\theoremstyle{plain}
\newtheorem{theorem}{Theorem}[section]
\newtheorem{metatheorem}[theorem]{Meta-theorem}
\newtheorem{corollary}[theorem]{Corollary}
\newtheorem{assumption}[theorem]{Assumption}
\newtheorem{claim}[theorem]{Claim}
\newtheorem{proposition}[theorem]{Proposition}
\newtheorem{lemma}[theorem]{Lemma}
\newtheorem{observation}[theorem]{Observation}
\newtheorem{definition}[theorem]{Definition}
\theoremstyle{definition}
\newtheorem{remark}[theorem]{Remark}
\numberwithin{equation}{section}
\newcommand{\R}{{\mathbb R}}
\newcommand{\Z}{{\mathbb Z}}
\newcommand{\N}{{\mathbb N}}
\newcommand{\E}{\mathbb E}
\newcommand{\Prob}{\mathbb{P}}
\newcommand\eps{\varepsilon}
\newcommand{\CA}{{\mathcal A}}
\newcommand{\CC}{{\mathcal C}}
\newcommand{\CE}{{\mathcal E}}
\newcommand{\CG}{{\mathcal G}}
\newcommand{\CH}{{\mathcal H}}
\newcommand{\CQ}{{\mathcal Q}}
\newcommand{\CU}{{\mathcal U}}
\newcommand{\CV}{{\mathcal V}}
\newcommand{\CZ}{{\mathcal Z}}
\newcommand{\re}{{\mathrm e}}
\newcommand{\Poi}{{\mathrm{Poi}}}
\newcommand{\ind}[1]{\mathds{1}_{\{#1\}}}
\newcommand{\Ind}[1]{\mathbbm{1}{\{#1\}}}
\newcommand{\rd}{\mathrm{d}}
\newcommand{\sss}[1]{{\scriptscriptstyle #1}}
\renewcommand{\complement}{\mathsf{c}}
\title[Large deviations of the giant in KSRGs]{Large deviations of the giant in supercritical kernel-based spatial random graphs}
\author[J.\ Jorritsma]{Joost Jorritsma$^1$}
\address{$^1$Department of Statistics, University of Oxford}
\author[J.\ Komj{\'a}thy]{J{\'u}lia Komj{\'a}thy$^2$}
\address{$^2$Delft Institute of Applied Mathematics, Delft University of Technology}
\author[D.\ Mitsche]{Dieter Mitsche$^3$}
\address{$^3$Instituto de Ingeniería Matemática y Computacional,  Pontif\'icia Univ. Cat\'olica de Chile.}
\email{joost.jorritsma@stats.ox.ac.uk, j.komjathy@tudelft.nl, dmitsche@gmail.com}
\begin{document}

\maketitle

\begin{abstract}
We study cluster sizes in supercritical $d$-dimensional inhomogeneous percolation models with long-range edges ---such as long-range percolation--- and/or heavy-tailed degree distributions ---such as geometric inhomogeneous random graphs and the age-dependent random connection model. Our focus is on large deviations of the size of the largest cluster in the graph restricted to a finite box as its volume tends to infinity. 
Compared to nearest neighbor Bernoulli bond percolation on $\Z^d$, we show that long edges can increase the exponent of the polynomial speed of the lower tail from  $(d-1)/d$ to any $\zeta_\star\in\big((d-1)/d,1\big)$.  We prove that this exponent $\zeta_\star$  also governs the size of the second-largest cluster, and the distribution of the size of the cluster  containing the origin $\CC(0)$. 

For the upper tail of large deviations, we prove that its speed is logarithmic for models with power-law degree distributions. We express the rate function 
via the generating function of $|\CC(0)|$. The upper tail in degree-homogeneous models decays much faster:  the speed in long-range percolation is \emph{linear}. 
\end{abstract}
{\footnotesize
\hspace{1em}Keywords: Cluster-size distribution, large deviation principle, second-largest component, spatial random graphs, scale-free networks.

\hspace{1em}MSC Class: 05C80, 60K35.
}

\section{Introduction}
For supercritical nearest neighbor  Bernoulli bond percolation on $\Z^d$ (NNP), it is a classical result that the graph restricted to a volume-$n$ box around the origin contains a \emph{giant}, i.e., a linear-sized component $\CC_n^\sss{(1)}$, with probability tending to one as $n\to\infty$. Its number of vertices satisfies a Law of Large Numbers (LLN):  $|\CC_n^\sss{(1)}|/n$ converges to $\theta:=\Prob_{\mathrm{NNP}}(0\leftrightarrow \infty)$ almost surely. Regarding large deviations of $|\CC_n^\sss{(1)}|$, the speed is known in all dimensions $d\ge 2$, and an interesting discrepancy occurs: the lower tail decays  slower than the upper tail.      

For the upper tail, the  event $\{|\CC_n^\sss{(1)}|>(\theta+\eps)n\}$ requires linearly more edges to be present than expected, which comes at an exponential cost. So, the speed of large deviations is \emph{linear} in the volume of the box: for each $\rho\in(\theta,1)$, there exist constants $C_\rho\ge c_\rho>0$ such that for all $n\ge 1$,
\begin{equation}\label{eq:LDP-NNP-upper}
-C_\rho\le \frac{1}{n}\log \Prob_{\mathrm{NNP}}\big(|\CC_n^\sss{(1)}|>\rho n\big)\le -c_\rho.
\end{equation} 
We refer to~\cite{durrett1988large, gandolfi1989thesis, lebowitz1988pseudo} for the speed of large deviations of a closely related quantity, and to \cref{upper-lrp-nnp} below for the precise result.
For the lower tail,  the  $\Omega(n^{(d-1)/d})$ many edges on the ``outer surface boundary'' of a linear-sized cluster have to be absent. So,  the speed of the lower tail is \emph{sublinear}~\cite{pisztora1996surface}: for each $\rho\in(0,\theta)$, there exist constants $C_\rho'\ge c_\rho'>0$ such that 
\begin{equation}\label{eq:LDP-NNP-lower}
-C_\rho' \le \frac{1}{n^{(d-1)/d}}\log \Prob_{\mathrm{NNP}}\big(|\CC_n^\sss{(1)}|<\rho n\big)\le -c_\rho'.
\end{equation}
We can interpret that the speed of the lower tail is caused by \emph{surface tension} (more precise results are known for closely related quantities, see ~\cite{alexander1990wulff, cerf2000large, durrett1988large, gandolfi1989thesis} and Section~\ref{sec:main-results} below). 

In this paper, we study large deviations of the giant in long-range percolation~\cite{schulman_1983}, and also in models with regularly-varying  edge-length-- and degree distributions. 
Our driving question is:
\begin{equation}\tag{{\bf Q}}\label{metaquestion}
     \parbox{\dimexpr\linewidth-10em}{%
    \strut
    \centering
    {\emph{How do high-degree vertices and long-range edges influence\\ the tails of 
    large deviations of the giant component?\phantom{-range}}}%
    \strut
  }
\end{equation}
We describe three new phenomena that encompass the answer.
First, high-degree vertices slow down the upper tail enormously: the speed becomes logarithmic. Second, long-range edges speed up the lower tail: the speed is still polynomial, but is no longer determined by surface tension alone. Since the lower tail decays much faster than the upper tail, the discrepancy of speeds is the opposite of the one in NNP. Third, we prove that the speed of the lower tail is closely related to the size of the second-largest connected component, and to the cluster-size distribution of the origin in the infinite model.
We explain these phenomena after motivating this topic with an application and an informal model description. 
  \begin{figure}[t]
      \centering
    \includegraphics[width=0.26\textwidth]{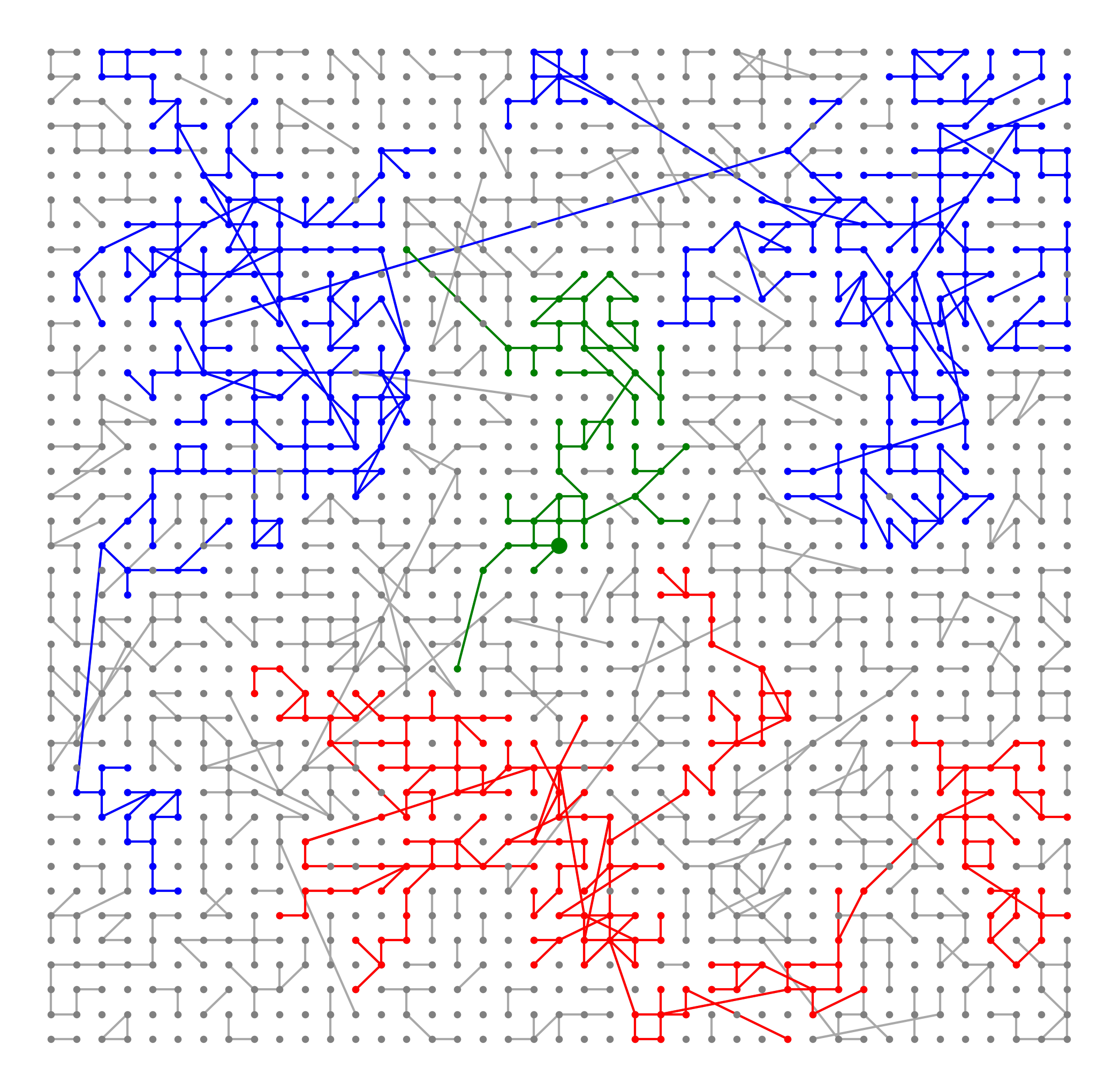}
    \includegraphics[width=0.26\textwidth]{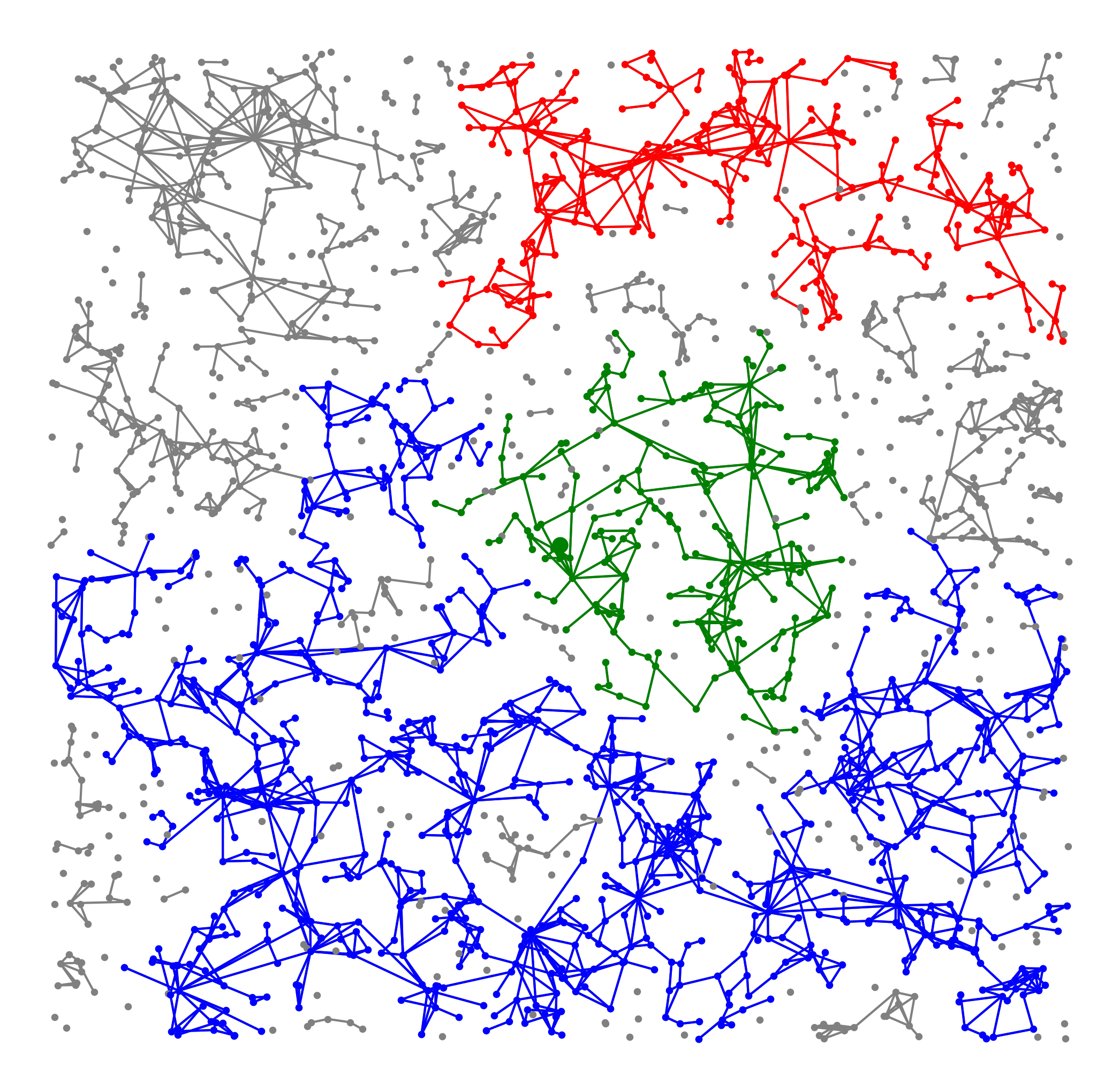}
    \includegraphics[width=0.45\textwidth, trim={0.2cm 0.2cm 0.2cm 0.2cm},clip]{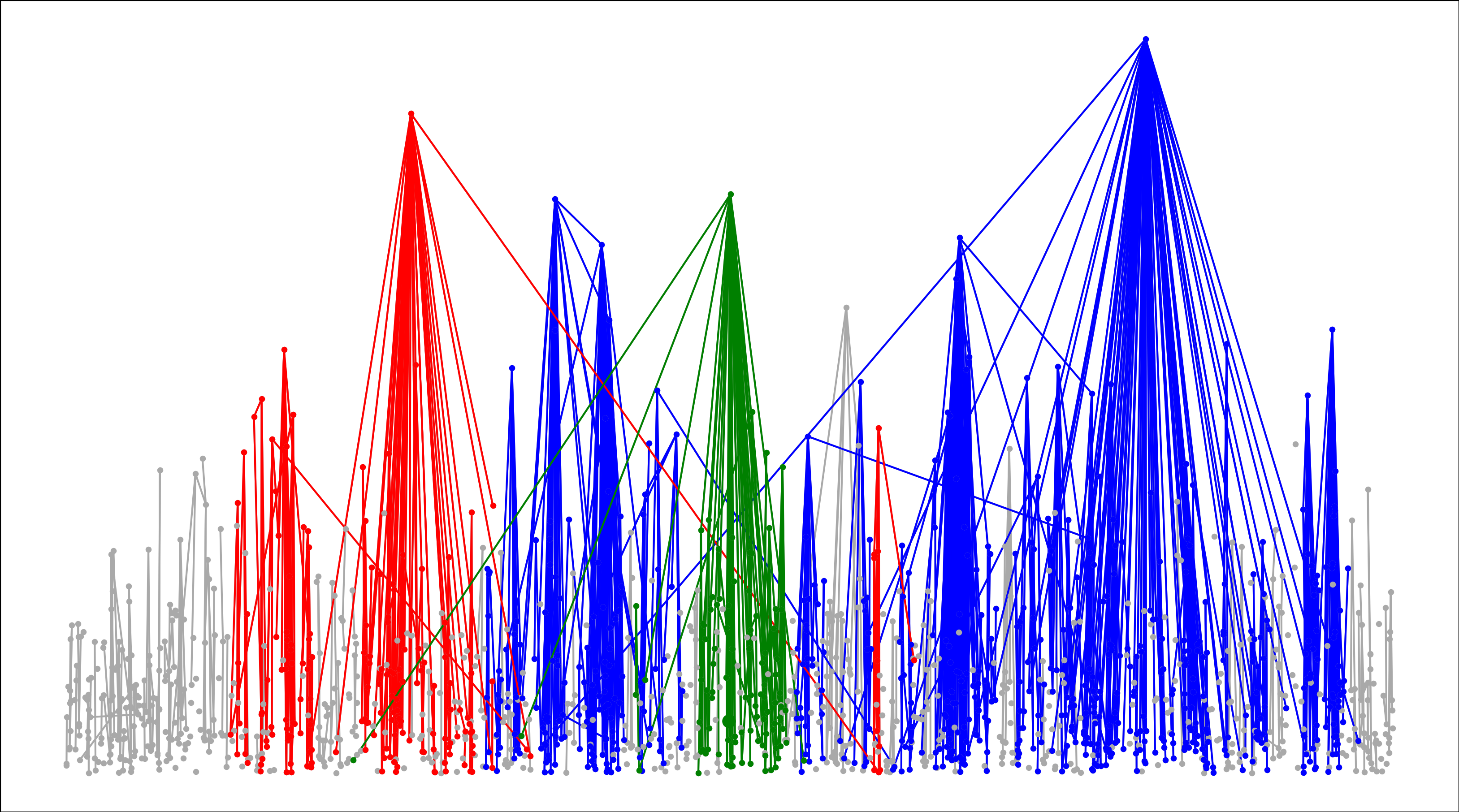}
      \captionsetup{width=0.98\textwidth}
      \caption{Simulations of 2-dimensional long-range percolation (LRP), a 2-dimensional geometric inhomogeneous random graph (GIRG), and a 1-dimensional soft Poisson--Boolean model (sPBM). In the sPBM, the $y$-axis reflects the vertex marks. The presence of long edges in these models leads to delocalized components, and affects the distributions of the sizes of three components: the speed of the lower tail of large deviations of the largest component (blue), the size of the second-largest component (red), and the distribution of the size of the cluster containing the origin (green) are all governed by the exponent $\zeta_\star$, which is defined in~\eqref{eq:zeta-star}. The upper tail of large deviations of the giant's size has linear speed in (mark)-homogeneous models as LRP, and logarithmic speed in inhomogeneous models as GIRG and sPBM.}
      \label{fig:ksrgs}
   \end{figure}

\smallskip\noindent
\emph{Motivation.} 
Many real networks have been found to have underlying geometry, inhomogeneous degrees, clustering, and long-range connections \cite{albert2002statistical, newman2012communities}.
 In biological contexts, the component sizes (under percolation of an initial graph)  correspond to the final size of a Reed--Frost or SIR epidemic on the network~\cite{abbey1952examination, barbour2004approximating, lefevre1990stochastic}, and to activity patterns in brain networks \cite{carvalho2021subsampled, del2018finding, kozma2015random}. 
 In the context of statistical physics, there are many models with long-range interactions, such as models of gravitational and charged systems, turbulent hydrodynamic systems, dipolar systems, and two-dimensional elasticity, see among others~\cite{campa2009statistical, Dauxois, Defenu} for examples.  
  In the last decade many random graph models have been invented to mimic real networks and systems with long-range interactions, see the next paragraph. For many of these models, an analysis of 
 basic properties of their component structure ---such as their approximate size-distribution---  has been lacking in the literature.
 In this paper we fill this gap and develop new, robust proof techniques that allow for the presence of long-range edges and inhomogeneous degrees. 

% \textcolor{blue}{we could add some sentences about for instances FK percolation here. Pisztora studied large deviations for Bernoulli percolation in \cite{pisztora1996surface}, and used domination of nearest neighbor percolation. For long-range models, maybe the methods from \cite{lrpRevisited20} could be adjusted when one would have to deal with the dependency among clusters. }

\smallskip\noindent
\emph{Spatial inhomogeneous percolation models.}\label{par:spatial-models} 
The most well-known spatial degree-inhomogeneous models are hyperbolic random graphs \cite{krioukov2010hyperbolic},
geometric inhomogeneous random graphs \cite{BriKeuLen19}, the (soft) Poisson--Boolean model with random radii \cite{gouere2008subcritical, hall1985continuum},  and the age-dependent random connection model \cite{gracar2019age}. The degree-inhomogeneity leads to arbitrarily long edges. While the degree distribution of long-range percolation (LRP) is light-tailed, LRP also contains long edges. The presence of long edges leads to components that might be `delocalized in space', see Figure~\ref{fig:ksrgs}.
Our formal setup, introduced informally now, captures these models all at once.

 The vertex sets are formed by a $d$-dimensional ergodic point process: either $\Z^d$ or a homogeneous Poisson point process (PPP) on $\R^d$. Each vertex $u\in\CV$ at location $x_u$ is equipped with an independent and identically distributed (iid) mark  $w_u\ge 1$, which is Pareto-distributed for degree-inhomogeneous models, and a constant for long-range percolation. The expected degree of a vertex is proportional to its mark (sometimes also called `weight'). Conditionally on all vertex locations and marks $\{(x_u, w_u)\}_{u\in\CV}$, each edge $\{u,v\}$ is present independently with probability $\varphi(\beta, w_u, w_v, \|x_u-x_v\|)$. The (model-dependent) \emph{edge-connectivity function} $\varphi$ is non-increasing in the distance, non-decreasing and symmetric in both marks. The parameter $\beta>0$ controls the edge-density. We often assume that the model is supercritical, i.e., $\beta>\beta_c$, where $\beta_c$ is the infimum over $\beta \ge 0$ such that the graph contains an infinite connected component almost surely. 

\smallskip\noindent
\emph{Law of Large Numbers for the size of the giant.} In this general model class, even the existence of a linear-sized component in finite boxes is not guaranteed/known in the whole supercritical regime.
When $\varphi$ allows for sufficiently many long edges,
we prove 
 existence and uniqueness of the giant for all $\beta>\beta_c$. Its size satisfies a Law of Large Numbers: $|\CC_n^\sss{(1)}|/n \to\theta:=\Prob(0\leftrightarrow \infty)$ in probability (for some cases the convergence holds almost surely). The long edges that are necessary for this LLN, come either from a slow decay of the connectivity function $\varphi$ as $\|x_u-x_v\|\to \infty$, or from the presence of high vertex marks, or from a combination of both. 

\medskip\noindent
\emph{Upper tail of large deviations.}
We now state informally our result on the upper tail.
\begin{metatheorem}[High-degree vertices cause slow upper tails]\label{thm:intro-upper}
Consider a spatial inhomogeneous percolation model with regularly-varying degree distribution, so that $|\CC_n^{\sss{(1)}}|$, the size of the giant, satisfies a Law of Large Numbers. The upper tail of $|\CC_n^{\sss{(1)}}|$ has logarithmic speed. We identify the non-negative rate-function $I(\cdot)$ such that for all $\rho\in(\theta,1)$,
\begin{equation}\label{eq:intro-upper}
\begin{aligned}
    -\inf_{r>\rho}I(r)&\le \liminf_{n\to\infty}\frac{1}{\log n}\log\Prob\big(|\CC_n^\sss{(1)}|>\rho n\big) \\
  &  \le 
    \limsup_{n\to\infty}\frac{1}{\log n}\log\Prob\big(|\CC_n^\sss{(1)}|>\rho n\big) 
    =-\inf_{r\ge \rho}I(r)=I(\rho).
   \end{aligned}
\end{equation}
On the contrary, the speed  in Bernoulli bond percolation or long-range percolation on $\Z^d$ is linear.
\end{metatheorem}
We formalize \cref{thm:intro-upper} in \cref{upper-lrp-nnp} and \cref{thm:large-dev-upper} below. 
\cref{thm:intro-upper} is the first result that identifies the rate function for the size of the giant in a spatial random graph model. Moreover, the rate function for the giant is not even known for many classical \emph{non-spatial} random graph models such as  the rank-one inhomogeneous random graph with iid Pareto weights \cite{ChungLu02.1, NorRei06}, and the configuration model with iid Pareto degrees \cite{MollReed98}. Some known results for non-spatial graphs are the following: The LDP of the giant in the configuration model with \emph{fixed} degree sequence has linear speed when linear-degree vertices are absent~\cite{bhamidiConfLargeDev}; in sparse inhomogeneous random graphs with bounded weights the speed is linear as well~\cite{andreis2023large}, similar to  sparse Erd{\H o}s-R\'enyi random graphs \cite{andreis2021er, oconnel1998}.
A work in progress by the first author and Zwart shows an LDP for the giant in inhomogeneous random graphs with iid heavy-tailed vertex weights~\cite{jorritsmaZwart2023}.

We briefly explain the reason  for the logarithmic speed:  with \emph{polynomially} decaying probability, the vertex set contains a few hubs: vertices with  mark $\Theta(n)$. These hubs connect to the giant and to linearly many small components. Small components do not merge with the giant with a probability that is exponentially small in their size and in the number of hubs. This allows us to express the rate-function $I(\cdot)$ via the generating function of the  cluster-size distribution of the infinite model.

 A similar phenomenon occurs for the sum of $n$ iid (fully asymmetric) Pareto random variables: this sum also satisfies an LDP with  logarithmic speed of the upper tail.  Having a single summand whose value is linear in $n$ comes at a polynomial cost. On the contrary, the lower tail for such variables has linear speed because linearly many summands need to be small \cite{nagaev1979large}. 

 \smallskip\noindent
 \emph{Lower tail of large deviations.} 
 Similar to the sum of iid Pareto random variables, the lower tail of the size of the giant decays much faster than the upper tail. However, due to the underlying geometry, the speed is not linear, but polynomial.
Contrary to NNP in \eqref{eq:LDP-NNP-lower}, in models with long-range edges the speed is not determined by surface tension alone. Instead, the speed is given by the solution of a variational problem that describes the most likely way that a box of volume $n$, denoted by $\Lambda_n$, is isolated from its complement $\Lambda_n^\complement$~\cite{clusterI}. For most edge-connectivity functions $\varphi$, the probability that $\Lambda_n$ is isolated decays as $\exp(-\Theta(n^{\zeta_\star}))$ for some exponent $\zeta_\star\in[0,1)$\footnote{For some connectivity functions $\varphi$, slowly-varying correction factors appear inside the ``$\Theta(\cdot)$'' in the exp-function.}. The exponent $\zeta_\star$ can also be determined via the expected size of the \emph{downward vertex boundary} that we define now. We say that the edge $\{u,v\}=\{(x_u, w_u), (x_v, w_v)\}$ is a `downward edge' from $u$ if $w_u\ge w_v$. We define$^{\ref{note-limit}}$
\begin{equation}\label{eq:zeta-star}
\zeta_\star:=\lim_{n\to\infty}\frac{\log \E\Big[ \big|\big\{u\in \Lambda_n: u\mbox{ has a downward edge to }\Lambda_n^\complement\big\}\big|\Big]}{\log n},
\end{equation}
The restriction of counting vertices with downward edges to $\Lambda_n^\complement$ is necessary for some (but not all) edge-connectivity functions $\varphi$. This excludes counting (potentially many) vertices on the vertex boundary that are connected by an edge to a small number of high-mark vertices spatially close to $\Lambda_n$. 
 In NNP all marks are identical and  $\zeta_\star=(d-1)/d$:
 the expectation in $\zeta_\star$ is governed by surface tension. As we will see, $\zeta_\star$ provides the right generalization of surface tension to spatial inhomogeneous percolation models with long edges.
 For these latter models, $\zeta_\star=\max(\zeta_\mathrm{long},(d-1)/d)$, where\footnote{\label{note-limit}The limits in the definitions of $\zeta_\star$ and $\zeta_\mathrm{long}$ are well-defined in the models that we study. We will never use $\zeta_\mathrm{long}$ when it equals $0$. The truncation ensures that the limit always exists.}
\begin{equation}\label{eq:zeta-long}
\zeta_\mathrm{long}:=\lim_{n\to\infty}\Bigg(0\vee\frac{\log \E\Big[ \big|\big\{u\in \Lambda_{n/2}: u\mbox{ has a downward edge to }\Lambda_n^\complement\big\}\big|\Big]}{\log n}\Bigg)
 \end{equation}
 describes the number of vertices incident to long downwards edges of length $\Omega(n^{1/d})$. The value of $\zeta_\mathrm{long}$ can be explicitly computed based on the exact model specifications. Importantly, $\zeta_\mathrm{long}$ can be larger than $(d-1)/d$ for any fixed $d\in \N$. In that case, $\zeta_\star=\zeta_{\mathrm{long}}$ and long edges speed up the decay of the lower tail of large deviations. 
\begin{metatheorem}[Long edges speed up the lower tail]\label{phen:lower}
Consider a supercritical spatial inhomogeneous percolation model so that $|\CC_n^{\sss{(1)}}|$, the size of the giant, satisfies a Law of Large Numbers, i.e., $\zeta_\mathrm{long}>0$. Assume that the expected downward vertex boundary in \eqref{eq:zeta-star} has size $\Theta(n^{\zeta_\star})$. For each $\rho\in(0,\theta)$, there exist constants $C_\rho\ge c_\rho>0$ such that for all $n\ge 1$,
      \begin{equation}
-C_\rho \le\frac{1}{n^{\zeta_\star}} \log \Prob\big(|\CC_n^\sss{(1)}|<\rho n\big)\le -c_\rho.\label{eq:lower-intro}
\end{equation}
When the expected downward vertex boundary has slowly-varying correction factors, (potentially different) slowly-varying correction factors appear in  the lower bound.
  \end{metatheorem}
   We formalize \cref{phen:lower} in~\cref{thm:large-dev2}. \cref{phen:lower} implies the existence of the giant with stretched exponential error probability, which is novel for several spatial inhomogeneous percolation models mentioned above on page  \pageref{par:spatial-models}. Moreover, this result includes long-range percolation and improves upon the currently known best bound there  as well, as we explain now. 
   
In long-range percolation, two vertices $x,y\in\Z^d$ are connected by an edge independently with probability $\Theta\big(1\wedge\|x-y\|^{-\alpha d}\big)$ as $\|x-y\|\to\infty$, for a long-range parameter $\alpha>1$. An elementary computation shows that $\zeta_\mathrm{long}=2-\alpha$, and $\zeta_\star=\max(2-\alpha, (d-1)/d)$. In~\cite{biskup2004scaling}, Biskup showed that there exists $\delta>0$ such that $\Prob\big(|\CC_n^\sss{(1)}|<\delta n\big)\le \exp\big(-n^{2-\alpha-o(1)}\big)$. \cref{phen:lower} improves upon~\cite{biskup2004scaling} in two ways when $2-\alpha=\zeta_\mathrm{long}>(d-1)/d$. First, the power of $n$ in the exponent in \eqref{eq:lower-intro} has no $o(1)$-term. Second, the decay in \eqref{eq:lower-intro} holds for arbitrary $\rho \in(0, \theta)$, instead of a small constant $\delta$.

 Returning to the general model setting, we give some intuition why $\zeta_{\mathrm{long}}$ drives the lower tail when $\zeta_{\mathrm{long}}>(d-1)/d$. 
 The definition of $\zeta_\mathrm{long}$ in \eqref{eq:zeta-long} tells us that  $\Lambda_n$ contains about  $\Theta(n^{\zeta_\mathrm{long}})$ many  vertices that are incident to long edges of length $\Omega(n^{1/d})$. 
We partition $\Lambda_n$ into subboxes of size $\Theta(n^{1-\zeta_\mathrm{long}})$, so that a subbox typically contains constantly many vertices incident to these long edges. Then, we couple the graph with an ERRG with large constant expected degree. In this coupling, each vertex corresponds to a local giant inside one of the $\Theta(n^{\zeta_\mathrm{long}})$ many subboxes, and we use the long edges to estimate the probability that two local giants are connected. Then we apply the LDP on the giant in ERRGs by O'Connell~\cite{oconnel1998}. This LDP has linear speed in the number of `Erd{\H o}s-R\'enyi vertices', and yields that $\Prob\big(|\CC_n^\sss{(1)}|<\rho  n\big)\le \exp\big(-\Theta(n^{\zeta_\mathrm{long}})\big)$. We apply this coupling \emph{after} proving the LLN, which allows us to choose any $\rho<\theta$. 

The Law of Large Numbers is a crucial ingredient to make this sketch work. The LLN ensures that local giants in subboxes exist and have the right size. We first establish the existence of the giant via two multi-scale renormalization schemes. To obtain the right size for the giant, it suffices to prove an upper bound on the cluster-size decay $\Prob( k\le  |\CC_n(0)|, 0 \notin \CC_n^{\sss{(1)}})$ in finite boxes \cite{hofstad2021giantlocal, clusterI}.

\smallskip\noindent
\emph{Cluster-size decay and second-largest component.} In our third main result, we prove that $\zeta_\star$ also governs the cluster-size decay of the origin and the size of the second-largest connected component $|\CC_n^\sss{(2)}|$. In \cite{clusterI}, we already showed that both sizes are determined by $\zeta_\star$ for some parameter settings of spatial inhomogeneous percolation models. 
 In this paper, in Theorems~\ref{thm:subexponential-decay} and~\ref{thm:second-largest}, we prove the same relation for complementary parameter settings. Combining the present paper and~\cite{clusterI}, we obtain the following result. Here, $\CC(0)$ is the cluster of the origin in the model on $\R^d$.
 \begin{metatheorem}[Cluster-size decay and the second-largest component]\label{phen:csd-c2}
     Consider a supercritical spatial inhomogeneous percolation model with $\zeta_\mathrm{long}>(d-1)/d$. Assume that the expected downward vertex boundary in \eqref{eq:zeta-star} has size $\Theta(n^{\zeta_\star})$. There exist constants $C\ge c>0$ such that for all $k\ge 1$,     \begin{equation}\label{eq:CLD-C2-intro}
\begin{aligned}
-C \le \frac{1}{k^{\zeta_\star}}\log \Prob\big(k< |\CC(0)|<\infty\big) \le - c, \qquad
\Prob\bigg(c \le \frac{|\CC_n^{\sss{(2)}}|}{(\log n)^{1/\zeta_\star}}  \le  C\bigg)\overset{n\to\infty}\longrightarrow 1. 
\end{aligned}
\end{equation}
When the expected downward vertex boundary has slowly-varying correction factors, (potentially different) slowly-varying correction factors appear in  the lower bound.
 \end{metatheorem} 
For long-range percolation, Crawford and Sly proved a polylogarithmic upper bound on $\CC_n^{\sss{(2)}}$ in \cite{crawford2012simple} with some unspecified exponent. \cref{phen:csd-c2} finds the correct exponent of this polylog.
For supercritical NNP and other homogeneous percolation models,  surface-tension-driven behavior for the speed of the lower tail, for the cluster-size decay, and for the second-largest component is known, see  Pisztora~\cite{pisztora1996surface}. 
Moreover, when $\zeta_\mathrm{long}<(d-1)/d$ (equivalently $\zeta_\star=(d-1)/d$),  we proved in~\cite{clusterIII} that~\cref{phen:csd-c2} holds for long-range percolation with high edge-density. That is, for (high edge-density) LRP with $\alpha> 1+1/d$, the cluster-size decay and the size of the second-largest component are both driven by surface tension.

  \smallskip\noindent
  \emph{Another exponent determining the presence of a supercritical phase.}
  Combined with~\cite{clusterI}, this paper completes the following picture: $\zeta_\star=\max(\zeta_{\mathrm{long}}, (d-1)/d)$ in~\eqref{eq:zeta-star} is the correct generalization of the surface-tension exponent $(d-1)/d$ for supercritical spatial inhomogeneous percolation models. It  governs the size of the second-largest component, the connected component containing a vertex at the origin, and the lower tail of large deviations of the giant component.

The value of $\zeta_\mathrm{long}$, defined in \eqref{eq:zeta-long}, is related to the \emph{effective decay exponent} $\delta_\mathrm{eff}$ defined in~\cite{jacob2023only}.  
     In words, $2-\delta_\mathrm{eff}$ measures the size of the ``long-edge'' \emph{edge}-boundary of a large box. To define it, say that $u\in \Lambda_n\times [a,b]$ if its location $x_u$ is in $\Lambda_n$, and its mark $w_u$ is in the interval $[a,b]$, and let $w_{n}^\sss{\max}$ denote the maximal \emph{expected} mark of a vertex in $\Lambda_n$. 
    \begin{equation}\label{eq:deltaeff}
    2-\delta_\mathrm{eff}=\lim_{n\to\infty}\frac{\log \E\Big[\big|\big\{\text{edges between } \Lambda_{n/2}\times[0,w_{n}^\sss{\max}]\text{ and } \Lambda_n^\complement\times[0,w_{n}^\sss{\max}]\big\}\big|\Big]}{\log n}.
    \end{equation}
  When $2-\delta_\mathrm{eff}>0$ and $d=1$,~\cite{jacob2023only} proves that the models studied here admit a supercritical phase, i.e., $\beta_c<\infty$. If $2-\delta_\mathrm{eff}<0$ and $d=1$, there is no supercritical phase \cite{jacob2023only}.
     For dimensions $d \ge 2$, $\beta_c<\infty$ holds in spatial inhomogeneous percolation models, since such models at high edge-density dominate supercritical random geometric graphs or Bernoulli bond percolation on $\Z^d$. Further, when $2-\delta_\mathrm{eff}>0$, in this model class annulus-crossing probabilities do not decay with the size of the annulus. This is denoted by $\hat \lambda_c=0$ in the recent work in \cite{ jacob2023only}. 
     
     By its definition in \eqref{eq:zeta-long}, $\zeta_\mathrm{long}$ describes the size of the ``long-edge'' downward \emph{vertex}-boundary. Hence, $2-\delta_\mathrm{eff}$ has the same sign as $\zeta_\mathrm{long}$ throughout the model class. Combining this observation with the results in \cite{jacob2023only}, positivity/negativity of $\zeta_\star=\max(\zeta_\mathrm{long}, (d-1)/d)$ also gives finiteness of $\beta_c$ and $\hat \lambda_c=0$ in this model class.

         The difference between $\zeta_\mathrm{long}$ and $2-\delta_\mathrm{eff}$ becomes apparent as soon as degrees are inhomogeneous. High-degree vertices can have many edges leaving the box. Edges leaving the maximal vertex mark in $\Lambda_n$ can form the dominating contribution to the \emph{edge}-boundary. In this case, the formula in \eqref{eq:deltaeff} is sensitive to truncation of marks and to the actual presence of the high-mark vertex. This is expressed in the non-concentration of the size of the edge-boundary around $\Theta(n^{2-\delta_\mathrm{eff}})$. On the contrary, the size of the downward vertex boundary is robust to random fluctuations of the maximal degree, and its size concentrates around $\Theta(n^{\zeta_\mathrm{long}})$. 
Summarizing, we observe that 
regarding presence/absence of supercritical phase, $\delta_{\mathrm{eff}}$ may be replaced by $\zeta_\star$ (or $\zeta_{\mathrm{long}}$), and that $\zeta_\star$ also drives the behavior of the three important distributions mentioned above in Meta-theorems~\ref{phen:lower} and \ref{phen:csd-c2}. 
         In the following section we formalize Meta-theorems~\ref{thm:intro-upper}--\ref{phen:csd-c2} that answer the question~\eqref{metaquestion}.

\section{Formal statements of model and main results}\label{sec:main-results}
\subsection{Kernel-based spatial random graphs}
We give a formal definition of a large class of spatial inhomogeneous percolation models. This class appeared under various names in the literature. The first general version of the model appeared in \cite{KomLod20}, called \emph{general geometric inhomogeneous random graphs}, and independently in \cite{GraHeyMonMor19} where it was called \emph{weight-dependent random connection model}. Afterwards, it was called \emph{spatial inhomogeneous random graphs} in~\cite{maitra2021locallim}. Due to the important role of a \emph{kernel function} driving the connection probabilities, we called them \emph{kernel-based spatial random graphs} in~\cite{clusterI}. Since this paper ties in with \cite{clusterI}, we keep this last name here. This model class includes classical homogeneous models such as Bernoulli bond percolation on $\Z^d$ \cite{broadbent1957percolation}, long-range percolation \cite{schulman_1983}, and random geometric graphs~\cite{Gilb61, Hafner72}.  It also includes (degree-) inhomogeneous models such as hyperbolic random graphs \cite{krioukov2010hyperbolic}, the Poisson-Boolean model with random radii \cite{hall1985continuum}, geometric inhomogeneous random graphs \cite{BriKeuLen19}, scale-free percolation \cite{DeiHofHoo13}, the age-dependent random connection model \cite{gracar2019age}, and the scale-free Gilbert model \cite{hirsch2017gilbertgraph}. We give now a formal definition.

\begin{definition}[Kernel-based spatial random graphs (KSRG)]\label{def:ksrg} Fix a dimension $d\ge 1$. A KSRG is then a random graph defined as follows. Let the vertex set $V$ be formed by an ergodic point process: either $\Z^d$ or a unit-intensity Poisson point process (PPP) on $\R^d$. Given $V$, we equip each vertex $u\in V$ with an independent mark $w_u\ge 1$ following distribution $F_W$. 
Let $\kappa: \R_+^2 \to \R_+$ be a symmetric function, called the kernel function. Let $\varphi: \R_+\to [0,1]$ be non-decreasing, called the profile function, and let $\beta>0$. 
Conditionally on the realization of the marked vertex set $\CV := \{(x_u, w_u)\}_{u\in V}\subset \R^d\times \R_+$, 
the edge $\{u,v\}\in V^2$ is present in the edge-set $\CE$ independently of other edges with probability
\begin{equation}\label{eq:connection-verygeneral}
\mathrm{p}(u,v):=\Prob_{W, \kappa, \varphi,  \beta}\big(u\text{ connected by an edge to }v \mid \CV\,\big) = \varphi\Big(\beta\cdot\frac{\kappa(w_u, w_v)}{\|x_u-x_v\|^d}\Big).
\end{equation}
We denote the obtained infinite graph by $\CG=(\CV,\CE)$, and  by $\CG_n=(\CV_n,\CE_n)$ the graph induced by vertices in $\Lambda_n:=[-n^{1/d}/2, n^{1/d}/2]^d$. 
We write $\CC_n^{\sss{(i)}}$ for the $i$th largest component of $\CG_n$, and $\CC_n(0)$ and $\CC(0)$ for the component containing a vertex at the origin in $\CG_n$ and in $\CG$, respectively. We write $\Prob^\sss{x}$ for the Palm-measure when the vertex set of a unit-intensity Poisson point process is conditioned to contain a vertex at location $x\in\R^d$ with unknown mark.\end{definition}

We omit the subscripts $W, \kappa, \varphi$ of $\Prob$ as they are fixed and always clear from the context.
Sometimes we emphasize the dependency of $\Prob$ on the edge-density parameter $\beta$ by writing $\Prob_\beta$. We  define
\begin{equation}\label{eq:theta}
\theta=\theta(\beta):=\Prob_\beta^{\sss{0}}(0 \leftrightarrow \infty)=\Prob_{\beta}^\sss{0}(|\CC(0)|=\infty)>0.
\end{equation}
Then, the critical edge-density $\beta_c$ is given by
\begin{equation}\label{eq:betac}
\beta_c:=\inf\big\{\beta\ge0 : \theta(\beta)>0\big\}.
\end{equation}
We call the model subcritical if $\beta<\beta_c$, critical if $\beta=\beta_c$, and supercritical if $\beta>\beta_c$. For a fixed choice of $W, \varphi, \kappa$, and $d$, some phases may not exist~\cite{gracar2022finiteness, gracar2021percolation}.

\cref{def:ksrg} allows for general kernel and profile functions. In the rest of the paper we restrict them to a few choices that are commonly used, and which cover the specific models in the introduction \cite{BriKeuLen19, DeiHofHoo13, gracar2019age, hall1985continuum, hirsch2017gilbertgraph, krioukov2010hyperbolic, schulman_1983}. 
Our results remain valid for (or can be extended to) other sufficiently similar kernels, profile functions, and mark distributions. There is no additional technicality in the proofs if one includes slowly varying functions in any of these functions, and the only change in the results is that lower order correction terms (for instance, slowly varying functions multiplying the polynomial exponent of cluster-size decay) appear in the results below. These lower-order corrections can be traced throughout the proofs. For ease of presentation, we thus work with functions satisfying the following assumption. We follow the notation of our previous work~\cite{clusterI}, and write for any $a, b, \in \R$, $a\wedge b$ for $\min(a,b)$, and $a\vee b$ for $\max(a,b)$.
 \begin{assumption}\label{assumption:main}
We assume that the mark distribution is either $W_v\equiv 1$ for all $v$, or a Pareto distribution with parameter $\tau>2$: 
\begin{equation}\label{eq:power-law}
1-F_W(w):=\Prob\big(W_v\ge w\big)=w^{-(\tau-1)}, \qquad w\ge 1.
\end{equation}
We assume that the profile function $\varphi$ is either threshold or polynomial: for constants $p\in(0,1]$, $\alpha>1$, let
\begin{equation}\label{eq:profile}
\varphi_{\mathrm{thres}}(s):=p\ind{s\ge1},\qquad\text{or }\qquad \varphi_{\mathrm{pol}}(s):=p\big(1\wedge s^{-\alpha}\big),
\end{equation}
and we assume that the kernel $\kappa$ is one of the following:
\begin{equation}\label{eq:kernels}
\kappa_\mathrm{sum}(w_1, w_2)=\big(w_1^{1/d}+w_2^{1/d}\big)^d,\qquad\text{or}\qquad
\kappa_\sigma(w_1, w_2)=(w_1\vee w_2)(w_1\wedge w_2)^\sigma,
\end{equation}
for a parameter $\sigma\ge 0$. By abuse of notation, when $W_v\equiv 1$ for all $v\in V$ we say 
that $\tau=\infty$; when $\varphi=\varphi_\mathrm{thres}$ we say that $\alpha=\infty$;  when $\kappa=\kappa_{\mathrm{sum}}$ we say that $\sigma=0$.
\end{assumption}
Assumption \ref{assumption:main} ensures that the model is parametrized so that the expected degree of a vertex is proportional to its mark when $\sigma\le \tau-1$. The restrictions $\tau>2$ and $\alpha>1$ ensure that the graph is locally finite. Increasing $\tau$ and/or $\alpha$ leads to less inhomogeneity (lighter-tailed degrees and fewer long edges, respectively). All our results below for $\tau=\infty$ except Theorem \ref{upper-lrp-nnp}   can be directly extended to all distributions $F_W$ with lighter tail than any power-law in \eqref{eq:power-law}.
The parameter $p$ in \eqref{eq:profile} makes the model closed under Bernoulli percolation of the edges, while the parameter $\sigma$ controls assortativity: increasing $\sigma$ makes it more likely that high-degree vertices are connected by an edge. The parameter $\sigma$ allows us to continuously interpolate between well-known models that are special cases. Therefore, we call $\kappa_\sigma$ the \emph{interpolation kernel}, see more in \cite{clusterI}.

Our earlier work~\cite{clusterI} as well as the papers~\cite{GraHeyMonMor19,maitra2021locallim} describe how the above models can be reconstructed as special cases. In short, in long-range percolation, the vertex set is $\Z^d$, and the marks are a constant, i.e., $\tau=\infty$, and $\alpha<\infty$. The kernel $\kappa_1$, corresponding to the product $w_1w_2$, is used to obtain hyperbolic random graphs, geometric inhomogeneous random graphs,  and scale-free percolation~\cite{BriKeuLen19, DeiHofHoo13,krioukov2010hyperbolic}, $\kappa_{\tau-2}$ to obtain the age-dependent random connection model \cite{gracar2019age}, and $\kappa_\mathrm{sum}$, with additionally $p=1$, to obtain the (soft) Poisson--Boolean model \cite{hall1985continuum}.  The soft Poisson--Boolean model uses the polynomial profile $\varphi_{\mathrm{pol}}$, while the classical Poisson--Boolean model uses the threshold profile $\varphi_{\mathrm{thres}}$. 
The kernel $\kappa_0$ is sometimes called the max-kernel, and is closely related to $\kappa_\mathrm{sum}$, as $\kappa_0\le\kappa_\mathrm{sum}\le 2\kappa_0$. Since the latter two kernels are qualitatively similar, we simply set $\sigma=0$ for $\kappa_\mathrm{sum}$; we never prove\footnote{The corresponding results are straightforward, in lower/upper bounds we use the corresponding bound from  $\kappa_0\le\kappa_\mathrm{sum}\le 2\kappa_0$, and constant factors do not matter in our calculations.} explicitly results for $\kappa_\mathrm{sum}$.

\subsection{Dominant connection types}\label{sec:dominant}
To make Meta-theorems \ref{thm:intro-upper}--\ref{phen:csd-c2} formal, we give the values of $\zeta_\star$ in~\eqref{eq:zeta-star} and $\zeta_\mathrm{long}$ in~\eqref{eq:zeta-long} in terms of the model parameters for KSRGs satisfying Assumption~\ref{assumption:main}. We follow the notation of~\cite{clusterI}, and write $\zeta_{\star}$ and $\zeta_{\mathrm{long}}$ as a maximum of  simple expressions of the parameters $d,\tau, \alpha, \sigma$.  As we will show below, they take the form
\begin{equation}\label{eq:zeta-first}
\zeta_\star = \max( \zeta_{\mathrm{short}}, \zeta_{\mathrm{ll}}, \zeta_{\mathrm{hl}}, \zeta_{\mathrm{hh}}), \qquad \zeta_{\mathrm{long}}= \max( \zeta_{\mathrm{ll}}, \zeta_{\mathrm{hl}}, \zeta_{\mathrm{hh}}).
\end{equation}
Each term inside the maxima describes the expected number of vertices with a certain type of downward edge leaving $\Lambda_n$. 
Each of these types contributes to the downward vertex boundary of $\CV\cap \Lambda_n$ in \eqref{eq:zeta-star}. If the maximum is unique, the size of the downward vertex boundary is dominated by exactly one of these four types. If the maximum is non-unique,
we say that the parameters are on a phase-transition boundary. 
We provide some back-of-the-envelope calculations to compute $\zeta_\star$ in \eqref{eq:zeta-star} and $\zeta_\mathrm{long}$ in \eqref{eq:zeta-long} when $\alpha<\infty$. When $\alpha=\infty$, one has to take the limit in the expressions below as $\alpha\to\infty$. The precise computations are given in~\cite{clusterI}. 

\emph{Short edges} -- of constant length: there are roughly $\Theta(n^{(d-1)/d})$ vertices incident to such edges crossing the boundary of $\Lambda_n$ in \eqref{eq:zeta-star}, giving the `surface-tension' exponent 
\begin{equation}\label{eq:zeta-short-ll}
    \zeta_\mathrm{short}:=(d-1)/d.
\end{equation}
Next, we start counting `long edges', that is, edges of length $\Theta(n^{1/d})$ crossing the boundary of $\Lambda_n$, and thus also contributing to $\zeta_{\mathrm{long}}$ in \eqref{eq:zeta-long}. These edges fall into three categories.

\emph{Low-low edges} connect two vertices of constant (low) mark between $\Lambda_{n/2}$ and $\Lambda_n^\complement$. The expected number of low-low edges is $\Theta(n \cdot n \cdot n^{-\alpha})$, and this is the same as the number of low-mark vertices having a low-low edge. 
Abbreviating `\emph{low}-mark to \emph{low}-mark' by ll, we obtain
\begin{equation}\label{eq:zeta-ll}
\zeta_{\mathrm{ll}}:=2-\alpha.
\end{equation}   
The other two types describe long edges incident to `high-mark' vertices in $\Lambda_{n/2}$.  We say a vertex has a high mark if its mark is at least $n^{\gamma_\mathrm{long}}$, where 
\begin{equation}\label{eq:gamma-long}
\gamma_\mathrm{long}:=\min\Big\{\gamma\ge0: \liminf_{n\to\infty}\E^\sss{0}\big[ |\{ \mbox{edges between }0\mbox{ and }\Lambda_n^\complement\}| \,\big|\, (0, n^{\gamma})\in\CV\big]>0\Big\}.
\end{equation}
Then, a constant proportion of vertices of mark at least $n^{\gamma_\mathrm{long}}$ inside $\Lambda_{n/2}$ contributes to the vertex boundary. By the Pareto distribution of the marks in~\eqref{eq:power-law}, there are $\Theta(n^{1-\gamma_\mathrm{long}(\tau-1)})$ many high-mark vertices inside $\Lambda_{n/2}$. 
The values $\tau$ in~\eqref{eq:power-law}, $\sigma$ in \eqref{eq:kernels}, and $\alpha$ in \eqref{eq:profile} jointly determine the value of $\gamma_{\mathrm{long}}$. We compute two possible values of $\gamma_{\mathrm{long}}$, that then yield the two remaining types.

\emph{High-low edges} are dominant if the other end-vertex of an edge emanating from vertex  $(0,n^{\gamma})$ to $\Lambda_n^\complement$ typically has constant mark. There are $\Theta(n)$ constant-mark vertices at distance $\Theta(n^{1/d})$. By the connection probability in~\eqref{eq:connection-verygeneral} with $\kappa_\sigma$ or $\kappa_\mathrm{sum}$ from~\eqref{eq:kernels} and $\varphi_{\mathrm{pol}}$ from~\eqref{eq:profile}, for $\gamma > 0$, the expected number of edges between vertex $(0, n^\gamma)$ and constant-mark vertices in $\Lambda_n^\complement$ is roughly $n (n^\gamma / n)^\alpha$. As required in \eqref{eq:gamma-long}, this expression is of constant order when
\begin{equation}\label{eq:gamma-lh}
\gamma=\gamma_\mathrm{hl}:=1-1/\alpha, 
\end{equation}
which leads to 
\begin{equation}\label{eq:zeta-lh}
    \zeta_\mathrm{hl}:=1-\gamma_\mathrm{hl}(\tau-1)=(\tau-1)/\alpha - (\tau-2).
\end{equation}

\emph{High-high edges} occur dominantly if the other end-vertex of an edge emanating from vertex $(0,n^\gamma)$ in \eqref{eq:gamma-long} typically has a high mark.   
There are $\Theta(n^{1-\gamma(\tau-1)})$ many vertices of mark $\Omega(n^\gamma)$ at distance $\Theta(n^{1/d})$ from $0$. Using the interpolation kernel in \eqref{eq:kernels}, the expected number of edges between $(0, n^\gamma)$ and these other vertices is roughly $n^{1-\gamma (\tau-1)} (n^{\gamma(\sigma+1)}/ n)^\alpha$. This expression is of constant order when 
\begin{equation}\label{eq:gamma-hh}
    \gamma=\gamma_\mathrm{hh}:=
    \begin{dcases}
    \frac{1-1/\alpha}{\sigma+1-(\tau-1)/\alpha},&\text{if }\tau\le \sigma+2,\\
    \frac{1}{\sigma+1},&\text{if }\tau>\sigma+2,
    \end{dcases}
\end{equation}
which leads to 
\begin{equation}\label{eq:zeta-hh}
    \zeta_\mathrm{hh}:=1-\gamma_\mathrm{hh}(\tau-1)=
    \begin{dcases}
        \frac{\sigma+2-\tau}{\sigma+1-(\tau-1)/\alpha},&\text{if }\tau\le \sigma+2,\\ 
        \frac{\sigma+2-\tau}{\sigma+1},&\text{if }\tau>\sigma+2.
    \end{dcases}
\end{equation}
We will never use the second row for $\gamma_\mathrm{hh}$ and $\zeta_\mathrm{hh}$: $\zeta_\mathrm{hh}$ is negative there, and some other connectivity type is dominant. The choice of $\gamma_\mathrm{hh}$ in this second row ensures continuity and monotonicity in the parameters. High-low connections can be dominant when $\sigma$ is small,  for example in the age-dependent random connection model and the soft Poisson--Boolean model.
The high-high type of connection can be dominant for large values of $\sigma$, e.g. in geometric inhomogeneous random graphs and random hyperbolic graphs. The infimum in \eqref{eq:gamma-long} indicates that $\gamma_{\mathrm{long}}$ is the minimum of $\gamma_{\mathrm{hl}}$ and $\gamma_{\mathrm{hh}}$, and the switch is when the denominator $\sigma+1-(\tau-1)/\alpha$ crosses $1$. 
Indeed, the following claim makes the above back-of-the-envelope computations formal: the integrals in the definitions of $\zeta_\star$ in \eqref{eq:zeta-star} and $\zeta_\mathrm{long}$ in \eqref{eq:zeta-long} are dominated  by the above four cases. Vertices incident to edges of intermediate length $\ell_n$ ($1\ll\ell_n\ll n^{1/d}$) crossing $\Lambda_n$ do not significantly contribute to the vertex boundary.
\begin{claim}[Phases of $\zeta_\star$, {\cite[Lemma 7.6]{clusterI}}]\label{claim:phases} Consider any KSRG with kernel and profile satisfying Assumption~\ref{assumption:main}. Then
\begin{align}
    \zeta_\mathrm{long}=\max(0, \zeta_\mathrm{ll}, \zeta_\mathrm{hl}, \zeta_\mathrm{hh}), \qquad 
    \zeta_\star=\max(\zeta_\mathrm{short}, \zeta_\mathrm{long}).
    \end{align} 
\end{claim}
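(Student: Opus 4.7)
The plan is to formalize the back-of-the-envelope calculations preceding the claim by decomposing the expected downward vertex boundaries in \eqref{eq:zeta-star} and \eqref{eq:zeta-long} into contributions from finitely many mark/edge-length regimes, computing each one in terms of $(d,\tau,\alpha,\sigma)$, and verifying that the maximum matches $\max(\zeta_\mathrm{short},\zeta_\mathrm{ll},\zeta_\mathrm{hl},\zeta_\mathrm{hh})$. For the upper bound I would use
\[
\Prob\bigl(u\text{ has a downward edge to }\Lambda_n^\complement\bigr)\le \E\bigl[\bigl|\{\text{downward edges from }u\text{ to }\Lambda_n^\complement\}\bigr|\bigr],
\]
then integrate $\varphi_\mathrm{pol}\bigl(\beta\kappa_\sigma(w_u,w_v)/\|x_u-x_v\|^d\bigr)$ against Lebesgue (or counting) measure on $\Lambda_n^\complement$ and the product Pareto density. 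Splitting by the edge length $\ell:=\|x_u-x_v\|$: contributions from $\ell=O(1)$ are concentrated in a boundary layer of $\Lambda_n$ of volume $\Theta(n^{(d-1)/d})$, yielding the $\zeta_\mathrm{short}$ term; an annular integration in $\R^d$ shows that contributions from intermediate scales $1\ll\ell\ll n^{1/d}$ are dominated by the two endpoints, and that the $\zeta_\mathrm{long}$ contribution is captured entirely by $u\in\Lambda_{n/2}$ together with $\ell=\Theta(n^{1/d})$.

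With $\ell=\Theta(n^{1/d})$ fixed I would partition the mark-pair $(w_u,w_v)$ at the cutoff $n^{\gamma_\mathrm{long}}$ from \eqref{eq:gamma-long} into three regions: low-low $[1,n^{\gamma_\mathrm{long}}]^2$, high-low $[n^{\gamma_\mathrm{long}},\infty)\times[1,n^{\gamma_\mathrm{long}}]$, and high-high $[n^{\gamma_\mathrm{long}},\infty)^2$; the remaining quadrant is ruled out by the downward condition $w_v\le w_u$. Plugging $\kappa_\sigma(w,w')=(w\vee w')(w\wedge w')^\sigma$ and $\varphi_\mathrm{pol}(s)=p(1\wedge s^{-\alpha})$ into the connection probability and integrating against the Pareto density $(\tau-1)w^{-\tau}$ on the high-mark coordinate(s) reduces each region to a polynomial integral in $n$ whose leading exponent equals $\zeta_\mathrm{ll}$, $\zeta_\mathrm{hl}$, or $\zeta_\mathrm{hh}$ from \eqref{eq:zeta-ll}--\eqref{eq:zeta-hh}. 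The cutoff $\gamma_\mathrm{long}=\min(\gamma_\mathrm{hl},\gamma_\mathrm{hh})$ arises as the minimum $\gamma$ for which a single mark-$n^\gamma$ vertex in $\Lambda_{n/2}$ has constant expected number of downward edges to $\Lambda_n^\complement$, and the two candidates correspond to whether the dominant target has low or high mark. The case distinction in \eqref{eq:gamma-hh} between $\tau\le\sigma+2$ and $\tau>\sigma+2$ matches exactly the boundary between unsaturated and saturated connection probabilities for an edge between two mark-$n^\gamma$ vertices at distance $n^{1/d}$; in the latter range $\zeta_\mathrm{hh}\le 0$ and the term is dominated by the others.

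For matching lower bounds I would fix, in each of the four cases, a family of source vertices of the prescribed mark range inside $\Lambda_{n/2}$ (respectively inside the boundary layer of $\Lambda_n$ for $\zeta_\mathrm{short}$), of size $\Theta(n^{\zeta_X})$ for $X\in\{\mathrm{short},\mathrm{ll},\mathrm{hl},\mathrm{hh}\}$. When the expected count $\mu_u$ of relevant downward edges from a source $u$ satisfies $\mu_u=o(1)$, conditional independence of edges gives $\Prob(\text{at least one})=(1+o(1))\mu_u$; when $\mu_u=\Theta(1)$ (precisely the calibration behind $\gamma_\mathrm{long}$), the probability is already $\Theta(1)$. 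Summing over the independent source choices --- possible because of the $\Z^d$ or PPP structure of $V$ and the iid marks --- produces the matching lower bound $\Omega(n^{\zeta_X})$. The main technical obstacle will be to rule out that some intermediate mark scale $w_u\asymp n^\gamma$ with $\gamma\in(0,\gamma_\mathrm{long})$ or some intermediate edge length $1\ll\ell\ll n^{1/d}$ produces a strictly larger exponent than the four corners; this amounts to maximizing the linear objective $1-\gamma(\tau-1)$ over a piecewise-log-linear feasibility region in $(\gamma,\log_n\ell)$-space and checking that the optimum sits at a corner. This casework is already carried out in \cite[Lemma 7.6]{clusterI}, whose argument I would follow essentially verbatim.
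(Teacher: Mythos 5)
Your proposal matches the paper's approach: the paper proves this claim only by citing \cite[Lemma 7.6]{clusterI}, and your sketch reconstructs precisely the first-moment computation outlined in Section~\ref{sec:dominant} (splitting by edge length and mark-pair regime, then checking that the log-linear optimization over intermediate scales attains its maximum at one of the four corners), deferring the formal casework to the same reference. The only imprecision worth flagging is that the partition of the mark pair should really use separate thresholds $\gamma_\mathrm{hl}$ and $\gamma_\mathrm{hh}$ rather than a single cutoff at $\gamma_\mathrm{long}=\min(\gamma_\mathrm{hl},\gamma_\mathrm{hh})$, but you implicitly address this by acknowledging that intermediate mark scales must be ruled out via the linear-programming argument, which is exactly what \cite[Lemma 7.6]{clusterI} does.
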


The values of $\zeta_\star$ and $\zeta_\mathrm{long}$ do not depend on $p, \beta$, and $\zeta_\mathrm{long}$ not even on $d$.    
We refer to~\cite{clusterI} for phase diagrams that illustrate for which values of the parameters $(\tau, \alpha, \sigma, d)$ the four different connection types are dominant. 
We call the areas in the parameter space corresponding to unique dominant types the \emph{phases} of $\zeta_\star$. On the boundaries of these phases,
multiple connection types are dominant simultaneously.  
 These cause poly-logarithmic factors in the expectation in the numerator of $\zeta_\star$ in~\eqref{eq:zeta-star}. These factors vanish in the limit, but they do appear in some of our lower bounds.
So, for a given parameter setting of $\tau, \alpha, \sigma, d$, we let $\mathfrak{m}_\CZ$ count the number of dominant connection types. Formally,  we define the \emph{multiplicity}  of the maximum of $\CZ:=\{\zeta_{\mathrm{ll}},\zeta_{\mathrm{hl}}, \zeta_{\mathrm{hh}}, \zeta_{\mathrm{short}}\}$ as
\begin{equation}
 \mathfrak{m}_\CZ := \sum_{\zeta\in \CZ}\Ind{\zeta=\max(\CZ)}. \label{eq:multiplicity}
\end{equation}
We now formalize Meta-theorems \ref{thm:intro-upper},~\ref{phen:lower}, and~\ref{phen:csd-c2} to answer the question~\eqref{metaquestion}. 
\subsection{Upper tail of large deviations}\label{subsec:results}
First we state the upper tail of the giant's size for LRP on $\Z^d$ and Bernoulli bond percolation on $\Z^d$ (NNP), making~\eqref{eq:LDP-NNP-upper} precise.
\begin{theorem}[Linear speed in degree-homogeneous models]\label{upper-lrp-nnp}
Consider long-range percolation or Bernoulli bond percolation on $\Z^d$ with parameters such that $\theta<1$. For all $\rho\in(\theta, 1)$, there exists $A>0$ such that for all $n\ge 1$, 
\[
-A\le \frac{1}{n}\log \Prob\big(|\CC_n^\sss{(1)}|>\rho n\big) \le -1/A.
\]    
\end{theorem}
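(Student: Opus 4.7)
Both bounds reduce to simple local estimates.

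\emph{Lower bound.} I would force every nearest-neighbour edge along a fixed Hamiltonian path $\pi\subset\Z^d\cap\Lambda_n$ (which has $n-1$ edges) to be open. Each such edge is open independently with probability at least $p_\star:=\varphi(\beta)>0$, so the probability that all of $\pi$ is open is at least $p_\star^{\,n-1}$. On this event every vertex in $\Lambda_n\cap\Z^d$ lies in the same component, so $|\CC_n^{\sss{(1)}}|=n>\rho n$, giving the lower bound with $A=2|\log p_\star|$.

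\emph{Upper bound — setup.} Since $\Prob(|\CC(0)|\le K)\uparrow 1-\theta$ as $K\to\infty$ and $\theta<\rho$, I fix $K=K(\rho,\theta)<\infty$ and $\delta=\delta(\rho,\theta)>0$ with $\Prob(|\CC(0)|\le K)\ge 1-\rho+\delta$. For each $v\in\Lambda_n$, define the small-cluster indicator
\[
X_v:=\mathds{1}\{|\CC_n(v)|\le K\}.
\]
For $n$ large, any $v\in\CC_n^{\sss{(1)}}$ has $|\CC_n(v)|>K$, so $X_v=0$; hence $\{|\CC_n^{\sss{(1)}}|>\rho n\}\subseteq\{\sum_{v\in\Lambda_n}X_v<(1-\rho)n\}$. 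Restricting the ambient graph $\CG$ to $\Lambda_n$ can only shrink clusters, so $\E[X_v]\ge\Prob(|\CC(v)|\le K)\ge 1-\rho+\delta$, and the expectation of the sum exceeds the threshold $(1-\rho)n$ by a linear amount $\delta n$.

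\emph{Upper bound — concentration.} For Bernoulli bond percolation, $X_v$ depends only on edges within graph-distance $K$ of $v$, so the $X_v$'s are mutually independent along any translate of the sub-lattice $(2K+1)\Z^d$. Decomposing $\Lambda_n$ into its $(2K+1)^d$ translates, a pigeonhole argument combined with Hoeffding's inequality on each translate yields $\Prob(\sum_v X_v<(1-\rho)n)\le(2K+1)^d\exp(-c\,n/(2K+1)^d)=\exp(-c'n)$ for some $c'=c'(K,\delta)>0$. For long-range percolation I would replace $X_v$ by the truncated indicator $\widetilde X_v$ that requires the cluster of $v$ in the subgraph restricted to $B(v,R)$ to have size $\le K$ and to have no edge reaching $\Lambda_n\setminus B(v,R)$. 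The summability $\sum_{\|y\|\ge R}(1\wedge\|y\|^{-\alpha d})\to 0$ (using $\alpha>1$) allows choosing $R=R(\delta,\alpha)$ so large that $\E[\widetilde X_v]\ge 1-\rho+\delta/2$, and the same sub-lattice argument applied to $\sum_v\widetilde X_v$ (now independent once vertices are at $L^\infty$-distance $>2R$) delivers the required exponential bound.

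\emph{Main obstacle.} The only non-routine step is decoupling the small-cluster indicators in long-range percolation, which I address via the truncation $X_v\mapsto\widetilde X_v$ and the finiteness of the expected long-edge contribution forced by $\alpha>1$. Everything else reduces to standard Chernoff/Hoeffding estimates on locally determined Bernoulli variables.
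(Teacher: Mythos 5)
Your lower bound is essentially the paper's (force all nearest-neighbour edges, or the ones along a Hamiltonian path; same cost $\exp(-\Theta(n))$). For Bernoulli nearest-neighbour percolation your upper bound is also correct, and it is a genuine alternative to the paper's route: you concentrate the aggregate $\sum_v X_v=\sum_{\ell\le K}\ell S_{n,\ell}(\infty)$ directly via sub-lattice independence plus Hoeffding, whereas the paper concentrates each $S_{n,\ell}$ separately (Lemma~\ref{lemma:size-k-nnp-lrp}) through a boxing argument and then assembles them. Since $X_v$ is finite-range in NNP, your decoupling is valid and arguably cleaner.

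The gap is in the long-range case. Your truncated indicator $\widetilde X_v$ --- ``the cluster of $v$ inside $B(v,R)$ has size $\le K$ and no edge reaching $\Lambda_n\setminus B(v,R)$'' --- is \emph{not} a local function of the edge configuration: verifying ``no edge to $\Lambda_n\setminus B(v,R)$'' requires inspecting edges of arbitrary length emanating from $B(v,R)$. Consequently, two vertices $v,v'$ at $L^\infty$-distance far larger than $2R$ do \emph{not} have independent $\widetilde X_v,\widetilde X_{v'}$: a single long edge between $B(v,R)$ and $B(v',R)$ is examined by both indicators. The parenthetical ``now independent once vertices are at $L^\infty$-distance $>2R$'' is therefore false, and the sub-lattice Hoeffding step does not go through. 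In fact no genuinely local event can lower-bound $X_v$ in long-range percolation, since $X_v=1$ asserts the absence of all long edges out of the cluster, which is inherently non-local. A repair would split $\widetilde X_v$ into a local part (no escape into $B(v,L)\setminus B(v,R)$ for fixed $L\gg R$) plus a correction bounded by the indicator that some vertex of $B(v,R)$ has an edge of length at least $L-R$, and then show that $\sum_{v\in\Lambda_n}$ of these corrections is below $\delta n/4$ with probability $1-\exp(-cn)$. That last estimate on the count of vertices touching long edges is non-trivial and is exactly what the paper proves in~\eqref{eq:upper-lrp-pr1} via an exponential-moment bound on $|\CV_n^{\mathrm{cross}}(k)|$; it is the step your proposal skips.
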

\cref{upper-lrp-nnp} applies for \emph{any} edge density $\beta$, i.e., regardless of whether the model is supercritical, subcritical or critical. 
Linear speed for the upper tail of large deviations of a closely related quantity is known for NNP~\cite{durrett1988large, gandolfi1989thesis}: the number of vertices in the intersection of a finite box and the infinite component.

We proceed to the upper tail for models with power-law degree distributions, when the speed is logarithmic, heading towards making Meta-theorem \ref{thm:intro-upper} precise. 
At an increased cost of technicality our subsequent results can be extended to KSRGs on $\Z^d$; in order to keep the proof simpler, we refrain from this and restrict to Poisson vertex sets for degree-inhomogeneous KSRGs, i.e., with $\tau<\infty$. 
To state the rate function, 
we introduce some notation.
We write $\CC(0)$ for the cluster of the origin in the model on $\R^d$, and recall from Definition \ref{def:ksrg} that we use the superscript $x$ when the vertex set has a vertex at location $x$ with unknown mark. 
Let $$H_{\CC}(z):=\E^\sss{0}[z^{|\CC(0)|}\ind{|\CC(0)|<\infty}]$$ be the probability generating function of the size of $\CC(0)$ restricted to be finite.  The function $H_{\CC}(z)$ is continuous and increasing, and has range $(0,1-\theta]$ for $z\in(0,1]$. 
Hence, the inverse $H_{\CC}^{(-1)}(y)$ is well-defined and increasing when $y\in(0,1-\theta]$. For $\rho\in(\theta, 1)$, define
 \begin{equation}\label{eq:hubs-gen}
 \mathrm{hubs}(\rho):=\begin{dcases}
     \frac{\log H_{\CC}^{(-1)}\big(1-\rho\big)}{\log (1-p)},&\text{if }p<1,\\
     1,&\text{if }p=1.
 \end{dcases}
 \end{equation}
 The distinction based on $p$ ensures that $\log(1-p)$ is well-defined.  
 We prove that $\lceil\mathrm{hubs}(\rho)\rceil$ many linear-mark vertices are required to increase the density of the giant from the typical density $\theta$ to $\rho$, see Section~\ref{sec:outline-upper} below for intuition. 
 The function $\mathrm{hubs}(\rho)$ is positive, increasing in $\rho$ and tends to infinity as $\rho\uparrow1$: similar to~\cite{jorritsmaZwart2023}, it can be shown that $\mathrm{hubs}(\rho)=\log(1-\rho)/\log(1-p) + o(1)$ as $\rho$ approaches $1$. 
The following theorem gives the rate function for the upper tail. Recall $\zeta_{\mathrm{long}}$ from \eqref{eq:zeta-long}.
 \begin{theorem}[Logarithmic speed in degree-inhomogeneous models]\label{thm:large-dev-upper}
 Consider a KSRG satisfying Assumption \ref{assumption:main} with a unit-intensity Poisson point process as vertex set and power-law mark distribution with $\tau<\infty$.
Fix $\rho\in(\theta, 1)$. There exists $A>0$ such that for all $n\ge 1$,
 \begin{equation}\label{eq:thm-utld}
\Prob\big(|\CC_n^\sss{(1)}|> \rho n\big)\le An^{-(\tau-2)\lceil \mathrm{hubs}(\rho)\rceil}.
 \end{equation} 
 If either the model is supercritical with $\zeta_\mathrm{long}>0$, or $\theta=0$, then there exists $A>0$ such that for all $n\ge 1$, 
 \begin{equation}\label{eq:thm-utld2}
 \Prob\big(|\CC_n^\sss{(1)}|> \rho n\big) \ge (1/A)n^{-(\tau-2)\lim_{r\downarrow\rho}\lceil\mathrm{hubs}(r)\rceil}.
 \end{equation} 
\end{theorem}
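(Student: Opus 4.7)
The plan is to show that the rare event $\{|\CC_n^\sss{(1)}|>\rho n\}$ is driven by the presence of $k := \lceil\mathrm{hubs}(\rho)\rceil$ many \emph{hubs} --- vertices of mark linear in $n$, each of which appears in $\Lambda_n$ with polynomially small probability $\Theta(n^{-(\tau-2)})$ via the Pareto tail~\eqref{eq:power-law} and the unit-intensity Poisson process. A hub $h$ of mark at least $Cn$ (for $C$ a large constant chosen below) placed near the centre of $\Lambda_n$ satisfies $\beta\kappa(w_h,w_v)/\|x_h-x_v\|^d\ge 1$ uniformly over $v\in\Lambda_n$ of constant mark, up to boundary effects that I handle by restricting to a sub-box $\Lambda_{(1-\delta)n}$. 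Under both profiles in~\eqref{eq:profile} the hub is then adjacent to $v$ independently with probability $p$. If $k$ mutually-connected hubs are present, a vertex $v$ whose cluster $\CC(v)$ in the hubs-removed graph has finite size $m$ is absorbed into the hub-containing component with probability $1-(1-p)^{km}$. Averaging with $H_\CC(z)=\E^\sss{0}[z^{|\CC(0)|}\ind{|\CC(0)|<\infty}]$, the density of the new giant concentrates around $1-H_{\CC}((1-p)^k)$, and the identity in~\eqref{eq:hubs-gen} pins down $\mathrm{hubs}(\rho)$ as the correct threshold.

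\textbf{Upper bound.} For~\eqref{eq:thm-utld}, let $H_n$ count vertices of $\CV_n$ of mark at least $Cn$. Poisson concentration for the thinned Poisson process of such vertices gives $\Prob(H_n\ge k)\le An^{-(\tau-2)k}$. On $\{H_n\le k-1\}$, I would condition on the hubs-removed graph $\CG'_n$ together with its cluster partition. Because $k-1<\mathrm{hubs}(\rho)$, one has $1-H_{\CC}((1-p)^{k-1})<\rho$ by strict monotonicity of $H_\CC$ and of $\mathrm{hubs}(\cdot)$. Conditionally on $\CG_n'$, the events ``a given finite cluster $C'$ of $\CG'_n$ is absorbed by some hub'' are \emph{independent} Bernoulli variables with success probability $1-(1-p)^{(k-1)|C'|}$, so the total absorbed fraction is a sum of independent bounded-range random variables. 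A Hoeffding/Bennett inequality combined with the LLN-controlled empirical cluster-size distribution of $\CG_n'$ yields stretched-exponential concentration of this fraction around its mean, and the resulting probability that the density exceeds $\rho$ is much smaller than $n^{-(\tau-2)k}$. A union bound over $H_n\in\{0,\ldots,k-1\}$ completes~\eqref{eq:thm-utld}.

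\textbf{Lower bound.} For~\eqref{eq:thm-utld2}, fix $\eps>0$ small enough so that $\lceil\mathrm{hubs}(\rho+\eps)\rceil=\lim_{r\downarrow\rho}\lceil\mathrm{hubs}(r)\rceil=:k'$, possible by the right-continuity of the ceiling combined with continuity of $\mathrm{hubs}(\cdot)$. By the Slivnyak--Mecke formula, $\CV_n$ contains $k'$ well-separated vertices with marks in $[Cn,2Cn]$ near the centre of $\Lambda_n$ with probability at least $(1/A)n^{-(\tau-2)k'}$. On this event, the $k'$ hubs are pairwise connected with probability bounded away from $0$. In the case $\zeta_\mathrm{long}>0$, the LLN for the giant (discussed in the introduction) ensures that the hubs-removed subgraph contains a giant of density $\theta+o(1)$ which the hubs join with probability $1-o(1)$; the remaining finite clusters get absorbed into the hub-giant via independent Bernoulli tries, and the concentration argument of the upper bound shows the total absorbed fraction exceeds $\rho+\eps/2$ with probability bounded away from $0$. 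In the case $\theta=0$, all clusters of the hubs-removed graph are finite, $H_\CC(1)=1$, and the identical cluster-absorption calculation yields a new giant of density at least $1-H_\CC((1-p)^{k'})\ge\rho+\eps$ with probability bounded away from $0$.

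\textbf{Main obstacle.} The delicate step is the concentration of the absorbed-fraction. Indicators $\ind{v\text{ absorbed}}$ indexed by vertices are not independent, because vertices in the same cluster of $\CG'_n$ are absorbed simultaneously. My plan is to reparametrise by clusters: conditionally on $\CG_n'$, each finite cluster is absorbed as a single atom, and the atomic indicators are genuinely independent thanks to independence of distinct hub-to-cluster edges. Standard concentration for sums of independent bounded random variables then controls the total, provided the empirical cluster-size distribution of $\CG_n'$ has light-enough tail --- which follows from the LLN and the cluster-size decay bound established elsewhere in the paper for the regime $\zeta_\mathrm{long}>0$. A secondary technical annoyance is that under $\varphi_\mathrm{pol}$ the hub-to-vertex connection probability can strictly fall below $p$ for vertices very close to $\partial\Lambda_n$; this is absorbed by working inside $\Lambda_{(1-\delta)n}$ and letting $\delta\downarrow 0$, at the cost of arbitrarily small additive error in the density.
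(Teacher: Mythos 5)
The heuristic and overall architecture of your argument match the paper's: both isolate a small number of linear‑mark ``hubs'', absorb finite clusters into the giant via hubs, and read off the exponent $(\tau-2)\lceil\mathrm{hubs}(\rho)\rceil$ from the Poisson tail of the hub count. Your lower‑bound sketch (hubs in $[Cn,2Cn]$ via Mecke, plus the ceiling arithmetic showing $\lceil\mathrm{hubs}(\rho+\varepsilon)\rceil = \lim_{r\downarrow\rho}\lceil\mathrm{hubs}(r)\rceil$ for small $\varepsilon$) is essentially the paper's Proposition on the lower bound, and looks sound.

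The upper bound has a genuine gap, and it lies in the choice of hub threshold. You set $H_n:=|\CV_n[Cn,\infty)|$ with $C$ a \emph{large} constant. But for any kernel in Assumption~\ref{assumption:main} there is a constant $c_0=c_0(\beta,d)$ such that every vertex of mark $\ge c_0 n$ already connects to every constant‑mark vertex of $\Lambda_n$ with probability $p$. So on $\{H_n\le k-1\}$ the ``hubs‑removed'' graph $\CG_n'$ can still contain vertices with mark in $[c_0 n, Cn)$, which are full hubs that you have not counted. Having $k$ of them and $H_n=0$ occurs with probability $\Theta(n^{-(\tau-2)k})$, comparable to your target, and on that event $|\CC_n^\sss{(1)}|>\rho n$ still happens with positive probability. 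More quantitatively: the empirical cluster‑size distribution of $\CG_n[1,Cn)$ only concentrates at rate $\Theta(n^{-(\tau-2)})$, because the event $\{\exists v: w_v\ge\lambda n\}$ for any fixed $\lambda<C$ --- which destroys concentration --- has probability of that order. For $k\ge 2$ this is far too slow, and the splitting $\Prob(\cdot)=\Prob(\cdot,H_n\ge k)+\Prob(\cdot,H_n\le k-1)$ bottoms out at $O(n^{-(\tau-2)})$ rather than $O(n^{-(\tau-2)k})$. In short, with a large $C$ the step ``on $\{H_n\le k-1\}$ the absorbed fraction concentrates below $\rho$'' is effectively circular: $\CG_n'$ is nearly $\CG_n$, and controlling its giant is the same problem you started with.

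The paper fixes this by going in the opposite direction: hubs are vertices with mark $\ge\phi n$ for $\phi$ \emph{small}, so $\CA_\mathrm{hubs}=\{|\CV_n[\phi n,\infty)|\le h_\mathrm{up}-1\}$ really rules out all near‑hubs. The price is that one must then bound the contribution of vertices with marks in the intermediate range $[\underline w_n,\phi n)$, where $\underline w_n=n^{(1+\psi)/(\tau-1)}$; Claim~\ref{claim:total-deg} shows their total degree is at most $\psi n$ with probability $1-n^{-C'}$, and this requires $\phi$ small (the constant $c_\phi$ in that proof tends to $0$ as $\phi\downarrow 0$). This total‑degree bound, together with the polynomially‑fast concentration of $S_{n,\ell}(\phi n)$ (Lemma~\ref{lemma:size-k-comp}, proved by boxing and cross‑edge control rather than Hoeffding on cluster atoms), is the missing ingredient in your plan. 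A secondary point: you invoke the cluster‑size decay bound, hence implicitly $\zeta_\mathrm{long}>0$, to control the empirical second moment for Hoeffding; the paper avoids this by truncating the cluster‑size sum at a fixed $\ell^\ast$ (Claim~\ref{claim:hubs-lower}) and using only $\sum_\ell\theta_\ell=1-\theta<\infty$, which is why the upper bound~\eqref{eq:thm-utld} holds without any assumption on $\zeta_\mathrm{long}$ or supercriticality.
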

The formulation of the either/or above \eqref{eq:thm-utld2} excludes critical models from the lower bound with $\theta(\beta_c)>0$, where $\beta_c$ is the critical edge-density in \eqref{eq:betac}.  
For KSRGs in general it is unknown when $\theta(\beta_c)$ is positive: this may sensitively depend on the short edges of the model \cite{longrangeUnique1987, aizenman1986discontinuity}. The preprint \cite{monchContinuityPercolation}  by M\"{o}nch shows that $\theta(\beta_c)=0$ for KSRGs in dimensions $d\ge 2$ with $\zeta_\mathrm{long}>0$.

Combining \eqref{eq:thm-utld} and \eqref{eq:thm-utld2} gives that when $\mathrm{hubs}(\rho)\notin\N$, we identify the decay rate of  $\Prob\big(|\CC_n^\sss{(1)}|> \rho n\big)$ up to a constant factor. 
If $\mathrm{hubs}(\rho)\in\N$ and $p<1$, the lower and upper bounds are no longer of the same order, because the necessary ``wiggle room'' in the  proof of the lower bound is lost. Nevertheless, Theorem \ref{thm:large-dev-upper} gives logarithmic speed for the upper tail with 
rate function 
\[
I(\rho):= (\tau-2)\lceil\mathrm{hubs}(\rho)\rceil.
\]
\begin{corollary}[Rate function for the upper tail]\label{cor:ldp}
Consider a KSRG satisfying Assumption \ref{assumption:main} on a unit-intensity Poisson point process as vertex set and power-law mark distribution with $\tau<\infty$. If the model is either supercritical with $\zeta_{\mathrm{long}}>0$ or $\theta=0$, then for all $\rho\in(\theta, 1)$,
\begin{equation}\label{eq:cor-ldp}
\begin{aligned}
    -\inf_{r>\rho}I(r) &\le 
\liminf_{n\to\infty}\frac{1}{\log n}\log\Prob\big(|\CC_n^\sss{(1)}|>\rho n\big)\\
&\le \limsup_{n\to\infty}\frac{1}{\log n}\log\Prob\big(|\CC_n^\sss{(1)}|>\rho n\big)\le -\inf_{r\ge\rho}I(r).
\end{aligned}
\end{equation}
\end{corollary}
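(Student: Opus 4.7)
The plan is to derive this corollary as a direct reformulation of Theorem~\ref{thm:large-dev-upper} using the monotonicity of $I$. First, I would record that $\mathrm{hubs}(\rho)$, and hence $I(\rho)=(\tau-2)\lceil\mathrm{hubs}(\rho)\rceil$, is non-decreasing in $\rho$, which is immediate from the fact (noted in the text above the theorem) that $\mathrm{hubs}$ is increasing. From non-decreasingness one obtains
\[
\inf_{r\ge \rho}I(r)=I(\rho)=(\tau-2)\lceil\mathrm{hubs}(\rho)\rceil, \qquad \inf_{r>\rho}I(r)=\lim_{r\downarrow\rho}I(r)=(\tau-2)\lim_{r\downarrow\rho}\lceil\mathrm{hubs}(r)\rceil,
\]
which are exactly the exponents appearing on the right- and left-hand side of~\eqref{eq:cor-ldp}, respectively.

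Second, for the upper inequality I would apply \eqref{eq:thm-utld} at the value $\rho$: taking logarithms, dividing by $\log n$, and letting $n\to\infty$, the additive $(\log A)/\log n$ vanishes and one is left with $\limsup_n (\log n)^{-1}\log\Prob(|\CC_n^{\sss{(1)}}|>\rho n)\le -I(\rho)=-\inf_{r\ge\rho}I(r)$. For the lower inequality I would apply \eqref{eq:thm-utld2} at $\rho$, which is permitted since the standing hypothesis of the corollary (supercritical with $\zeta_{\mathrm{long}}>0$, or $\theta=0$) matches the hypothesis of \eqref{eq:thm-utld2}. The same logarithmic rescaling yields $\liminf_n (\log n)^{-1}\log\Prob(|\CC_n^{\sss{(1)}}|>\rho n)\ge -(\tau-2)\lim_{r\downarrow\rho}\lceil\mathrm{hubs}(r)\rceil=-\inf_{r>\rho}I(r)$. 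The middle inequality $\liminf\le\limsup$ is trivial, closing the chain.

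There is no real obstacle here; the corollary is a bookkeeping exercise once Theorem~\ref{thm:large-dev-upper} is in hand. The only small subtlety is the identity $\inf_{r>\rho}I(r)=(\tau-2)\lim_{r\downarrow\rho}\lceil\mathrm{hubs}(r)\rceil$ when $\mathrm{hubs}(\rho)\in\N$, because the ceiling then jumps at $\rho$. This still follows from continuity and strict monotonicity of $\mathrm{hubs}$: for any $r$ slightly larger than $\rho$, $\mathrm{hubs}(r)$ is slightly larger than the integer $\mathrm{hubs}(\rho)$, so $\lceil\mathrm{hubs}(r)\rceil=\lceil\mathrm{hubs}(\rho)\rceil+1$, and the right-limit is well-defined. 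This reconciles with the remark following Theorem~\ref{thm:large-dev-upper} that the upper and lower bounds coincide up to constants exactly when $\mathrm{hubs}(\rho)\notin\N$.
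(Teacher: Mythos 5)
Your proof is correct and is essentially the paper's argument, spelled out in more detail: the paper proves the corollary in one sentence by invoking continuity and monotonicity of $\mathrm{hubs}$ and citing Theorem~\ref{thm:large-dev-upper}, which is exactly the bookkeeping you perform. (One minor quibble: strict monotonicity of $\mathrm{hubs}$ fails when $p=1$, but then $\mathrm{hubs}\equiv 1$ and there is no jump in the ceiling, so your identity $\inf_{r>\rho}I(r)=(\tau-2)\lim_{r\downarrow\rho}\lceil\mathrm{hubs}(r)\rceil$ still holds; non-decreasingness is all that is actually needed.)
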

\begin{proof} 
Since $\mathrm{hubs}(\rho)$ in \eqref{eq:hubs-gen} is continuous and non-decreasing, both bounds follow directly from \cref{thm:large-dev-upper}.  
\end{proof}
When we compare this result to~\cref{upper-lrp-nnp} we see that the speed drops from linear to logarithmic when degree-inhomogeneity enters the models.
The assumption $\zeta_\mathrm{long}>0$ in \eqref{eq:thm-utld2} reflects the aim to study the influence of degree and edge-length inhomogeneity in~\eqref{metaquestion}. However, $\zeta_\mathrm{long}>0$ is a technical condition here. It comes from our bounds for the lower tail, which serves as a tool for proving \eqref{eq:thm-utld2}.
\subsection{Lower tail of large deviations}\label{subsec:lower-tail}
Our next result determines the speed of the lower tail of large deviations of the giant component for supercritical KSRGs, i.e., when $\beta>\beta_c$ in \eqref{eq:betac}. This will make \cref{phen:lower} precise. Contrary to Bernoulli bond percolation, the lower tail decays faster than the upper tail when $\tau<\infty$: the speed is polynomial, with exponent $\zeta_\star=\max(\zeta_\mathrm{long},\zeta_\mathrm{short})$ where $\zeta_{\mathrm{short}}=(d-1)/d$ and $\zeta_\mathrm{long}$ is defined in \eqref{eq:zeta-long}. 
Recall the definition of $\zeta_\star$ from~\eqref{eq:zeta-star}, its value from \cref{claim:phases}, and the multiplicity $\mathfrak{m}_\CZ$ from~\eqref{eq:multiplicity}.
 \begin{theorem}[Speed in the lower tail of large deviations for the giant]\label{thm:large-dev2}
 Consider a supercritical KSRG satisfying Assumption \ref{assumption:main} with a unit-intensity Poisson point process as vertex set, or consider supercritical long-range percolation on $\Z^d$. Assume that the model satisfies $\zeta_\mathrm{long}>0$.
Fix $\rho\in(0,\theta)$. There exists a constant $A>0$ such that for all $n\ge 1$,
    \begin{equation}     \log \Prob\big(|\CC_n^\sss{(1)}| < \rho n\big)\in\big(-An^{\zeta_\star}(\log n)^{\mathfrak{m}_\CZ-1}, -\tfrac{1}{A}n^{\zeta_\star}\big).\label{eq:thm-ltld}
 \end{equation} 
 \end{theorem}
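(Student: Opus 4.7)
\emph{Strategy.} I prove the two inequalities separately. Both rely on the LLN for $|\CC_n^\sss{(1)}|/n$ established earlier in the paper and on the box-isolation estimates from~\cite{clusterI}.

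\emph{Lower bound on the probability.} Fix an integer $k$ with $k^d > \theta/(\theta-\rho)$, and partition $\Lambda_n$ into $k^d$ axis-parallel subboxes $B_1,\ldots,B_{k^d}$ of equal volume $v_n:=n/k^d$. Let $E_n$ be the event that no edge of $\CG_n$ has endpoints in two distinct subboxes. On $E_n$, every component of $\CG_n$ lies inside a single $B_i$, so by the LLN applied in each subbox (valid since $v_n\to\infty$) together with a union bound, the largest such component has size at most $(\theta+o(1))v_n<\rho n$ with probability $1-o(1)$. Hence $\Prob(|\CC_n^\sss{(1)}|<\rho n) \ge (1-o(1))\Prob(E_n)$. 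Since distinct pairs of subboxes involve disjoint sets of potential edges, $E_n$ factorises as an independent product of ``no-edge-between-$B_i$-and-$B_j$'' events, and a direct expected-edge-count computation along the lines of the derivations leading to~\cref{claim:phases} gives $-\log\Prob(E_n)=\Theta\big(n^{\zeta_\star}(\log n)^{\mathfrak{m}_\CZ-1}\big)$, the desired bound.

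\emph{Upper bound on the probability.} Consider first the case $\zeta_\star=\zeta_\mathrm{long}$ and follow the renormalisation/ERRG coupling outlined in the introduction. Tile $\Lambda_n$ into $K:=\lceil n^{\zeta_\mathrm{long}}\rceil$ subboxes of equal volume $v_n:=n/K$; since $v_n\to\infty$, a quantitative LLN (available via \cref{thm:subexponential-decay}) shows that every subbox contains a local giant of size $(\theta+o(1))v_n$ except on a joint event of stretched-exponentially small probability in $v_n$. Form the auxiliary graph $H$ on $[K]$ by setting $i\sim_H j$ iff the local giants of $B_i$ and $B_j$ are joined by an edge of $\CG_n$. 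Restricting to the single dominant connection type from~\cref{claim:phases} and to bounded-mark endpoints, the retained edge-indicators of $H$ are jointly independent; by the very definition of $\zeta_\mathrm{long}$, the retained expected degree in $H$ is bounded below by a constant $c$ that can be made arbitrarily large by inflating the bounded-mark threshold. Hence $H$ stochastically dominates an Erd\H{o}s--R\'enyi graph $G(K,c/K)$ on $K$ vertices with independent edge probability $c/K$, and O'Connell's LDP~\cite{oconnel1998} gives, for $c$ large and any $\eta<1$, $\Prob\big(|\CC^\sss{(1)}(G(K,c/K))|<\eta K\big)\le \exp(-c'K)$. Choosing $\eta$ close enough to $1$ translates this into a component of $\CG_n$ of size $>\rho n$, and the overall bound $\exp(-c'n^{\zeta_\mathrm{long}})=\exp(-c'n^{\zeta_\star})$ follows. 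In the complementary regime $\zeta_\star=\zeta_\mathrm{short}>\zeta_\mathrm{long}$, the above ERRG rate is too weak; there one runs instead a block-renormalisation/Peierls contour argument in the spirit of Pisztora~\cite{pisztora1996surface} (adapted as in~\cite{clusterIII}) over blocks of large constant side length, controlling the deficit of the giant by a surface-of-bad-blocks count.

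\emph{Main obstacle.} The principal technical difficulty is producing the clean ERRG stochastic lower bound on $H$. The raw edge-indicators $\{i\sim_H j\}$ are correlated, because a single high-mark vertex can generate edges to many local giants simultaneously, and two long edges sharing an endpoint are correlated through the presence of that endpoint. The thinning must therefore remove such shared-vertex configurations --- for instance by restricting to bounded-mark endpoints and to at most one retained edge per source vertex --- while preserving an expected degree comfortably above the supercritical threshold of the ERRG. A secondary issue is uniformity of the LLN at the stretched-exponential scale across all $K$ subboxes simultaneously, which is precisely what~\cref{thm:subexponential-decay} will provide. Finally, matching the $(\log n)^{\mathfrak{m}_\CZ-1}$ factor in the lower bound at phase-transition parameter boundaries requires careful bookkeeping of the log-corrections in the isolation probabilities of~\cite{clusterI}.
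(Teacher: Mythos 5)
Your overall strategy — box-isolation for the lower bound and a block-decomposition coupled to an Erd\H{o}s--R\'enyi graph (long case) respectively site-bond percolation (short case) for the upper bound — is indeed the route the paper takes. However, there are several concrete gaps.

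\textbf{Upper bound: the per-subbox rate is too weak for a union bound.} You claim each of the $K\sim n^{\zeta_\mathrm{long}}$ subboxes has a local giant of density $\theta+o(1)$ except on an event of probability $\exp(-\Theta(v_n^{\zeta_\mathrm{long}}))$, with $v_n=n/K$, and then union-bound. But $v_n^{\zeta_\mathrm{long}}=n^{\zeta_\mathrm{long}(1-\zeta_\mathrm{long})}\ll n^{\zeta_\mathrm{long}}$, so the union bound delivers only $\exp(-n^{\zeta_\mathrm{long}(1-\zeta_\mathrm{long})+o(1)})$, strictly worse than the target. The paper sidesteps this in \cref{lemma:bootstrap}: it only uses that each subbox is good with some fixed probability $1-\varepsilon_1/2$ (via the requirement \eqref{eq:bootstrap-condition}, i.e.\ a weak LLN), and then a Chernoff bound on the \emph{number} of good subboxes gives failure probability $\exp(-\Theta(m_n))=\exp(-\Theta(n^{\zeta_\mathrm{long}}))$. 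The stretched-exponential rate comes from the count of subboxes, not from a stretched-exponential estimate inside each subbox.

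\textbf{Upper bound: the LLN at density $\theta-\varepsilon$ must be earned, not cited.} You invoke a ``quantitative LLN available via~\cref{thm:subexponential-decay}'' to supply subbox giants of density $\theta+o(1)$. In fact the paper's machinery only gives the giant density $\geq 1/A$ initially (via the two multi-scale renormalisations of Section~\ref{sec:biskup} and \cref{cor:initial-upper}). To upgrade to density $\theta-\varepsilon$ one must first prove the second-largest component bound (Proposition~\ref{prop:second-upper}), the polylog mark-threshold for membership in the giant, and only then deduce the weak LLN via \cref{prop:prer-upper}; \cref{lemma:bootstrap} is then re-applied with $\rho=\theta-\varepsilon$. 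Your proposal skips exactly this bootstrap.

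\textbf{Lower bound: the independence and expected-count claims fail.} You factorise $\Prob(E_n)$ into independent pairwise no-edge events, but these events share the random marks of the common boxes (a single high-mark vertex in $B_i$ correlates the presence of edges to every other $B_j$), so they are not independent. Moreover, for heavy-tailed marks the isolation probability is not governed by the expected number of crossing edges but by the expected size of the downward \emph{vertex} boundary, as the paper discusses around \eqref{eq:zeta-star}--\eqref{eq:deltaeff}. The ``expected-edge-count computation'' you allude to does not give $-\log\Prob(E_n)\asymp n^{\zeta_\star}(\log n)^{\mathfrak{m}_\CZ-1}$. The paper imports this estimate from \cite[Theorem~2.7]{clusterI}, where the computation is carried out properly.

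\textbf{The identified ``main obstacle'' is a non-issue.} You worry about correlations in the auxiliary graph $H$ arising from a single high-mark vertex hitting several local giants. Once the full vertex set (with marks) and all within-subbox edges are revealed (the paper's pre-$\CG_n$), the residual between-subbox edge indicators are conditionally independent, so $\{\CC_j\sim\CC_\ell\}$ for distinct pairs are conditionally independent; no thinning is needed. The genuine technical obstacles are the three points above.
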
    
The lower bound in \cref{thm:large-dev2} is proved in our earlier work~\cite[Theorem 2.5]{clusterI}. When a single connection type is dominant (i.e., $\mathfrak{m}_\CZ=1$, see Section~\ref{sec:dominant}), the lower and upper bound on the right-hand side differ by a constant factor, matching \cref{phen:lower}. For such parameters,~\cref{thm:large-dev2} determines the exact \emph{speed} of large deviations for the lower tail. When $\mathfrak{m}_\CZ\in\{2,3,4\}$, the bounds differ by a poly-log factor. 

We discuss a few related results.
For Bernoulli nearest neighbor  percolation on $\Z^d$ (NNP), the exponent of the polynomial speed of the lower tail is $\zeta_\star=\zeta_\mathrm{short}=(d-1)/d$, see~\eqref{eq:LDP-NNP-lower} and~\cite{pisztora1996surface}. We expect that \cref{thm:large-dev2} remains true\footnote{In dimension $d=1$ there is no lower tail since the model is subcritical when $\zeta_\mathrm{long}<0$, see below \eqref{eq:deltaeff}.} also for KSRGs with $\zeta_\mathrm{long}\le0$ in $d\ge 2$. In view of \eqref{metaquestion}, this is beyond the scope of our paper. 
For Bernoulli NNP on $\Z^2$ or $\Z^3$, the works~\cite{alexander1990wulff} and~\cite{cerf2000large} identify the \emph{rate function} of the lower tail for the size of the intersection of the infinite component with $\Lambda_n$. Keep in mind that this is not necessarily the size of the largest component in $\Lambda_n$. 

For long-range percolation, Biskup proved in~\cite{biskup2004scaling} that $\Prob\big(|\CC_n^\sss{(1)}|<\delta n\big)\le \exp(-\Theta(n^{2-\alpha-o(1)}))$ for $\delta>0$ sufficiently small, which is an almost sharp upper bound, see our comment below \cref{phen:lower}.
Hyperbolic random graphs \cite{FouMul18, krioukov2010hyperbolic} are equivalent to one-dimensional threshold geometric inhomogeneous random graphs and are a special instance of the KSRGs~\cite{KomLod20} by setting $p=1, \sigma=1, \alpha=\infty$ in Assumption \ref{assumption:main}. Independent of our work, the recent preprint~\cite{girgGiantBlasius} proves the upper bound $\Prob\big(|\CC_n^\sss{(1)}|<\delta n\big)\le \exp(-\Theta(n^{(3-\tau)/2}))$ for $\delta>0$ sufficiently small. This is a special case of the upper bound here, and $(3-\tau)/2$ matches the exponent $\zeta_\mathrm{hh}$ in our earlier general lower bound in \cite{clusterI} for $\alpha=\infty$. In comparison, \cref{thm:large-dev2} allows (beyond other kernels) for $p<1$ and long-range profiles. It also shows that $\delta$ can be chosen arbitrarily close to $\theta$.

The previous three theorems give the \emph{strong} Law of Large Numbers as a corollary for KSRGs with finite-variance degree distributions, which is not known for (specific) models in the KSRG class\footnote{While the result seems folklore to us for Bernoulli NNP, we could not find a reference for the exact quantity we look at here. We are also not aware of strong LLN for other models in the class.}
. For KSRGs with an infinite-variance mark distribution, we obtain a weak LLN.
\begin{corollary}[Law of Large Numbers]\label{prop:lln}
    Consider either supercritical long-range percolation on $\Z^d$ with $\zeta_\mathrm{long}>0$, or Bernoulli nearest neighbor  bond percolation on $\Z^d$, or a supercritical KSRG satisfying Assumption \ref{assumption:main} on a unit-intensity PPP with $\tau>3$ and $\zeta_\mathrm{long}>0$. Then
    \begin{equation}\label{eq:lln-strong-main}
    \frac{|\CC_n^\sss{(1)}|}{n} \overset{\text{a.s.}}\longrightarrow \theta=\theta(\beta, p, \alpha, d),\qquad \mbox{as }n\to\infty. 
    \end{equation}
    For a supercritical KSRG satisfying Assumption \ref{assumption:main} on a unit-intensity PPP with $\zeta_\mathrm{long}>0$ and $\tau\in(2,3]$, 
    \begin{equation}\label{eq:lln-weak-main}
    \frac{|\CC_n^\sss{(1)}|}{n} \overset{\Prob}\longrightarrow \theta=\theta(\beta, p, \alpha, \tau, \sigma, d),\qquad \mbox{as }n\to\infty. 
    \end{equation}
\end{corollary}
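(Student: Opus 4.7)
The plan is to derive all three convergence statements from a Borel--Cantelli argument fed by the large deviation upper bounds already established in Theorems~\ref{upper-lrp-nnp}, \ref{thm:large-dev-upper}, and~\ref{thm:large-dev2}, together with the classical Pisztora surface-tension estimate for Bernoulli NNP. The structure is: for each $\eps>0$, bound $\Prob(|\CC_n^{\sss{(1)}}|/n\notin(\theta-\eps,\theta+\eps))$ by the sum of one-sided deviation probabilities, show it is summable (respectively, tends to $0$), and conclude by Borel--Cantelli (respectively, by definition of convergence in probability).

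For the strong LLN in~\eqref{eq:lln-strong-main}, I would first bound the lower tail: Theorem~\ref{thm:large-dev2} gives $\exp(-n^{\zeta_\star}/A)$ in both the supercritical LRP and KSRG regimes, summable since $\zeta_\star\ge\zeta_\mathrm{long}>0$; for NNP the classical bound~\eqref{eq:LDP-NNP-lower} of \cite{pisztora1996surface} produces the analogous summable $\exp(-cn^{(d-1)/d})$ estimate. For the upper tail in LRP/NNP, Theorem~\ref{upper-lrp-nnp} gives an $\exp(-n/A)$ bound, trivially summable. For KSRGs with $\tau>3$, Theorem~\ref{thm:large-dev-upper} gives $An^{-(\tau-2)\lceil\mathrm{hubs}(\theta+\eps)\rceil}$; since $\theta+\eps>\theta$ forces $\mathrm{hubs}(\theta+\eps)>0$ by strict monotonicity of $H_{\CC}^{(-1)}$, the ceiling is at least $1$, and the assumption $\tau>3$ gives $(\tau-2)>1$, yielding summability. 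Borel--Cantelli combined with a countable intersection over $\eps=1/k$ then yields $|\CC_n^{\sss{(1)}}|/n\to\theta$ almost surely.

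The weak LLN in~\eqref{eq:lln-weak-main} will follow the same template, but in the regime $\tau\in(2,3]$ the upper tail bound $An^{-(\tau-2)}$ is only guaranteed to tend to $0$, not to be summable. This still suffices for convergence in probability once combined with the (summable) lower tail estimate. The main ``obstacle'' is therefore purely bookkeeping at $\tau=3$: the polynomial-decay exponent in the logarithmic-speed upper tail crosses the Borel--Cantelli summability threshold there, which is precisely why the dichotomy between almost sure and weak convergence appears in the statement. The only ingredient beyond the cited theorems that needs explicit verification is the elementary fact that $\mathrm{hubs}(\rho)>0$ for every $\rho>\theta$, which is immediate from~\eqref{eq:hubs-gen} upon noting that $H_{\CC}(1)=1-\theta$ gives $H_{\CC}^{(-1)}(1-\rho)<1$ whenever $\rho>\theta$.
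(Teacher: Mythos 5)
Your proposal is correct and follows essentially the same route as the paper's proof: bound both tails by the large deviation estimates (Theorem~\ref{upper-lrp-nnp} and the Pisztora bound~\eqref{eq:LDP-NNP-lower} for the degree-homogeneous cases, Theorems~\ref{thm:large-dev-upper} and~\ref{thm:large-dev2} for KSRGs), observe that the resulting error probabilities are summable when $\tau>3$ or for NNP/LRP, and apply Borel--Cantelli over a countable family of $\eps$, falling back to the weak law when $\tau\in(2,3]$ since the upper-tail bound $n^{-(\tau-2)\lceil\mathrm{hubs}(\theta+\eps)\rceil}$ then merely tends to zero. The one detail you spell out that the paper leaves implicit --- that $\mathrm{hubs}(\rho)>0$ for every $\rho>\theta$, so the ceiling is at least one and the exponent is at least $\tau-2$ --- is verified correctly.
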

\begin{proof}
   For NNP, a lower bound follows from the lower tail of large deviations by Pisztora in~\cite{pisztora1996surface}, an upper bound follows from \cref{upper-lrp-nnp}. The weak law of large numbers follows directly from \cref{upper-lrp-nnp}, \cref{thm:large-dev-upper} and \cref{thm:large-dev2}. The error terms are summable for LRP, NNP, and KSRGs with $\tau>3$ (in \eqref{eq:thm-utld}): an application of the  Borel--Cantelli Lemma gives the strong law, while the weak law still holds also for KSRGs with $\tau\in (2,3]$.
\end{proof}
\noindent\emph{Related work on LLN.} Fountoulakis and M{\" u}ller in  \cite{FouMul18}  proved a weak LLN for the giant in hyperbolic random graphs, which is a threshold one-dimensional KSRG with infinite variance degrees satisfying $\zeta_\mathrm{hh}>0$, and is contained here in \eqref{eq:lln-weak-main}. In \cite[Corollary 2.3]{clusterI}, we extended the weak LLN of \cite{FouMul18} to general KSRGs on PPPs with $\zeta_\mathrm{hh} > 0$.
Finally, for general long-range percolation models with $\zeta_\mathrm{long}\le 0$, B{\"a}umler in~\cite[Theorem 1.6]{Baumler} recently showed that $\Prob\big(|\CC_n^\sss{(1)}|\ge \rho n\big)\to 1$ for any $\rho\in(0,\theta)$. Below, we first prove a weak LLN when $\zeta_\mathrm{long}=\max(\zeta_\mathrm{hh}, \zeta_\mathrm{hl}, \zeta_\mathrm{ll})>0$. This weak LLN serves as a tool to proving Theorems \ref{thm:large-dev-upper} and \ref{thm:large-dev2}. When simultaneously $\zeta_\mathrm{hh}\le 0$ and $\max(\zeta_\mathrm{hl}, \zeta_\mathrm{ll})>0$, the LLN requires new methods compared to the methods in \cite{FouMul18, clusterI}.
 These theorems give the strong law in some cases.  

\subsection{Cluster-size decay and second-largest component}\label{subsec:cluster-size-decay-results}
We proceed to formalizing \cref{phen:csd-c2}. We start with the cluster-size decay of the origin. Let $\CC(0)$ denote the cluster of the origin in the infinite model. Recall $\zeta_{\mathrm{long}}$ from \eqref{eq:zeta-long}, $\zeta_{\mathrm{short}}=(d-1)/d$ and $\mathfrak{m}_\CZ$ from \eqref{eq:multiplicity}.
\begin{theorem}[Cluster-size decay]\label{thm:subexponential-decay}
Consider a supercritical KSRG satisfying Assumption \ref{assumption:main} with a unit-intensity Poisson point process as vertex set, or consider supercritical long-range percolation on $\Z^d$. Assume that the model satisfies $\zeta_\mathrm{long} > \max\{0, \zeta_\mathrm{hh}\}$. 
Then there exists a constant $A>0$ such that for all $k\ge 1$ and $n\in[Ak,\infty]$, 
 \begin{equation}\label{eq:cluster-size-decay-main}
   \log\Prob^\sss{0}\big(|\CC_n(0)|> k, \; 0\notin\CC_n^\sss{(1)} \big)\in\big(-Ak^{\max(\zeta_\mathrm{long}, \zeta_\mathrm{short})}(\log k)^{\mathfrak{m}_\CZ-1}, -\tfrac{1}{A} k^{\zeta_\mathrm{long}}\big).
 \end{equation}
The result remains valid when  
$\zeta_\mathrm{long}=\zeta_\mathrm{hh}$ and $\sigma\le \tau-1$ \cite[Theorem 2.2(i),(ii)]{clusterI}. When $\zeta_\mathrm{long}=\zeta_\mathrm{hh}$ and $\sigma> \tau-1$, the lower bound on the right-hand side remains valid while the upper bound changes to $-(1/A) k^{1/(\sigma+1-(\tau-1)/\alpha)}$ \cite[Theorem 2.2(i),(iii)]{clusterI}.
\end{theorem}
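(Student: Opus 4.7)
The lower bound on the probability in~\eqref{eq:cluster-size-decay-main} (the left endpoint of the interval) is obtained by an isolation construction extending \cite[Sec.~7]{clusterI}: surround a box of volume $\Theta(k)$ centred at the origin by a buffer so that no downward edge of any dominant connection type (ll, hl, hh or short) crosses out, which by the very definition \eqref{eq:zeta-long} of $\zeta_{\mathrm{long}}$, together with the polylog correction $(\log k)^{\mathfrak{m}_\CZ-1}$ that appears when multiple types dominate simultaneously, costs at most $\exp(-O(k^{\max(\zeta_{\mathrm{long}},\zeta_{\mathrm{short}})}(\log k)^{\mathfrak{m}_\CZ-1}))$; the weak LLN (\cref{prop:lln}) applied inside the isolated box then guarantees $|\CC_n(0)|\ge k$ with positive conditional probability.

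The bulk of the work is the upper bound $\Prob^{\sss{0}}(|\CC_n(0)|\ge k,\,0\notin\CC_n^{\sss{(1)}})\le \exp(-k^{\zeta_{\mathrm{long}}}/A)$. First apply \cref{prop:lln} to restrict to the event $|\CC_n^{\sss{(1)}}|\ge (\theta/2)n$, which fails with probability much smaller than the target rate as $n\ge Ak$. On this event let $\Lambda^{\ast}\subseteq\Lambda_n$ be the smallest axis-aligned box whose vertex set contains $\CC_n(0)$, and split into two regimes. In the \emph{localised} regime $\mathrm{Vol}(\Lambda^{\ast})\le A_1 k$, the cluster lives inside a box of volume $\Theta(k)$ that must be edge-disconnected from its complement (since the giant fills a constant fraction of $\Lambda_n\setminus\Lambda^{\ast}$). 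Because edge indicators are independent given the marked point configuration, a Chernoff-type bound applied to the downward vertex boundary of $\Lambda^{\ast}$---whose expectation is $\Theta(\mathrm{Vol}(\Lambda^{\ast})^{\zeta_{\mathrm{long}}})$ by \eqref{eq:zeta-long}---yields $\exp(-ck^{\zeta_{\mathrm{long}}})$; the polynomial overhead from a union bound over positions and dyadic side-lengths of $\Lambda^{\ast}$ is absorbed into $A$.

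In the \emph{spread-out} regime $\mathrm{Vol}(\Lambda^{\ast})\ge A_1 k$, cover $\CC_n(0)$ by disjoint sub-boxes of side $(k/A_2)^{1/d}$ and let $\mathcal{B}$ be the family of sub-boxes meeting $\CC_n(0)$. Since $|\CC_n(0)|\ge k$, the union $U:=\bigcup\mathcal{B}$ has volume at least $k/A_3$, and every long downward edge leaving $U$ must avoid the giant. An isoperimetric lower bound for the expected downward vertex boundary of an arbitrary $U\subseteq\Lambda_n$ of volume $v$, matching the exponent in \eqref{eq:zeta-long}, yields a cost of at least $\exp(-c|U|^{\zeta_{\mathrm{long}}})\le \exp(-ck^{\zeta_{\mathrm{long}}})$. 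A Peierls-type enumeration of possible shapes of $\mathcal{B}$ contributes only an exponential-in-$|\mathcal{B}|$ combinatorial factor, absorbed by choosing $A_2$ large.

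The main obstacle is the isoperimetric lower bound in the spread-out case, where the "shape" of $U$ is not a box. The hypothesis $\zeta_{\mathrm{long}}>\zeta_{\mathrm{hh}}$ is essential here: since high-high edges are not dominant, the downward boundary is driven by low-mark vertices (the ll and hl types), whose count concentrates tightly around its expectation for arbitrary sets $U$, which permits Chernoff control over the essentially independent edge indicators and an isoperimetric inequality robust to the shape of $U$. In the complementary regime $\zeta_{\mathrm{long}}=\zeta_{\mathrm{hh}}$ a single high-mark vertex can dominate the boundary, concentration fails, and a different argument is required; this is why the theorem invokes \cite[Theorem 2.2]{clusterI} as a black box for those parameters. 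The range $n\in[Ak,\infty]$ is handled uniformly: for $n=\infty$ translation invariance applies and no LLN is needed, and for finite $n\ge Ak$ the LLN-failure and polynomial-in-$n$ summation are both absorbed into $A$.
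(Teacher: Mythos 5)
Your lower bound matches the paper in spirit (both cite the isolation construction from \cite[Prop.~7.1]{clusterI}), so I will focus on the upper bound, where you take a genuinely different route from the paper. The paper does not isolate $\CC_n(0)$ directly. It reduces the cluster-size decay to a bound on $|\CC_n^{\sss{(2)}}|$ (\cref{prop:second-upper}, proved via a four-stage revealment scheme with box-wedging vertices and a backbone) together with two further prerequisites (a polynomial lower bound on $|\CC_n^{\sss{(1)}}|$ from \cref{cor:polylog-thresholds}, and the containment of all high-mark vertices in the giant from \cref{claim:prer-mark}), and then invokes the structural implication \cref{prop:prer-upper} from \cite{clusterI} to convert these into \eqref{eq:repeat-CLD}. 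Your approach is more geometric and self-contained; unfortunately it has gaps that, in my view, were exactly what forced the authors to take the more indirect route.

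\textbf{Localized regime.} You claim $\Lambda^\ast$ ``must be edge-disconnected from its complement.'' This is too strong: the random box $\Lambda^\ast$ contains many vertices besides those of $\CC_n(0)$, and those extra vertices may well have edges to $\Lambda^\ast{}^\complement$. What is true is that $\CC_n(0)$ itself has no edge to its complement, but $\CC_n(0)$ is a random, adaptively-explored vertex set determined by the edges. Conditioning on its realization is neither monotone nor product-like, so you cannot ``apply a Chernoff bound to the downward vertex boundary of $\Lambda^\ast$'' as if the edge indicators leaving $\CC_n(0)$ were fresh and independent: the set over which you sum has already consumed randomness. This is the classic obstacle in cluster-size-decay proofs, and it is precisely what the paper's revealment order (stages \emph{(1)}--\emph{(4)} in \cref{def:revealment-stages}, plus the cover-expansion of \cite{clusterI} discussed in \cref{remark:hh-different}) is designed to circumvent.

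\textbf{Spread-out regime.} You invoke, as a black box, an ``isoperimetric lower bound for the expected downward vertex boundary of an arbitrary $U\subseteq\Lambda_n$ of volume $v$, matching the exponent in \eqref{eq:zeta-long}.'' That is a substantial unproved claim: $\zeta_\mathrm{long}$ is defined in \eqref{eq:zeta-long} via the box pair $(\Lambda_{n/2},\Lambda_n^\complement)$, and nothing in the paper establishes a shape-independent lower bound of the form $\Theta(|U|^{\zeta_\mathrm{long}})$. Even granting it, the Peierls step does not close uniformly. With sub-boxes of volume $k/A_2$, the cost you extract is $\exp(-c(|\mathcal B|k/A_2)^{\zeta_\mathrm{long}})$ against a combinatorial factor $C^{|\mathcal B|}$. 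Since $\zeta_\mathrm{long}<1$, the ratio $(|\mathcal B|k/A_2)^{\zeta_\mathrm{long}}/|\mathcal B|$ \emph{decreases} in $|\mathcal B|$, so for $|\mathcal B|\gtrsim k^{\zeta_\mathrm{long}/(1-\zeta_\mathrm{long})}$ the exponent loses to the entropy no matter how large you take $A_2$. Because $|\CC_n(0)|\ge k$ admits configurations spread across up to $\Theta(k)$ sub-boxes, this range is non-empty, and ``choosing $A_2$ large'' does not save the argument. The paper's backbone construction sidesteps this: it never enumerates shapes, but instead shows that any non-giant component of size $\ge k$ connects to the pre-built backbone with probability $\ge 1-\exp(-\Theta(k^{\zeta_\mathrm{long}}))$, after which a union bound over the $O(n/k)$ components suffices.

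In short: the architecture you propose resembles what one would do for nearest-neighbour percolation, but both the conditioning and the isoperimetric/Peierls steps fail to adapt to long-range KSRGs without new ideas; the paper explicitly flags these obstacles in \cref{remark:nn-difficulty,remark:hh-different} and routes around them via \cref{prop:second-upper} and \cref{prop:prer-upper}.
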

We proved the bounds for the case  $\zeta_{\mathrm{long}}=\zeta_{\mathrm{hh}}$  in our earlier work~\cite{clusterI}: then high-high connections are dominant. Theorem \ref{thm:subexponential-decay} extends the result for the phases when $\zeta_\star$ is either $\zeta_{\mathrm{ll}}$ in \eqref{eq:zeta-ll} and/or $\zeta_{\mathrm{hl}}$ in \eqref{eq:zeta-lh}. This corresponds to parameter settings where long-range connections incident to at least one vertex of small mark dominate the expectation in \eqref{eq:zeta-star}. The techniques for these latter phases are different from those in \cite{clusterI}, see Remark \ref{remark:hh-different} on the details. 

The lower and upper bound in \eqref{eq:cluster-size-decay-main} are of the same order if the maximum in $\CZ$ is unique \emph{and} $\zeta_\mathrm{long}>(d-1)/d$ (excluding the sub-optimal bound for $\zeta_\mathrm{long}=\zeta_{\mathrm{hh}}$ and $\sigma>\tau-1$). If the maximum is non-unique, i.e., $\mathfrak{m}_\CZ\in\{2,3,4\}$ and $\zeta_\mathrm{long}\ge (d-1)/d$, then the two bounds differ by a polylog factor. The exponent of $k$ in the two bounds only differs when $(d-1)/d$ is \emph{strictly larger} than $\zeta_\mathrm{long}$.  We conjecture that the lower bound is sharp, and that the correct exponent must be $\zeta_\mathrm{short}=(d-1)/d$ generally in the KSRG class in this regime. The additional restriction that $\sigma\le\tau-1$ when $\zeta_\mathrm{hh}=\zeta_\mathrm{long}$ comes as a technical assumption from~\cite{clusterI}. 

\smallskip\noindent
\emph{Related work on cluster-size decay}. There are many percolation models where the cluster-size decay is determined by surface tension.
For Bernoulli nearest neighbor  bond percolation on $\Z^d$, a series of papers \cite{aizenman1980lower,grimmett1990supercritical, kesten1990bondpercolation,kunz1978essential} showed that the exponent of the decay is $\zeta_{\mathrm{short}}$, i.e., for some $C_p\ge c_p>0$,
\begin{equation}\label{eq:csd-surface}
\begin{aligned}
-C_p\le \liminf_{k\to\infty}&\frac{1}{k^{(d-1)/d}}\log \Prob\big(k<|\CC(0)|<\infty\big)\\&\le 
\limsup_{k\to\infty}\frac{1}{k^{(d-1)/d}}\log \Prob\big(k<|\CC(0)|<\infty\big)\le -c_p. 
\end{aligned}
\end{equation}
For dimension $d\in\{2,3\}$, the constants $C_p=c_p$ above are known, and determined by the Wulff construction~\cite{alexander1990wulff,cerf2000large}. For Bernoulli percolation on more general, transitive infinite graphs with polynomial ball-growth, \cite{contreras2021supercritical, hutchcroft2022transience} obtain similar bounds.
In~\cite{clusterIII}, we derived \eqref{eq:csd-surface} for high edge-density long-range percolation when $\zeta_\star=(d-1)/d>2-\alpha$, i.e., when short-range edges are dominant. Our techniques in~\cite{clusterIII} differ from those here and in \cite{clusterI}, see more on the difficulties when $(d-1)/d=\zeta_\star$ in Remark \ref{remark:nn-difficulty} below. 

\smallskip\noindent
\emph{The second-largest component.} Our last result identifies the size  $|\CC_n^\sss{(2)}|$ of the second largest component in KSRGs in boxes of volume $n$. 
\begin{theorem}[Second-largest component]\label{thm:second-largest}
 Consider the same setting as in Theorem \ref{thm:subexponential-decay}. There exist constants $A,\delta>0$ such that  for all $n\ge1$, with probability at least $1-n^{-\delta}$,
     \begin{equation}
     \bigg(\frac{\log n}{A(\log\log n)^{\mathfrak{m}_\CZ-1}}\bigg)^{1/\max(\zeta_\mathrm{long}, \zeta_\mathrm{short})}
     \le |\CC_n^\sss{(2)}| \le (A\log n)^{1/\zeta_\mathrm{long}}.\nonumber
 \end{equation}
 The result remains valid when  
$\zeta_\mathrm{long}=\zeta_\mathrm{hh}$ and $\sigma\le \tau-1$ \cite[Theorem 2.4(i),(ii)]{clusterI}. When $\zeta_\mathrm{long}=\zeta_\mathrm{hh}$ and $\sigma> \tau-1$, the lower bound on the right-hand side remains valid while the upper bound changes to $(A \log n)^{\sigma+1-(\tau-1)/\alpha}$ \cite[Theorem 2.4(i),(iii)]{clusterI}.
\end{theorem}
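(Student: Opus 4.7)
The plan is to derive both bounds from the cluster-size decay in \cref{thm:subexponential-decay}, the lower-tail LDP in \cref{thm:large-dev2}, and the Law of Large Numbers in \cref{prop:lln}.

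\textbf{Upper bound.} Set $k^+:=\lceil (A\log n)^{1/\zeta_\mathrm{long}}\rceil$ with $A>0$ to be chosen large. By the Mecke--Palm formula for the Poisson vertex set (and translation invariance on $\Z^d$ for LRP), combined with the upper bound in~\eqref{eq:cluster-size-decay-main},
\[
\E\bigl[|\{v\in\CV_n:v\notin\CC_n^\sss{(1)},\,|\CC_n(v)|\ge k^+\}|\bigr]
\le n\,\Prob^\sss{0}\bigl(|\CC_n(0)|\ge k^+,\,0\notin\CC_n^\sss{(1)}\bigr)
\le n\exp\bigl(-(k^+)^{\zeta_\mathrm{long}}/A'\bigr).
\]
For $A$ large enough, this is at most $n^{-\delta}$ for some $\delta>0$, and Markov's inequality yields $|\CC_n^\sss{(2)}|<k^+$ with probability at least $1-n^{-\delta}$.

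\textbf{Lower bound.} Write $\zeta_\star:=\max(\zeta_\mathrm{long},\zeta_\mathrm{short})$ and set $k^-:=(\log n/(A(\log\log n)^{\mathfrak{m}_\CZ-1}))^{1/\zeta_\star}$ with $A$ large. Partition $\Lambda_n$ into $N=\lfloor n/k^-\rfloor$ disjoint translates $B_1,\dots,B_N$ of a sub-box of volume $k^-$. For each $i$, let $E_i$ be the event that (a) $\CG$ restricted to $B_i$ carries a connected cluster of size at least $\theta k^-/2$, and (b) no edge of $\CG$ has one endpoint in $B_i$ and the other in $B_i^c$. Conditionally on the marked vertex set $\CV$, events (a) and (b) depend on disjoint collections of edge-Bernoullis and are therefore conditionally independent. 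The probability of (a) is at least $1/2$ for $k^-$ sufficiently large by \cref{prop:lln} applied inside $B_i$, and the probability of (b) is at least $\exp\bigl(-C(k^-)^{\zeta_\star}(\log k^-)^{\mathfrak{m}_\CZ-1}\bigr)$ by the box-isolation construction used in the lower bound of \cref{thm:large-dev2}. By the choice of $k^-$ and $A$, this yields $\Prob(E_i)\ge n^{-1+\eta}$ for some $\eta>0$. A second-moment argument applied to $X:=\sum_{i\le N}\Ind{E_i}$ then gives $\Prob(X=0)\to 0$; when $X\ge 1$, the relevant $B_i$ carries a component of size $\Theta(k^-)$ completely disconnected from $B_i^c$, which is polylogarithmic in $n$ and hence cannot be $\CC_n^\sss{(1)}$ (whose size is linear by \cref{prop:lln}), so bounds $|\CC_n^\sss{(2)}|$ from below.

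\textbf{Main obstacle.} The crux of the lower bound is to produce the isolation event (b) with probability matching~\eqref{eq:thm-ltld} at scale $k^-$ \emph{jointly} with the LLN event (a), and with sufficiently weak dependence across the $B_i$ to drive the second moment. Conditioning on the Poisson marks reduces (a) and (b) inside a single $B_i$ to independent events; the cross-box correlations $\mathrm{Cov}(\Ind{E_i},\Ind{E_j})$ then stem only from edge-Bernoullis with one endpoint in each of $B_i,B_j$, and can be quantified directly from~\eqref{eq:connection-verygeneral}. In the phases $\zeta_\mathrm{long}\in\{\zeta_\mathrm{ll},\zeta_\mathrm{hl}\}$ that are the new content of the theorem, the isolation probability is dominated by edges incident to low-mark vertices, for which these correlations are quantitatively easier to control than in the high-high regime already handled in~\cite{clusterI}.
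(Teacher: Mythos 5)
Your upper bound is correct in substance, though it routes through \cref{thm:subexponential-decay} where the paper applies \cref{prop:second-upper} directly; since the paper proves \cref{thm:subexponential-decay} \emph{from} \cref{prop:second-upper} via \cref{prop:prer-upper}, the content is the same and only the order of derivation differs. (One small point: \eqref{eq:cluster-size-decay-main} is stated for $\Prob^{\sss{0}}$, and the Mecke integral involves $\Prob^{\sss{x}}$ for varying $x\in\Lambda_n$; this is handled in the paper by shifting the reference box, and is a routine but necessary remark.)

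The lower bound, however, contains a genuine gap at the step you yourself flag as the crux, namely $\Prob(E_i)\ge n^{-1+\eta}$. Conditional independence of (a) and (b) given the marked vertex set $\CV$ gives
\[
\Prob(E_i)=\E\bigl[\Prob(\text{(a)}\mid\CV)\,\Prob(\text{(b)}\mid\CV)\bigr],
\]
and you then substitute the \emph{unconditional} bounds $\Prob(\text{(a)})\ge 1/2$ and $\Prob(\text{(b)})\ge \exp(-C(k^-)^{\zeta_\star}(\log k^-)^{\mathfrak{m}_\CZ-1})$ as if the expectation of the product were bounded below by the product of the expectations. That inequality requires positive correlation, and in fact the two conditional probabilities are \emph{negatively} associated as functions of $\CV$ in the regimes that are the new content of this theorem: a high-mark vertex inside (or just outside) $B_i$ substantially increases $\Prob(\text{(a)}\mid\CV)$ while sending $\Prob(\text{(b)}\mid\CV)$ towards zero, because such a vertex has many outgoing edges leaving the box. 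To repair this one must first restrict to a good event for $\CV$ on which \emph{both} conditional probabilities are uniformly bounded below by the right quantities (in particular, one must truncate the marks inside $B_i$ at a polylogarithmic threshold before the LLN step and only then perform the isolation estimate), and show the good event itself is not too rare. This is precisely what \cref{prop:prer-lower} (cited from \cite{clusterI}) does: its prerequisite \eqref{eq:lower-sub-cond} asks exactly that the cluster of the origin, \emph{already restricted to polylogarithmic marks}, has positive density, and the conclusion \eqref{eq:lower-second-bound} is the desired lower bound on $|\CC_n^{\sss{(2)}}|$. The paper's proof therefore reduces to verifying this prerequisite via \cref{claim:lower-bound-prer}. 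Your sketch also asserts, without calculation, that cross-box correlations are controllable; this is plausible but requires noting that $E_i$ and $E_j$ both condition on the absence of $B_i$--$B_j$ edges (a shared, and hence positively correlating, requirement) and handling the resulting second-moment inflation. As written, the second-moment scheme is a reasonable roadmap, but the single-box estimate is the real work, and neither the restriction to a good vertex-set event nor the uniform conditional lower bounds are supplied.
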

\cref{thm:second-largest} accompanies the results from \cite{clusterI, clusterIII} exactly as \cref{thm:subexponential-decay}.
The lower and upper bounds are matching exactly when the bounds on the cluster-size decay are matching, and differ by a ``poly-loglog'' factor on phase transition boundaries (excluding the sub-optimal bound for $\zeta_\mathrm{long}=\zeta_{\mathrm{hh}}$ and $\sigma>\tau-1$). Since the size of $\CC_n^\sss{(2)}$ is polylogarithmic in $n$, this theorem implies uniqueness of the giant component.
Moreover, it makes a result by Crawford and Sly~\cite{crawford2012simple} sharp: that paper proves a polylogarithmic upper bound on $|\CC_n^\sss{(2)}|$ for long-range percolation with unidentified exponent. 

For readers interested in classical long-range percolation models (LRP), we state all our results valid for LRP  in a single corollary, gathered from \cite{clusterI, clusterIII} and this paper. Here, we assume that the model is on $\Z^d$ for some $d\ge 1$ and two vertices of $\Z^d$ are connected by an edge with probability that is asymptotically of order $1/\|x-y\|^{\alpha d}$, independently of all other edges.
We assume $\alpha\neq 1+1/d$ so that lower-order corrections in the lower bounds are not present (for the case $\alpha=1+1/d$ see the individual theorems above, setting $m_\mathcal Z=2$). When we talk about high edge-density below, we assume that the model follows the connectivity function
\begin{equation}\label{eq:high-edge-density}
   p(x\sim y) = p \min\big(1, \beta/\|x-y\|\big)^{d\alpha}, 
\end{equation}
where we assume for $\beta\le 1$ that  $p\beta^{d\alpha}$ is sufficiently close to $1$, and we assume for $\beta>1$ that $\beta$ is sufficiently large (given $p$). In the proof of this corollary we give the exact place of the proof of each result.
\begin{corollary}[Summary of results for LRP] Consider supercritical long-range percolation on $\Z^d$ with connection probability $p(x\sim y)$ decaying up to constant factors as $\|x-y\|^{-\alpha d}$, and let $\theta=\mathbb P(0 \in \mathcal C_\infty)>0$. In the model spanned in a box of volume $n$, let $\mathcal C_n(0), \mathcal C_n^{\sss{(1)}}, \mathcal C_n^{\sss{(2)}},$ be respectively the connected component of the origin, the largest and the second largest connected component.
  
Assume $\alpha\in(1,2)$ and  $\alpha\neq 1+1/d$.

\noindent Let $\rho\in(\theta, 1)$. There exists a constant $A_0$ such that for all $n\ge 1$, 
\begin{equation}\label{eq:upper-lrp}
-A_0n\, \le\, \log\Prob\big(|\CC_n^\sss{(1)}|>\rho n\big)\, \le\, -\tfrac1{A_0}n.
\end{equation}
Let $\rho\in(0,\theta)$. There exists a constant $A_1$ such that for all $n\ge 1$, 
\begin{equation}\label{eq:LRPcor-Cn1-lowertail}
\, -A_1n^{\max(2-\alpha, 1-1/d)}\le\,\log\Prob\big(|\CC_n^\sss{(1)}|<\rho n\big)\, \le\, -\tfrac{1}{A_1}n^{\max(2-\alpha, 1-1/d)}.
\end{equation}
There exist constants $A_2,\delta>0$ such that with probability at least $1-n^{-\delta}$,
\begin{equation}\label{eq:LRPcor-Cn2}
\tfrac{1}{A_2}(\log n)^{1/\max(2-\alpha, 1-1/d)}\,\le\, |\CC_n^\sss{(2)}|\,\le\, (A_2 \log n)^{1/(2-\alpha)}.
\end{equation}
 There exists a constant $A_3>0$ such that for all $k\ge 1$ and $n\in[A_3k, \infty]$, 
\begin{equation}\label{eq:LRPcor-Cn0}
\begin{aligned}
-A_3k^{\max(2-\alpha, 1-1/d)}\,\le\,\log \Prob\big(|\CC_n(0)|>k, 0\notin\CC_n^\sss{(1)}\big)\,\le\,-\tfrac{1}{A_3}k^{2-\alpha}.
\end{aligned}
\end{equation}
All lower bounds and the upper bound in~\eqref{eq:upper-lrp} extend to long-range percolation with $\alpha\ge 2$. 

\noindent Assuming high edge-density and $\alpha>1+1/d$, the upper bounds in \eqref{eq:LRPcor-Cn2} and \eqref{eq:LRPcor-Cn0} can be sharpened to match the lower bounds for all $\alpha>1+1/d$:
\begin{align}
\tfrac{1}{A_2}(\log n)^{1/(1-1/d)}\,\le\, &|\CC_n^\sss{(2)}|\,\le\, (A_2 \log n)^{1/(1-1/d)}\label{eq:high-density-CN2} \\
-A_3k^{1-1/d}\,\le\,&\log \Prob\big(|\CC_n(0)|>k, 0\notin\CC_n^\sss{(1)}\big)\,\le\,-\tfrac{1}{A_3}k^{1-1/d}. \label{eq:high-density-CN0}
\end{align}
\end{corollary} 
\begin{proof} 

\emph{Large deviations of the giant.\ }The proof of the upper tail in \eqref{eq:upper-lrp} can be found in Section \ref{sec:upper-tail} of this paper, and the statement is present in Theorem \ref{upper-lrp-nnp} (note that both bounds are valid for all $\alpha > 1$). The statement of the upper bound on the lower tail in \eqref{eq:LRPcor-Cn1-lowertail} can be found in Theorem \ref{thm:large-dev2}, where the condition $\zeta_{\mathrm{long}}>0$ translates to $\alpha\in(1,2)$ for LRP. For the proof, see Section \ref{sec:bootstrap} (Proposition \ref{lemma:bootstrap} in particular).  When the exponent is $2-\alpha$, the proof is in Section \ref{sec:bootstrap-long}, while for the exponent $1-1/d$, we refer to Section \ref{sec:bootstrap-short}. The lower bound of the lower tail in \eqref{eq:LRPcor-Cn1-lowertail} is \cite[Theorem 2.4]{clusterI}, while its proof can be found in Section 7 of the same paper. 

\emph{Cluster-size decay.\ }The upper bound in \eqref{eq:LRPcor-Cn0} follows from Theorem \ref{thm:subexponential-decay} of this paper, and its proof can be found in Section \ref{sec:cluster-size-lln}. Its proof heavily relies on \cite[Section 6]{clusterI}, where we developed a general technique to extend results on the decay of small components in finite boxes to the cluster-size decay of the infinite model. Moving on to the lower bound on the cluster-size decay in \eqref{eq:LRPcor-Cn0}, the statement itself is in Theorem \ref{thm:subexponential-decay}, while its proof is in Section \ref{sec:second-upper}, after Proposition \ref{prop:prer-lower} of this paper. The proof there heavily relies on \cite[Proposition 7.1, Section 7]{clusterI} where the actual lower bound appears as a localized event:  we find, with probability given in the lower bound, a localized component in a box of volume order $k$ centered at $0$. 

\emph{Second-largest component.\ }The bounds on the second-largest component are a consequence of Theorem \ref{thm:second-largest}. The proof of the lower bound is in Section \ref{sec:cluster-size-lln}, and relies on the same proof as for the cluster-size-decay, namely the localized component in \cite[Section 7, in particular Section 7.3]{clusterI}. 
The proof of the upper bound \eqref{eq:LRPcor-Cn2} on the second-largest component  follows a four-step revealment scheme described in the proof of Proposition \ref{prop:second-upper}.  
As the lower bound on the localized component event in \cite[Section 7]{clusterI} is valid for all $\alpha> 1$, all lower bounds remain valid for $\alpha\ge 2$. 

\emph{Upper bounds for $\alpha> 1+1/d$.\ }Finally, when we assume high edge density and $\alpha>1+1/d$ (which is actually below $2$ for dimensions $2$ and higher), the proofs of the upper bounds in \eqref{eq:high-density-CN0} and \eqref{eq:high-density-CN2} are the main results of \cite{clusterIII}. The proof of \eqref{eq:high-density-CN2} uses a decomposition of any component into nearest-neighbor connected blocks that are connected by edges longer than $2$, then carries out a spanning tree counting method combined with estimates based on isoperimetry. The extension to the upper bound on the cluster-size decay in \eqref{eq:high-density-CN0} follows the same method as here in Section \ref{sec:cluster-size-lln}, which is essentially \cite[Section 6]{clusterI}.
\end{proof}

 We believe that many of our proof techniques can be extended to other models with long-range interaction, for instance the random cluster model (FK-percolation) with $q\ge1$. To deal with the additional dependencies coming from boundary conditions in renormalization arguments, techniques from~\cite{pisztora1996surface} and~\cite{lrpRevisited20} could serve as a starting point. Extension to FK percolation would match the picture for large deviations of the largest component  for Bernoulli bond percolation on $\Z^d$ obtained by Pisztora~\cite{pisztora1996surface}. 

\medskip
\textbf{Notation.} Let $G=(V,E)$ be a graph. We  write $u\leftrightarrow_G v$ if there exists a path between $u,v\in V$, and $|A|$ for the number of vertices in a set $A$ of vertices, whereas for two subsets $V_1, V_2$ containing (marked) vertices we write $V_1 \sim_G V_2$ if there exist vertices $v_1 \in V_1, v_2 \in V_2$ such that $v_1$ is connected by an edge to $v_2$ (in case $V_1=\{v_1\}$ and/or $V_2=\{v_2\}$ we write simply $v_1 \sim_G V_2$, or $v_1 \sim_G v_2$, respectively). We also write $V_1 \nsim_G V_2$ if there is no edge between vertex pairs in $V_1\times V_2$. If the graph $G$ is clear from the context we omit the subscript. If $\CH=(\CV_\CH, \CE_\CH)$ is (a subgraph of) a KSRG $\CG=(\CV,\CE)$, we write $\CH[a,b)$ for the induced subgraph of $\CH$ on vertices with mark in the half-open interval $[a,b)$, denoted by $\CV_\CH[a,b)$. For a set $B\subseteq\R^d$, we write $\CV_B$ for the vertices with location in $B$. If $B=\Lambda_n=[-n^{1/d}/2, n^{1/d}/2]^d$, we simply write $\CV_n$.
For two random variables $X$ and $Y$, we write $Y \preccurlyeq X$, if $X$ stochastically dominates $Y$, that is, $\Prob(X \ge x) \ge \Prob(Y \ge x)$ for all $x \in \R$. For two random graphs $G_1=(V_1, E_1), G_2=(V_2, E_2)$ we say that $G_1$ stochastically dominates $G_2$ if there exists a coupling such that $\Prob\big(E_1\supseteq E_2, V_1\supseteq V_2\big)=1$. A sequence of events $(\CA_n)_{n\ge 1}$ holds with high probability (whp) if $\Prob(\CA_n)\to1$ as $n\to\infty$.
 We use standard Landau notation for asymptotics of functions: for two not necessarily positive real functions $f(n), g(n)$ we say that $f=O(g)$ if $\limsup_{n\to\infty}|f(n)/g(n)|<\infty$, $f=o(g)$ if $\lim_{n\to\infty}|f(n)/g(n)|=0$, $f=\Omega(g)$ if $g=O(f)$, $f=\omega(g)$ if $g=o(f)$, and $f=\Theta(g)$ if both $f=O(g)$ and $g=O(f)$.

\section{Roadmap of the paper}\label{sec:outline}
 Theorems~\ref{thm:large-dev-upper}--\ref{thm:second-largest} are closely linked together.  We prove them in (almost) reverse order, in six main steps.  
All except Step 4 rely on (often multi-scale) renormalization techniques. 
Here we sketch these steps and explain their relation, the role of the condition $\zeta_\mathrm{long}>0$, 
and the connection to the methods used in our earlier works~\cite{clusterI, clusterIII}. 
This sketch assumes a unit-intensity Poisson point process and power-law vertex marks. In the sections below we make the adaptation for the other settings. We start with the lower tail of large deviations.  
\subsection{Route to the lower tail of large deviations} The lower bound is given in~\cite{clusterI}, so we focus on the more involved upper bound.
Using $\zeta_\mathrm{long}>0$, for $\rho\in(0,\theta)$ we sketch how to prove 
\begin{equation}\label{eq:ltld-upper}
\Prob\big(|\CC_n^\sss{(1)}|<\rho n\big)\le \exp\big(-\Theta\big(n^{\zeta_\star}\big)\big).
\end{equation}

\medskip
\noindent\emph{Step 1. Existence of a giant (Section~\ref{sec:biskup}).} For supercritical settings with $\max(\zeta_\mathrm{hl}, \zeta_\mathrm{ll})>0$, we prove for a small constant $\varepsilon>0$ that  $\{|\CC_n^\sss{(1)}|\ge \varepsilon n\}$ whp. 
For $\zeta_\mathrm{hh}>0$ this step is already given by the weak law of large numbers for $|\CC_n^\sss{(1)}|$ in our earlier work~\cite{clusterI}. 
To prove $\Prob(|\CC_n^\sss{(1)}|\ge \varepsilon n)\to 1$, we first adapt a multi-scale renormalization technique by Berger~\cite{berger2002transience} which relies on ergodicity and uniqueness of the infinite component.  This technique yields a polynomial lower bound on $|\CC_n^\sss{(1)}|$. We  use this lower bound as the initialization of another multi-scale renormalization scheme, inspired by a method that Biskup introduced for long-range percolation~\cite{biskup2004scaling}. The two techniques combined yield that for some $\varepsilon>0$, $A>0$, and $\gamma\in(0,1)$ satisfying $\zeta=1-\gamma(\tau-1)$, 
\begin{equation} \label{eq:biskup-outline}
    \Prob\big(|\CC_n^\sss{(1)}[1, An^{\gamma})|< \varepsilon n\big)\le \exp\big(-n^{\max(\zeta_\mathrm{ll}, \zeta_\mathrm{hl})-o(1)}\big),
    \end{equation}
where $\CC_n^\sss{(1)}[1, An^{\gamma})$ 
denotes the largest connected component in the graph induced on vertices with location in $\Lambda_n$ and mark in $[1, An^{\gamma})$.

\medskip
\noindent\emph{Step 2. Sharp bounds on the giant's existence (Section~\ref{sec:bootstrap}).}
The inequality \eqref{eq:biskup-outline} is worse than~\eqref{eq:ltld-upper}. In the next step we improve it: 
 we show the implication that \emph{if} 
 \begin{equation}\label{eq:density-condition}\tag{Req}
      \parbox{\dimexpr\linewidth-17em}{%
    \strut
    $\Lambda_n$ has whp a giant component of density at least $\rho>0$\\
    containing all vertices with mark at least $ n^{(1-o(1))/(\tau-1)}$,
    \strut
  }
 \end{equation}
 \emph{then}
 for any $\varepsilon'>0$ and some $A'=A'(\varepsilon')>0$,
\begin{equation}\label{eq:outline-er}
\Prob\big(|\CC_n^\sss{(1)}[1, A'n^{\gamma})|< (\rho-\varepsilon') n\big) \le \exp\big(-\Theta(n^{\max(\zeta_\mathrm{ll}, \zeta_\mathrm{hl}, \zeta_\mathrm{hh},(d-1)/d)})\big).
\end{equation}
Compared to \eqref{eq:biskup-outline}, the $o(1)$ disappeared from the speed-exponent while $(d-1)/d$ and $\zeta_\mathrm{hh}$ entered the maximum.
To prove~\eqref{eq:outline-er} when $\zeta_\mathrm{long}>(d-1)/d$, we reduce the event $\{|\CC_n^\sss{(1)}|<(\rho-\varepsilon')n\}$ to a large-deviation event for Erd\H{o}s-R\'enyi random graphs, see below \cref{phen:lower}.
When $(d-1)/d$ is the unique maximum and $\zeta_\mathrm{long}>0$, we prove \eqref{eq:outline-er} with a different renormalization. Using boxes of constant volume $M$, the renormalized model dominates supercritical site-bond percolation on $\Z^d$. Using long-edges present due to $\zeta_\mathrm{long}>0$, two neighboring boxes of volume $M$ are connected by an edge with probability tending to $1$ as $M\to\infty$. 

We also prove the condition \eqref{eq:density-condition} of \eqref{eq:outline-er}: all vertices with mark at least $(\log n)^{\eta'}$ for an explicit $\eta'>0$ are in the giant component: using an `extra layer' on the renormalization leading to \eqref{eq:biskup-outline} we prove in the case $\max(\zeta_\mathrm{hl}, \zeta_\mathrm{ll})>0$ that
\begin{equation}\label{eq:high-mark-outline}
\Prob\big(\exists \text{ component }\CC\text{ in }\CG_n: \CV_n[(\log n)^{\eta'}, \infty)\supseteq \CC, |\CC|\ge \varepsilon n\big)\longrightarrow 1.
\end{equation}
We  combine this with~\eqref{eq:outline-er}, and set $\rho=\eps, \eps'=\eps/2$  when $\zeta_\mathrm{long}=\max(\zeta_\mathrm{hl}, \zeta_\mathrm{ll})$. When $\zeta_\mathrm{long}=\zeta_\mathrm{hh}$, we use the results from~\cite{clusterI} and set $\rho=\theta-\eps$. In Section \ref{sec:bootstrap} we thus prove that for some $\eps>0$, 
\begin{equation}\label{eq:outline-right-prob}
    \Prob\big(|\CC_n^\sss{(1)}[1,A'n^\gamma)|< \varepsilon n\big)\le \exp\big(-\Theta(n^{\zeta_\star})\big).
\end{equation}
After having proven an improved value of $\rho=\theta-\eps$ in Step 5 below, requirement \eqref{eq:density-condition} shall imply an improved version of  \eqref{eq:outline-right-prob} with the new density.

\medskip
\noindent\emph{Step 3. Non-giant components are small (Section~\ref{sec:second-sub}).}
In this step we prove the upper bound on the second-largest component in Theorem \ref{thm:second-largest}, using~\eqref{eq:outline-right-prob}. For KSRGs with PPP vertex sets, for any $k=k(n)$, we prove
\begin{equation}\label{eq:second-outline}
\Prob\big(|\CC_n^\sss{(2)}|> k\big)\le 3(n/k)\exp\big(-\Theta( k^{\zeta_\mathrm{long}})\big)=:\mathrm{err}_{n,k}.
\end{equation}
Given \eqref{eq:second-outline}, substitute $k=(A\log n)^{1/\zeta_\mathrm{long}}$ for a large constant $A=A(\delta)$ to obtain the upper bound in \cref{thm:second-largest}. The lower bound there follows by verifying a prerequisite from our earlier work~\cite{clusterI}. There we
also proved \eqref{eq:second-outline} when $\zeta_\mathrm{long}=\zeta_{\mathrm{hh}}$. The method in~\cite{clusterI} breaks down for the cases when $\zeta_\mathrm{long}\in\{\zeta_\mathrm{ll}, \zeta_\mathrm{hl}\}$, see \cref{remark:hh-different}. So, we use a new method that we explain now. 

We sequentially reveal the graph. 
After the last revealment stage a \emph{non-largest} component of size at least $k$ is present in $\Lambda_n$ with probability at most $\mathrm{err}_{n,k}$. 
We partition $\Lambda_n$ into boxes of volume $k$. The probability that all boxes contain a linear-sized component on vertices with mark at most $A'k^\gamma$ is at least $1-\tfrac{1}{3}\mathrm{err}_{n,k}$ by \eqref{eq:outline-right-prob}. When $\zeta_\mathrm{long}=\zeta_\mathrm{hl}$, we sprinkle in vertices of mark larger than $A' k^{\gamma_\mathrm{hl}}$, forming the PPP $\CV^{\sss{\mathrm{high}}}$. We say that a vertex $v\in \CV^{\sss{\mathrm{high}}}$ is \emph{box-wedging} if it connects to the local giant in its own box and also to the local giant in the neighboring box. We prove that the number of box-wedging vertices in each box is $\Theta(k^{\zeta_\mathrm{long}})$: see the back-of-the-envelope calculations in Section~\ref{sec:dominant} for intuition on this. 
 So,  all local giants exist and are in the same connected component in $\CG_n$ via box-wedging vertices with probability at least  $1-\tfrac23\mathrm{err}_{n,k}$. We call this component the \emph{backbone}.

We now take care of small components: in the third stage we reveal edges between vertices that are not in local giants but in different boxes, obtaining $\CG^{\sss{3\mathrm{rd}}}_n$. In the fourth revealment stage we reveal edges between the components of $\CG^{\sss{3\mathrm{rd}}}_n$ and the backbone.  If a component of $\CG^{\sss{3\mathrm{rd}}}_n$ is larger than $k$, then we show that with probability at least $1-\exp\big(-\Theta(k^{\zeta_\mathrm{long}})\big)$ it connects to a box-wedging vertex in the backbone. So, all components of size at least $k$ merge with the backbone with probability at least $1-\mathrm{err}_{n,k}$. 
\medskip 
\noindent\emph{Step 4. LLN for the giant, cluster-size decay (Section~\ref{sec:cluster-size-lln}).}
In \cite[Proposition 6.1]{clusterI}, we showed how the combination of the bounds \eqref{eq:high-mark-outline},  \eqref{eq:second-outline}, and the whp existence of a polynomially-sized largest component (a weaker bound than \eqref{eq:outline-right-prob}), leads to the upper bound on the cluster-size decay in Theorem \ref{thm:subexponential-decay} and the weak LLN
\begin{equation}\label{eq:lln}
 |\CC_n^\sss{(1)}|/n \overset{\Prob}\longrightarrow \theta(\beta, p, \alpha, \tau, \sigma),\qquad \mbox{as }n\to\infty.
 \end{equation}

\medskip 
\noindent\emph{Step 5. Lower tail of large deviations of the giant (Section~\ref{sec:lower-tail}).}\label{roadmapuppertail}
The LLN in~\eqref{eq:lln} gives that $|\CC_n^\sss{(1)}|/n\ge \theta-\varepsilon$ holds whp for any $\varepsilon>0$. So the condition \eqref{eq:density-condition} holds with $\rho=\theta-\eps$. We can reuse~\eqref{eq:outline-er}, combine it with \eqref{eq:high-mark-outline}, so that the unique giant contains all highest-mark vertices whp. Then combination with \eqref{eq:second-outline} results in~\eqref{eq:ltld-upper}.
\smallskip

\begin{remark}[Difficulties when surface tension is dominant]\label{remark:nn-difficulty}
When $(d-1)/d>\zeta_\mathrm{long}$, Theorem \ref{thm:large-dev2} on the lower tail of LDP is sharp, while Theorems \ref{thm:subexponential-decay} and \ref{thm:second-largest} are not sharp. 
In the proofs we connect local giants in subboxes via edges with length of the same order as the box-diameter. These edges are present under the assumption that $\zeta_\mathrm{long}>0$. 
This is enough to give Theorem \ref{thm:large-dev2}: to be able to estimate the giant's size, not every subbox needs to contribute to the global giant. 
The condition $\zeta_{\mathrm{long}}>0$ could only be dropped by developing more information about the position of local giants inside subboxes. They must come close to the boundary to be able to connect them without using long edges. This is beyond the scope of this paper.

For proving Theorems \ref{thm:subexponential-decay} and \ref{thm:second-largest} with $\zeta=(d-1)/d$ in the upper bound, we need sharper estimates. Between the giant component and any other component of size at least $k$, we need to find at least $\Omega(k^{(d-1)/d})$ non-connected vertex pairs within constant distance, with probability at least $1-n\exp(-\Omega(k^{(d-1)/d}))$. Our techniques do not focus on the geometry of short missing edges around components, so this case is beyond the scope of this paper. For NNP, such geometric information is known, see \cite{Grim99} for references. For high edge-density long-range percolation our recent work~\cite{clusterIII} solves this issue, and obtains matching upper and lower bounds for $|\CC_n^\sss{(2)}|$ and the cluster-size decay. For long-range percolation, we refer also to the recent work by B\"aumler~\cite{Baumler} for related results.
\end{remark}
\begin{remark}[High-high regime is different]\label{remark:hh-different}
In \cite{clusterI}, we proved the bound on $|\CC_n^\sss{(2)}|$ in~\eqref{eq:second-outline} when $\zeta_{\mathrm{hh}}$ is maximal. When high-high type connections dominate, the likeliest way to connect two boxes is via edges between two high-mark vertices.
So, building a backbone with error probability $\mathrm{err}_{n,k}$ in \eqref{eq:second-outline} is more straightforward than the multi-scale renormalization techniques here. 
However, the sequential revealment method sketched in Step 3 above breaks down when we look at small components of size at least $k$. When $\zeta_{\mathrm{hh}}=\zeta_\star$, we also need to show that the giant has $\Theta(k^{\zeta_\mathrm{hh}})$ many high-mark vertices within ``connection probability'' $\Theta(1)$ to every one of these components, with probability $\mathrm{err}_{n,k}$. The revealment method from Step 3 does not guarantee that, since the diameter of the box $\Theta(k^{1/d})$ is a too crude bound on the distance between previously revealed small components and high-mark vertices. So, in \cite{clusterI} we developed a geometric method that we call \emph{cover-expansion}, which allows us to control this distance based on the geometry of the cluster. See \cite[Section 3]{clusterI} for a comparison. 
\end{remark}

\subsection{The upper tail of large deviations (Section~\ref{sec:upper-tail})}\label{sec:outline-upper}
First we present the intuition behind the formula of $\mathrm{hubs}(\rho)$ in~\eqref{eq:hubs-gen}, sketched in Figure~\ref{fig:hubs}. Afterwards, we outline the proof of \cref{thm:large-dev-upper} and \cref{upper-lrp-nnp}.  

\begin{figure}
    \centering
    \includegraphics[width=0.8\linewidth]{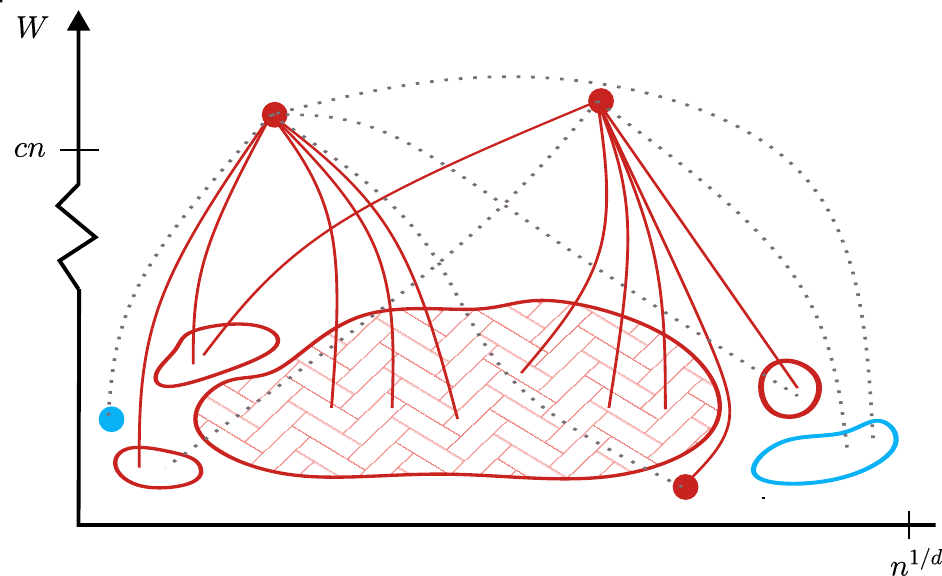}
    \captionsetup{width=0.95\linewidth}
    \caption{Strategy for the upper tail. The giant induced on lower-mark vertices (filled component) increases in size:  hubs (vertices of mark $\Omega(n)$) connect to it by an edge, and also to smaller-size components. The resulting giant component in the entire graph is colored red. If $p<1$, the hubs do not connect by an edge to all vertices (missing edges are dashed in gray), leaving some small components or isolated vertices (blue). The probability for a component of size exactly $k$ to have no edge to one of at least $h$ many hubs is $(1-p)^{kh}$, so the giant's size increases by roughly $\sum_{k} \mathbb P(\mathcal C(0)=k)(1-(1-p)^{kh})$. The function $\mathrm{hubs}$ in~\eqref{eq:hubs-gen} gives the minimal number of hubs, so that the size of the giant increases above $\rho n$. }
    \label{fig:hubs}
\end{figure}

\medskip
\noindent \emph{Hubs increase the giant linearly.}
We investigate an unlikely event. Take $R$ to be a large constant. When $h$ vertices with mark at least $Rn$ -- so called \emph{hubs} -- are present in a box of volume $n$, the size of the giant increases linearly in $n$, since each hub connects by an edge to every other vertex in the box with probability $p$ independently. By the Pareto distribution of the marks in~\eqref{eq:power-law}, the probability of having $h$ hubs is $\Theta(n^{-h(\tau-2)})$. For the exact rate function, we need to compute how the giant's size increases when there are $h$ hubs present.
We reveal the edges in two stages: first all edges between non-hub vertices, obtaining the graph $\CG_n[1,Rn)$. Then we reveal all edges incident to hubs, obtaining $\CG_n$. 
We capture $|\CC_n^{\sss{(1)}}|$ via its complement: 
small components of $\CG_n[1,Rn)$ do not merge with the giant in the second step if none of the vertices in the component connect to the hubs.  Distinguishing all component sizes, we write
\begin{equation}\label{eq:random-clusters}
\begin{aligned}
1-\frac{|\CC_n^{\sss{(1)}}|}{n}\approx\frac{|\CV_n\setminus\CC_n^\sss{(1)}|}{n}&\approx \sum_{v\in\CV_n[1,Rn)}
\!\!\!\!\frac{\ind{\CC_n(v)[1,Rn)\nsim \mathrm{hubs}}}{n} 
=\sum_{\ell\ge 1}\sum_{\substack{\CC\in \CG_n[1,Rn):\\ |\CC|=\ell}}\frac{\ell\cdot\ind{\CC\nsim \mathrm{hubs}}}{n}.
\end{aligned}
\end{equation}
The probability that no edge exists between a size-$\ell$ component and the $h$ hubs is $(1-p)^{h\ell}$. Moreover, by local convergence (see~\cite{Hofbook2} for a detailed explanation and references), the proportion of vertices in components of size-$\ell$ converges in probability to $\Prob^\sss{0}(|\CC(0)|=\ell)$. Therefore, the total number of size-$\ell$ components in $\Lambda_n$ concentrates around $(n/\ell)\cdot\Prob^\sss{0}(|\CC(0)|=\ell)$. Hence, 
\begin{equation}\label{eq:concentrated-clusters}
1-\frac{|\CC_n^{\sss{(1)}}|}{n}\approx\sum_{\ell\ge 1}\Prob^\sss{0}\big(|\CC(0)|=\ell\big)\cdot (1-p)^{h\ell}=\E^\sss{0}\big[(1-p)^{h|\CC(0)|}\big].
\end{equation}
On the event $\{|\CC_n^\sss{(1)}|/n>\rho\}$,
the right-hand side above should be at most $1-\rho$. Thus, letting $h$ to be the smallest integer satisfying
\begin{equation}\label{eq:hub-informal}
\E^\sss{0}\big[(1-p)^{h|\CC(0)|}\big] \le 1-\rho,
\end{equation}
the presence of $h$ hubs implies the event $\{|\CC_n^\sss{(1)}|/n>\rho\}$.
The solution to \eqref{eq:hub-informal} with equality is exactly $\mathrm{hubs}(\rho)$  in~\eqref{eq:hubs-gen}. 

\medskip
\noindent\emph{Concentration on the number of size-$\ell$ components.} 
To move from \eqref{eq:random-clusters} to \eqref{eq:concentrated-clusters}, we need that the rate of concentration of the number of size-$\ell$ components is $o(n^{-(\tau-2)h}))$. Local convergence (LC) does not provide any convergence rate. We boost the performance of LC by applying it to disjoint subboxes of $\Lambda_n$, binomial concentration, and a bound on the number of components containing edges that cross box-boundaries.
When vertex-marks are at most a linear function in $n$, this yields a polynomial convergence rate with a tunable exponent.

\medskip\noindent
\emph{Linear-scale hubs are required to increase the giant's size.}
For the lower bound in \eqref{eq:thm-utld2}, we use that the graph without hubs contains a giant component of density almost $\theta$ by the lower tail of large deviations. The hubs connect to this giant with error probability exponential in $n$. When $\mathrm{hubs}(\rho)\notin\N$, our concentration bound on size-$\ell$-component is strong enough to guarantee that the presence of  $\lceil\mathrm{hubs}(\rho)\rceil$ hubs increases the giant's size to a value above $\rho n$. We also show that any event that has fewer than $\lceil\mathrm{hubs}(\rho)\rceil$ hubs means that too many size-$\ell$ components remain outside of the giant with probability at least $\Theta\big(n^{-(\tau-2)h}\big)$. So, 
the complement of the giant remains larger than $(1-\rho)n$ on such events. This gives the upper bound.

\medskip\noindent
\emph{Homogeneous percolation models.} In Bernoulli bond percolation or long-range percolation on $\Z^d$, the degree-distribution has (super-)exponential tails, so no linear-scale hubs are present. We show that the number of size-$\ell$ components concentrate with error probability exponentially small in $n$ for each constant $\ell\in\N$. We capture the giant's size by its complement, and \cref{upper-lrp-nnp} follows.

\section{Existence of a giant}\label{sec:biskup}
The main goal of this section is to prove the existence of a giant, i.e., a linear-sized component, for the whole supercritical regime when $\max(\zeta_\mathrm{hl},\zeta_\mathrm{ll})>0$. For $\zeta_\mathrm{hh}>0$, we already proved existence in our recent work \cite{clusterI}.
The first proposition corresponds to \eqref{eq:biskup-outline} in the roadmap in Section \ref{sec:outline}. 
\begin{proposition}[Existence of a giant]\label{proposition:ltld-weaker}
Consider a supercritical KSRG satisfying Assumption \ref{assumption:main} with a Poisson point process as vertex set. Assume that the parameters satisfy $\max(\zeta_\mathrm{hl}, \zeta_\mathrm{ll})>0$.
For each constant $\delta>0$, there exists a constant $A>0$ such that for all $n\ge 1$,
    \begin{itemize}
        \item[(hl)] when $\zeta_\mathrm{hl}>0$,
\begin{equation}\label{eq:ltld-weaker}
     \Prob\big(|\CC_n^\sss{(1)}[1, An^{\gamma_\mathrm{hl}})|\le \tfrac{1}{A} n \big) \le \exp\big(- \tfrac{1}{A} n^{\zeta_\mathrm{hl}-\delta}\big) ,
 \end{equation}
 \item[(ll)] when $\zeta_\mathrm{ll}>0$, 
 \begin{equation}\label{eq:ltld-weaker-ll}
     \Prob\big(|\CC_n^\sss{(1)}[1, A)|\le \tfrac{1}{A} n \big) \le \exp\big(- \tfrac{1}{A} n^{\zeta_\mathrm{ll}-\delta}\big).
 \end{equation}    
    \end{itemize}    
 The statements~\eqref{eq:ltld-weaker} and~\eqref{eq:ltld-weaker-ll} remain valid for the Palm version $\Prob^\sss{x}$ of $\Prob$ for any $x\in\R^d$.
\end{proposition}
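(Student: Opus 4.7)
\emph{Plan.} I follow the outline around \eqref{eq:biskup-outline}: a two-stage multi-scale renormalisation. Stage~1 adapts an ergodicity argument of Berger~\cite{berger2002transience} (originally for LRP) to extract a \emph{polynomial} lower bound on $|\CC_n^\sss{(1)}|$ with only a qualitative $1-o(1)$ control on the failure probability. Stage~2 uses this polynomial bound as the base case of a Biskup-style~\cite{biskup2004scaling} iterative renormalisation that promotes polynomial to linear size while producing the stretched-exponential failure bound $\exp(-n^{\zeta-\delta}/A)$ with $\zeta=\zeta_\mathrm{hl}$ or $\zeta=\zeta_\mathrm{ll}$, depending on which of the two regimes we are in.

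\emph{Stage~1: Berger initialisation.} Since $\beta>\beta_c$, the infinite cluster $\CC_\infty$ exists a.s., is unique, and has density $\theta>0$. Tile $\R^d$ by constant-side boxes of volume $L$. Call such a box \emph{good} if it contains a vertex of $\CC_\infty$ that is already joined to $\CC_\infty$ within a fixed enlargement of the box. By ergodicity and uniqueness of $\CC_\infty$ the density of good boxes is close to $\theta$ for $L$ large, and two neighbouring good boxes share a cluster inside the union of their enlargements. This yields a site-percolation on $\Z^d$ whose supercriticality transfers a cluster of polynomial size $n^{\delta_0}$ into $\CG_n$. In the $(\mathrm{ll})$-regime the construction runs on vertices of a constant mark range; in the $(\mathrm{hl})$-regime I sprinkle in vertices of mark up to $An^{\gamma_\mathrm{hl}}$, needed to produce enough cross-box edges in the next stage. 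No rate on the failure probability is required at this step.

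\emph{Stage~2: Biskup bootstrap.} Choose polynomially-growing scales $m_0\le m_1\le\cdots\le m_K$ with $m_K\sim n$ and $K=O(\log\log n)$. Declare an $m_k$-box \emph{$k$-good} if it carries a connected component of density at least $\rho_k$ on vertices of mark in $[1,A_k m_k^{\gamma_\mathrm{hl}})$ (or $[1,A_k)$ in the $(\mathrm{ll})$-regime). Take $\rho_0=m_0^{\delta_0-1}$ from Stage~1. Inductively, partition an $m_{k+1}$-box into $m_{k+1}/m_k$ subboxes of volume $m_k$, each $k$-good with probability $1-p_k$ by the induction hypothesis. By the back-of-the-envelope counts from Section~\ref{sec:dominant}, between any fixed pair of such subboxes at distance $\Theta(m_{k+1}^{1/d})$ the expected number of long edges of the relevant type is $\Theta(m_k^{\zeta})$, so the local giants of two $k$-good subboxes merge with probability at least $1-\exp(-cm_k^\zeta)$. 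A renormalised supercritical-percolation argument then produces a cluster of density $\rho_{k+1}\ge \min(2\rho_k,\varepsilon)$ at the next scale, with updated failure probability roughly $p_{k+1}\le p_k+\exp(-\Theta(m_k^\zeta))$ up to binomial concentration factors. Iterating until $\rho_k$ saturates at some small $\varepsilon>0$ and then propagating the bound up to scale $n$ produces \eqref{eq:ltld-weaker} and \eqref{eq:ltld-weaker-ll}; the $-\delta$ loss in the exponent absorbs polylog slack from union bounds and from the mark-truncation level moving with the scale.

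\emph{Main obstacle and Palm extension.} The delicate point is the inductive step: local giants inside different $m_k$-subboxes are not independent, because long edges of length $\omega(m_k^{1/d})$ can already couple them at scale $m_k$. I handle this by revealing edges in two layers at each scale, first the within-subbox edges that construct the local giants and then the cross-subbox long edges used to merge them, which provides enough conditional independence to close the recursion. A secondary difficulty, specific to the $(\mathrm{hl})$-regime, is that the mark-truncation level itself moves with scale; it is dealt with by sprinkling the extra marks only at each new level and arguing that a single high-low edge attaches every fresh high-mark vertex to the existing giant with overwhelming probability. Finally, the Palm extension to $\Prob^x$ is immediate: conditioning on a vertex at $x$ only perturbs a constant-volume neighbourhood, while every renormalisation estimate above averages over $\Lambda_n$ and is robust to such a local perturbation.
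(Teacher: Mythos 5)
Your two-stage structure (a Berger-style ergodic bound followed by a Biskup-style renormalisation) matches the paper's route, and your observation about sprinkling high-mark vertices only at each new level in the hl-regime is exactly the paper's \emph{(Mark)} step. However, there are two substantive gaps.

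First, the density recurrence $\rho_{k+1}\ge\min(2\rho_k,\varepsilon)$ is impossible: merging local giants of density $\rho_k$ inside subboxes produces a component of density \emph{at most} $\rho_k$ at the next scale (size $\approx (m_{k+1}/m_k)\cdot\rho_k m_k = \rho_k m_{k+1}$), not $2\rho_k$. There is no mechanism by which merging improves density. The paper takes the opposite route: $\rho_i=\rho_0\prod_{j\ge 1}(1-\varepsilon_j)$ is \emph{non-increasing}, with $\rho_0 = n_0^{-\delta/4}$. The trick you appear to have missed is that Stage~1 (Lemma~\ref{lemma:ltld-weakest}) is invoked at a fixed \emph{constant} base scale $n_0$, so the ``polynomial'' size $n_0^{1-\delta/4}$ is already a linear-in-$n_0$ size, i.e.\ a constant density; iteration then preserves this density (losing only the summable factors $\varepsilon_j=(j+1)^{-2}$) while upgrading the failure probability from constant to $\exp(-n^{\zeta-\delta}/A)$. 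Your scheme, in which $\rho_0=m_0^{\delta_0-1}$ truly starts polynomially small and then doubles, has no valid inductive step.

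Second, the quoted merge bound $1-\exp(-cm_k^\zeta)$ conceals the crucial dependence on both the current density and the scale ratio. The correct failure probability for the \emph{(Con)} step is roughly $\exp\bigl(-\Theta(\rho_{k}^2\, m_k^\zeta\, (m_{k+1}/m_k)^{1-\alpha}\cdot (m_{k+1}/m_k))\bigr)$, and making this of order $\exp(-m_{k+1}^{\zeta-\delta})$ forces a precise choice of the ratio $m_{k+1}/m_k\approx m_k^{\xi_\delta}$ with $\xi_\delta=\tfrac{\delta/2}{\zeta+(\alpha-1)-\delta/2}$ (cf.\ \eqref{eq:xi-delta} and \cref{claim:useful}). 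This is exactly where the unavoidable $-\delta$ loss comes from; your text attributes the loss to ``polylog slack and mark truncation'', which is not where it originates. Finally, your Stage~1 argument dominates by site percolation on $\Z^d$, but the events ``box is good'' (depending on the infinite cluster) are \emph{not} independent across boxes, so the domination needs justification; the paper circumvents this with Markov's inequality on a deliberately overlapping boxing scheme (\cref{claim:induction-advance-ii}) rather than Chernoff bounds.
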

\cref{proposition:ltld-weaker} generalizes~\cite[Theorem 3.2]{biskup2004scaling} by Biskup from long-range percolation on $\Z^d$ to a wider class of KSRGs. The presence of vertex marks in KSRGs,  and the truncation of them in the statement of Proposition~\ref{proposition:ltld-weaker} require new ideas. 
We will prove Proposition~\ref{proposition:ltld-weaker} via a multi-scale renormalization scheme. To initialize this scheme, we need an initial  lower bound on $|\CC_n^{\sss{(1)}}|$.
\begin{lemma}[Existence of an ``almost''-giant]\label{lemma:ltld-weakest}
Consider a supercritical KSRG under the same setting as in Proposition \ref{proposition:ltld-weaker}. For all $\varepsilon>0$
 \begin{equation}\label{eq:ltld-weakest}
     \Prob\big(|\CC_n^\sss{(1)}|\le n^{1-\varepsilon} \big) \longrightarrow 0,\qquad\mbox{ as }n\to\infty .
 \end{equation}
 The statement remains valid for the Palm version $\Prob^\sss{x}$ of $\Prob$ for any $x\in\R^d$.
\end{lemma}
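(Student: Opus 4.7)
\medskip
\noindent\emph{Proof plan.} The strategy adapts Berger's multi-scale renormalisation \cite{berger2002transience} to the KSRG setting, using ergodicity of the marked Poisson point process together with almost-sure uniqueness of the infinite component $\CC_\infty$ in the supercritical regime. Because the target bound $|\CC_n^\sss{(1)}|>n^{1-\varepsilon}$ is much weaker than linear, it suffices to work at a single mesoscopic scale $M=M(n):=n^{\varepsilon/2}$ rather than iterate through a full cascade of scales; the full cascade will be reserved for the stronger bound in \cref{proposition:ltld-weaker}.

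The plan has four steps. First, ergodicity together with $\theta>0$ implies $|\CV_n\cap\CC_\infty|/n\to\theta$ in probability. Second, partition $\Lambda_n$ into sub-boxes $Q_1,\ldots,Q_K$ of volume $M$, with $K=n/M$, and call $Q_i$ \emph{good} if the graph $\CG_{3Q_i}$ on the triple enlargement $3Q_i$ contains a connected component capturing at least $\theta M/2$ vertices of $\CV_{Q_i}$. One argues $\Prob(Q_i\text{ is good})\to 1$ as $M\to\infty$ by combining uniqueness of $\CC_\infty$ with the fact that a typical vertex of $\CC_\infty\cap Q_i$ is connected to most other vertices of $\CC_\infty\cap Q_i$ via paths that stay inside $3Q_i$. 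Third, the assumption $\max(\zeta_\mathrm{hl},\zeta_\mathrm{ll})>0$ forces the expected number of edges between any pair of adjacent sub-boxes to grow like $M^{\Omega(1)}$; a positive fraction of these edges lands on the large local components of two good neighbouring boxes, thereby merging them. Fourth, the events ``$Q_i$ is good and linked to each of its lattice-neighbours'' have probability tending to $1$ and are only locally dependent, so a standard stochastic-domination argument compares them with supercritical site percolation on $\Z^d$ at parameter close to $1$. Whp a linear fraction of the $K$ sub-boxes belongs to a single cluster at the meso-scale, producing $|\CC_n^\sss{(1)}|\ge c\cdot M\cdot K=\Theta(n)$, which is strictly stronger than the claim.

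The main obstacle lies in the second step. One must control the probability that two vertices $u,v\in\CV_{Q_i}\cap\CC_\infty$ fail to be connected inside $3Q_i$, i.e., that every $\CC_\infty$-path between them exits the enlarged box. For nearest-neighbour Bernoulli percolation this follows from Burton--Keane uniqueness together with finite energy, but for KSRGs such a path may well use long edges or traverse very high-mark vertices located far outside $3Q_i$. The natural remedy is to first truncate the marks inside $\Lambda_n$ at $n^{\gamma_\mathrm{long}}$ with $\gamma_\mathrm{long}$ as in~\eqref{eq:gamma-long}, carry out the renormalisation in the truncated graph where long edges are tamer and an effective finite-energy argument is available, and then sprinkle back the rare high-mark vertices via a monotone coupling. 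This truncation-and-sprinkling layer is the substantive departure from the unmarked long-range percolation argument of \cite{berger2002transience, biskup2004scaling}.
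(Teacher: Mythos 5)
Your step 2 has a genuine gap, and it is precisely the gap that forces the paper to use a multi-scale rather than a single-scale argument. You claim that $\Prob(Q_i\ \text{good})\to 1$ as $M=n^{\varepsilon/2}\to\infty$, where goodness means that $\CG_{3Q_i}$ contains a component capturing a $\theta/2$-fraction of $\CV_{Q_i}$. What uniqueness of $\CC_\infty$ actually gives is much weaker: for any fixed box $\Lambda_{\underline n_0}$ and any $\delta>0$, there exists \emph{some} larger box $\Lambda_{n_0}$ (with $n_0=n_0(\underline n_0,\delta)$) in which the finitely many pieces of $\CC_\infty\cap\Lambda_{\underline n_0}$ merge into one component, with probability at least $1-\delta$. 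This is a pure compactness statement, and it comes with no control on the ratio $n_0/\underline n_0$. In particular there is no reason why a triple enlargement suffices, and the ratio may not even stay bounded as $\underline n_0\to\infty$. You correctly identify this as the main obstacle and note that for Bernoulli NNP one has stronger information, but that information (Antal--Pisztora/Grimmett--Marstrand-type estimates) is exactly what is unavailable for KSRGs; the proposed truncation-and-sprinkling layer does not produce it, since truncating marks at $n^{\gamma_{\mathrm{long}}}$ tames individual long edges but gives no quantitative bound on how far a $\CC_\infty$-path between two nearby vertices may wander.

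The paper's proof (Appendix~\ref{app:linear-sized}, \cref{def:renorm-overlap}--\cref{claim:induction-advance-ii}) handles this by a multi-scale scheme with \emph{overlapping} boxes: at the base level the fattening $n_0/\underline n_0$ is allowed to be arbitrarily large (this is exactly where the uniqueness argument lives, in \cref{claim:induction-base-ii}), but at each subsequent level the fattened box $\CQ_i(x)$ of volume $n_i$ exceeds the core box $\CQ^\sss{\mathrm{sub}}_i(x)$ of volume $\underline n_i$ only by a \emph{constant-width} annulus, so that $n_i/\underline n_i\to 1$ as $i\to\infty$; see~\eqref{eq:berger-mi-ni} and~\eqref{eq:ni-bigger-box}. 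The price paid is a small polynomial density loss at each level (encoded by the sequence $\rho_i=(i+s)^{-2d}$ in~\eqref{eq:berger-rho-i}), which accumulates to $\prod_{\ell}\rho_\ell\ge n_i^{-\varepsilon'}$ by~\eqref{eq:berger-rho-bound}; this is exactly why the lemma only asserts $|\CC_n^\sss{(1)}|\ge n^{1-\varepsilon}$ rather than a linear bound. Your steps 3 and 4 (edge-count between adjacent boxes when $\zeta_{\mathrm{long}}>0$; domination by dense site percolation) are sound sub-ideas and reappear, in modified form, in the paper's \emph{(Con)}-steps; but without a correct version of step 2 they have nothing to glue together, and the output $\Theta(n)$ you announce at the end of step 4 would in fact contradict the known difficulty (one cannot get a sharp linear bound at this stage --- that is the content of the later \cref{proposition:ltld-weaker} and \cref{lemma:bootstrap}).
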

The proof of this lemma is inspired by a multi-scale renormalization technique introduced for long-range percolation by Berger in~\cite{berger2002transience}. When $\zeta_\mathrm{ll}=2-\alpha>0$ and the vertex set is formed by a PPP, \cite[Proposition 3.9]{GraHeyMonMor19} already proves \eqref{eq:ltld-weakest} using an adaptation of~\cite{berger2002transience}.
For the case $\zeta_\mathrm{hl}>0$ we give a new proof  in  Appendix~\ref{app:linear-sized} that explicitly relies on the vertex-marks.  The rest of this section is devoted to proving Proposition~\ref{proposition:ltld-weaker}. We first introduce some additional preliminaries.

\subsection{Preliminaries}\label{sec:sprinkle}
We describe a sprinkling technique that enables us to reveal the vertex set of a KSRG on a PPP in multiple stages, so that the graph is  supercritical already before sprinkling.  
\begin{definition}[Scaled-thinned KSRG on a PPP]\label{def:scale-thin} Consider a KSRG on a PPP.
    Let $q\in[0,1]$ and $\eta>0$. We write $\eta\CG_n^{\beta, q}$ for a scaled-thinned KSRG as follows: first, we keep each vertex of the PPP $\CV$ independently with probability $q$. Then we sample the edges according to \cref{def:ksrg} with edge-density parameter $\beta>0$. Finally, we we assign a new location to each retained vertex, i.e., we map $u=(x_u, w_u)\in\R^d\times[1,\infty)$ to $(\eta x_u, w_u)$. 
\end{definition}
It is not hard to see that the scaled-thinned KSRG with $\eta=q$ is distributed like the original graph with a smaller parameter $\beta$ in a smaller box. This is the content of the following observation.
\begin{observation}[Sprinkling trick-A]\label{obs:sprinkle}
Consider a supercritical KSRG on a PPP, i.e., with $\beta>\beta_c(p, \alpha, \tau, \sigma)$. Let $q\in(0,1]$. Then the following distributional identity holds:
\begin{equation}\label{eq:sprinkling-dist}
    q\CG_n^{\beta, q}\overset{d}{=}\CG_{qn}^{\beta q^d, 1} \overset{d}{=}  \CG_{qn}^{\beta q^d}
\end{equation}
In particular, the graph $q\CG_n^{\beta, q}$ is supercritical whenever the new edge-intensity $\beta q^d>\beta_c$.
\begin{proof}
Denote by $\CV^{(q,q)}$ the marked vertex set of $q\CG_n^{\beta, q}$. Since the projection of $\CV$ onto the spatial dimensions in $\R^d$ is a PPP with intensity being the Lebesgue measure, thinning $\CV$ gives intensity $q\mathrm{Leb}(\cdot)$. Then, re-scaling with $\eta=q$ maps the box $\Lambda_n$ to $\Lambda_{qn}$. For any set $A\subseteq \Lambda_{qn}$, the expected number of points in it is $q \mathrm{Leb}(\eta^{-1}A)= \mathrm{Leb}(A)$. Hence $\CV^{(q,q)}$ has the same distribution as $\CV\cap \Lambda_{qn}$, since the spatial intensity of the new point process $\CV^{(q,q)}$ is $\mathrm{Leb}(\cdot)$ again, and the distribution of the mark-coordinate is unchanged. 
We now inspect the connection probabilities in $q\CG_n^{\beta, q}$. Let us denote the new connectivity function by $\mathrm{p}^{(q,q)}$. Consider two points $u=(x_u, w_u)$ and  $v=(x_v, w_v)$ in $\CV^{(q,q)}$. Since we connect $u$ to $v$ by an edge in $q\CG_n^{\beta, q}$ iff we connect their inverse points $(q^{-1}x_u, w_u)$ and $(q^{-1}x_v, w_v)$ in $\CG_n$ by an edge, the new connection probability $\mathrm{p}^{(q,q)}$ is given by
\[ 
\mathrm{p}^{(q,q)}\big((x_u, w_u),  (x_v, w_v)\big) = \mathrm{p}\big( (q^{-1}x_u, w_u), (q^{-1}x_v, w_v)\big)
= 
 \varphi\Big(\beta\frac{\kappa(w_u, w_v)}{\|q^{-1}(x_u-x_v)\|^d}\Big).\]
 Thus, the new graph has the same distribution as a graph on  $\CV\cap \Lambda_{qn}$ with the same connection probabilities but new edge-density parameter $\beta'=\beta q^d$. This graph is denoted by $\CG_{qn}^{\beta',1}$, since in this latter notation the thinning factor is $q=1$, and the box is $\Lambda_{qn}$. The last identity in \eqref{eq:sprinkling-dist} says that omitting $q=1$ from the superscript is consistent with the notation. 
 Finally, the new graph is supercritical whenever the new edge-intensity $\beta q^d$ is at least $\beta_c$ as well.
\end{proof}
\end{observation}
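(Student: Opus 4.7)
\textbf{Plan for proving \cref{obs:sprinkle}.} The identity is a ``scaling + thinning'' calculation for Poisson point processes combined with a rescaling of the edge-connectivity function. The plan is to verify separately that (i) the marked vertex set of $q\CG_n^{\beta,q}$ has the same law as the marked vertex set of a unit-intensity KSRG on $\Lambda_{qn}$, and (ii) the induced conditional edge-probability function equals the one used to define $\CG_{qn}^{\beta q^d}$. Since a KSRG is determined by its marked vertex set and its conditional connection function, these two points together give the distributional identity in \eqref{eq:sprinkling-dist}, and the supercriticality statement is then just the definition of $\beta_c$ applied to $\CG_{qn}^{\beta q^d}$.

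\emph{Step 1: the marked vertex set.} Project $\CV$ onto $\R^d$; by \cref{def:ksrg} this projection is a unit-intensity PPP. Independent $q$-thinning yields a PPP of intensity $q$ by the classical thinning theorem, and the marks attached to the surviving atoms remain i.i.d.\ with law $F_W$ and independent of the locations. Applying the map $x\mapsto q x$ to the spatial coordinate and invoking the mapping theorem for Poisson processes shows that the rescaled point set has the distribution of a unit-intensity PPP on the image of $\Lambda_n$, with marks still i.i.d.\ $F_W$. This matches the marked vertex set of $\CG_{qn}^{\beta q^d}$ in law.

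\emph{Step 2: the connection probabilities.} Condition on the marked vertex sets. Two surviving vertices $u,v$ at original locations $x_u,x_v$ are connected in $q\CG_n^{\beta,q}$ with probability $\varphi\bigl(\beta\,\kappa(w_u,w_v)/\|x_u-x_v\|^d\bigr)$, independently of the rest. Their new locations are $qx_u,qx_v$; inverting the scaling gives $\|x_u-x_v\|^d=q^{-d}\|qx_u-qx_v\|^d$, so in the new coordinates the argument of $\varphi$ becomes
\begin{equation*}
\beta\,\frac{\kappa(w_u,w_v)}{q^{-d}\|qx_u-qx_v\|^d}=(\beta q^d)\,\frac{\kappa(w_u,w_v)}{\|qx_u-qx_v\|^d}.
\end{equation*}
This is exactly the edge-connectivity function in \cref{def:ksrg} with edge-density parameter $\beta q^d$ in place of $\beta$. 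Edges remain conditionally independent across pairs because they were in $\CG_n$. Combined with Step 1, this proves the displayed identity, and the final clause follows because $\CG_{qn}^{\beta q^d}$ percolates if and only if $\beta q^d>\beta_c$.

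\emph{Expected main obstacle.} The content is mostly bookkeeping, so there is no deep obstacle; the one place where care is needed is the interaction between the PPP rescaling and the factor $\|x_u-x_v\|^{-d}$ in the connection kernel, which is precisely what produces the $q^d$ shift in the edge-density parameter (rather than $q$ or $q^{1/d}$). Stating the thinning/mapping theorem for marked Poisson processes explicitly, and keeping track of the same scaling factor in the kernel, makes the whole argument into a direct verification.
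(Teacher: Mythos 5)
There is a genuine gap in Step~1 for $d>1$. After independent $q$-thinning, the spatial projection has intensity $q\,\mathrm{Leb}$. Pushing this forward by the per-coordinate map $x\mapsto q x$ gives, by the mapping theorem, a PPP with intensity $q\cdot q^{-d}\,\mathrm{Leb}=q^{1-d}\,\mathrm{Leb}$, which is unit intensity only when $d=1$; and the image of $\Lambda_n$ (side length $n^{1/d}$) under $x\mapsto qx$ is a box of side $qn^{1/d}$, i.e.\ $\Lambda_{q^d n}$, not $\Lambda_{qn}$. To land on a unit-intensity PPP in $\Lambda_{qn}$, as the claimed identity requires, the per-coordinate scaling must be $q^{1/d}$. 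But with that map your Step~2 reads $\|x_u-x_v\|^d=q^{-1}\|q^{1/d}(x_u-x_v)\|^d$, so the edge-density multiplier comes out as $q$, not $q^d$. You correctly flagged the $q$ vs.\ $q^{1/d}$ vs.\ $q^d$ bookkeeping as the one delicate point, but the proof does not keep the two steps consistent: Step~1 (unit intensity on $\Lambda_{qn}$) implicitly needs the $q^{1/d}$-scaling while Step~2 (arriving at $\beta q^d$) uses the $q$-scaling, and these are different maps.

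For completeness, the paper's own proof appears to carry the same tension: the line $q\,\mathrm{Leb}(\eta^{-1}A)=\mathrm{Leb}(A)$ and the claim $\Lambda_n\mapsto\Lambda_{qn}$ both force the $q^{1/d}$-scaling, while the connection-probability line written as $\|q^{-1}(x_u-x_v)\|^d$ uses the $q$-scaling and is what produces $\beta q^d$. These cannot both hold for $d\ge 2$. Downstream the observation is used only through the implication ``$\beta q^d>\beta_c$ guarantees the thinned graph is supercritical,'' and since $\beta q\ge\beta q^d$ for $q\in(0,1]$, that implication survives under the corrected multiplier $\beta q$, so later arguments are not affected. But as written, your Step~1 is false for $d\ge 2$ and the two steps of your proof are not about the same rescaling map.
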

Throughout the paper we will use the following construction of KSRGs when we consider PPPs as the underlying vertex set, following the setting of Assumption~\ref{assumption:main}.
\begin{definition}[Alternative KSRG construction on a PPP]\label{def:ksrg-alt}
    Consider a KSRG on a PPP with parameters and kernel $\kappa$ from Assumption~\ref{assumption:main}. If $\beta>\beta_c$, let $q\in(0,1]$ be such that $\beta q^d >\beta_c$, i.e., the scaled-thinned graph $q\CG_{n}^{\beta,q}$ in \eqref{eq:sprinkling-dist} is supercritical. If $\beta\le \beta_c$, set $q=1$.
    Let $\CV^\sss{\mathrm{base}}$ and $\CV^\sss{\mathrm{spr}}$ be two independent PPPs with intensity $q\mathrm{Leb}\!\times\! F_W(\rd w)$ and $(1-q)\mathrm{Leb}\!\times\! F_W(\rd w)$, respectively, with $F_W$ as in Assumption~\ref{assumption:main}. Define 
    \begin{equation*}
        \overline\CV:=\CV^\sss{\mathrm{base}}\cup\CV^\sss{\mathrm{spr}},
    \end{equation*} 
    which is equal in distribution  to a Poisson point process on $\R^d\times[1,\infty)$ with intensity 
    \begin{equation}\label{eq:poisson-intensity}
    \mathrm{Leb}\times F_W(\rd w) = \begin{dcases}
        \rd x\times (\tau-1)w^{-\tau}\rd w,&\text{if }\tau<\infty,\\
        \rd x\times \delta_1(\rd w),&\text{if }\tau=\infty,
    \end{dcases} 
    \end{equation}
    where $\delta_1(\cdot)$ denotes a Dirac measure at one.
    Conditionally on the realization of $\overline\CV$, let $\overline\CG_n^\sss{\beta}$ be the graph on $\overline \CV$ where two vertices $u, v\in\overline\CV$ are connected by an edge independently with probability 
    \begin{equation}
        \mathrm{p}(u,v)=
        \begin{dcases}
            p\Big(\beta\cdot\frac{\kappa(w_u, w_v)}{\|x_u-x_v\|^d}\wedge 1\Big)^\alpha,&\text{if }\alpha<\infty,\\
            p\mathds{1}\big\{\beta\kappa(w_u, w_v)\ge \|x_u-x_v\|^d
            \big\},&\text{if }\alpha=\infty.
            \end{dcases}\label{eq:connection-prob-gen}
    \end{equation}
    Let $\CG_n^{\beta,\sss{\mathrm{base}}}\ {\buildrel d \over =}\ \CG_n^{\beta,q}$ be the induced subgraph on $\CV^\sss{\mathrm{base}}$. If $q<1$, we call $\CG_n^\sss{\beta,\mathrm{base}}$ the \emph{thinned} KSRG.
  We omit the superscript $\beta$ if it is clear from the context.
\end{definition}
\begin{observation}[Sprinkling trick-B]\label{obs:sprinkle-2}
    Consider a supercritical KSRG on a PPP, and let $\overline \CG_n^{\sss{\beta}}$ and $\CG_n^{\beta,\sss{\mathrm{base}}}$ be as in \cref{def:ksrg-alt}. Write $\beta':=\beta q^d$.
    Let $\CC^\sss{(1)}_{n,\sss{\mathrm{base}}}$ and $\CC^\sss{(1)}_{qn,\beta'}$ be the largest component in $\CG_n^{\beta, \sss{\mathrm{base}}}$ and $\CG_{qn}^{\beta'}$, respectively. Then 
  \[
    \overline \CG_n^\sss{\beta}\overset{d}=\CG_n^\sss{\beta},
    \qquad 
    \CG_n^{\beta,\sss{\mathrm{base}}} \overset{d}= \tfrac1q \CG_{qn}^{\beta'}, \qquad     |\CC_{qn, \beta'}^\sss{(1)}|
    \, \overset{d}=\,
    |\CC_{n, \sss{\mathrm{base}}}^\sss{(1)}|. 
    \]
    \begin{proof} $ \overline \CG_n^\sss{\beta}$ has the same law as $\CG_n^\sss{\beta}$ since $\overline \CV$ has the same law as $\CV$ and the connection probabilities are the same. $\CG_n^{\beta,\sss{\mathrm{base}}}$ has the same law as $1\CG_{n}^{\beta, q}$ in \cref{def:scale-thin}, i.e., we just apply thinning but do not rescale. We write this now as $q^{-1} (q\CG_{n}^{\beta, q})$ and apply the identity \eqref{eq:sprinkling-dist} to the scaled-thinned graph, and we get the second distributional identity above. The last identity about the largest component holds since edges (and thus connected components) do not change under rescaling.
    \end{proof}
\end{observation}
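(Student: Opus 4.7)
The plan is to decompose the claim into three distributional identities and verify each one by direct appeal to the definitions, combined with the already-proved \cref{obs:sprinkle} and the superposition theorem for Poisson point processes. None of the three identities is deep, so the work is essentially checking that the notation in \cref{def:scale-thin,def:ksrg-alt} matches up correctly.

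First, I would establish $\overline\CG_n^\sss{\beta} \overset{d}= \CG_n^\sss{\beta}$ as follows. By construction, $\CV^\sss{\mathrm{base}}$ and $\CV^\sss{\mathrm{spr}}$ in \cref{def:ksrg-alt} are independent PPPs on $\R^d\times[1,\infty)$ with intensities $q\,\mathrm{Leb}\times F_W(\rd w)$ and $(1-q)\,\mathrm{Leb}\times F_W(\rd w)$. By the superposition theorem, their union $\overline\CV$ is a PPP with intensity equal to the sum, which is $\mathrm{Leb}\times F_W(\rd w)$, matching \eqref{eq:poisson-intensity}. Since the connection probabilities in \eqref{eq:connection-prob-gen} coincide with those of the original KSRG in \cref{def:ksrg}, the two marked graphs have the same law.

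Second, for $\CG_n^{\beta,\sss{\mathrm{base}}} \overset{d}= \tfrac{1}{q}\CG_{qn}^{\sss{\beta'}}$, I would observe that the base graph is exactly a $q$-thinning of the full KSRG \emph{without} any spatial rescaling; in the notation of \cref{def:scale-thin} this is $\eta\CG_n^{\beta,q}$ with $\eta=1$. Writing $1\cdot\CG_n^{\beta,q} = q^{-1}\bigl(q\CG_n^{\beta,q}\bigr)$ and applying the identity \eqref{eq:sprinkling-dist} from \cref{obs:sprinkle} to the inner scaled-thinned object yields the stated equality in distribution. The third identity $|\CC_{qn,\beta'}^\sss{(1)}| \overset{d}= |\CC_{n,\sss{\mathrm{base}}}^\sss{(1)}|$ is then a formal consequence of the second: the size of a connected component is a function of the combinatorial edge-set alone, so it is invariant under the spatial dilation $x \mapsto q^{-1}x$ that turns $\CG_{qn}^{\beta'}$ into $q^{-1}\CG_{qn}^{\beta'}$.

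I do not foresee any real obstacle here: the whole argument is bookkeeping. The one mildly delicate point is keeping careful track of the distinction between pure thinning (which keeps spatial positions fixed and thereby decreases the vertex intensity) and the scaled-thinned operation of \cref{def:scale-thin} (which additionally rescales positions by $\eta=q$ so that the resulting process again has unit intensity on a smaller box). Once this distinction is made explicit, each of the three identities reduces to a single invocation of \cref{obs:sprinkle} or of the Poisson superposition theorem.
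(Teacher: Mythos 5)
Your proof is correct and follows essentially the same route as the paper: superposition of the two independent PPPs for the first identity, writing the base graph as $q^{-1}(q\CG_n^{\beta,q})$ and invoking \eqref{eq:sprinkling-dist} for the second, and invariance of the combinatorial edge structure under spatial rescaling for the third. The only difference is that you make the appeal to the Poisson superposition theorem explicit, whereas the paper simply asserts that $\overline\CV$ has the same law as $\CV$; this is a presentational detail, not a different argument.
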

Let us also state a well-known statement about the connectivity of Erd\H{o}s-R\'enyi random graphs. We give a short (suboptimal) proof for the sake of completeness.
\begin{claim}[Connectivity of $G(n,p_n)$]\label{claim:er-conn}
    Consider the Erd\H{o}s-R\'enyi random graph $G(n,p_n)$ with $np_n=\omega(\log n)$. For all sufficiently large $n$, 
    \begin{equation}
        \Prob\big(\mbox{$G(n,p_n)$ is not connected}\big)\le 3(\re n)^2(1-p_n)^{n/2}. 
    \end{equation}
    \end{claim}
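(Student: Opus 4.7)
The plan is a first-moment argument over vertex cuts. The graph $G(n,p_n)$ fails to be connected precisely when the vertex set $[n]$ admits a nontrivial partition $(S, S^c)$ such that every candidate edge between $S$ and $S^c$ is absent. By symmetry it suffices to restrict to $1\le |S|\le \lfloor n/2\rfloor$. Since the $k(n-k)$ edges between $S$ and $S^c$ are present independently, the union bound gives
\[
\Prob\big(G(n,p_n)\text{ disconnected}\big)\le \sum_{k=1}^{\lfloor n/2\rfloor}\binom{n}{k}(1-p_n)^{k(n-k)}.
\]

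The next step is to collapse this sum to a geometric series. I would bound $\binom{n}{k}\le n^k$, and use the elementary inequality $k(n-k)\ge kn/2$, which is valid precisely when $1\le k\le n/2$. Writing $z := n(1-p_n)^{n/2}$, the sum is bounded by $\sum_{k\ge 1}z^k$. This step is where the slack in the factor $3(en)^2$ comes in: any reasonable handling of $\binom{n}{k}$ combined with the concavity of $k\mapsto k(n-k)$ would produce essentially the same bound up to polynomial-in-$n$ constants.

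To finish, I would split into two regimes. If $z\le 1/2$, the geometric series evaluates to at most $2z=2n(1-p_n)^{n/2}$, which is trivially dominated by $3(en)^2(1-p_n)^{n/2}$. If instead $z>1/2$, then $(1-p_n)^{n/2}>1/(2n)$, so the claimed right-hand side satisfies $3(en)^2(1-p_n)^{n/2}>3e^2n/2\ge 3e^2/2>1$, and the inequality holds trivially since probabilities do not exceed $1$.

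There is no substantive obstacle: the bound is intentionally loose so as to hold uniformly in $n\ge 1$ and $p_n\in[0,1]$ without any regime-splitting on $p_n$. The hypothesis $np_n=\omega(\log n)$ is not actually needed for the inequality itself; it only ensures that the resulting upper bound tends to $0$, making the estimate useful as a quantitative connectivity criterion in the applications later in the paper.
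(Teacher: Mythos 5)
Your proof is correct, and it takes a genuinely different route from the paper's. The paper argues over putative \emph{small components}: it conditions on a component of size $k$ existing, enumerates the ways such a component can be internally connected via Cayley's formula ($k^{k-2}$ spanning trees on $k$ labelled vertices), and takes a union bound over subsets and trees. You instead take a union bound over \emph{vertex cuts}: you observe that disconnectedness is equivalent to the existence of a nontrivial bipartition $(S,S^c)$ with no crossing edges, and bound $\sum_{k\le n/2}\binom{n}{k}(1-p_n)^{k(n-k)}$ directly, never invoking any structure on $S$. Your approach is more elementary (it avoids Cayley's formula entirely) and also slightly more careful at the end: the paper evaluates the geometric series as $\tfrac{(\re n)^2(1-p_n)^{2\lceil n/2\rceil}}{1-\re n(1-p_n)^{\lceil n/2\rceil}}\le 2(\re n)^2(1-p_n)^{n/2}$, which tacitly requires $\re n(1-p_n)^{\lceil n/2\rceil}\le 1/2$; your explicit dichotomy on $z=n(1-p_n)^{n/2}$ disposes of the complementary regime by noting the claimed bound then exceeds $1$. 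The trade-off is that the paper's spanning-tree count gives a sum that (were one to keep the $p^{k-1}$ factor it drops) is tighter term-by-term, which can matter in sharper connectivity results, but for this deliberately loose bound the two are interchangeable. Your closing remark that the hypothesis $np_n=\omega(\log n)$ is not needed for the inequality itself, only for its usefulness, is also correct.
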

    \begin{proof}If $G(n,p_n)$ is not connected, then there must exist a connected component of size $k$, for some $1\le k \le \lfloor n/2 \rfloor$. For $k=1$, this probability is at most $n(1-p_n)^{n-1}\le (\re n)^2 (1-p_n)^{n/2}$ by a union bound over all vertices. For $k\ge 2$, by Cayley's formula, there are $k^{k-2}$ spanning trees on $k$ vertices, and a connected component of size $k$ has to contain such a tree. So by a union bound over the spanning trees, the probability that a connected component of size between $2$ and $n/2$ exists without being connected to the other $n-k$ vertices, is at most
    \[
    \sum_{k=2}^{\lfloor n/2 \rfloor} \binom{n}{k} k^{k-2}(1-p_n)^{k(n-k)} \le \sum_{k=2}^{\lfloor n/2 \rfloor}\left(\frac{\re n}{k}\right)^k k^{k-2}(1-p_n)^{k(n-k)}\le \sum_{k=2}^{\lfloor n/2 \rfloor} \big(\re n(1-p_n)^{n-k}\big)^k.
    \]
     Using $n-k\ge \lceil n/2\rceil$, we get that
    the right-hand side is bounded from above by 
    \[
    \sum_{k=2}^{\lfloor n/2 \rfloor} \big(\re n(1-p_n)^{\lceil n/2\rceil}\big)^k\le \frac{(\re n)^2(1-p_n)^{2\lceil n/2\rceil }}{1-\re n(1-p_n)^{\lceil n/2\rceil }}\le 2(\re n)^2(1-p_n)^{n/2}.
    \]
 Combining the cases of $k=1$ and $k\ge 2$ we obtain the desired statement. 
    \end{proof}

\subsection{A linear-sized component (high-low regime)}
We turn to the proof of Proposition~\ref{proposition:ltld-weaker} for the hl-regime, so we restrict to KSRGs on PPPs. We describe the adjustments for the ll-regime at the end of this section.  A sketch of the argument can be found in Figure~\ref{fig:multi-scale}.
\begin{figure}
    \centering
    \includegraphics[width=0.95\textwidth]{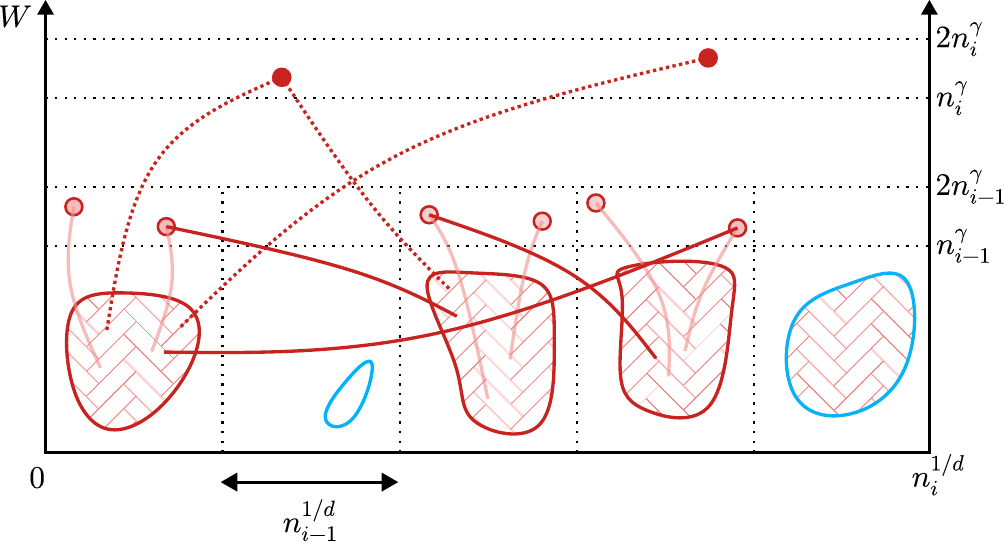}
    \captionsetup{width=0.95\linewidth}
    \caption{Sketch of the multi-scale renormalization for the hl-regime. The level-$(i-1)$ box situated second from the left is bad because its largest connected component is too small (e.g.\ due to failure of \emph{Con} at a previous level). The fifth box is bad because the \emph{Mark} step failed at level $i-1$, even though it contains a large component. The large components in the first, third, and fourth box are connected using the connector vertices, forming the darkred component. At the end, we reveal the high-mark vertices in $[n_i^\gamma, 2n_i^\gamma)$ and their edges to the local giant in the level-i box. If there are sufficiently many high-mark vertices, the box passes the \emph{Mark}-step. We will only use these vertices in the next level.}
    \label{fig:multi-scale}
\end{figure}

\subsubsection{Multi-scale renormalization -- sketch}\label{sec:outline-biskup-renorm} We set up an iterative boxing scheme. The initial box size is a constant $n_0$. We check if the largest component of a thinned KSRGs inside a level-$0$ box has size at least $\rho_0n_0:=n_0^{1-\varepsilon}$.  Next, we sprinkle vertices of sufficiently high (but constant) mark that connect to this largest component. If both of these steps succeed, we call a level-$0$ box good. This happens with constant probability using \cref{lemma:ltld-weakest}. 

For each iteration $i\ge1$, we group $m_{i}$ level-{$(i-1)$} boxes of volume $n_{i-1}$ into a larger level-$i$ box $\mathrm{B}$ of volume $n_i:=m_{i}n_{i-1}$. We always work with only the `good' subboxes of the previous level $i-1$ inside $\mathrm{B}$ to determine whether the level-$i$ box $\mathrm{B}$ is good,
and  control the probabilities of the following three events:
\begin{enumerate}
    \item[\emph{(Sub)}]Using independence across subboxes, at least $(1-\varepsilon_{i})m_{i}$ of the level-$(i-1)$ subboxes of volume $n_{i-1}$ of $\mathrm{B}$ are level-$(i-1)$-good with sufficiently large probability. 
    That is, the subboxes satisfy the next  events \emph{(Con), (Mark)} at level $i-1$: they have a local component of density at least $\rho_{i-1}$, which contains enough level-$(i-1)$ high-mark vertices. 
    \item[\emph{(Con)}] Each good level-$(i\!-\!1)$ subbox of $\mathrm{B}$  has enough high-mark vertices inside a local  component by the event \emph{(Mark)} on the previous level. We use these high-mark vertices to connect the level-$(i\!-\!1)$ subboxes \emph{to each other}. This is done by a stochastic domination of an Erd\H{o}s-R\'enyi random graph $G\big((1-\varepsilon_i)m_i, q_i\big)$: the giant component of each good level-$(i-1)$ subbox is represented by a vertex in $G$. We ensure that $q_i$ is above the connectivity threshold of Erd\H{o}s-R\'enyi random graphs. The outcome is that the level-$i$ box $\mathrm{B}$ contains a connected component of density at least $\rho_{i}:=\rho_{i-1}(1-\varepsilon_{i})$ with error probability stretched exponential in $n_i$ (and not $n_{i-1}$ that we had on level $i-1$).
    \item[\emph{(Mark)}]The sprinkling trick from Section~\ref{sec:sprinkle} allows us to sprinkle level-$i$ high-mark vertices that were not exposed in previous levels. We make sure enough of them connect to the large connected component in $\mathrm{B}$ built in the \emph{Con}-step. We will then use these vertices in the \emph{Con}-step of level $i\!+\!1$ later.
\end{enumerate}
If the steps \emph{Con} and \emph{Mark} succeed then we call $\mathrm{B}$ level-$i$ good. (The first step \emph{Sub} is only a tool to control error probabilities of these latter steps).
We will ensure that $\inf_i\rho_i>0$, so the size of the connected component we construct in the \emph{Con}-step is linear in $n_i$. Currently we use \cref{lemma:ltld-weakest} for initializing the level-$0$ boxes: this guarantees local largest components of size $n_0^{1-\eps}$, i.e., density $n_0^{-\eps}$, and error probability tending to $0$ at an unspecified rate. This proof of Proposition~\ref{proposition:ltld-weaker} here will `boost' the giant to have density at least of order $n_0^{-\varepsilon}n_i$, which is linear in $n_i$ (as $i\to \infty$, because $n_0$ is a constant), and the error probability becomes stretched exponential in $n_i$. This renormalization gives Proposition~\ref{proposition:ltld-weaker} along the subsequence $(n_i)_{i\ge 0}$ of box sizes.
A final boxing argument will ensure that Proposition~\ref{proposition:ltld-weaker} holds for all $n\ge 1$.

\subsubsection{Formal setup of the renormalization}\label{sec:formal-setup-biskup} We formalize the boxing scheme with all its parameters.
Fix any small constant $\delta>0$ that appears in the statement of Proposition~\ref{proposition:ltld-weaker}. We define an extra constant, that  shall describe the growth rate of the box sizes $(n_i)_{i\ge 1}$:
\begin{equation} 
\xi_\delta:=\xi_\delta(\zeta)=\frac{\delta/2}{\zeta+(\alpha-1)-\delta/2}.\label{eq:xi-delta}
\end{equation}
The value of $\xi_\delta$ is defined both for $\zeta_{\mathrm{hl}}=(\tau-1)/\alpha-(\tau-2)$ and for $\zeta_{\mathrm{ll}}=2-\alpha$ whenever these are positive.
Since $\alpha>1$, $\xi_\delta>0$ for all $\delta<\zeta$.
 Unless explicitly stated otherwise, we abbreviate in the rest of the section
\begin{equation}
    \gamma=\gamma_\mathrm{hl}=1-1/\alpha,\qquad 
    \zeta=\zeta_\mathrm{hl}=(\tau-1)/\alpha-(\tau-2).
\end{equation}
We recall the PPPs $\CV^\sss{\mathrm{base}}$ and $\CV^\sss{\mathrm{spr}}$ with thinning constant $q<1$ from \cref{def:ksrg-alt}. We will choose any $q$ with $\beta q^d>\beta_c$. We recall the parameters $p, \beta$ from the connectivity function in \eqref{eq:connection-prob-gen}. We define one extra constant depending on $q$:
    \begin{equation}
    \delta':=\tfrac{1}{4}(1-q)(1-2^{-(\tau-1)})\cdot p\beta^\alpha d^{-\alpha d/2}.\label{eq:delta-prime}
    \end{equation}
    Slightly abusing notation, if $\CV$ is a set of vertices and $\CC$ is a component, we write $\CV\cap\CC$ for the set of vertices in $\CV$ that belong to the component $\CC$.
\begin{definition}[renormalization scheme]\label{def:boxing-biskup}
Consider a KSRG under the same setting as in Proposition \ref{proposition:ltld-weaker}.
    Fix $\zeta>0$, and $\delta\in (0, \zeta/2)$, and $\xi_\delta(\zeta) $ as in \eqref{eq:xi-delta}, and a thinning constant $q<1$ with $\beta q^d>\beta_c$. 
    Let $n_0$ be a large constant. We iteratively define for $i\ge 1$ 
        \begin{equation}\label{eq:def-mi-ni}
    m_{i}:=\left\lfloor n_{i-1}^{\xi_\delta/d}\right\rfloor^d, \qquad 
    n_{i}:=m_{i}n_{i-1}.
    \end{equation}    
    Let $\CQ_i(x)$ denote a box centered at $x\in\R^d$ of volume $n_i$. We define the level-$i$ vertex set of $\CQ_i(x)$ to be 
    \begin{equation}
        \widehat\CV_{i}:= \CV^\sss{\mathrm{base}}_{\CQ_i(x)}\big[1, n_0^{2/(\tau-1)}\big)\cup\CV^\sss{\mathrm{spr}}_{\CQ_i(x)}[1, 2n_i^\gamma).\label{eq:def-xi-biskup}
    \end{equation}
    We write $\widehat \CG_{n_i}$ for the induced subgraph on $\widehat\CV_{i}$.
    Define the sequences 
    \begin{align}
    \varepsilon_i&:=
    \begin{dcases}
        1-n_0^{-\delta/4},&\text{if }i=0,\\
        (i+1)^{-2},&\text{if }i\ge 1,
    \end{dcases}\label{eq:eps-i}\\
    \rho_i&:=\prod_{j=0}^i(1-\varepsilon_j)=n_0^{-\delta/4}\prod_{j=2}^{i+1} (1-j^{-2}), 
    \qquad 
    \rho:=\lim_{j\to\infty}\rho_j.\label{eq:rho-i}
    \end{align}
    We call $\CQ_i(x)$ $i$-good (and $i$-bad otherwise) if it satisfies the event (with $\delta'$ in \eqref{eq:delta-prime})
    \begin{equation}\label{eq:i-good-def}
    \begin{aligned}
       \CA_{\mathrm{i\textnormal{-}good}}(x):= \Big\{ \exists&\text{comp.\ $\CC_{i}$ in }\widehat\CG_{n_i}: |\CC_{i}| \ge \rho_i n_i, \\
       &\qquad|\CV^\sss{\mathrm{spr}}_{\CQ_i(x)}[n_i^\gamma, 2n_i^\gamma)\cap\CC_{i}| \ge \delta'\rho_{i} n_i^{\zeta}\Big\}.
    \end{aligned}
    \end{equation}
\end{definition}
The definitions of $(m_i)_{i\ge 0}$ and $(n_i)_{i\ge 0}$ in \eqref{eq:def-mi-ni} ensure that they  increase doubly exponentially in $i$: indeed, ignoring integer parts, $n_i \approx n_{i-1}^{1+\xi_\delta}$ so $n_i\approx n_0^{(1+\xi_\delta)^i}$. This also implies that $2n_{i-1}^\gamma< n_{i}^\gamma$, and so for any level-$(i\!-\!1)$ subbox centered at any $y$, $\widehat\CV_{i-1, y}$ is disjoint of $\CV^\sss{\mathrm{spr}}_{\CQ_i(x)}[n_i^\gamma, 2n_i^\gamma)$. Thus, in $\CQ_{i}(x)$ we sprinkle in \emph{new} vertices with mark in $[n_{i}^\gamma, 2n_{i}^\gamma)$ in level-$i$ compared to previous levels. Summability of $(\varepsilon_i)_{i\ge 0}$ in \eqref{eq:eps-i} implies that $\rho>0$ for any starting value $\eps_0(n_0)$ in (\ref{eq:eps-i}--\ref{eq:rho-i}).

The next claim provides useful bounds on the sequences that will appear in subsequent computations. Actually, $\xi_\delta$ in~\eqref{eq:xi-delta} is chosen precisely such that~\eqref{eq:mi-ni-sub}--\eqref{eq:mi-ni-mark} are satisfied below. The label on the inequalities indicate where we will use the bounds later. 
\begin{claim}[Useful bounds]\label{claim:useful}
   Fix \emph{any} $\zeta\in(0,1)$ and then $\xi_{\delta}(\zeta)$ in \eqref{eq:xi-delta}.
    Assume $\delta\in(0, \zeta/2)$ and consider the quantities in \cref{def:boxing-biskup}. For each constant $C>0$, there exists a constant $n_0^\star>0$ such that for all $n_0\ge n_0^\star$ and all $i\ge 1$, the following hold:
    \begin{align}
    \varepsilon_{i}m_{i}n_{i-1}^{\zeta-\delta}&\overset{\mathrm{(Sub)}}\ge C n_{i}^{\zeta-\delta}, \label{eq:mi-ni-sub}\\
    \rho_{i-1}^2 m_{i}^{1-\alpha}n_{i-1}^\zeta &\overset{\mathrm{(Con)}}\ge Cn_{i}^{\zeta-\delta}, \label{eq:mi-ni-con}\\
      \rho_in_i^\zeta&\overset{\mathrm{(Mark)}}\ge Cn_i^{\zeta-\delta}.\label{eq:mi-ni-mark}
    \end{align}
 Further,
    \begin{align}
    n_i\in \Big[n_0^{(1+\xi_\delta/2)^i}, n_0^{(1+\xi_\delta)^i}\Big], \qquad
    m_{i} \in \Big[\tfrac{1}{2}n_0^{\xi_\delta(1+\xi_\delta/2)^{i-1}}, n_0^{\xi_\delta(1+\xi_\delta)^{i-1}}\Big],\label{eq:mi-ni-doubly-exp}\\
    \varepsilon_{i}m_{i}^{1-\zeta+\delta}\ge C, \qquad
     (n_{i-1}/n_0)^{\zeta-\delta}\ge C\log(\re/\varepsilon_{i}). 
    \label{eq:mi-ni-1}    
    \end{align}
    Moreover, for all $m\le m_{i+1}$ and $n:=n_im$,
    \begin{equation}
        mn_i^{\zeta+\alpha} n^{-\alpha}\ge n^{\zeta-\delta}.\label{eq:useful-boxing}
    \end{equation}
    Finally, when $\zeta=\zeta_{\mathrm{hl}}$ and $\gamma=1-1/\alpha$, then
    \begin{equation}\label{eq:mi-ni-1-gamma}
     n_{i}^\gamma > 2n_{i-1}^\gamma.
    \end{equation}
    \end{claim}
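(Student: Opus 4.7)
The plan is to derive every bound from two ingredients: the doubly exponential growth of $(n_i)$ and $(m_i)$, together with the calibrated choice of $\xi_\delta$ in~\eqref{eq:xi-delta}. Routine manipulations reduce most of the inequalities to the single exponent check $\xi_\delta(\zeta+\alpha-1-\delta)<\delta/2$, which is built into the definition of $\xi_\delta$.

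I would first establish~\eqref{eq:mi-ni-doubly-exp}. From $m_i=\lfloor n_{i-1}^{\xi_\delta/d}\rfloor^d$, for $n_0$ large enough $m_i\in[\tfrac12 n_{i-1}^{\xi_\delta},n_{i-1}^{\xi_\delta}]$, so $n_i\in[\tfrac12 n_{i-1}^{1+\xi_\delta},n_{i-1}^{1+\xi_\delta}]$; iterating and absorbing the factor $\tfrac12$ by replacing the exponent $1+\xi_\delta$ with $1+\xi_\delta/2$ in the lower bound yields~\eqref{eq:mi-ni-doubly-exp}. Next, from the classical identity $\prod_{j\ge 2}(1-j^{-2})=1/2$ I obtain $\rho_i\ge n_0^{-\delta/4}/2$ uniformly in $i$; combined with $n_i\ge n_0$, this delivers~\eqref{eq:mi-ni-mark} for any prescribed $C$ once $n_0^{3\delta/4}\ge 2C$. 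For~\eqref{eq:mi-ni-sub}, substituting $n_i=m_in_{i-1}$ reduces it to $\varepsilon_i m_i^{1-\zeta+\delta}\ge C$, which is exactly the first bound in~\eqref{eq:mi-ni-1} and holds because $m_i$ is doubly exponential while $\varepsilon_i^{-1}=(i+1)^2$ is only polynomial in $i$. The second inequality in~\eqref{eq:mi-ni-1} follows by comparing the doubly exponential $(n_{i-1}/n_0)^{\zeta-\delta}$ with $\log(\re/\varepsilon_i)=1+2\log(i+1)$. Finally,~\eqref{eq:mi-ni-1-gamma} holds because $n_i^\gamma=m_i^\gamma n_{i-1}^\gamma$ and $m_i^\gamma\ge(\tfrac12 n_0^{\xi_\delta})^\gamma>2$ for $n_0$ large.

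The core calibration sits in~\eqref{eq:mi-ni-con} and its twin~\eqref{eq:useful-boxing}. Dividing~\eqref{eq:mi-ni-con} through by $n_i^{\zeta-\delta}=m_i^{\zeta-\delta}n_{i-1}^{\zeta-\delta}$ reduces it to $\rho_{i-1}^2 n_{i-1}^{\delta}\ge C\,m_i^{\alpha+\zeta-1-\delta}$. Since $m_i\le n_{i-1}^{\xi_\delta}$, it suffices to verify $\xi_\delta(\alpha+\zeta-1-\delta)<\delta$. A direct computation with $\xi_\delta=(\delta/2)/(\zeta+\alpha-1-\delta/2)$ gives
\[
\xi_\delta(\zeta+\alpha-1-\delta)=\frac{\delta}{2}\cdot\frac{\zeta+\alpha-1-\delta}{\zeta+\alpha-1-\delta/2}<\frac{\delta}{2},
\]
since the numerator is strictly smaller than the positive denominator. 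The leftover slack of $n_{i-1}^{\delta/2}$ absorbs $\rho_{i-1}^{-2}\le 4n_0^{\delta/2}$ once $n_0$ is large. Estimate~\eqref{eq:useful-boxing} is the same computation: writing $n=n_im$ with $m\le m_{i+1}\le n_i^{\xi_\delta}$, it rearranges to $n_i^\delta\ge m^{\alpha+\zeta-1-\delta}$, which again follows from $\xi_\delta(\alpha+\zeta-1-\delta)<\delta$.

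The proof involves no genuine obstacle: each bound is either a direct consequence of doubly exponential growth or a single exponent check. The only mild bookkeeping point is that different inequalities impose different thresholds on $n_0$ (requiring bounds like $n_0^{3\delta/4}\ge 2C$, $n_0^{\xi_\delta\gamma}\ge 2^{1+1/\gamma}$, and similar); since there are finitely many such conditions, $n_0^\star$ is simply taken to be their maximum.
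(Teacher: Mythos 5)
Your proposal follows the paper's argument almost exactly: each inequality is reduced to an exponent comparison between $n_{i-1}$, $m_i$ and $n_i$ using the doubly exponential growth~\eqref{eq:mi-ni-doubly-exp}, and the key observation is that $\xi_\delta$ in~\eqref{eq:xi-delta} is tuned precisely so that the power of $m_i$ falls below the $n_{i-1}^\delta$ slack. The reduction of \textnormal{(Sub)} to the first bound in~\eqref{eq:mi-ni-1}, and of \textnormal{(Con)} and~\eqref{eq:useful-boxing} to the $\xi_\delta$-calibration, match the paper's.

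One step of~\textnormal{(Con)} is stated imprecisely and, as literally written, does not close. You claim first that ``it suffices to verify $\xi_\delta(\alpha+\zeta-1-\delta)<\delta$'', but this is too weak: since $\rho_{i-1}^{-2}\le 4n_0^{\delta/2}$, the exponent must actually fall \emph{strictly} below $\delta/2$ to leave any usable room. You then correctly compute $\xi_\delta(\zeta+\alpha-1-\delta)<\delta/2$, but the next sentence --- ``the leftover slack of $n_{i-1}^{\delta/2}$ absorbs $\rho_{i-1}^{-2}\le 4n_0^{\delta/2}$ once $n_0$ is large'' --- is not right: at $i=1$ one has $n_{i-1}=n_0$, so $n_{i-1}^{\delta/2}$ exactly cancels the $n_0^{\delta/2}$ from $\rho_{i-1}^{-2}$ and no amount of enlarging $n_0$ beats an arbitrary constant $C$. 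What saves the argument is that the strict margin is of definite size: a direct computation gives $\delta-\xi_\delta(\zeta+\alpha-1-\delta)=\tfrac{\delta}{2}(1+\xi_\delta)$, so the leftover slack is $n_{i-1}^{\delta(1+\xi_\delta)/2}$, and the extra factor $n_{i-1}^{\delta\xi_\delta/2}\ge n_0^{\delta\xi_\delta/2}$ is what absorbs $4C$ for $n_0$ large. The paper makes this explicit by raising the inequality to the power $2/\delta$, whence the condition becomes $n_{i-1}^{1+\xi_\delta}\ge \widetilde{C}^{2/\delta}n_0$, which plainly holds once $n_0^{\xi_\delta}\ge\widetilde{C}^{2/\delta}$. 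You should quantify the margin $\delta\xi_\delta/2>0$ explicitly rather than describing the slack as $n_{i-1}^{\delta/2}$.
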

    We postpone the proof to Appendix~\ref{app:useful}.

\subsubsection{Induction base of the renormalization}\label{sec:induction-base-biskup} We now carry out the multi-scale renormalization outlined in Section \ref{sec:outline-biskup-renorm} by induction. We first constitute the induction base. 
\begin{claim}[Induction base]\label{claim:induction-base}Consider a supercritical KSRG under the setting as in Proposition \ref{proposition:ltld-weaker} and consider Definition \ref{def:boxing-biskup}. 
There exists a constant $n_0^\star>0$ such that for all $n_0\ge n_0^\star$ in \cref{def:boxing-biskup}, and all $x\in\R^d$,
\[
\Prob\big(\CQ_0(x)\mbox{ is $0$-bad}\big)\le 1/\re^{6}.
\]
\begin{proof}
By translation invariance, we assume w.l.o.g.\ that $x=0$, and abbreviate $\CQ_0:=\CQ_0(0)$.
We use the the definition of $i$-goodness in \eqref{eq:i-good-def} for $i=0$ in \cref{def:boxing-biskup}, and substitute $\rho_0=n_0^{-\delta/4}$ by \eqref{eq:rho-i}. Then,
\begin{equation}\label{eq:0-good-crit}
\begin{aligned}
\Prob\big(\CQ_0\mbox{ is $0$-good}\big)=
\Prob\big(\exists&\text{ component $\CC$ in $\widehat\CG_{n_0}$: } |\CC|\ge n_0^{1-\delta/4}, \\
&\qquad\hspace{15pt} |\CC\cap\CV^\sss{\mathrm{spr}}[n_0^\gamma, 2n_0^\gamma)|\ge \delta'n_0^{-\delta/4}n_0^{\zeta}\big).
\end{aligned}
\end{equation}
Following \cref{def:ksrg-alt}, we use the alternative KSRG construction with the $q$-thinning of the vertex set. Let $\CG^\sss{\mathrm{base}}_{n_0}$ be the thinned KSRG restricted to $\CQ_0$, which is a subgraph of $\widehat\CG_{n_0}$; let $\CC^\sss{(1)}_{n_0, \mathrm{base}}[1, n_0^{2/(\tau-1)})$ be the largest component in the induced subgraph on the  restricted marks $[1, n_0^{2/(\tau-1)})$ in $\CG^\sss{\mathrm{base}}_{n_0}$; and let $\CC_{0}\supseteq \CC^\sss{(1)}_{n_0, \mathrm{base}}[1, n_0^{2/(\tau-1)})$ be the component in the whole $\widehat\CG_{n_0}$ containing this component. We define the event that this latter component is large enough:
\begin{equation}\label{eq:large-0-giant}
\CA_{\mathrm{large}}^{n_0} := \big\{|\CC^\sss{(1)}_{n_0, \mathrm{base}}[1, n_0^{2/(\tau-1)})|\ge n_0^{1-\delta/4}\big\}.
\end{equation}
Conditionally on $\CA_{\mathrm{large}}^{n_0}$, the component $\CC_{0}$ is large enough to play the role of $\CC$ in \eqref{eq:0-good-crit}. So, by the law of total probability,
\begin{equation}\label{eq:two-factors-0-good}
\Prob\big(\CQ_0\mbox{ is $0$-good}\big)\ge
\Prob\big(\CA_{\mathrm{large}}^{n_0}\big)\cdot \Prob\big(|\CC_{0}\cap\CV^\sss{\mathrm{spr}}_{\CQ_0}[n_0^\gamma, 2n_0^\gamma)|\ge \delta'n_0^{-\delta/4}n_0^{\zeta}\mid \CA_{\mathrm{large}}^{n_0}\big).
\end{equation}
We show that both factors on the right-hand side tend to one as $n_0\to\infty$. 

\emph{(Con)-step of level-$0$.}
We first consider $\Prob(\CA_{\mathrm{large}}^{n_0})$ with the event defined in \eqref{eq:large-0-giant}. If the thinned PPP  $\CV^\sss{\mathrm{base}}_{n_0}$ contains no vertices of mark at least $n_0^{2/(\tau-1)}$, then $\CC^\sss{(1)}_{n_0, \mathrm{base}}[1, n_0^{2/(\tau-1)})$ and $\CC^\sss{(1)}_{n_0,\mathrm{base}}$ coincide. Here $\CC^\sss{(1)}_{n_0,\mathrm{base}}$ is the largest component of the thinned PPP in $\CQ_0$. By a union bound over the complementary events, it follows that 
\begin{align}
\Prob(\CA_{\mathrm{large}}^{n_0}) &\ge \Prob\big( |\CV^\sss{\mathrm{base}}_{\CQ_0}[n_0^{2/(\tau-1)}, \infty)|=0, \ |\CC^\sss{(1)}_{n_0, \mathrm{base}}|\ge n_0^{1-\delta/4}, \big)\nonumber\\ &\ge 1-\Prob\big(|\CC^\sss{(1)}_{n_0, \mathrm{base}}|\le n_0^{1-\delta/4}\big) - \Prob\big(|\CV^\sss{\mathrm{base}}_{\CQ_0}[n_0^{2/(\tau-1)}, \infty)|>0\big).\label{eq:base-1}
\end{align}
By \cref{obs:sprinkle-2}, $\CC^\sss{(1)}_{n_0, \mathrm{base}}$ has the same distribution as the largest component of a supercritical KSRG with edge density  $\beta q^d>\beta_c$ inside the box $\Lambda_{qn_0}$, with $q\in(0,1)$ from \cref{def:ksrg-alt}. Hence, by \cref{lemma:ltld-weakest}, the first probability on the right-hand side in~\eqref{eq:base-1} tends to zero as $n_0q \to \infty$ (we absorb the factor $q$ into the exponent $1-\delta/4$ by choosing $\eps<\delta/4$ in \cref{lemma:ltld-weakest}). The intensity of $\CV^\sss{\mathrm{base}}$ is $q\mathrm{Leb}\times F_W(\rd w)=q\mathrm{Leb}\times (\tau-1)w^{-\tau}\rd w$ and the box has volume $n_0$. So, the expected number of vertices in $\CV^\sss{\mathrm{base}}_{\CQ_0}[n_0^{2/(\tau-1)},\infty)$ is $\Theta(n_0^1n_0^{-2})$, which tends to zero. Hence, the second error term in \eqref{eq:base-1} also tends to zero.
This shows that $\Prob(\CA_{\mathrm{large}}^{n_0})\to 1$ in \eqref{eq:two-factors-0-good}. 

\emph{(Mark) step of level-$0$.}
We now carry out the \emph{(Mark)}-step of level $0$: We inspect the second factor in \eqref{eq:two-factors-0-good}.
The vertices $\CV^\sss{\mathrm{spr}}_{\CQ_0}[n_0^\gamma, 2n_0^\gamma)$ form a PPP with intensity $(1-q)\mathrm{Leb}\times (\tau-1)w^{-\tau}\rd w$ in a box with volume $n_0$. Its expected size is  
    \begin{equation}\label{eq:poisson-mean-0}
    \E\big[|\CV^\sss{\mathrm{spr}}_{\CQ_0}[n_0^\gamma, 2n_0^\gamma)|\big]=(1-q) (1-2^{-(\tau-1)})n_0^{1-\gamma(\tau-1)}=(1-q) (1-2^{-(\tau-1)})n_0^{\zeta}.
    \end{equation}
Further, $\CV^\sss{\mathrm{spr}}$ is independent of $\CV^{\sss{\mathrm{base}}}$ and thus also of the graph $\CG^\sss{\mathrm{base}}_{n_0}[1, n_0^{2/(\tau-1)})$, the graph induced by vertices whose marks are in the interval $[1, n_0^{2/(\tau-1)}$) and the event $\CA_{\mathrm{large}}^{n_0}$.
 On this event, there are at least $n_0^{1-\delta/4}$ vertices in the above mentioned component, and each vertex there has mark at least one. We use that the maximal distance between two vertices in $\CQ_0$ is $\sqrt{d}n_0^{1/d}$.  Given $\CA_{\mathrm{large}}^{n_0}$, for each vertex $v$ in $\CV^\sss{\mathrm{spr}}_{\CQ_0}[n_0^\gamma, 2n_0^\gamma)$,
     \[
    \begin{aligned}
    \Prob\Big( v \sim \CC_{n_0, \sss{\mathrm{base}}}^\sss{(1)}[1,n_0^{2/(\tau-1)}) \mid \CA_{\mathrm{large}}^{n_0}\Big) &\ge
    1-\Big(1-p\big(1\wedge \frac{\beta n_0^{\gamma}}{d^{d/2} n_0}\big)^\alpha\Big)^{n_0^{1-\delta/4}} \\
    &\ge 
    1-\exp\big(-p\beta^\alpha d^{-\alpha d/2} n_0^{-\delta/4} n_0^{\gamma\alpha-(\alpha-1)}\big)\\
    &=1-\exp\big(-p\beta^\alpha d^{-\alpha d/2}n_0^{-\delta/4}\big).
    \end{aligned}
    \]
 We used that $\gamma<1$, so $n_0^{\gamma-1}=o(1)$ and the minimum is attained at the second term for $n_0$ sufficiently large to obtain the first inequality. Then we used that $\gamma=(\alpha-1)/\alpha$ to obtain the second row.
    For $n_0$ sufficiently large, we use the bound $1-\re^{-x}\ge x/2$ for $x<1$. Therefore, the right-hand side is at least 
    $p\beta^\alpha d^{-\alpha d/2}n_0^{-\delta/4}/2$.   This is the connectivity estimate for a single vertex. 
    Since each vertex in $\CV^\sss{\mathrm{spr}}_{\CQ_0}[n_0^\gamma, 2n_0^\gamma)$ connects independently to $\CC_{n_0, \mathrm{base}}^\sss{(1)}[1,n_0^{2/(\tau-1)})$, the number of connecting vertices is Poisson with mean at least the mean in \eqref{eq:poisson-mean-0} multiplied with this connectivity estimate. So, using $\rho_0=n_0^{-\delta/4}$ in \eqref{eq:rho-i},
        \[
    \begin{aligned}
    \Prob\big(&|\CV^\sss{\mathrm{spr}}_{\CQ_0}[n_0^\gamma, 2n_0^\gamma)\cap\CC_{0}| < \delta'\rho_{0} n_0^{\zeta} \mid \CA_{\mathrm{large}}^{n_0}\big)
    \\
    &\le \Prob\big(\mathrm{Poi}\big((1-q)(1-2^{-(\tau-1)})\cdot \tfrac{1}2p\beta^\alpha d^{-\alpha d/2}\cdot n_0^{\zeta-\delta/4}\big)< \delta' n_0^{\zeta-\delta/4}\big).
    \end{aligned}
    \]
    By the choice of $\delta'$ in~\eqref{eq:delta-prime} and concentration of Poisson variables in \cref{lemma:poisson-1}, the right-hand side tends to zero as $n_0\to\infty$. This proves that the second factor in \eqref{eq:two-factors-0-good} tends to $1$ as well. This finishes the proof of \cref{claim:induction-base}.
\end{proof}
\end{claim}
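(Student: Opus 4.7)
The plan is to establish $0$-goodness as the intersection of two events: the existence of a suitable ``almost-giant'' component in the base graph, followed by sufficiently many sprinkled high-mark vertices attaching to it. I would apply the two-stage construction of \cref{def:ksrg-alt}: split $\overline\CV$ into the thinned PPP $\CV^\sss{\mathrm{base}}$ and the independent sprinkling $\CV^\sss{\mathrm{spr}}$, choosing the thinning parameter $q\in(0,1)$ so that $\beta q^d>\beta_c$, and work exclusively inside $\CQ_0(0)$ by translation invariance.

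First, for the size requirement $|\CC_0|\ge \rho_0 n_0 = n_0^{1-\delta/4}$, I would define the event $\CA_{\mathrm{large}}^{n_0}$ that the largest component of the thinned graph restricted to marks below $n_0^{2/(\tau-1)}$ has size at least $n_0^{1-\delta/4}$. By \cref{obs:sprinkle-2}, this largest component is distributed as $|\CC^\sss{(1)}_{qn_0,\beta q^d}|$ in a smaller but still supercritical KSRG, so \cref{lemma:ltld-weakest} (applied with $\eps<\delta/4$) yields $\Prob(|\CC^\sss{(1)}_{n_0,\mathrm{base}}|\le n_0^{1-\delta/4})\to 0$. The restriction to marks $<n_0^{2/(\tau-1)}$ costs only $o(1)$ because the expected number of $\CV^\sss{\mathrm{base}}$-vertices in $\CQ_0$ with mark at least $n_0^{2/(\tau-1)}$ is $\Theta(n_0\cdot n_0^{-2})\to 0$.

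Second, conditionally on $\CA_{\mathrm{large}}^{n_0}$ I would carry out the mark step by exploiting the independence of $\CV^\sss{\mathrm{spr}}$ from $\CV^\sss{\mathrm{base}}$. The sprinkled vertices with marks in $[n_0^\gamma,2n_0^\gamma)$ form a PPP whose expected count equals $(1-q)(1-2^{-(\tau-1)})n_0^{1-\gamma(\tau-1)}=\Theta(n_0^\zeta)$ by the Pareto density \eqref{eq:poisson-intensity} and $\zeta=\zeta_\mathrm{hl}=1-\gamma_\mathrm{hl}(\tau-1)$. For a single such vertex $v$, the probability of connecting by an edge to at least one vertex of the almost-giant is bounded below using the crude diameter bound $\|x_u-x_v\|\le\sqrt d n_0^{1/d}$ and $|\CC|\ge n_0^{1-\delta/4}$; plugging these into $\varphi_\mathrm{pol}$ from \eqref{eq:profile} and using $1-(1-x)^N\ge 1-\exp(-Nx)\ge Nx/2$ for small $Nx$, together with the precise choice $\gamma\alpha-(\alpha-1)=0$ since $\gamma=1-1/\alpha$, yields a connection probability of order $n_0^{-\delta/4}$. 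By Poisson thinning the number of sprinkled vertices in $[n_0^\gamma,2n_0^\gamma)$ attaching to the almost-giant is Poisson with mean $\Theta(n_0^{\zeta-\delta/4})$. Choosing $\delta'$ as in \eqref{eq:delta-prime} so that $\delta'$ is strictly below this constant prefactor and invoking a standard Poisson lower tail (such as the cited \cref{lemma:poisson-1}) makes the complementary probability tend to $0$.

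Combining the two steps by the law of total probability gives $\Prob(\CQ_0\text{ is $0$-good})\to 1$ as $n_0\to\infty$, and in particular the probability of being $0$-bad is at most $1/\re^6$ for $n_0\ge n_0^\star$ with $n_0^\star$ large enough. The main obstacle I anticipate is keeping the two PPPs $\CV^\sss{\mathrm{base}}$ and $\CV^\sss{\mathrm{spr}}$ properly decoupled: the event $\CA_{\mathrm{large}}^{n_0}$ must be measurable with respect to $\CV^\sss{\mathrm{base}}$ alone (which is why I restrict to marks $<n_0^{2/(\tau-1)}$ and additionally ask that no $\CV^\sss{\mathrm{base}}$-vertex exceeds this threshold), so that conditioning on it leaves the sprinkled PPP $\CV^\sss{\mathrm{spr}}[n_0^\gamma,2n_0^\gamma)$ untouched and the independent Poisson concentration can legitimately be applied.
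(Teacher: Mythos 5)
Your proposal is correct and follows essentially the same approach as the paper's proof: the same two-stage base/sprinkle decomposition via \cref{def:ksrg-alt}, the same application of \cref{obs:sprinkle-2} and \cref{lemma:ltld-weakest} (with $\eps<\delta/4$) for the almost-giant event, the same $o(1)$ cost for the mark truncation at $n_0^{2/(\tau-1)}$, the same crude diameter bound and the crucial cancellation $\gamma\alpha-(\alpha-1)=0$ in the connection-probability estimate, and the same Poisson-thinning plus \cref{lemma:poisson-1} finish. The only cosmetic difference is that you fold the ``no high-mark base vertex'' condition into the event $\CA_{\mathrm{large}}^{n_0}$ for measurability, whereas the paper keeps $\CA_{\mathrm{large}}^{n_0}$ as defined in \eqref{eq:large-0-giant} (already measurable with respect to $\CV^\sss{\mathrm{base}}$ and edges among marks $<n_0^{2/(\tau-1)}$) and uses the high-mark absence only as a lower-bounding device in the union bound \eqref{eq:base-1} — either way is fine.
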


\subsubsection{Advancing the induction: the iterative step.} \cref{claim:induction-base} shows that level-$0$ boxes are good with constant probability. In the next claim we advance the induction by bounding the probabilities of the steps \emph{(Sub), (Con), (Mark)} in Section \ref{sec:outline-biskup-renorm} for level-$i$ boxes.
\begin{claim}[Advancing the induction]\label{claim:induction-advance}
Consider a supercritical KSRG on a PPP with parameters such that $\zeta_\mathrm{hl}>0$.  Assume $\delta\in(0,\zeta/2)$. 
There exists a constant $n_0^\star>0$ such that for all $n_0\ge n_0^\star$ in \cref{def:boxing-biskup}, for all $i\ge 0$, 
    \begin{equation}
    \Prob\big(\CQ_i(x)\mbox{ is $i$-bad}\big) \le \exp\big(-6(n_i/n_0)^{\zeta-\delta}\big):=\mathrm{err}_i.\label{eq:induction-advance}
    \end{equation}
    \begin{proof}
    When $i=0$ the statement follows from \cref{claim:induction-base}.     
    Let $i\ge 1$ and assume that~\eqref{eq:induction-advance} holds for level-$(i-1)$ boxes. We now follow the steps \emph{(Sub), (Con), (Mark)} in Section \ref{sec:outline-biskup-renorm} and bound their error probabilities. Like in \cref{claim:induction-base}, we assume w.l.o.g.\ that $x=0$, and write $\CQ_i:=\CQ_i(0)$.

    \emph{Enough subboxes are good.}
    We start with the \emph{(Sub)} step.
    Since $n_i=m_in_{i-1}$ by \eqref{eq:def-mi-ni}, we can partition $\CQ_{i}(x)$ into $m_{i}$ subboxes of volume $n_{i-1}$. Let $(x_i)_{i\le m_i}$ denote the centers of these boxes. Define the event 
    \begin{equation}
    \CA_\mathrm{sub}:=\Big\{\sum_{j\in [m_i]}\ind{\CQ_{i-1}(x_j)\text{ is $(i-1)$-bad}} < \varepsilon_{i}m_{i}\Big\}.\label{eq:sub-event}
    \end{equation}
    In words, $\CQ_{i}(x)$ contains at least $(1-\eps_i)m_i$ many $(i-1)$-good subboxes. 
    Using that disjoint boxes are i.i.d., the induction hypothesis yields that the number of bad boxes is stochastically dominated by a $\mathrm{Bin}(m_i, \mathrm{err}_{i-1})$ random variable, i.e.,
    \begin{align*}
        \Prob\big(\neg \CA_\mathrm{sub})\le \Prob\Big(\mathrm{Bin}\big(m_{i}, \mathrm{err}_{i-1})\big) \ge \varepsilon_{i}m_i\Big)
        \le 
        \sum_{k= \lceil\varepsilon_{i}m_{i}\rceil}^{m_{i}}\binom{m_{i}}k \mathrm{err}_{i-1}^k.
    \end{align*}
    We use that $\binom{m_i}k\le (\re m_i/k)^k$, so the binomial coefficient is at most  $(\re/\varepsilon_i)^k$ for all $k\ge \varepsilon_i m_i$. 
    Hence, using the definition of $\mathrm{err}_{i-1}$ from  \eqref{eq:induction-advance},
    \begin{equation*}
        \Prob\big(\neg \CA_\mathrm{sub})\le  \sum_{k= \lceil\varepsilon_{i}m_{i}\rceil}^{\infty}\exp\big(-k(6(n_{i-1}/n_0)^{\zeta-\delta}-\log(\re/\varepsilon_{i}))\big). 
        \end{equation*}
        We use that $\varepsilon_i=(i+1)^{-2}$ by definition in~\eqref{eq:eps-i}: when $i=1$ we compute $6-\log(\re/\varepsilon_1)=6-\log(4\re)=5-\log(4)>3$; for $i\ge 2$, by the second inequality in \eqref{eq:mi-ni-1} for $C=4/6$, we obtain that $6(n_{i-1}/n_0)^{\zeta-\delta}-\log(\re/\varepsilon_{i})>3(n_{i-1}/n_0)^{\zeta-\delta}$ when $n_0$ is sufficiently large. 
        We obtain a geometric series with base $\exp(-3(n_{i-1}/n_0)^{\zeta-\delta})$, which is at most $1/2$ for all $i\ge 2$ by choosing $n_0$ sufficiently large. Evaluating the geometric sum and then using the bound (Sub) in \eqref{eq:mi-ni-sub} yields
      \begin{equation}\label{eq:not-a-sub}
                \begin{aligned}
        \Prob\big(\neg \CA_\mathrm{sub})&
        \le 
        \sum_{k= \lceil\varepsilon_{i}m_{i}\rceil}^{\infty}\exp\big(-3(n_{i-1}/n_0)^{\zeta-\delta}\big)^k    \le \exp\big(-3\varepsilon_{i}m_{i}(n_{i-1}/n_0)^{\zeta-\delta}\big)/2\\
        &\le \tfrac13\exp\big(-6(n_{i}/n_0)^{\zeta-\delta}\big).
        \end{aligned}
        \end{equation} 
This finishes the \emph{(Sub)}-step of level-$i$. 

\emph{Connectivity: the local giant is big enough.}
We now move on to the \emph{(Con)}-step in Section \ref{sec:outline-biskup-renorm}. 
In level $i-1$, we used the vertex sets $\CV^\sss{\mathrm{base}}_{\CQ_{i-1}(x_j)}\big[1, n_0^{2/(\tau-1)}\big)\cup\CV^\sss{\mathrm{spr}}_{\CQ_{i-1}(x_j)}[1, 2n_{i-1}^\gamma)$ in each subbox $j\in[m_i]$, see~\eqref{eq:def-xi-biskup}. We denote the level-$(i-1)$ graphs on these vertex sets by $\widehat\CG_{n_{i-1}, x_j}$.
We now reveal the realization of $(\widehat\CG_{n_{i-1}, x_j})_{j\le m_i}$ for all $j\le m_i$ in the level-$(i-1)$ subboxes of $\CQ_{i}(x)$, and assume that the realization satisfies the event $\CA_\mathrm{sub}$ for $\CQ_i(x)$.
We write 
\begin{equation}\label{eq:pre-graph}
\begin{aligned}
\mathrm{pre\textnormal{-}}\widehat\CG_{n_{i}}&:=\cup_{j\le m_i}\widehat\CG_{n_{i-1}, x_j} \quad \mbox{on}\quad \\
\CV_{\mathrm{pre}}&:=\cup_{j\le m_i}\CV^\sss{\mathrm{base}}_{\CQ_{i-1}(x_j)}\big[1, n_0^{2/(\tau-1)}\big)\cup\CV^\sss{\mathrm{spr}}_{\CQ_{i-1}(x_j)}[1, 2n_{i-1}^\gamma).
\end{aligned}
\end{equation}
Since the realization satisfies $\CA_{\mathrm{sub}}$, at least $(1-\eps_i)m_i$ many subboxes are $(i-1)$-good.
    We re-label boxes so that the boxes with indices $j\le (1-\eps_i)m_i$ are all $(i-1)$-good. In $\mathrm{pre\textnormal{-}}\widehat\CG_{n_{i}}$ we did not reveal yet whether vertices in \emph{different} level-$(i-1)$ subboxes are connected to each other or not. The graph $\widehat\CG_{n_{i}}$ which contains  $\mathrm{pre\textnormal{-}}\widehat\CG_{n_{i}} $
         additionally contains these edges between subboxes. 
    
   \emph{The Erd{\H o}s-R\'enyi type auxiliary graph.} The definition of $(i-1)$-goodness in \eqref{eq:i-good-def} gives us components $\CC_{i-1, x_j}$ in each good subbox with size at least $\rho_{i-1}n_{i-1}$. Consider the auxiliary graph $\CH=(\CV_\CH, \CE_\CH)$ in which the vertex $j\in\CV_\CH:=[\lceil (1-\varepsilon_i)m_i\rceil]$ corresponds to $\CC_{i-1, x_j}$, and two vertices $j,k\in\CV_\CH$ are connected by an edge if the components $\CC_{i-1, x_j}$ and $\CC_{i-1, x_{k}}$ are connected by an edge in $\widehat\CG_{n_i}$. If the auxiliary graph $\CH$ is connected, then $\CQ_i(x)$ contains a component that has size at least $(1-\eps_i)m_i \cdot \rho_{i-1}n_{i-1}$. Since $m_{i}n_{i-1}=n_i$ and $\rho_{i-1}(1-\eps_i)=\rho_i$ by~\eqref{eq:def-mi-ni} and~\eqref{eq:rho-i}, respectively, this size is at least $\rho_{i}n_i$.
    Therefore, it satisfies the first criterion for being $i$-good in \eqref{eq:i-good-def}. 
    Hence, for all realizations $\mathrm{pre\textnormal{-}}\widehat\CG_{n_{i}}$ satisfying $\CA_{\mathrm{sub}}$,
\begin{equation}\label{eq:first-good-crit}
    \Prob(\exists \mbox{ component } \CC_{i}: |\CC_{i}|\ge \rho_i n_i \mid \mathrm{pre\textnormal{-}}\widehat\CG_{n_{i}}) \ge \Prob(\CH \mbox{ connected} \mid \mathrm{pre\textnormal{-}}\widehat\CG_{n_{i}} ). \end{equation}
We now prove that $\CH$ stochastically dominates an Erd\H{o}s-R\'enyi graph. The criteria for being $(i-1)$-good in \eqref{eq:i-good-def} guarantee that the level-$(i-1)$ good subboxes contain a component $\CC_{i-1, x_j}$ with size at least $\rho_{i-1}n_{i-1}$, \emph{and} this component contains at least $\delta'\rho_{i-1}n_{i-1}^{\zeta}$ vertices of mark in $[n_{i-1}^{\gamma}, 2n_{i-1}^\gamma)$.
   We bound the probability that there is an edge between two such components in different good subboxes by checking whether any of the potential edges between the $\delta'\rho_{i-1}n_{i-1}^{\zeta}$ many level-$(i-1)$ high mark vertices in one box and the $\rho_{i-1}n_{i-1}$ many vertices in the component of mark at least $1$ in the other box is present. 
    Any two vertices in $Q_i(x)$ are at distance at most $\sqrt{d}n_i^{1/d}$, so using the connection probability in \eqref{eq:connection-prob-gen} we compute for any \emph{good} subboxes with index $j,k$ that
    \begin{align}
    \Prob\big(\CC_{i-1, x_j}\nsim_{\widehat\CG_{n_i}}\CC_{i-1, x_{k}}\mid \text{pre-}\widehat\CG_{n_{i}}\big)
    &\le \Big(1-p\big(1\wedge \tfrac{\beta n_{i-1}^{\gamma}}{\sqrt{d}^d n_i}\big)^\alpha\Big)^{\delta'\rho_{i-1}^2n_{i-1}^{1+\zeta}} \nonumber\\
    &\le\exp\big(-p\delta'\beta^{\alpha}d^{-\alpha d/2}\rho_{i-1}^2n_{i-1}^{1+\zeta+\gamma\alpha} n_i^{-\alpha}\big)\nonumber \\
    &=
    \exp\big(-p\delta'\beta^{\alpha}d^{-\alpha d/2}\rho_{i-1}^2n_{i-1}^{\zeta+\alpha} n_i^{-\alpha}\big)
    =: 1-q_i,\label{eq:biskup-qi}
    \end{align}
    where we used that $\gamma=1-1/\alpha$ to get the last row, and also to see that the minimum is at the second term in the first row, since $n_{i-1}^\gamma/n_i$ can be made arbitrarily small by choosing $n_0$ sufficiently large, see \eqref{eq:def-mi-ni}. The connectivity between different local large components is conditionally independent given the realization of $\mathrm{pre}\textnormal{-}\widehat\CG_{n_{i}}$, so the auxiliary graph $\CH$ dominates an Erd\H{o}s-R\'enyi graph $G(\lceil(1-\varepsilon_i)m_i\rceil, q_i)$. 
    The reader may verify that $m_iq_i\gg \log m_i$ for all $i\ge 1$ if $n_0$ is sufficiently large, so by \cref{claim:er-conn} on the connectivity of $G$, 
    \begin{align}
    \ind{\CA_\mathrm{sub}}
    \Prob\big(\CH\text{ not conn.}\mid \text{pre-}\widehat\CG_{n_i}\big)
   & \le 
    3(\re m_i)^2(1-q_i)^{(1-\varepsilon_i)m_i/2 } \nonumber \\
    &\le 
    3(\re m_i)^2\exp\big(-\tfrac{3p\delta'\beta^{\alpha}}{8d^{\alpha d/2}}\rho_{i-1}^2n_{i-1}^{\zeta+\alpha} n_i^{-\alpha}m_i\big),\label{eq:repeat-at-n-con}
    \end{align}
    where we substituted $1-q_i$ from~\eqref{eq:biskup-qi}, and used  that $1-\varepsilon_i\ge 3/4$ for all $i\ge 1$. 
    We substitute $n_i=m_in_{i-1}$ to obtain the first row below, and then use the inequality (Con) in~\eqref{eq:mi-ni-con} to see that the $i$-dependent factors together can be bounded from below by $Cn_i^{\zeta-\delta}$ for any constant $C>0$ to get the second row. Here, we can choose $n_0$ sufficiently large so that $C$ cancels out the constant ($\delta', \beta, p, d$-dependent) prefactors:
    \begin{align}
    \ind{\CA_\mathrm{sub}}
    \Prob\big(\CH\text{ not conn.}\mid \text{pre-}\widehat\CG_{n_i}\big)
    &\le 3(\re m_i)^2\exp\big(-\tfrac{3p\delta'\beta^{\alpha}}{8d^{\alpha d/2}}\rho_{i-1}^2n_{i-1}^{\zeta} m_i^{1-\alpha}\big).\nonumber\\
    &\le 3(\re m_i)^2\exp\big(-7n_i^{\zeta-\delta}\big) \le \tfrac{1}{3}\exp\big(-6n_i^{\zeta-\delta}\big).\label{eq:ind-pr-con}
    \end{align}
       Returning to \eqref{eq:first-good-crit}, this shows that the first condition of being level-$i$ good in \eqref{eq:i-good-def}, i.e., containing a connected component with size at least $\rho_in_i$, holds with probability at least $1-\exp(-6n_i^{\zeta-\delta})/3$, given $\CA_{\mathrm{sub}}$. This finishes the \emph{(Con)}-step. We summarize it as follows:
   since $\Prob\big(\neg \CA_{\mathrm{sub}})\le \exp\big(-6(n_i/n_0)^{\zeta-\delta}\big)/3$ by \eqref{eq:not-a-sub},
       \begin{align}
\Prob\big(\CQ_i(x)\text{ is $i$-bad}\big)&\le 
     \Prob\big(\neg \CA_\mathrm{sub}\big)
     +
\E\big[\ind{\CA_\mathrm{sub}}
\Prob\big(\CQ_i(x)\text{ is $i$-bad}\mid \text{pre-}\widehat\CG_{n_i}\big)\big] \nonumber\\
     &
     \begin{aligned}
     &\le\Prob\big(\neg \CA_\mathrm{sub}\big)
     +
     \E\big[\ind{\CA_\mathrm{sub}}
     \Prob\big(\CH\text{ not conn.}\mid \text{pre-}\widehat\CG_{n_i}\big)\big] \\
     &\quad+\E\big[\ind{\CA_\mathrm{sub}}
     \Prob\big(\CQ_i(x)\text{ is $i$-bad}\mid \text{pre-}\widehat\CG_{n_i}, \CH\text{ connected}\big)\big]
\end{aligned}\label{eq:biskup-giant-pr}\\
&\le \tfrac{2}{3}\exp\big(-6(n_i/n_0)^{\zeta-\delta}\big) + \E\big[\ind{\CA_\mathrm{sub}}
     \Prob\big(\CQ_i(x)\text{ is $i$-bad}\mid \text{pre-}\widehat\CG_{n_i}, \CH\text{ conn.}\big)\big],
\nonumber\label{eq:biskup-giant-pr}
    \end{align}
   where the error of the \emph{(Con)}-step is the first term in the last row, and the second term still needs to be bounded during  the \emph{(Mark)} step. 
    
    \emph{Enough high-mark vertices in level-$i$.}
    Finally, we treat the \emph{(Mark)}-step in Section \ref{sec:outline-biskup-renorm}. That is, we bound the conditional probability in the last row above. This corresponds to the second criterion in \eqref{eq:i-good-def}, i.e., the requirement that $\CC_{i}$ also contains enough level-$i$ high mark vertices. 
    On the event $\CA_\mathrm{sub}\cap\{\CH\text{ is connected}\}$, 
    we set $\CC_{i}$ to be the connected component in $\widehat\CG_{n_i}$ that contains $\cup_{j\in\CV_\CH}\CC_{i-1, x_j}=:\mathrm{pre\textnormal{-}}\CC_{i}$. By the argument leading to \eqref{eq:first-good-crit},  $\mathrm{pre\textnormal{-}}\CC_{i}$ has already size at least $\rho_in_i$.
    Hence, by the criterion in \eqref{eq:i-good-def}, we need to bound 
    \begin{equation}
    \begin{aligned}
\ind{\CA_\mathrm{sub}}\Prob\big(|\CV^\sss{\mathrm{spr}}_{\CQ_i(x)}[n_i^\gamma, 2n_i^\gamma)\cap\CC_{i}| < \delta'\rho_{i} n_i^{1-\gamma(\tau-1)} &\mid \text{pre-}\widehat\CG_{n_i}, \CH\text{ connected}\big).
    \end{aligned}\label{eq:mark-to-show} 
    \end{equation}
    We argue similarly as for $i=0$ around and below \eqref{eq:poisson-mean-0}. Since $\mathrm{pre\textnormal{-}}\CC_{i}\subseteq\CV_{\mathrm{pre}}$ in \eqref{eq:pre-graph}, and $2n_{i-1}^\gamma<n_i^\gamma$ by \eqref{eq:mi-ni-1-gamma}, the already revealed  vertex set $\CV_{\mathrm{pre}}$ is disjoint and thus independent of $\CV^\sss{\mathrm{spr}}_{\CQ_i(x)}[n_i^\gamma, 2n_i^\gamma)$. 
     Hence, the PPP $\CV^\sss{\mathrm{spr}}_{\CQ_i(x)}[n_i^\gamma, 2n_i^\gamma)$ itself is independent of the conditioning in \eqref{eq:mark-to-show}. Using the intensity in~\eqref{eq:poisson-intensity}, its expected size is 
\begin{equation}\label{eq:poisson-mean-i}
\E\big[|\CV^\sss{\mathrm{spr}}_{\CQ_i(x)}[n_i^\gamma, 2n_i^\gamma)|\big]=(1-q) (1-2^{-(\tau-1)})n_i^{1-\gamma(\tau-1)}=(1-q) (1-2^{-(\tau-1)})n_i^{\zeta},
    \end{equation} with $q\in(0,1)$ from the \cref{def:ksrg-alt}.  There are at least $\rho_i n_i$ vertices in  $\mathrm{pre\textnormal{-}}\CC_{i}$, each with mark at least one, and each pair of vertices inside $\CQ_{i} $ is within distance $\sqrt{d}n_i^{1/d}$. So, we get using the connection probability in \eqref{eq:connection-prob-gen} that each individual vertex in $\CV^\sss{\mathrm{spr}}_{\CQ_i(x)}[n_i^\gamma, 2n_i^\gamma)$ connects by an edge to  $\mathrm{pre\textnormal{-}}\CC_{i}\subseteq \CC_{i}$ with probability at least 
    \[
    \begin{aligned}
    1-\Big(1-p\big(1\wedge  \tfrac{\beta n_i^\gamma}{\sqrt{d}^d n_i}\big)^\alpha\Big)^{\rho_i n_i} &\ge 
    1-\exp\big(-p\beta^\alpha d^{-\alpha d/2}\rho_in_i^{\gamma\alpha-(\alpha-1)}\big)\\
    &=1-\exp\big(-p\beta^\alpha d^{-\alpha d/2}\rho_i\big).
    \end{aligned}
    \]
    To obtain the second expression, we used that the minimum is at the second term for $n_0$ sufficiently large, and $\gamma=(\alpha-1)/\alpha$ to get the second row. 
   Since $\rho_i=\Theta(n_0^{-\delta/4})$ as $n_0$ tends to infinity by \eqref{eq:rho-i}, we may assume that $n_0$ is so large so that the bound $1-e^{-x}\ge x/2$ applies (for all $x\le 1$), and the right-hand side is at least 
    $p\beta^\alpha d^{-\alpha d/2}\rho_i/2$.    
    Since each vertex in $\CV^\sss{\mathrm{spr}}_{\CQ_i(x)}[n_i^\gamma, 2n_i^\gamma)$ connects independently to $\mathrm{pre\textnormal{-}}\CC_{i}$, the number of connecting vertices forms a PPP with mean at least the product of \eqref{eq:poisson-mean-i} multiplied with this bound. So,
        \[
    \begin{aligned}
    \ind{\CA_\mathrm{sub}}\Prob\big(&|\CV^\sss{\mathrm{spr}}_{\CQ_i(x)}[n_i^\gamma, 2n_i^\gamma)\cap\CC_{i}| < \delta'\rho_{i} n_i^{\zeta} \mid \text{pre-}\widehat\CG_{n_i}, \CH\text{ conn.}\big)
    \\
    &\le \Prob\big(\mathrm{Poi}\big((1-q)(1-2^{-(\tau-1)})\cdot \tfrac{1}2p\beta^\alpha d^{-\alpha d/2}\cdot\rho_i n_i^{\zeta}\big)< \delta'\rho_{i} n_i^{\zeta}\big).
    \end{aligned}
    \]
    The value of $\delta'$ in \eqref{eq:delta-prime} is chosen exactly so that $2\delta'$ is the prefactor of $\rho_in_i^\zeta$ in the mean of the Poisson variable. Hence, 
by the concentration of Poisson variables in \cref{lemma:poisson-1}, there exists $c=c(\delta')>0$ such that 
    \[
\ind{\CA_\mathrm{sub}}\Prob\big(|\CV^\sss{\mathrm{spr}}_{\CQ_i(x)}[n_i^\gamma, 2n_i^\gamma)\cap\CC_{i}| 
< \delta'\rho_{i} n_i^{\zeta} \mid \text{pre-}\widehat\CG_{n_i}, \CH\text{ conn.}\big)
    < \exp\big(-c\rho_in_i^{\zeta}\big),
    \]
     Finally, by the bound (Mark) in~\eqref{eq:mi-ni-mark}, the right-hand side is at most $\exp\big(-6n_i^{\zeta-\delta}\big)/3$ for $n_0$ sufficiently large. Combining this bound with  \eqref{eq:biskup-giant-pr} finishes the proof of \cref{claim:induction-advance}.
     \end{proof}    
\end{claim}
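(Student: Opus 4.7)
The plan is to induct on $i$, with the base case $i=0$ handled by \cref{claim:induction-base}, and the inductive step split into the three substeps (Sub), (Con), (Mark) from Section \ref{sec:outline-biskup-renorm}, each contributing at most $\mathrm{err}_i/3$ to the target bound $\mathrm{err}_i=\exp(-6(n_i/n_0)^{\zeta-\delta})$.

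For (Sub), I partition $\CQ_i(x)$ into $m_i$ disjoint subboxes of volume $n_{i-1}$. Independence across subboxes and the induction hypothesis yield that the number of $(i-1)$-bad subboxes is stochastically dominated by $\mathrm{Bin}(m_i,\mathrm{err}_{i-1})$. A union bound with $\binom{m_i}{k}\le(\re/\eps_i)^k$ for $k\ge \eps_i m_i$ produces a geometric series whose dominant term is $\exp(-\Omega(\eps_i m_i(n_{i-1}/n_0)^{\zeta-\delta}))$, and inequality (Sub) of \cref{claim:useful} identifies this exponent with $6(n_i/n_0)^{\zeta-\delta}$, delivering the first $\mathrm{err}_i/3$.

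For (Con), I condition on the (Sub)-event and on the graphs already revealed inside the good subboxes. In each of the $\lceil(1-\eps_i)m_i\rceil$ good subboxes, $(i-1)$-goodness furnishes a component $\CC_{i-1,x_j}$ of size $\ge\rho_{i-1}n_{i-1}$ containing $\ge\delta'\rho_{i-1}n_{i-1}^\zeta$ vertices of mark in $[n_{i-1}^\gamma,2n_{i-1}^\gamma)$. Between any pair of good subboxes there are at least $\delta'\rho_{i-1}^2 n_{i-1}^{1+\zeta}$ as-yet-unexposed independent edge-trials, each of probability $\Theta((\beta n_{i-1}^\gamma/n_i)^\alpha)$, since any two points in $\CQ_i(x)$ lie within distance $\sqrt{d}\,n_i^{1/d}$ and $\gamma=1-1/\alpha$. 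This produces an auxiliary Erd\H{o}s--R\'enyi-type graph $\CH$ on the good subboxes with edge probability $q_i\ge 1-\exp(-c\rho_{i-1}^2 n_{i-1}^{\zeta+\alpha}n_i^{-\alpha})$. Inequality (Con) of \cref{claim:useful} then forces $m_i q_i\gg\log m_i$, so \cref{claim:er-conn} bounds the probability that $\CH$ is disconnected by $\mathrm{err}_i/3$; connectedness of $\CH$ produces a component in $\widehat\CG_{n_i}$ of size $\ge(1-\eps_i)m_i\rho_{i-1}n_{i-1}=\rho_i n_i$, verifying the first criterion of $i$-goodness.

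For (Mark), the bound $n_i^\gamma>2n_{i-1}^\gamma$ from \eqref{eq:mi-ni-1-gamma} is essential: the sprinkled set $\CV^{\sss{\mathrm{spr}}}_{\CQ_i(x)}[n_i^\gamma,2n_i^\gamma)$ is disjoint from every vertex exposed in the (Sub) and (Con) stages, hence is an independent PPP of mean $\Theta(n_i^\zeta)$. Using $\gamma\alpha-(\alpha-1)=0$, each such vertex connects to the just-built component of size $\ge\rho_i n_i$ with probability $\Omega(\rho_i)$ (via $1-\re^{-x}\ge x/2$); by Poisson thinning and \cref{lemma:poisson-1}, the count of connecting vertices exceeds $\delta'\rho_i n_i^\zeta$ except with probability $\exp(-c\rho_i n_i^\zeta)$, which inequality (Mark) of \cref{claim:useful} bounds by $\mathrm{err}_i/3$. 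The main technical obstacle is the synchronization of the three error bounds across scales: the growth exponent $\xi_\delta$ in \eqref{eq:xi-delta} is tuned precisely so that in each substep the factor $(n_{i-1}/n_0)^{\zeta-\delta}$ inherited from the induction hypothesis promotes cleanly to $(n_i/n_0)^{\zeta-\delta}$ while preserving the leading constant $6$---this is exactly the content of \cref{claim:useful}.
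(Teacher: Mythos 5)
Your proposal is correct and follows essentially the same route as the paper's proof: a three-part inductive step split into (Sub)/(Con)/(Mark), with binomial domination for (Sub), Erd\H{o}s--R\'enyi stochastic domination plus \cref{claim:er-conn} for (Con), Poisson concentration for the freshly sprinkled high-mark vertices in (Mark), and the inequalities of \cref{claim:useful} converting each error to $\tfrac{1}{3}\exp(-6(n_i/n_0)^{\zeta-\delta})$. The only cosmetic difference is that you attribute $m_i q_i \gg \log m_i$ to inequality (Con), whereas the paper treats that as a separate routine verification and uses (Con) only to bound the resulting exponent; this does not change the argument.
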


If the box $\CQ_{i}(x)$ is $i$-good, then it contains a giant component on the restricted marks $[1, 2n_i^{\gamma})$. Thus, Claims \ref{claim:induction-base} and \ref{claim:induction-advance} together prove Proposition~\ref{proposition:ltld-weaker} \emph{along the subsequence} $n_i$, and when $\zeta=\zeta_{\mathrm{hl}}$. We now prove  Proposition~\ref{proposition:ltld-weaker} for arbitrary  $n$.

\begin{proof}[Proof of Proposition~\ref{proposition:ltld-weaker} \label{proof:prop-ltld-weaker}
when $\zeta_\mathrm{hl}>0$]
    W.l.o.g.\ we may assume that $\delta\in(0,\zeta/2\wedge 1-\zeta)$ in the statement of the lemma. Let $n_0$ be a constant such that Claims~\ref{claim:induction-base}--\ref{claim:induction-advance} hold for all $i\ge 0$.
    Fix now the sequence $n_i$ in~\eqref{eq:def-mi-ni}. 
    We may assume that $n>n_0$, since the constant $A$ in the statement of Proposition~\ref{proposition:ltld-weaker} can be increased to account for values $n\le n_0$.  We
    define
    \begin{equation}\label{eq:boxing-tilde-n}
    \begin{aligned}
    i_0&:=\min\{i: 2n_i^\gamma > n_0^{2/(\tau-1)}\}, &
    i_\ast&:=\max\{i: n_i<n\}, \\
    m&:=\lfloor (n/n_{i_\ast})^{1/d}\rfloor^d,
    &\tilde n&:=n_{i_\ast}m.
    \end{aligned}
    \end{equation}
    By increasing the constant $A$ in the statement of Proposition~\ref{proposition:ltld-weaker}, we may assume w.l.o.g.\ that $i_\ast>i_0$.
   Then, there exist $\nu$ such that
   \begin{equation}\label{def:ratio-nu}
   m\ge \nu (n/n_{i_\ast}).
    \end{equation}
    By the definition of $m$ and $\tilde n$, $\Lambda_{\tilde n}\subseteq \Lambda_{n}$ can be partitioned into exactly $m\ge 1$ disjoint boxes of volume $n_{i_\ast}$. We denote the boxes of this partition by $\CQ_{i_\ast}(x_1),\ldots,\CQ_{i_\ast}(x_m)$. From here on we argue similarly as in the proof of \cref{claim:induction-advance}: we use \emph{(Sub)} and \emph{Con}-type steps. 
       We will show that at least half of the boxes are $i_\ast$-good, and afterwards connect the large components in the first $\lceil m/2\rceil$ many $i_\ast$-good boxes via a domination to an Erd\H{o}s-R\'enyi random graph. 

 \emph{Enough subboxes are good.}
    Define 
    \[ 
    \CA_\mathrm{sub}(n):=
    \bigg\{\sum_{j\in [m]}\ind{\CQ_{i_\ast}(x_j)\text{ is $i_\ast$-bad}} < m/2\bigg\}.
    \]
   We follow the calculations from~\eqref{eq:sub-event} until~\eqref{eq:not-a-sub} and replace $\varepsilon_i$ by $1/2$, $m_i$ by $m$ and $n_{i-1}$ by $n_{i_\ast}$ there. The bound ${m \choose k}\le (m/\re k)^k$ works for all $m\ge1$ and $k\le m$. We also use there the second inequality in \eqref{eq:mi-ni-1} for $C=4/6$, which also holds for $n_{i_\ast}$. So all bounds except the last row in \eqref{eq:not-a-sub} remain valid. Using \eqref{def:ratio-nu} we obtain the bound
    \begin{equation}\label{eq:not-a-sub-mmm}
                \begin{aligned}
        \Prob\big(\neg \CA_\mathrm{sub}(n))&
        \le 
        \sum_{k= \lceil m/2\rceil}^{\infty}\exp\big(-3(n_{i_\ast}/n_0)^{\zeta-\delta}\big)^k    \le \exp\big(-3(m/2)(n_{i_\ast}/n_0)^{\zeta-\delta}\big)/2\\
        &\le  
        \exp\Big(-(\nu/2n_0^{\zeta-\delta })\cdot nn_{i_\ast}^{\zeta-\delta-1}\Big).
        \end{aligned}
        \end{equation} 
  As $n>n_{i_\ast}$ and $\zeta<1$, it follows that $n_{i_\ast}^{\zeta-\delta-1}>n^{\zeta-\delta-1}$, which, when rearranged yields that $n n_{i_\ast}^{\zeta-\delta-1} > n^{\zeta-\delta}$. Hence, 
    \begin{equation}
        \Prob\big(\neg\CA_\mathrm{sub}(n)\big) 
        \le 
        \exp\Big(-(\nu/2n_0^{\zeta-\delta }) \cdot n^{\zeta-\delta}\Big).\label{eq:boxing-sub}
    \end{equation}
    This finishes the \emph{(Sub)}-step for general $n$. 
    
    \emph{Connecting local-giants of subboxes.} Now we move on to the \emph{(Con)}-step in Section \ref{sec:outline-biskup-renorm}. We follow the argument around \eqref{eq:pre-graph}, and define the pre-vertex set $\CV_{\mathrm{pre}}$ as there, with $n_i$ replaced by $n$, $i-1$ replaced by $i_\ast$, and $m_i$ replaced by $m$ there: 
\begin{equation}\label{eq:pre-graph-n}
\begin{aligned}
\mathrm{pre\textnormal{-}}\widehat\CG_{\tilde n}&:=\cup_{j\le m}\widehat\CG_{n_{i_\ast}, x_j} \quad \mbox{on}\quad \\
\CV_{\mathrm{pre}}(\tilde n)&:=\cup_{j\le m}\CV^\sss{\mathrm{base}}_{\CQ_{i_\ast}(x_j)}\big[1, n_0^{2/(\tau-1)}\big)\cup\CV^\sss{\mathrm{spr}}_{\CQ_{i_\ast}(x_j)}[1, 2n_{i_\ast}^\gamma).
\end{aligned}
\end{equation}
We reveal the realization of $\mathrm{pre\textnormal{-}}\widehat\CG_{n}$, satisfying the $\CA_\mathrm{sub}(n)$. Following the argument above \eqref{eq:first-good-crit}, we construct the auxiliary graph $\CH_n$ on vertices representing the first $[\lceil m/2\rceil]$ many $i_\ast$-good subboxes. Each vertex $j$ represents a large component $\CC_{i_\ast, x_j}$ guaranteed by the first condition in $i$-goodness in \eqref{eq:i-good-def}. Two vertices $j,k$ are connected by an edge in $\CH_n$  if there is an edge between $\CC_{i_\ast, x_j}$ and $\CC_{i_\ast, x_{k}}$. We follow the computations between~\eqref{eq:first-good-crit} and~\eqref{eq:repeat-at-n-con}, and replace there $(1-\eps_i)m_i$ with $m/2$, replace $n_{i-1}, \rho_{i-1}$ with $n_{i_\ast}, \rho_{i_\ast}$, and replace $n_i$ by $\tilde n$. The latter holds because $m n_{i_\ast}=\tilde n$ by \eqref{eq:boxing-tilde-n}. Then, for some constant $c>0$, \eqref{eq:repeat-at-n-con} turns into
\begin{align}
    \ind{\CA_\mathrm{sub}(n)}
    \Prob\big(\CH_n\text{ not conn.}\mid \text{pre-}\widehat\CG_{\tilde n}\big)
    &\le A'm\exp\big(-c\rho_{i_\ast}^2n_{i_\ast}^{\zeta+\alpha} \tilde n^{-\alpha}m\big).\nonumber
    \end{align}
    By \eqref{eq:boxing-tilde-n}, $m\le m_{i_\ast+1}$, so the bound~\eqref{eq:useful-boxing} holds for $\tilde n=m n_{i_\ast}$, which gives the first row below. Then, $\tilde n\ge \nu n$ follows by multiplying both sides of \eqref{def:ratio-nu}. Moreover, the prefactor $A'm < A'n$ can be absorbed by decreasing the constant prefactor in the exponent. We obtain for some $c'>c''>0$
    \begin{align} 
    \ind{\CA_\mathrm{sub}(n)}
    \Prob\big(\CH_n\text{ not connected}\mid \text{pre-}\widehat\CG_{\tilde n}\big)
    &\le A'm\exp\big(-c'\rho_{i_\ast}^2\tilde n^{\zeta-\delta}\big)\nonumber\\
    &\le \tfrac{1}{2}\exp\big(-c''\rho_{i_\ast}^2 n^{\zeta-\delta}\big).\nonumber
    \end{align}
    The definition of $\rho_i$ in \eqref{eq:rho-i} gives that $\rho_{i_\ast}>\rho>0$ for all $i_\ast$, which gives our desired bound for the \emph{(Con)}-step. Combining    the error bound in~\eqref{eq:boxing-sub} with this, we obtain that for any $A>0$ sufficiently large that only depends on the fixed value of the constant $n_0$, 
    \[ 
    \Prob\big(\CA_\mathrm{sub}(n)\cap\{\CH_n\mbox{ connected}\}\big)\ge 1-\exp\big(-\tfrac{1}{A}n^{\zeta-\delta}\big).
    \]
    By \eqref{eq:pre-graph-n} and the arguments below, connectivity of the auxiliary graph $\CH_n$ means that the large components $\CC_{i_\ast, x_j}$ in the first $m/2$ $i_\ast$-good boxes form a connected graph. In pre-$\widehat\CG_{\tilde n}$ we only revealed edges between vertices of mark at most $2n_{i_\ast}^\gamma<2n^\gamma$ by \eqref{eq:pre-graph-n}. By \eqref{eq:boxing-tilde-n} and the assumption that $i_\ast>i_0$ below \eqref{eq:boxing-tilde-n}, we also have $2n_{i_\ast}^\gamma>n_0^{2/(\tau-1)}$.
   Moreover, on $\CA_\mathrm{sub}(n)\cap\{\CH_n\mbox{ connected}\}$, the $\lceil m/2\rceil$ components of size at least $\rho_{i_\ast} n_{i_\ast}$ form a connected graph, and thus $\CG_n[1, 2n_{i_\ast}^\gamma)$ contains a component of size at least 
    \[
    (m/2)\cdot \rho_{i_\ast}n_{i_\ast}= \rho_{i_\ast} \tilde n /2 \ge (\nu/2) \rho_{i_\ast}n \ge (\nu/2)\rho n.
    \]
    This implies that the largest component must be at least this large as well. So, 
    \begin{equation}\CA_\mathrm{sub}(n)\cap\{\CH_n\mbox{ connected}\}
    \,
    \subseteq
    \,
    \big\{|\CC_n^\sss{(1)}[1,An^{\gamma})|\ge \tfrac{1}{A}n\big\}\label{eq:boxing-imp}
    \end{equation} when $A$ is sufficiently large.
 This finishes the proof of~\eqref{eq:ltld-weaker}. We do not need the \emph{(Mark)} step here since there is no further iteration. For the Palm version $\Prob^\sss{x}$ the proof is exactly the same. 
    \end{proof}

\begin{proof}[Proof of Proposition~\ref{proposition:ltld-weaker} when $\zeta_\mathrm{ll}>0$]
When the vertex is formed by a PPP, we obtain the second statement \eqref{eq:ltld-weaker-ll} by a straightforward adaptation of the above proofs. We provide a sketch.   
When $\zeta_{\mathrm{ll}}=2-\alpha>0$, we re-define what we call an $i$-good box by leaving the requirement on the marks in \eqref{eq:i-good-def} out. 
 We now call $\CQ_i(x)$ $i$-good (and $i$-bad otherwise) if it satisfies the event 
    \begin{equation}\label{eq:i-good-def-ll}
       \CA_{\mathrm{i\textnormal{-}good}}(x):= \Big\{ \exists\text{ component $\CC_{i}$ in }\widehat\CG_{n_i}: |\CC_{i}| \ge \rho_i n_i \Big\}.
    \end{equation}
Then, we replace $\zeta_\mathrm{hl}$ by $\zeta_\mathrm{ll}=2-\alpha$ in all computations and definitions,  and we skip the \emph{(Mark)}-step on each level. We do this by setting the level-$i$ vertex set to be $\widehat\CV_{i}:=\CV_{\CQ_i(x)}[1, n_0^{2/(\tau-1)})$. That is, only the size of the box changes across levels, the mark-truncation stays put. The \emph{(Sub)}-step leading to \eqref{eq:not-a-sub} carries through unchanged by the inductive assumption and the concentration of binomial random variables on the number of level-$(i-1)$ good subboxes. For the \emph{(Con)}-step, instead of using high-mark vertices, we estimate the mark of each vertex from below by $1$ in the large components $\CC_{i-1, x_j}$ and $\CC_{i-1, x_k}$. Using this lower bound, the number of edges between the two components dominates a binomial random variable. Both components have size at least $\rho_{i-1}n_{i-1}$ because they are contained in $(i-1)$-good boxes, and their distance is at most $\sqrt{d}n_i^{1/d}$, so \eqref{eq:biskup-qi} simplifies to 
\[ 
\begin{aligned}
\Prob\big(\CC_{i-1, x_j}\nsim_{\widehat\CG_{n_i}}\CC_{i-1, x_{k}}\mid \text{pre-}\widehat\CG_{n_{i}}) &\le \Big(1-p\big(1\wedge \tfrac{\beta}{\sqrt{d}^d n_i}\big)^\alpha\Big)^{\rho_{i-1}^2n_{i-1}^{2}} \\
&\le \exp\big(  - (p \beta d^{-\alpha d/2}) \cdot \rho_{i-1}^2n_{i-1}^{2} n_i^{-\alpha}\big)=:1-q_i. 
\end{aligned}
\]  
Observe that the exponent $n_{i-1}^2=n_{i-1}^{\zeta+\alpha}$ since here $\zeta=2-\alpha$. From here on the computations are identical to those between \eqref{eq:repeat-at-n-con} and \eqref{eq:ind-pr-con}, and since the last term in \eqref{eq:biskup-giant-pr} is not present, we conclude the proof at \eqref{eq:biskup-giant-pr} for $\zeta_{\mathrm{ll}}>0$.
Along the lines we also use that all statements of \cref{claim:useful} remain valid also for $\zeta=\zeta_{\mathrm{ll}}$, except \eqref{eq:mi-ni-1-gamma}. But, \eqref{eq:mi-ni-1-gamma} is only used in the \emph{(Mark)}-step which we omit  when we consider the case $\zeta_{\mathrm{ll}}>0$.
\end{proof}

\section{Sharp bounds on the existence of a giant}\label{sec:bootstrap}
The goal of this section is to improve upon the probability of the existence of the giant component compared to Proposition~\ref{proposition:ltld-weaker}, which is formalized in the next proposition. In Proposition~\ref{proposition:ltld-weaker}, the exponent of $n$ in the stretched exponential decay is $\max(\zeta_\mathrm{hl}, \zeta_\mathrm{ll})-\delta$. Here we remove the error $\delta$ and `append' $(d-1)/d, \zeta_\mathrm{hh}$ to the list in the maximum.

For a regime $\mathrm{type}\in\{\mathrm{ll}, \mathrm{hl}, \mathrm{hh}\}$, and constants $\rho,\delta\in(0,1)$, we define the event 
\begin{equation}\label{eq:a-type-giant}
    \CA_\mathrm{giant}^\mathrm{type}(\rho):=
    \begin{dcases}
        \big\{\exists\mbox{ comp. $\CC$ in $\CG_n$}: |\CC|\ge\rho n, \CC\supseteq\CV[n^{(1-\delta)/(\tau-1)},\infty)\big\},&\text{if }\mathrm{type}\in\{\mathrm{hl}, \mathrm{hh}\}, \\
        \big\{\exists\mbox{ comp. $\CC$ in $\CG_n$}:  |\CC|\ge\rho n\big\},&\text{if }\mathrm{type}=\mathrm{ll}.
    \end{dcases}
\end{equation}
Recall the values $\gamma_\mathrm{hl}$ and $\gamma_\mathrm{hh}$ from~\eqref{eq:gamma-lh} and~\eqref{eq:gamma-hh}, respectively. Define 
\begin{equation}
    \gamma_\mathrm{ll}:=(\alpha-1)/(\tau-1),\label{eq:gamma-ll}
\end{equation}
which we will only use when $\zeta_\mathrm{ll}=2-\alpha>0$. In particular, we will not use it when $\alpha=\infty$.
\begin{proposition}[Sharp bounds for the giant]\label{lemma:bootstrap}
Consider a supercritical KSRG satisfying Assumption \ref{assumption:main} on a PPP or consider supercritical long-range percolation on $\Z^d$. Assume that there is a $\mathrm{type}\in\{\mathrm{ll}, \mathrm{hl}, \mathrm{hh}\}$ so that  $\zeta_\mathrm{type}>0$ and that for a constant $\rho>0$, 
\begin{equation}\label{eq:bootstrap-condition}
\Prob\big(\CA_\mathrm{giant}^\mathrm{type}(\rho)\big)\longrightarrow 1,\qquad\text{as }n\to\infty.
\end{equation}
\begin{enumerate}
    \item[(long)] Then for all $\varepsilon>0$, there exists $A>0$ such that for all $n\ge 1$
    \begin{equation}
\Prob\big(|\CC_n^\sss{(1)}[1, An^{\gamma_\mathrm{type}})|<(1-\varepsilon)\rho n\big)\le \exp\big(-\tfrac{1}{A}n^{\zeta_\mathrm{type}}\big).\label{eq:bootstrap-result-long}
\end{equation}
    \item[(short)] Also, for all $\varepsilon>0$, there exists $A>0$ such that for all $n\ge 1$
    \begin{equation}
\Prob\big(|\CC_n^\sss{(1)}[1, A)|<(1-\varepsilon)\rho n\big)\le \exp\big(-\tfrac{1}{A}n^{\zeta_\mathrm{short}}\big).\label{eq:bootstrap-result-short}
\end{equation}
\end{enumerate}
\end{proposition}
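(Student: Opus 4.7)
The plan is to partition $\Lambda_n$ into disjoint subboxes, use the hypothesis \eqref{eq:bootstrap-condition} to produce a local giant in each subbox, and then merge these local giants via inter-subbox edges. The subbox scale and the macro-LDP invoked at the final step differ between the two bullets.

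For (long), I take subboxes of volume $M_n:=n^{1-\zeta_\mathrm{type}}$, producing $K=K_n=\Theta(n^{\zeta_\mathrm{type}})$ disjoint subboxes $B_1,\ldots,B_K$ partitioning $\Lambda_n$. Since $M_n\to\infty$, translation invariance of the PPP/kernel/profile together with \eqref{eq:bootstrap-condition} applied at scale $M_n$ give that each $B_j$ is ``good'' (contains a local component $\CC_j$ of size $\ge\rho M_n$ which, in the hl/hh cases, contains every vertex of mark $\ge M_n^{(1-\delta)/(\tau-1)}$ in $B_j$) with probability $\ge 1-\varepsilon_{M_n}$, with $\varepsilon_{M_n}\downarrow 0$. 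The goodness events are mutually independent because they depend only on disjoint parts of the PPP and on independent edge variables, so a Chernoff bound gives at least $(1-\varepsilon/4)K$ good subboxes with failure probability $\exp(-\Omega(K))=\exp(-\Omega(n^{\zeta_\mathrm{type}}))$; the slow rate $\varepsilon_{M_n}\downarrow 0$ suffices because the binomial large-deviation rate function blows up as the success probability tends to $1$. I then build an auxiliary macrograph $H$ on the good subboxes with $j\sim_H k$ iff $\CC_j\sim_{\CG_n}\CC_k$, and use the back-of-the-envelope computations of Section~\ref{sec:dominant} to show that this macro-edge has probability at least a constant $q>0$ independent of $n$: in the hh case via the $\Theta(1)$ vertices of mark $\Theta(n^{\gamma_\mathrm{hh}})$ inside each good subbox (which lie in $\CC_j$ by the mark-containment clause of $\CA_\mathrm{giant}^\mathrm{type}$ in \eqref{eq:a-type-giant}); in the hl case using such high-mark vertices in one subbox and the $\Theta(M_n)$ bounded-mark vertices in the other; in the ll case by the low-low calculation at inter-subbox distance. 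Since the inter-subbox edge variables are conditionally independent given the induced subgraphs on each $B_j$, $H$ stochastically dominates an Erd\H{o}s--R\'enyi graph $G((1-\varepsilon/4)K,q)$, so O'Connell's LDP~\cite{oconnel1998} produces an ER-giant of density $\ge 1-\varepsilon/4$ with failure probability $\exp(-\Omega(K))$. The union of the corresponding local components is a $\CG_n$-component of size $\ge(1-\varepsilon)\rho n$, and all participating vertices have mark at most $An^{\gamma_\mathrm{type}}$ for a suitable constant $A$ by a direct check using the explicit formulas for $\gamma_\mathrm{type}$.

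For (short), I run the same scheme with subboxes of \emph{constant} volume $M$, giving $K=n/M$ macrosites. Each macrobox is good with probability $\ge 1-\varepsilon_M$ with $\varepsilon_M\downarrow 0$ as $M\to\infty$, by applying \eqref{eq:bootstrap-condition} at the fixed scale $M$, and $\zeta_\mathrm{long}>0$ lets me arrange the probability that two neighbouring local giants are joined by an edge at constant inter-macrobox distance $\Theta(M^{1/d})$ to tend to $1$ as $M\to\infty$ (via low-low edges when $\zeta_\mathrm{ll}>0$, and via the now constant-mark ``high''-mark vertices lying in the local giants when $\max(\zeta_\mathrm{hl},\zeta_\mathrm{hh})>0$). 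For $M$ large the resulting macro-process dominates supercritical Bernoulli site-bond percolation on $\Z^d$, and Pisztora's surface-tension LDP~\cite{pisztora1996surface} applied to the $n/M$ macrosites yields a macro-giant of density $\ge 1-\varepsilon/4$ with failure probability $\exp(-\Omega((n/M)^{(d-1)/d}))=\exp(-\Omega(n^{(d-1)/d}))$. Choosing $A=A(M)$ above the (constant) highest mark used inside a macrobox or on the macro-edges then gives \eqref{eq:bootstrap-result-short}.

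The main obstacle is securing the constant lower bound $q>0$ on the inter-subbox connection probability in the (long) proof. This bound depends crucially on the local giants actually containing the relevant high-mark vertices, which is precisely why the definition of $\CA_\mathrm{giant}^\mathrm{type}$ in \eqref{eq:a-type-giant} carries the mark-containment clause in the hl and hh cases. A secondary bookkeeping issue is matching the mark truncations $An^{\gamma_\mathrm{type}}$ and $[1,A)$ in the conclusions with the mark ranges produced by the hypothesis at the subbox scale and by the inter-subbox connection step; both are absorbed into the final constant $A$ after a short check using the explicit values of $\gamma_\mathrm{type}$ from Section~\ref{sec:dominant}.
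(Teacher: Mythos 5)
Your overall decomposition (subboxes, local giants via \eqref{eq:bootstrap-condition}, an auxiliary macrograph, then an ER--LDP in the (long) case and NNP--LDP in the (short) case) is the same route the paper takes, and the (short) part is essentially correct: constant volume $M$, Liggett--Schonmann--Stacey domination, then the surface-order LDP for $\mathrm{NNP}(q,r)$ at density close to $1$, which is exactly Proposition~\ref{prop:nn-hasgiant} in the paper.

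There is, however, a concrete error in the (long) part: the claim that the macro-edge probability ``$q$'' is a constant independent of $n$ is false. With subboxes of volume $M_n=\Theta(n^{1-\zeta_\mathrm{type}})$ and $K=\Theta(n^{\zeta_\mathrm{type}})$ of them, the inter-subbox distance is $\Theta(n^{1/d})$, and the back-of-the-envelope of Section~\ref{sec:dominant} that you cite gives a $\Theta(1)$ expected number of downward edges from a high-mark vertex to the \emph{entire} complement $\Lambda_n^\complement$, not to a fixed target subbox. Distributed over $K$ subboxes, the probability of an edge between the local giants $\CC_j$ and $\CC_k$ for a \emph{fixed} pair $(j,k)$ is $\Theta(1/K)=\Theta(n^{-\zeta_\mathrm{type}})$ in all three regimes: e.g.\ in the hh case, two mark-$n^{\gamma_\mathrm{hh}}$ vertices at distance $\Theta(n^{1/d})$ connect with probability $\Theta(n^{-\zeta_\mathrm{hh}})$; in the hl case, one mark-$n^{\gamma_\mathrm{hl}}$ vertex and $\Theta(M_n)$ low-mark targets give $\Theta(M_n\cdot n^{-1})=\Theta(n^{-\zeta_\mathrm{hl}})$; and similarly for ll. Consequently, the auxiliary graph dominates a \emph{sparse} $G(K,\lambda/K)$, not a dense $G(K,q)$ with constant $q$, and the density of its largest component is $\varrho_\lambda<1$, not $1-o(1)$. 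To get density at least $1-\varepsilon/4$ you must tune $\lambda=\lambda(\varepsilon)$ to be a large constant (so that the survival probability $\varrho_\lambda>1-\varepsilon/4$), and this is achieved by scaling the subbox volume by a large constant $M$, i.e.\ $M_n=Mn^{1-\zeta_\mathrm{type}}$, exactly as in the paper's Claim~\ref{prop:er-ltld} and the case-by-case computation in \eqref{eq:compute-lh-ER}--\eqref{eq:computation-ll-ER-end}. Once the constant-$q$ claim is replaced by ``$q\ge\lambda(\varepsilon)/K$, with $\lambda$ made large by adjusting the subbox-volume constant,'' your argument goes through and coincides with the paper's.
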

\cref{lemma:bootstrap} gives sharp decay for the value of $\zeta_\mathrm{type}$ that maximizes $\{\zeta_\mathrm{ll}, \zeta_\mathrm{hl}, \zeta_\mathrm{hh}, \zeta_\mathrm{short}\}$, which is equal to $\zeta_\star$ by \cref{claim:phases}.  When $\zeta_{\mathrm{long}}=\max\{\zeta_{\mathrm{ll}}, \zeta_{\mathrm{hl}}, \zeta_{\mathrm{hh}}\}>0$ and $(d-1)/d>0$, the second statement shows that the giant still exists with stretched exponential error probability even when all vertices with mark above a constant are deleted from the graph. When $\zeta_\star=(d-1)/d$, this decay is even stronger than \eqref{eq:bootstrap-result-long}.

The requirement~\eqref{eq:bootstrap-condition} corresponds to \eqref{eq:density-condition} in Section \ref{sec:outline}. It demands \emph{some} initial density  bound $\rho$ on the size of the giant, and for the (hh) and (hl)-regimes it also demands  that the highest-mark vertices are in the giant. Via edges incident to these high-mark vertices we can connect large components in subboxes in renormalization schemes below. We guarantee the presence of these long edges via the reasoning in \cref{sec:dominant}. The requirement on high-mark vertices is not needed in the (ll) regime in \eqref{eq:a-type-giant}, since we bound all marks from below by one. Thus, in the (ll)-regime, \eqref{eq:bootstrap-condition} is already satisfied for a small $\rho$ by \cref{proposition:ltld-weaker}.
At the end of this section we verify \eqref{eq:bootstrap-condition} also for the (hl) and (hh)-regimes for a small $\rho$ using Proposition~\ref{proposition:ltld-weaker}, obtaining \cref{cor:initial-upper}. 

We will apply \cref{lemma:bootstrap} again in Section~\ref{sec:lower-tail} with density $\rho$ close to $\theta$, as by then we have proved the weak LLN for $|\CC_n^\sss{(1)}|$ for all parameters satisfying $\zeta_\mathrm{long}>0$. This will result in the upper bound on the right-hand side of \eqref{eq:thm-ltld} in \cref{thm:large-dev2}.
The exponent $\zeta_\star$ of the stretched exponential decay in \cref{lemma:bootstrap}  will also appear in our upper bounds on $|\CC_n^\sss{(2)}|$ and the cluster-size decay for $\zeta_\mathrm{type}\in\{\zeta_\mathrm{hl}, \zeta_\mathrm{ll}\}$ in Theorems \ref{thm:subexponential-decay} and \ref{thm:second-largest}. Instead of \cref{lemma:bootstrap}, \cref{proposition:ltld-weaker} could instead be used as input to obtain (weaker) upper bounds on $|\CC_n^\sss{(2)}|$ and the cluster-size decay for $\zeta_\mathrm{type}\in\{\zeta_\mathrm{hl}, \zeta_\mathrm{ll}\}$. That strategy would still give a weak LLN and also Theorem \ref{thm:large-dev2}, but would lead to the appearance of an error $\delta$ in exponents in Theorems \ref{thm:subexponential-decay} and \ref{thm:second-largest}, which would then never match the lower bounds in Theorems \ref{thm:subexponential-decay} and \ref{thm:second-largest}. This explains the benefit of \cref{lemma:bootstrap}.  
\begin{remark}[High-level comparison of proofs in Sections \ref{sec:biskup} and \ref{sec:bootstrap}]\label{remark:biskup-comparison}
Section \ref{sec:bootstrap} sets up a single-layer renormalization with carefully chosen number and size of boxes that lead exactly to \eqref{eq:bootstrap-result-long} and \eqref{eq:bootstrap-result-short}, respectively. It needs Section \ref{sec:biskup} as an input, because in each of these boxes a linear-sized component has to be present whp on truncated mark vertices. 

It is natural to ask why one could not use `more carefully chosen' box sizes already in Section \ref{sec:biskup} and obtain better bounds already there? The rough answer is the following: here in Section \ref{sec:bootstrap} we shall shortly perform a single-layer renormalization. Since it is a single layer, we \emph{can} afford to lose a small linear portion of the giant. Thus, we can renormalize to a \emph{supercritical} Erd{\H os}-R\'enyi random graph (ERRG), and use large deviation results for those. Then, we have $\Theta(n^{\zeta})$ many boxes, and the probability that a supercritical ERRG has a too small giant decays exponentially in the the number of boxes.

On the contrary, Section \ref{sec:biskup} uses an iterative scheme in order to improve the initial sublinear bound on the largest component to a linear bound. That is not possible in a single-layer renormalization. In an iterative scheme, one \emph{cannot} allow to lose a small constant fraction of the largest component in each step. That would (again) result in a sublinear bound on the largest component. Therefore, in Section \ref{sec:biskup} we renormalized to a \emph{connected} ERRG. For that we need slightly larger boxes, and thus we have $o(n^{\zeta})$ boxes when reaching total size $n$. Moreover, the probability of the rare event that such an ERRG is not connected, decays exponentially in the expected degree, which is smaller than the total number of boxes. These reasons explain the $-\delta$ term in the exponent in Section \ref{sec:biskup}. 

\end{remark}
 We proceed to the proofs. We start with a claim that allows us to truncate the vertex marks from above; then we separately present the proof of the two statements in \cref{lemma:bootstrap}. 
\begin{claim}\label{claim:bootstrap-cond-strong}
    Consider a supercritical KSRG under the setting of Proposition \ref{lemma:bootstrap}. Assume that $\zeta_{\mathrm{type}}>0$ and ~\eqref{eq:bootstrap-condition} holds for some $\rho>0$, $\mathrm{type}\in\{\mathrm{hl}, \mathrm{hh}\}$, and $\delta\in (0,1)$. Then, as $n$ tends to infinity,
\begin{equation}
\Prob\Big(\exists\mbox{comp. $\CC$ in $\CG_n\Big[1,n^{\tfrac1{\tau-1}}\Big)$}: \CC\supseteq\CV\Big[n^{\tfrac{1-\delta}{\tau-1}},n^{\tfrac1{\tau-1}}\Big), |\CC|>\rho n\Big)\longrightarrow 1. \label{eq:bootstrap-condition-cor}
\end{equation}
\begin{proof}
    We abbreviate $\underline w=n^{(1-\delta)/(\tau-1)}$, $\overline w=n^{1/(\tau-1)}$ throughout the proof.
     We start from \eqref{eq:bootstrap-condition}
    and distinguish whether $\CG_n$ contains vertices of mark at least $\overline w$: if there are no vertices of mark at least $\overline w$, then $\CG_n[1,\overline w)=\CG_n$. Slightly abusing standard notation, we write here $\CC\subset \CV_n[1,\underline w)\cup \CV_n[\underline w, \infty)$ for a set of vertices that is a \emph{component} when restricted to marks below $\underline w$. Then we get
    \begin{align*}
        \Prob&\big(\exists\mbox{ comp. $\CC$ in $\CG_n$}: \CC\supseteq\CV[\underline w,\infty), |\CC|>\rho n\big) \\
        &=
        \Prob\big(\{\exists\mbox{ comp. $\CC$ in $\CG_n[1,\overline w)$}: \CC\supseteq\CV[\underline w,\overline w), |\CC|>\rho n\}\cap \{\CV[\overline w,\infty)=\emptyset\}\big) \\
        &\hspace{15pt}
        +
        \Prob\big(\{\exists\mbox{ comp. $\CC$ in $\CG_n$}: \CC\supseteq\CV[\underline w,\overline w), |\CC|>\rho n\}\cap\{\CV[\overline w,\infty)\neq \emptyset\}\big).
    \end{align*}
    Since the vertices are formed by a PPP, $\CV[\overline w,\infty)$ is independent of $\CV[1, \overline w)$ and thus also of the graph $\CG_n[1,\overline w)$.  So, in the first term on the right-hand side we can take the product of the probabilities of the two events. Using the intensity of $\CV$ in~\eqref{eq:poisson-intensity}, $\Prob(\CV[\overline w,\infty)=\emptyset)=1/\re$. Hence, conditioning on $\{\CV[\overline w,\infty)\neq\emptyset\}$ in the second term on the right-hand side yields
    \begin{align*}
        \Prob&\big(\exists\mbox{ comp. $\CC$ in $\CG_n$}: \CC\supseteq\CV[\underline w,\overline w), |\CC|>\rho n\big) \\
        &=
        \frac{1}{\re}\Prob\big(\exists\mbox{ comp. $\CC$ in $\CG_n[1,\overline w)$}: \CC\supseteq\CV[\underline w,\overline w), |\CC|>\rho n\big) \\
        &\hspace{15pt}
        +
        \Big(1-\frac{1}{\re}\Big)\Prob\big(\exists\mbox{ comp. $\CC$ in $\CG_n$}: \CC\supseteq\CV[\underline w,\overline w), |\CC|>\rho n\mid\CV[\overline w,\infty)\neq\emptyset\big).
    \end{align*}
    Since the left-hand side converges to $1$ as $n$ tends to infinity by the assumed condition~\eqref{eq:bootstrap-condition}, the two probabilities on the right-hand side must converge to $1$ as well. Since the event in the first row on the right-hand side coincides with \eqref{eq:bootstrap-condition-cor}, the claim follows.
\end{proof}
\end{claim}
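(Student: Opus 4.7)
The plan is to exploit the Poisson structure of the vertex set together with the judicious choice of cutoff. Setting $\overline{w} := n^{1/(\tau-1)}$ and using the intensity in~\eqref{eq:poisson-intensity}, the count $|\CV_n[\overline{w}, \infty)|$ is Poisson with mean exactly $n \cdot \overline{w}^{-(\tau-1)} = 1$; thus $\Prob(\CV[\overline{w}, \infty) \cap \Lambda_n = \emptyset) = 1/\re$ is a fixed positive constant. By the independence of disjoint Poisson thinnings, the event $\{\CV_n[\overline{w}, \infty) = \emptyset\}$ is independent of the restricted graph $\CG_n[1, \overline{w})$ and of all events measurable with respect to it.

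First, I would observe that on $\{\CV_n[\overline{w}, \infty) = \emptyset\}$, we have the identities $\CG_n = \CG_n[1, \overline{w})$ and $\CV_n[\underline{w}, \infty) = \CV_n[\underline{w}, \overline{w})$, where $\underline{w} := n^{(1-\delta)/(\tau-1)}$. Consequently, on this event the hypothesis event in~\eqref{eq:bootstrap-condition} literally coincides with the target event in~\eqref{eq:bootstrap-condition-cor}.

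Next, I would decompose by total probability and use the independence just noted:
\begin{equation*}
\Prob\bigl(\CA^{\mathrm{type}}_{\mathrm{giant}}(\rho)\bigr) = \tfrac{1}{\re}\,\Prob\bigl(\text{event in \eqref{eq:bootstrap-condition-cor}}\bigr) + \bigl(1 - \tfrac{1}{\re}\bigr)\,\Prob\bigl(\CA^{\mathrm{type}}_{\mathrm{giant}}(\rho) \bigm| \CV_n[\overline{w}, \infty) \neq \emptyset\bigr).
\end{equation*}
The left-hand side tends to $1$ by the assumed~\eqref{eq:bootstrap-condition}, and both weights on the right are fixed constants in $(0,1)$; hence each probability on the right is bounded above by $1$ and their weighted sum converges to $1$. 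A convex combination of two quantities in $[0,1]$ with fixed positive weights tends to $1$ only if each summand tends to $1$, so in particular $\Prob(\text{event in \eqref{eq:bootstrap-condition-cor}}) \to 1$, which is the claim.

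There is no real obstacle here: the whole argument hinges on the specific choice $\overline{w} = n^{1/(\tau-1)}$ making the Poisson mean equal to $1$, and on Poisson independence across mark strata. The only bookkeeping to verify is that, on the emptiness event, a component $\CC$ witnessing~\eqref{eq:bootstrap-condition} automatically lies inside $\CG_n[1, \overline{w})$ and the superset condition $\CC \supseteq \CV_n[\underline{w}, \infty)$ reduces to $\CC \supseteq \CV_n[\underline{w}, \overline{w})$, both of which are immediate.
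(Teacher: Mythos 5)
Your proof is correct and follows essentially the same route as the paper: the choice $\overline w = n^{1/(\tau-1)}$ makes $\Prob(\CV_n[\overline w,\infty)=\emptyset)=1/\re$ a fixed positive constant, Poisson independence across mark strata decouples this event from $\CG_n[1,\overline w)$, and the identity of the two events on the emptiness event plus the convex-combination observation yields the conclusion. The only cosmetic wrinkle (inherited from the paper itself) is that \eqref{eq:bootstrap-condition} uses $|\CC|\ge\rho n$ while \eqref{eq:bootstrap-condition-cor} uses $|\CC|>\rho n$; this does not affect the argument.
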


\subsection{When relying on long edges}\label{sec:bootstrap-long}
Before the proof of \cref{lemma:bootstrap}(long), we state a result about the giant in Erd\H{o}s-R\'enyi random graph $G(n,p)$. The result will `replace' \cref{claim:er-conn} in the renormalization scheme of this section compared to the scheme in Section \ref{sec:biskup}. The claim follows from a large-deviation principle for the size of the giant component obtained by O'Connell~\cite{oconnel1998}, see also~\cite{andreis2021er}. For our result, we do not need the exact rate function of this LDP. 
 
\begin{claim}[Large deviations in $G(n,\lambda/n)$]\label{prop:er-ltld}
Consider the largest connected component $C_n^\sss{(1)}(p_n)$ of the Erd\H{o}s-R\'enyi random graph $G(n,p_n)$. Fix $\varepsilon>0$. There exist constants $A>0$, $\lambda=\lambda(\varepsilon)>1$ such that for all $n\ge 1$
\[ 
\Prob\big(|C_n^\sss{(1)}(\lambda/n)|<(1-\varepsilon)n\big)\le \exp\big(-\tfrac{1}{A}n\big).
\]
\begin{proof}
Denote by $\varrho_\lambda$ the survival probability of a Bienaym\'e-Galton-Watson branching process with $\mathrm{Poi}(\lambda)$ offspring. By~\cite[Theorem 3.1]{oconnel1998}, for every $\lambda > 1$ and $\tilde \varepsilon > 0$, there exists a constant $A>0$ such that
    \begin{equation}\nonumber
\Prob\big(|C_n^\sss{(1)}(\lambda/n)|<(1-\tilde \varepsilon)\varrho_\lambda n\big)\le \exp\big(-\tfrac{1}{A}n\big).
\end{equation}
It is well known that the survival probability $\varrho_\lambda$ tends to one as $\lambda \to \infty$, as $\varrho_\lambda$ is the largest solution in $(0,1)$ to the equation $\varrho_\lambda=1-\exp(-\lambda\varrho_\lambda)$ (see e.g.\ \cite[Example 6.1.10]{mdp}).
     Hence, given $\varepsilon > 0$, we may choose $\lambda$ large enough, and $\tilde\varepsilon>0$ small enough, so that $(1-\tilde \varepsilon)\varrho_\lambda\ge(1-\varepsilon)$. The statement follows. 
\end{proof}
\end{claim}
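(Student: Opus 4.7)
The plan is to reduce the statement to the large deviation principle for the size of the giant in $G(n,\lambda/n)$ established by O'Connell \cite{oconnel1998} and then exploit that the Bienaym\'e-Galton-Watson survival probability $\varrho_\lambda$ with $\mathrm{Poi}(\lambda)$ offspring tends to $1$ as $\lambda\to\infty$. The key point is that the target density $(1-\varepsilon)$ in the claim is strictly less than $1$, so once $\lambda$ is large enough the typical giant of $G(n,\lambda/n)$ will comfortably exceed $(1-\varepsilon)n$, and the probability of falling below it decays at linear speed.

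First, I would invoke O'Connell's result \cite[Theorem 3.1]{oconnel1998}: for any $\lambda>1$ and any $\tilde\varepsilon>0$ there exists a constant $A_0=A_0(\lambda,\tilde\varepsilon)>0$ such that for all $n\ge 1$,
\begin{equation*}
\Prob\bigl(|C_n^\sss{(1)}(\lambda/n)|<(1-\tilde\varepsilon)\varrho_\lambda n\bigr)\le \exp\bigl(-n/A_0\bigr).
\end{equation*}
Here $\varrho_\lambda\in(0,1)$ is the largest solution of the fixed-point equation $\varrho=1-\re^{-\lambda\varrho}$ (see e.g.~\cite[Example 6.1.10]{mdp}), which is continuous and strictly increasing in $\lambda$, and satisfies $\varrho_\lambda\to 1$ as $\lambda\to\infty$.

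Next, I would choose the parameters. Given $\varepsilon\in(0,1)$, pick $\tilde\varepsilon:=\varepsilon/2$, and then choose $\lambda=\lambda(\varepsilon)$ large enough so that $\varrho_\lambda\ge 1-\varepsilon/2$. A short computation then gives
\begin{equation*}
(1-\tilde\varepsilon)\varrho_\lambda\ \ge\ (1-\varepsilon/2)^2\ \ge\ 1-\varepsilon,
\end{equation*}
so the event $\{|C_n^\sss{(1)}(\lambda/n)|<(1-\varepsilon)n\}$ is contained in $\{|C_n^\sss{(1)}(\lambda/n)|<(1-\tilde\varepsilon)\varrho_\lambda n\}$. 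Applying O'Connell's bound with these parameters yields the claim with $A:=A_0(\lambda,\tilde\varepsilon)$.

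I do not anticipate any serious obstacle; the entire argument is a packaging of an off-the-shelf LDP together with the monotonicity of $\varrho_\lambda$ in $\lambda$. The only minor subtlety is keeping the constant $A$ independent of $n$ (which is already guaranteed by O'Connell's theorem, since $\lambda$ is fixed once $\varepsilon$ is fixed), and verifying that the bound indeed holds uniformly for all $n\ge 1$ and not merely asymptotically — this is again immediate from the formulation of \cite[Theorem 3.1]{oconnel1998}, possibly after enlarging $A$ to absorb finitely many small values of $n$.
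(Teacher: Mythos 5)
Your proof is correct and follows exactly the same route as the paper: both invoke O'Connell's LDP \cite[Theorem 3.1]{oconnel1998}, use that $\varrho_\lambda\to 1$ as $\lambda\to\infty$, and then choose $\lambda$ and $\tilde\varepsilon$ so that $(1-\tilde\varepsilon)\varrho_\lambda\ge 1-\varepsilon$. The only difference is cosmetic---you make the choice $\tilde\varepsilon=\varepsilon/2$, $\varrho_\lambda\ge 1-\varepsilon/2$ explicit rather than leaving it implicit.
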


\begin{proof}[Proof of \cref{lemma:bootstrap}(long)]
We give the proof for KSRGs on a Poisson point process. The result for long-range percolation on $\Z^d$ follows analogously to the case $\zeta_\mathrm{ll}>0$.

Fix $\zeta_\mathrm{type}\in\{\zeta_\mathrm{hl}, \zeta_\mathrm{hh}, \zeta_\mathrm{ll}\}$ such that $\zeta_\mathrm{type}>0$. Let $\gamma_\mathrm{type}\in\{\gamma_\mathrm{hl}, \gamma_\mathrm{hh}, \gamma_\mathrm{ll}\}$ be the corresponding value that solves $\zeta_\mathrm{type}=1-\gamma_\mathrm{type}(\tau-1)$ by the definitions of $\gamma_\mathrm{hl}$, $\gamma_\mathrm{hh}$, and $\gamma_\mathrm{ll}$ in~\eqref{eq:gamma-lh},~\eqref{eq:gamma-hh}, and~\eqref{eq:gamma-ll}, respectively.
For readability, we omit the subscript $\mathrm{type}$ throughout the proof. 

\smallskip\noindent
\emph{Definitions of constants and boxing scheme.}
Below we will suitably define a constant 
$M>0$ depending on the regime (i.e., high-low, high-high, or low-low). 
For simplicity, we assume that $(n^{\zeta}/M)^{1/d}\in\N$, so that we can partition $\Lambda_n$ into 
\begin{equation}
m_n:=n^{\zeta}/M\label{eq:bootstrap-mn}
\end{equation} boxes $\CQ_1,\ldots,\CQ_{m_n}$. Using that $(1-{\zeta})/(\tau-1)=\gamma$, each box has volume 
\begin{equation}\label{eq:bootstrap-kn}
k_n:=n/m_n = Mn^{1-\zeta}= \big(M^{1/(\tau-1)}n^{\gamma}\big)^{\tau-1}.
    \end{equation}
Given any $\varepsilon>0$ in \cref{lemma:bootstrap}(long), define the small $\varepsilon_1, \varepsilon_2>0$ as the solutions of the following equations:
\begin{equation}
    1-\varepsilon = (1-\varepsilon_1)^2, 
    \qquad 
    \Prob\big(\CV[\varepsilon_2k_n^{1/(\tau-1)},\infty)=\emptyset\big)=\varepsilon_1/4.\label{eq:bootstrap-eps}
\end{equation}
In a box of volume $k_n$, the expected number of vertices of mark at least $\varepsilon_2k_n^{1/(\tau-1)}$ equals $\varepsilon_2^{-(\tau-1)}$ by \eqref{eq:power-law}, so $\varepsilon_2$ is a constant.

\emph{Good boxes and auxiliary graph.} We write $\CV_{k_n, j}:=\CV\cap(\CQ_j\times[1,\infty))$ for the vertices in $\CQ_j$, $\CG_{k_n,j}$ for the subgraph of $\CG_n$ induced by the vertices in $\CQ_j$, and similar to \eqref{eq:pre-graph}, we set
\begin{equation}\label{eq:pre-gn-333}
\text{pre-}\CG_{n}:=\cup_{j\le m_n} \CG_{k_n,j}.
\end{equation}
We call a box $\CQ_j, j\in[m_n]$ \emph{good} if the following event holds depending on the regime $\mathrm{type}\in\{\mathrm{hl}, \mathrm{hh}, \mathrm{ll}\}$.
When $\mathrm{type}\in\{\mathrm{hl}, \mathrm{hh}\}$,
\begin{equation}\label{eq:bootstrap-box}
    \CA_\mathrm{box}(j)\!:=
    \!\left\{ \,\begin{aligned}\exists \mbox{comp. $\CC_j$ in $\CG_{k_n, j}\big[1,k_n^{1/(\tau-1)}\big)$}: 
    |\CC_{j}|&\ge \rho k_n, \\
 \CC_{j}&\supseteq\CV_{k_n, j}[\varepsilon_2k_n^{1/(\tau-1)}, k_n^{1/(\tau-1)})\neq\emptyset
    \end{aligned}
    \right\}.
\end{equation}
When $\mathrm{type}=\mathrm{ll}$, 
\begin{equation*}
\CA_\mathrm{box}(j) :=
    \!\left\{ \,\exists \mbox{comp. $\CC_j$ in $\CG_{k_n, j}\big[1,k_n^{1/(\tau-1)}\big)$}: 
    |\CC_{j}|\ge \rho k_n
    \right\}.
    \end{equation*}
We define the event that there are enough good boxes in $\Lambda_n$ (similar to $\CA_{\mathrm{sub}}$ in 
\eqref{eq:sub-event}):
\begin{equation}\label{eq:a-sub-bootstrap}
\CA_{\mathrm{sub}}:=\bigg\{\sum_{j\in[m_n]}\ind{\CA_\mathrm{box}(j)}\ge (1-\varepsilon_1)m_n\bigg\},
\end{equation}
The event $\CA_\mathrm{sub}$ is measurable with respect to pre-$\CG_{n}$ in \eqref{eq:pre-gn-333}.

\emph{Enough subboxes are good.} We estimate $\Prob\big(\neg \CA_\mathrm{sub}\big)$, with $\CA_\mathrm{sub}$ defined in \eqref{eq:a-sub-bootstrap}. 
For that we analyze the individual probabilities $\Prob\big(\CA_\mathrm{box}(j)\big)$. By the definition of $\varepsilon_1$ in~\eqref{eq:bootstrap-eps} and the hypothesis~\eqref{eq:bootstrap-condition}, for all $n$ sufficiently large and any $j\in[m_n]$
\begin{equation}
\Prob\big(\CA_\mathrm{box}(j)\big)\ge 1-\varepsilon_1/2.
\label{eq:a-box-i-wp}
\end{equation}
Since the graphs $\CG_{k_n,j}$ are  iid across $j\in[m_n]$, by a standard Chernoff bound on binomial random variables, there exists a constant $c>0$ such that 
\begin{equation}
\Prob\big(\neg\CA_\mathrm{sub}\big)\le \exp\big(-cm_n\big).\label{eq:bootstrap-firstev}
\end{equation}

\emph{Introducing the auxiliary graph.}
Similar to below \eqref{eq:pre-graph} and \eqref{eq:pre-graph-n}, on any realization of pre-$\CG_n$ satisfying $\CA_\mathrm{sub}$, we re-label boxes so that the first $\lceil (1-\eps_1) m_n\rceil$ many boxes are good. 
Then we define the auxiliary graph $\CH_n=(\CV_{\CH_n},\CE_{\CH_n})$ on $\lceil (1-\eps_1) m_n\rceil$ many vertices, so that vertex $j\in \CV_{\CH_n}$ corresponds to the component $\CC_j$ inside $\CQ_j$ in \eqref{eq:bootstrap-box}, 
and two vertices $j, \ell$ are connected in $\CH_n$ if the components $\CC_{j}$ and $\CC_{\ell}$ are connected by an edge in $\CG_n$.
We write $\CC_{\CH_n}^\sss{(1)}$ for the largest component in $\CH_n$.
By construction, the induced connected components $(\CC_j)_{j\in \CC_{\CH_n}^\sss{(1)}}$ in \eqref{eq:bootstrap-box} are all contained in the same component in $\CG_n$. So if the event $\CA_\mathrm{sub}\cap\{|\CC_{\CH_n}^\sss{(1)}|\ge (1-\varepsilon_1) |\CV_{\CH_n}|\}$ holds, then the graph $\CG_n\big[1,k_n^{1/(\tau-1)}\big)$ contains a connected component $\CC$ with
\[
|\CC| \ge \sum_{j\in\CC_{\CH_n}^\sss{(1)}}|\CC_j| \ge (1-\varepsilon_1)^2m_n\cdot \rho k_n = (1-\varepsilon_1)^2\rho n = (1-\varepsilon)\rho n.
\]
Here we used that $|\CC_j|\ge \rho k_n$ by the event $\CA_\mathrm{box}(j)$, that $k_n=n/m_n$ by~\eqref{eq:bootstrap-kn}, and $(1-\varepsilon_1)^2=1-\varepsilon$ by \eqref{eq:bootstrap-eps}.
 Clearly, the largest component of $\CG_n[1, k_n^{1/(\tau-1)})$ is at least as large as $\CC$, so 
\begin{align}
    \Prob\big(|\CC_n^\sss{(1)}[1,k_n^{1/(\tau-1)})|\le (1-\varepsilon)\rho n\big)&\le 
    \Prob\big(\neg \CA_\mathrm{sub}\big) + \Prob\big(\CA_\mathrm{sub}\cap\{|\CC_{\CH_n}^\sss{(1)}|\le (1-\varepsilon)|\CV_{\CH_n}|\}\big).\label{eq:bootstrap-twoterms}
\end{align}
We study the two probabilities on the right-hand side separately.

\emph{Large deviation for the giant of ERRGs.} We turn to the second term on the right-hand side in~\eqref{eq:bootstrap-twoterms}. We rewrite this second term using the law of total probability: we first reveal pre-$\CG_n$, then integrate over realizations satisfying $\CA_\mathrm{sub}$: 
\begin{equation}\label{eq:bootstrap-total-prob}
\Prob\big(\CA_\mathrm{sub}\cap\{|\CC_{\CH_n}^\sss{(1)}|\le(1-\varepsilon)^2m_n\big)
=
\E\big[\ind{\CA_\mathrm{sub}}\Prob\big(|\CC_{\CH_n}^\sss{(1)}|\le(1-\varepsilon_1)|\CV_{\CH_n}|\mid \text{pre-}\CG_n\big)\big].
\end{equation}
Let $\lambda=\lambda(\eps_1),A>0$ be the two constants from \cref{prop:er-ltld} so that for all $m\ge 1$, 
\begin{equation}
    \Prob\big(|C_m^\sss{(1)}(\lambda/m)|< (1-\varepsilon_1)m\big)\le A\exp\big(-\tfrac{1}{A}m\big).\label{eq:bootstrap-lambda}
\end{equation}
We will show below that conditionally on the realization pre-$\widehat \CG_n$ satisfying $\CA_\mathrm{sub}$,
\begin{equation}
    \CH_n \succcurlyeq G(|\CV_{\CH_n}|, \lambda(\eps_1)/|\CV_{\CH_n}|).\label{eq:bootstrap-domination}
\end{equation}
Suppose we have proven this stochastic domination. Then we can apply \eqref{eq:bootstrap-lambda}, and use that $|\CV_{\CH_n}|=\lceil (1-\eps_1)m_n\rceil=:\widetilde m_n$:
\begin{equation*}
\begin{aligned}
\ind{\CA_\mathrm{sub}}\Prob\big(|\CC_{\CH_n}^\sss{(1)}|<(1-\varepsilon_1)|\CV_{\CH_n}|\mid \widehat\CG_n\big)
&\le \Prob\big(|C_{\widetilde m_n}^\sss{(1)}(\lambda/\widetilde m_n)|< (1-\varepsilon_1)\widetilde m_n\big)\\
& \le \exp\big(-\tfrac{1}{A}|\CV_{\CH_n}|\big)\le  \exp(-\tfrac{1}{A(1-\varepsilon_1)}m_n)
\end{aligned}
\end{equation*}
 This bound holds uniformly for all realizations of pre-$\CG_n$ with $\CA_\mathrm{sub}$ in \eqref{eq:bootstrap-total-prob}. So,  combined with~\eqref{eq:bootstrap-firstev}, this bounds the two terms on the right-hand side in~\eqref{eq:bootstrap-twoterms}. Recalling that $m_n=n^{\zeta}/M$,
\begin{equation*}
    \Prob\big(|\CC_n^\sss{(1)}[1, k_n^{1/(\tau-1)})|<(1-\varepsilon) \rho n\big)\le \exp\big(-\tfrac{c}{M}n^{\zeta}\big) + \exp\big(-\tfrac{1}{AM(1-\varepsilon_1)}n^{\zeta}\big).
\end{equation*}
The mark bound $k_n^{1/(\tau-1)}$ in the largest component on the left-hand side is equal to $M^{1/(\tau-1)}n^{\gamma}$ by~\eqref{eq:bootstrap-kn}. 
Thus, to finish the proof of \eqref{eq:bootstrap-result-long}, it remains to prove the stochastic domination in~\eqref{eq:bootstrap-domination}. 

\emph{Showing that the auxiliary graph dominates ERRG.}
We now prove~\eqref{eq:bootstrap-domination} case-by-case for the three regimes, for $\zeta$ being either $\zeta_{\mathrm{ll}}$, $\zeta_{\mathrm{hl}}$, or  $\zeta_{\mathrm{hh}}$. We reveal the  subgraph pre-$\CG_n$ in \eqref{eq:pre-gn-333}, i.e., all induced subgraphs inside boxes $(\CQ_j)_{j\in[m_n]}$, and assume that the realization satisfies $\CA_\mathrm{sub}$. The first $\lceil (1-\eps_1) m_n\rceil$ boxes are good and these labels form the vertex set $\CV_{\CH_n}$. Each label corresponds to a component $\CC_j$ in $\CQ_j$. Edges between these components $(\CC_j)_{j\in\CV_{\CH_n}}$ have not been revealed yet, and  conditional on pre-$\CG_n$, they are present independently of each other. Therefore, it is sufficient to show that for all $j,\ell\in\CV_{\CH_n}$,
\begin{equation}
\Prob\big(\CC_j\sim_{\CG_n}\CC_{\ell}\mid \text{pre-}\CG_n\big)\ge \frac{\lambda}{|\CV_{\CH_n}|}.\label{eq:bootstrap-domination-toshow}
\end{equation}
We will prove this by suitably choosing the constant $M$ in~\eqref{eq:bootstrap-mn}, case-by-case for the possible values of $\zeta=\zeta_\mathrm{type}\in\{\zeta_\mathrm{hl}, \zeta_\mathrm{hh}, \zeta_\mathrm{ll}\}$.

\smallskip
\noindent
\emph{1. High-low edges, $\zeta_{\mathrm{type}}=\zeta_{\mathrm{hl}}>0$.}
We assume that $\alpha<\infty$, since $\zeta_\mathrm{hl}=(\tau-1)/\alpha-(\tau-2)<0$ otherwise.
If a box $\CQ_j$ is good, see \eqref{eq:bootstrap-box}, the large component $\CC_j$ with  size at least $\rho k_n$ contains at least one vertex $v_j^\star$ of mark at least $\varepsilon_2k_n^{1/(\tau-1)}$. If there are more such  vertices, choose one arbitrarily.   
 We estimate the probability that $\CC_j$ and $\CC_{\ell}$ are connected from below by the probability that either $v_j^\star$ connects to any vertex in $\CC_{\ell}$ or $v_{\ell}^\star$ connects to any vertex in $\CC_{j}$. We bound marks in the other component from below by $1$. Any two vertices in $\Lambda_n$ are within distance $\sqrt{d}n^{1/d}$.  Since vertex weights are revealed in pre-$\CG_n$, edges are present independently. So, we obtain from the connectivity probability $\mathrm{p}$ in~\eqref{eq:connection-prob-gen}: 
 \begin{equation}\label{eq:compute-lh-ER}
\begin{aligned}    \Prob\big(\CC_j\nsim_{\CG_n}\CC_{\ell}\mid \text{pre-}\CG_n\big) &\le \Prob(\{v_j^\star \nsim_{\CG_n} \CC_{\ell} \}\cap \{v_{\ell}^\star \nsim_{\CG_n} \CC_{j}\}\mid \text{pre-}\CG_n\big)\\
&=
\prod_{u\in\CC_j}(1-\mathrm{p}(u,v_{\ell}^\star))\cdot \prod_{\tilde u\in\CC_{\ell}}(1-\mathrm{p}(\tilde u,v_{j}^\star)) 
\\
&\le
\Big(1-p\Big(1\wedge \frac{\beta \eps_2 k_n^{1/(\tau-1)}}{d^{d/2} n}\Big)^\alpha \Big)^{2\rho k_n} \\ 
&\le 
\exp\Big(-2\rho k_n p( 1\wedge  \beta^\alpha d^{-\alpha d/2}\varepsilon_2^\alpha k_n^{\alpha/(\tau-1)}n^{-\alpha}\big) \Big).
\end{aligned}
\end{equation}
There are now two cases. If the minimum is attained at $1$ in the last row, then the right-hand side tends to $0$ since $k_n=M n^{1-\zeta}\to \infty$ since $\zeta<1$. This means that~\eqref{eq:bootstrap-domination-toshow} holds. 
If the minimum is not attained at $1$, some further calculations are required. 
We set $M$ as the solution of the equation 
\begin{equation}\label{eq:M-def-ll}
\rho p\beta^\alpha d^{-\alpha d/2}\varepsilon_2^\alpha =M^{-\alpha/(\tau-1)}\lambda
/(1-\varepsilon_1).
\end{equation}
Then the last row in \eqref{eq:compute-lh-ER} equals
\[
\exp\big(-2\rho p\beta^\alpha d^{-\alpha d/2}\varepsilon_2^\alpha k_n^{1+\alpha/(\tau-1)}n^{-\alpha}\big) = \exp\big(-\tfrac{2\lambda}{1-\varepsilon_1}k_n(k_n/M)^{\alpha/(\tau-1)}n^{-\alpha}\big).
\]
We now use that $k_n=n/m_n$ for the first appearance of $k_n$ and that $k_n/M=n^{1-\zeta_\mathrm{hl}}$ in the second appearance, see \eqref{eq:bootstrap-kn}. The exponent there is $1-\zeta_\mathrm{hl}=(1-1/\alpha)(\tau-1)$, so rewriting the right-hand side yields that the exponents of $n$ cancel each other.  Using also $|\CV_{\CH_n}|=\lceil(1-\varepsilon_1)m_n\rceil$, 
\[
\exp\big(-\tfrac{2\lambda}{1-\varepsilon_1}(n/m_n)n^{(1-\zeta_\mathrm{hl})\alpha/(\tau-1)}n^{-\alpha}\big)=\exp\Big(-\frac{2\lambda}{(1-\varepsilon_1)m_n}\Big) < 1-\frac{\lambda}{|\CV_{\CH_n}|}
\]
for all $n$ sufficiently large. This proves the desired bound~\eqref{eq:bootstrap-domination-toshow} in the high-low regime.

\smallskip
\noindent
\emph{2. High-high edges, $\zeta_{\mathrm{type}}=\zeta_{\mathrm{hl}}>0$, and $\alpha<\infty$.}
 As in the high-low regime, we reveal pre-$\CG_n$, and choose in each good box $\CQ_j$ a vertex $v_j^\star$ in $\CC_j$ with mark at least $\varepsilon_2k_n^{1/(\tau-1)}$, see \eqref{eq:bootstrap-box}. We directly estimate the probability that $v_j^\star$ is connected to $v_{\ell}^\star$ for $j, \ell\in \CV_{\CH_n}$:
\begin{equation}\label{eq:compute-hh-ER}
\begin{aligned}    \Prob\big(\CC_j\sim_{\CG_n}\CC_{\ell}\mid \text{pre-}\CG_n\big) &\ge \Prob(v_j^\star \sim_{\CG_n}v_{\ell}^\star\mid \text{pre-}\CG_n )\\
&\ge  p\Big(1\wedge \frac{\beta \varepsilon_2^{\sigma+1} k_n^{(\sigma+1)/(\tau-1)}}{d^{ d/2}n}\Big)^\alpha.
\end{aligned}
\end{equation}
If the minimum is attained at $1$ on the right-hand side, then \eqref{eq:bootstrap-domination-toshow} follows, so we assume it is attained at the second term. We use that $k_n=n/m_n$ by \eqref{eq:bootstrap-kn}. The right-hand side equals 
\begin{align}\nonumber
    p\beta^\alpha d^{-\alpha d/2}\varepsilon_2^{(\sigma+1)\alpha}n^{\alpha(\sigma+1)/(\tau-1)-\alpha}
    m_n^{-\alpha(\sigma+1)/(\tau-1)+1}
    \frac{1}{m_n},
\end{align}
where we multiplied by a factor $m_n/m_n$. We use now that $m_n=n^{\zeta_\mathrm{hh}}/M$ by \eqref{eq:bootstrap-mn} in the first occurrence of $m_n$ above to compute the power of $n$. The above expression equals
\begin{equation*}
    p\beta^\alpha d^{-\alpha d/2}\varepsilon_2^{(\sigma+1)\alpha}M^{\alpha(\sigma+1)/(\tau-1)-1}n^{(1-\zeta_\mathrm{hh})\alpha(\sigma+1)/(\tau-1)-\alpha+\zeta_\mathrm{hh}} \cdot \frac{1}{m_n}. 
\end{equation*}
It is an elementary computation to check that the power of $n$ is $0$ using that
$\zeta_\mathrm{hh}=(\sigma +1 - (\tau-1))/(\sigma+1-(\tau-1)/\alpha)$
 in \eqref{eq:zeta-hh}.
Further, $\zeta_{\mathrm{hh}}$ is only positive when $\sigma> \tau-2$. This latter inequality implies that $(\sigma+1)/(\tau-1)> 1$ and so the exponent of $M$ is strictly positive for all $\alpha>1$. 
So, if we choose $M$ large enough then the constant multiplying $1/m_n$ in the above formula becomes arbitrarily large. So, returning to \eqref{eq:compute-hh-ER},
\[ \Prob\big(\CC_j\sim_{\CG_n}\CC_{\ell}\mid \text{pre-}\CG_n\big)\ge \frac{1}{m_n}\cdot p\beta^\alpha d^{-\alpha d/2}\varepsilon_2^{(\sigma+1)\alpha}M^{\alpha(\sigma+1)/(\tau-1)-1} \ge \frac{\lambda}{|\CV_{\CH_n}|}
\]
holds for sufficiently large $M$, since $|\CV_{\CH_n}|=\lceil (1-\eps_1)m_n\rceil$. This finishes the proof of \eqref{eq:bootstrap-domination} for the high-high regime with $\alpha<\infty$.

\smallskip\noindent
\emph{3. High-high edges, $\zeta_{\mathrm{type}}=\zeta_{\mathrm{hh}}=(\sigma+2-\tau)/(\sigma+1)>0$, and $\alpha=\infty$.} This case is almost the same as the $\alpha<\infty$ case. The only difference is that in \eqref{eq:compute-hh-ER} the minimum on the right-hand side \emph{must} be attained at $1$, and then the connection probability is $p$, otherwise it is $0$. 
So we check whether 
\begin{equation}\nonumber
\beta\varepsilon_2^{\sigma+1}k_n^{(1+\sigma)/(\tau-1)} / (d^{d/2}n) \ge 1
\end{equation}
holds.
We use $k_n=Mn^{1-\zeta_\mathrm{hh}}$ in \eqref{eq:bootstrap-kn}
and $1-\zeta_\mathrm{hh}=\gamma_\mathrm{hh}(\tau-1)=(\tau-1)/(\sigma+1)$ when $\alpha=\infty$. Thus, the powers of $n$ cancel each other again. Since $\tau<2+\sigma$, we can choose $M$ large enough so that
\[
\beta\varepsilon_2^{\sigma+1}M^{(1+\sigma)/(\tau-1)}/ d^{d/2} \ge 1.
\]
The bound~\eqref{eq:bootstrap-domination-toshow} also follows in this regime.

\smallskip
\noindent\emph{4. Low-low edges, $\zeta_{\mathrm{type}}=\zeta_{\mathrm{ll}}>0$.}
We may assume that $\alpha<\infty$, since otherwise $\zeta_\mathrm{ll}=2-\alpha<0$. By the definition of a good box in~\eqref{eq:bootstrap-box},
$|\CC_j|\ge \rho k_n$ and each vertex has mark at least $1$. So we compute using the connection probability $\mathrm{p}$ in~\eqref{eq:connection-prob-gen} for $j, \ell\in \CV_{\CH_n}$:
\begin{equation}\label{eq:computation-ll-ER}
\begin{aligned}
\Prob\big(\CC_j\nsim_{\CG_n}\CC_{\ell}\mid \text{pre-}\CG_n\big)
&\le
\big(1-p\beta^\alpha d^{-\alpha d/2}n^{-\alpha}\big)^{|\CC_j|\cdot|\CC_\ell|}\\
&\le 
\exp\big(-p\beta^\alpha d^{-\alpha d/2}\rho^2 k_n^2n^{-\alpha}\big).
\end{aligned}
\end{equation}
Since $k_n=Mn^{1-\zeta}=M n^{\alpha-1}$ in \eqref{eq:bootstrap-kn}, $k_n^2n^{-\alpha}=M^2 n^{\alpha-2}$. With  $m_n=n^{2-\alpha}/M$, we may write $k_n^2n^{-\alpha}=M/m_n$ on the right-hand side above. We choose $M$ as the solution of the equation 
\begin{equation*}
    p\beta^\alpha d^{-\alpha d/2}\rho^2 M=2\lambda/(1-\varepsilon_1).
\end{equation*} Now we obtain for $n$ large that, using $|\CV_{\CH_n}|=\lceil (1-\varepsilon_1)m_n\rceil$,
\begin{equation} \label{eq:computation-ll-ER-end} 
\begin{aligned}     \Prob\big(\CC_j\nsim_{\CG_n}\CC_{\ell}\mid \text{pre-}\CG_n\big) 
    &=\exp\big(-p\beta^\alpha d^{-\alpha d/2}\rho^2 M\tfrac{1}{m_n}\big) \le
    \exp\bigg(-\frac{2\lambda}{|\CV_{\CH_n}|}\bigg)< 1-\frac{\lambda}{|\CV_{\CH_n}|},
    \end{aligned}
\end{equation}
showing \eqref{eq:bootstrap-domination-toshow}.
The same calculation works for long-range percolation on $\Z^d$.

\smallskip
\noindent
\emph{Conclusion for proving \eqref{eq:bootstrap-result-short}.} We conclude that the bound \eqref{eq:bootstrap-domination-toshow} holds whenever $\zeta$ equals either $\zeta_{\mathrm{ll}}, \zeta_{\mathrm{hl}}, \zeta_{\mathrm{hh}}$ for a suitable choice of $M$ in~\eqref{eq:bootstrap-mn}. This proves the stochastic domination stated in~\eqref{eq:bootstrap-domination} and finishes the proof of \eqref{eq:bootstrap-result-short} in \cref{lemma:bootstrap}.
\end{proof}

\subsection{When relying on constant-length edges}\label{sec:bootstrap-short}
We proceed to proving \cref{lemma:bootstrap}(short). Here, instead of using long edges to connect boxes and a domination towards an Erd{\H o}s-R\'enyi random graph, we use ``relatively short'' edges and stochastic domination towards nearest neighbor  site-bond percolation on $\Z^d$. In that model, 
 each vertex $x\in\Z^d$ is independently \emph{active} with probability $q$, and each edge between active neighboring vertices is \emph{open}  with probability $r$, independently again. We denote the resulting graph formed by open edges between active vertices by $\mathrm{NNP}(q,r)$, and write $L_n^\sss{(1)}(q,r)$ for the largest component  inside $\Lambda_n$. We state a combination of two classical results \cite[Theorem 1.1]{deuschel1996surface, liggett1997domination} that will replace~\cref{prop:er-ltld} in this regime. 
\begin{proposition}[High-density NNP($q, r$) has a large giant]\label{prop:nn-hasgiant}
    Consider nearest neighbor  site-bond percolation $\mathrm{NNP}(q,r)$ on $\Z^d$ with probabilities $q,r\in(0,1]$. For each $\varepsilon>0$, there exist constants $q_0,r_0\in(0,1)$, and $A>0$ such that for all $q\ge q_0$, $r\ge r_0$, $n\ge 1$,
    \[
    \Prob\big(|L_n^\sss{(1)}(q,r)|<(1-\varepsilon) n\big)\le \exp\big(-\tfrac{1}{A}n^{(d-1)/d}\big).
    \]
    \begin{proof}
    Fix $\varepsilon > 0$. We shall define an $\eps_1=\eps_1(\eps)$ and  set $q_0:=1-\eps_1$ and $r_0=1-\eps_1$. Let  $\omega\sim\mathrm{NNP}(q_0, r_0)$. 
       We  couple $\omega$ to iid Bernoulli bond percolation $\omega^\star\sim\mathrm{NNP}(1, 1-12 d\eps_1)$ on $\Z^d$, similarly to   \cite{andjel1993characteristic, liggett1997domination}. By a union bound over the  endpoints of an edge and the edge itself, each edge in is open in $\omega$ with \emph{marginal} probability at least $1-3\eps_1$. Consider an edge with endpoints $(x,y)$ in $\omega$, and let $N_{(x,y)} := \{\text{edges incident to $x$ or $y$ in $\Z^d$}\} \setminus \{(x,y)\}$. Clearly  $|N_{(x,y)}| = 4d-2$.
    For any set of edges $S \subseteq N_{(x,y)}$, by a union bound,
    \begin{align*}
    \Prob((x,y) \text{ open in } \omega\mid \text{all edges in $S$ open in $\omega$}) \nonumber & \geq \Prob((x,y) \text{ open} \text{ and } \text{all edges in $S$ open}) \\ 
    & \geq 1-(4d-1)\cdot 3\eps_1 \geq 1-12d\eps_1=:\tilde p.
    \end{align*}
    Edges that do not share a vertex are independently open, so one can iterate this argument:  for every finite set $E$ of edges in $\Z^d$, $\Prob(\text{all edges in $E$ are open}) \geq \tilde p^{|E|}$. Hence, by Strassen's theorem~\cite{lindvall1999strassen} we can couple $\omega$ with an independent bond percolation $\omega^{\star}$ on $\Z^d$ with retention-probability $\tilde p$, where every open edge in $\omega^{\star}$ is open in $\omega$.
   Let us write $C_{B_n}^\sss{(1)}(\tilde p)$ for the largest component of $\omega^\star\cap\Lambda_n$. Given $\eps>0$, by~\cite[Theorem 1.1]{deuschel1996surface}, there exists a $p_\star(\eps)<1$ so that whenever   $\tilde p >p_\star(\eps)$, for some $A > 0$,
$$ \Prob\big(|C_{B_n}^\sss{(1)}(\tilde p)| < (1-\varepsilon) n\big) \le \exp\big(-\tfrac{1}{A}n^{(d-1)/d}\big).
    $$
    Hence, setting $\eps_1> (1-p_\star(\eps))/12d$ yields $\tilde p>p(\eps)$, and we obtain by the coupling $\omega \succcurlyeq \omega_\star$:
     \[
    \Prob\big(|L_n^\sss{(1)}(q,r)|<(1-\varepsilon) n\big) \le 
    \Prob\big(|C_{B_n}^\sss{(1)}(\tilde p)| < (1-\varepsilon) n\big) \le \exp\big(-\tfrac{1}{A}n^{(d-1)/d}\big),
    \]
    and the result follows for  $q_0=r_0=1-\eps_1$, and by monotonicity for all values $q \ge q_0$ and $r \ge r_0$.
    \end{proof}
\end{proposition}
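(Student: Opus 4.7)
\emph{My plan.} The idea is to reduce the high-density site-bond problem to high-density independent Bernoulli bond percolation on $\Z^d$, for which the desired surface-tension-type bound $\exp(-\Theta(n^{(d-1)/d}))$ on the density deficit of the largest component is a classical result of Deuschel--Pisztora that I would cite (and which is already invoked in the statement). So the whole task is to produce a stochastic lower bound of $\mathrm{NNP}(q,r)$, for $q,r$ sufficiently close to $1$, by an independent Bernoulli bond percolation on $\Z^d$ with retention probability arbitrarily close to $1$.

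\textbf{Step 1 (marginal bound).} Given $\eps>0$, set $q_0=r_0=1-\eps_1$ for a small $\eps_1=\eps_1(\eps)$ to be chosen, and let $\omega\sim\mathrm{NNP}(q,r)$ with $q\ge q_0$, $r\ge r_0$. Declare an edge $e=\{x,y\}$ \emph{present} in $\omega$ if $x,y$ are both active and $e$ is open. By a union bound over the three independent events failing, $\Prob(e\text{ present})\ge 1-3\eps_1$.

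\textbf{Step 2 (conditional bound).} The obstruction to independence is that two edges sharing a common endpoint $x$ both depend on the activity of $x$. Fortunately, each edge $e$ shares a vertex with at most $4d-2$ other edges; call this set $N_e$. Edges outside $N_e\cup\{e\}$ are independent of $e$. For any finite $S\subseteq N_e$, Bayes' formula together with the marginal bounds in Step 1 (applied to $e$ and to each edge in $S$) gives
\[
\Prob(e\text{ present}\mid \text{all edges in $S$ present})\ge 1-(4d-1)\cdot 3\eps_1 \ge 1-12 d\eps_1=:\tilde p,
\]
because the numerator is at least $1-(|S|+1)\cdot 3\eps_1$ by inclusion--exclusion and the denominator is at most $1$. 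Since only edges in $N_e$ interact with $e$ at all, the same bound holds when conditioning on \emph{any} finite set of edges in $\Z^d$, by first conditioning only on the $N_e$-coordinates.

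\textbf{Step 3 (stochastic domination).} The uniform conditional bound in Step 2 is exactly the hypothesis of Strassen's theorem (or equivalently Liggett--Schonmann--Stacey), which gives a coupling such that $\omega$ (restricted to its edge-set) dominates an iid Bernoulli bond percolation $\omega^\star\sim\mathrm{NNP}(1,\tilde p)$. Open edges in $\omega^\star$ are then open in $\omega$, so $L_n^\sss{(1)}(q,r)\succcurlyeq C_{B_n}^\sss{(1)}(\tilde p)$.

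\textbf{Step 4 (apply Deuschel--Pisztora).} Choose $\eps_1$ small enough that $\tilde p=1-12d\eps_1$ exceeds the threshold $p_\star(\eps)<1$ from \cite{deuschel1996surface}, i.e.\ take $\eps_1<(1-p_\star(\eps))/(12d)$. Then $\Prob(|C_{B_n}^\sss{(1)}(\tilde p)|<(1-\eps)n)\le \exp(-n^{(d-1)/d}/A)$ by their surface-order lower-tail bound, and by Step 3 the same upper bound holds for $|L_n^\sss{(1)}(q,r)|$. Monotonicity in $(q,r)$ then extends the bound from $q=q_0,r=r_0$ to all $q\ge q_0$, $r\ge r_0$.

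\textbf{Expected main obstacle.} There is no serious obstacle once one recognizes the problem reduces to the classical Deuschel--Pisztora bound; the only care needed is Step 2, where one must check that a short-range conditional bound of the form ``$\Prob(e\mid S)\ge \tilde p$'' suffices to invoke Strassen even though the true dependence graph of $\omega$ is infinite --- this is handled by noticing that the dependence is finite-range ($N_e$ has bounded size) so the inequality for $S\subseteq N_e$ already implies the inequality for arbitrary $S$.
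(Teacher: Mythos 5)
Your proposal is correct and follows essentially the same route as the paper: reduce to Deuschel--Pisztora by coupling to iid Bernoulli bond percolation, with identical marginal bound $1-3\eps_1$, identical conditional bound $\tilde p = 1-12d\eps_1$ using $|N_e|=4d-2$, and the same Strassen/Liggett--Schonmann--Stacey coupling step, closing with the same Deuschel--Pisztora citation and monotonicity. The only differences are cosmetic (e.g.\ you say ``inclusion--exclusion'' where a union bound is used, and you phrase the passage from $S\subseteq N_e$ to arbitrary conditioning slightly differently from the paper's iteration to a product bound, but both are the same finite-range-dependence observation).
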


\begin{proof}[Proof of \cref{lemma:bootstrap}(short)] We give the proof for KSRGs on a Poisson point process. The result for long-range percolation on $\Z^d$ follows analogously to the case $\zeta_\mathrm{ll}>0$. By the assumptions in \cref{lemma:bootstrap}), there is a $\mathrm{type}\in\{\mathrm{hl}, \mathrm{hh},\mathrm{ll}\}$ so that $\zeta_{\mathrm{type}}>0$. 
  Positivity will ensure that ``relatively long'' edges are  present to keep the giant together.  
Let $M$ be a sufficiently large constant. We assume for simplicity that $(n/M)^{1/d}\in\N$. 
Tessellate $\Lambda_n$ into $m_n:=n/M$ may boxes $\CQ_1,\ldots, \CQ_{m_n}$ of volume $M$. Similarly as before, we write $\CV_{M, j}:=\CV\cap(\CQ_j\times[1,\infty))$ for the vertices in $\CQ_j$, $\CG_{M,j}$ for the subgraph of $\CG_n$ induced by these vertices in $\CQ_j$, and pre-$\CG_{n}$ for the union of the subgraphs $\CG_{M,j}$, $j\in [m_n]$.
Fix $\delta > 0$ small as in \eqref{eq:bootstrap-condition}.

\emph{Good subboxes.} We call a box $\CQ_j, j\in[m_n]$ \emph{good} if the following event holds. 
When $\mathrm{type}\in\{\mathrm{hl}, \mathrm{hh}\}$, we set
\begin{equation}\label{eq:bootstrap-boxNN}
    \CA_\mathrm{box}(j)\!:=
    \!\left\{ \,\begin{aligned}\exists \mbox{comp. $\CC_j$ in $\CG_{M, j}\big[1,M^{1/(\tau-1)}\big)$}: 
    |\CC_{j}|&\ge \rho M, \\
 \CC_{j}&\supseteq\CV_{M, j}[M^{(1-\delta)/(\tau-1)}, M^{1/(\tau-1)})\\    
     \tfrac{1}{2}M^{\delta}&\le |\CV_{M, j}[M^{(1-\delta)/(\tau-1)}, M^{1/(\tau-1)})|
    \end{aligned}
    \right\}.
\end{equation}
When $\mathrm{type}=\mathrm{ll}$, we set
\begin{equation}
\CA_\mathrm{box}(j) :=
    \!\left\{ \,\exists \mbox{comp. $\CC_j$ in $\CG_{M, j}\big[1,M^{1/(\tau-1)}\big)$}: 
    |\CC_{j}|\ge \rho M
    \right\}.\label{eq:bootstrap-boxNN-2}
    \end{equation}
This event is measurable with respect to pre-$\CG_{n}$. Our assumption \eqref{eq:bootstrap-condition} directly implies \cref{claim:bootstrap-cond-strong}, which in turn implies that the first two requirements in $\CA_\mathrm{box}(j)$ ---concerning $\CC_j$--- are satisfied with probability arbitrarily close to  $1$ by choosing $M$ sufficiently large.
For the third requirement in \eqref{eq:bootstrap-boxNN} we use the intensity of the PPP~\eqref{eq:poisson-intensity}, and compute for fixed $\delta>0$ 
\[
\Prob\big(|\CV_{M, j}[M^{(1-\delta)/(\tau-1)},M^{1/(\tau-1)})|\ge \tfrac12 M^{\delta}\big) \longrightarrow 1,
\]
as $M$ tends to infinity.  Since the three conditions on the right-hand side hold with probability arbitrarily close to $1$, we may assume that $M$ is so large that for a fixed $q\in(0,1)$ and any $j\in[m_n]$, independently of all other boxes,
\begin{equation}
\Prob\big(\CA_\mathrm{box}(j)\big)>q.\label{eq:domination-nn-active}
\end{equation}
 \emph{The auxiliary graph and LDP for site-bond percolation.} Consider then the auxiliary graph $\CH_n$, with vertex set $\CV_{\CH_n}\subset [m_n]$. By definition of the boxing, the set of vertices $[m_n]$ can be mapped to the integer points in a box of side-length $m_n^{1/d}$, such that any two adjacent boxes are mapped to integer points at distance one from each other.
 We call $j\in\CV_{\CH_n}$ active if the box $\CQ_j$ satisfies $\CA_\mathrm{box}(j)$. Two active vertices $j, \ell\in \CV_{\CH_n}$ are connected in $\CH_n$ if they are neighboring vertices in $B_{m_n}$, and  there exist vertices $v \in \CC_{j}, \tilde v \in \CC_{\ell}$ such that $v$ is connected by an edge to $\tilde v$ in $\CG_n$. Let us write  $\CC_{\CH_n}^\sss{(1)}$ for the largest component in $\CH_n$. Since each $\CC_j$ in a good box $\CQ_j$ has size at least $\rho n$,  the event $\{|\CC_{\CH_n}^\sss{(1)}|\ge (1-\varepsilon)m_n\}$ implies  $\{|\CC_n^\sss{(1)}[1,M^{1/(\tau-1)})|\ge(1-\varepsilon)\rho n\}$. 
Let $q, r>0$ be such that by Proposition~\ref{prop:nn-hasgiant} for a fixed $\varepsilon>0$, $A>0$ and all $n\ge 1$, \[
\Prob(|L_{m_n}^\sss{(1)}(q,r)|<(1-\varepsilon) m_n\big)\le \exp\big(-\tfrac{1}{A}m_n^{(d-1)/d}\big).
\]
We will show that we can choose $M$ sufficiently large (depending on $\varepsilon$) so that 
\begin{equation}
\CH_n\succcurlyeq \mathrm{NNP}(q,r).\label{eq:domination-nn}
\end{equation}
This domination then implies that $|\CC_{\CH_n}^\sss{(1)}|\succcurlyeq |L_{m_n}^\sss{(1)}(q,r)|$, and so
\begin{equation}\label{eq:finish-nn}
\begin{aligned}
\Prob\big(|\CC_n^\sss{(1)}[1,M^{1/(\tau-1)})|<(1-\varepsilon)\rho n\big)
    &\le 
\Prob\big(|L_{m_n}^\sss{(1)}(q,r)|<(1-\varepsilon) m_n\big)\\
    &\le 
    \exp\big(-\tfrac{1}{A}m_n^{(d-1)/d}\big)\\
    &=
    \exp\big(-\tfrac{1}{A M^{(d-1)/d}}    n^{(d-1)/d}\big),
\end{aligned}
\end{equation}
implying the statement of \cref{lemma:bootstrap} for a larger value $A$.

\emph{The auxiliary graph dominates site-bond percolation.} It remains to  prove the domination~\eqref{eq:domination-nn}.
By~\eqref{eq:domination-nn-active} and the definition of $\CH_n$ below~\eqref{eq:domination-nn-active}, each vertex $j\in[m_n]$ is independently \emph{active} with probability at least $q$ for $M$ sufficiently large. Thus, it remains to show that for two large components $\CC_j, \CC_{\ell}$ in adjacent good boxes $\CQ_j, \CQ_{\ell}$,
\begin{equation}
\Prob\big(\CC_j\nsim_{\CG_n}\CC_{\ell}\mid \text{pre-}\CG_n\big) = \prod_{u\in\CC_j, v\in\CC_{\ell}}(1-\mathrm{p}(u,v)) \longrightarrow 0,\qquad\text{as }M\to\infty.\label{eq:domination-nn-edge}
\end{equation}
We write $|\CE(\CC_j,\CC_{\ell})|$ for the number of edges between the two components. Then
 \[
\Prob\big(\CC_j\nsim_{\CG_n}\CC_{\ell}\mid \text{pre-}\CG_n\big) \le  \exp\bigg(-\sum_{u\in\CC_j, v\in\CC_{\ell}}\mathrm{p}(u,v)\bigg)
=
\exp\big(-\E[|\CE(\CC_j,\CC_{\ell})|\mid\text{pre-}\CG_n]\big).
\]
 We will show case-by-case for $\zeta\in\{\zeta_\mathrm{hl}, \zeta_\mathrm{hh}, \zeta_\mathrm{ll}\}$ that $\E[|\CE(\CC_j,\CC_{\ell})|\mid\text{pre-}\CG_n]\to\infty$ using either high-low, high-high, or low-low connections between two active neighboring boxes $j,\ell$. The maximal distance between any two vertices in neighboring boxes is  $2\sqrt{d}M^{1/d}$. 

 \smallskip\noindent
 \emph{1. Low-low connections help: $\zeta_{\mathrm{ll}}>0$.} We may assume $\alpha < \infty$ otherwise $\zeta_{\mathrm{ll}}=2-\alpha$ is negative. Every vertex has mark at least $1$. Thus, from the connection probability $\mathrm{p}$ in~\eqref{eq:connection-prob-gen} it follows that 
\begin{equation}\label{eq:expected-edges-ll}
\E[|\CE(\CC_j,\CC_{\ell})|\mid\text{pre-}\CG_n]
= \Omega\big(
M^{-\alpha}|\CC_j|\cdot|\CC_{\ell}|\big)=\Omega\big(
M^{2-\alpha}\big).
\end{equation}
 Since by assumption $\zeta_\mathrm{ll}=2-\alpha > 0$, \eqref{eq:domination-nn-edge} follows for fixed $\varepsilon$. The same calculation works for long-range percolation on $\Z^d$.

\smallskip\noindent
\emph{2. High-low connections help: $\zeta_{\mathrm{hl}}>0$.}
We may again assume $\alpha<\infty$ otherwise $\zeta_{\mathrm{hl}}$ is negative. 
On the event $\CA_\mathrm{box}(j)$, defined  in~\eqref{eq:bootstrap-boxNN}, the component $\CC_j$ contains at least $M^{\delta}/2$ vertices with mark at least $M^{(1-\delta)/(\tau-1)}$.
Then, using possible edges between a high-mark vertex in $\CC_j$ and the at least $\rho M$ vertices in $\CC_{\ell}$ that have mark at least $1$, we obtain
\begin{align}
    \E[|\CE(\CC_j,\CC_{\ell})|\mid\text{pre-}\CG_n]
    &= \Omega(M^{\delta+1})\cdot\Omega\bigg(1 \wedge \Big(\frac{M^{(1-\delta)/(\tau-1)}}{M}\Big)^\alpha\bigg)\nonumber\\&= 
\Omega(M^{1+\delta})\wedge \Omega(M^{1-\alpha(\tau-2)/(\tau-1)+\delta(1-\alpha/(\tau-1))})\label{eq:bootstrap-nn-lh}
    .
\end{align}
When $\zeta_\mathrm{hl}=1-(1-1/\alpha)(\tau-1)=(\tau-1)/\alpha-(\tau-2)>0$,  
the exponent of $M$ in~\eqref{eq:bootstrap-nn-lh} is positive for sufficiently small $\delta>0$. So \eqref{eq:domination-nn-edge} follows.

\smallskip\noindent
\emph{3. High-high connections help, $\zeta_{\mathrm{hh}}>0$.} We first assume that  $\alpha<\infty$.
Using that we only consider good boxes, and that there are at least $M^{\delta}/2 $ many high-mark vertices with mark in the interval $[M^{(1-\delta)/(\tau-1)}, M^{1/(\tau-1)})$ in both $\CC_j$ and $\CC_{\ell}$, we compute
\[
    \E[|\CE(\CC_j,\CC_{\ell})|\mid\text{pre-}\CG_n]
    =\Omega(M^{2\delta})\cdot\Omega\bigg(1 \wedge \Big(\frac{M^{(\sigma+1)(1-\delta)/(\tau-1)}}{M}\Big)^\alpha\bigg).
\]
From \eqref{eq:zeta-hh},  $\zeta_\mathrm{hh}>0$ only when $\sigma>\tau-2$. So, for $\delta$ is sufficiently small, the second factor is $\Theta(1)$. Thus, the right-hand side tends to infinity with $M$ and \eqref{eq:domination-nn-edge} follows. 
When $\alpha=\infty$, it follows analogously that $\E[|\CE(\CC_j,\CC_{\ell})|\mid\widehat\CG_n]=\Omega(M^{2\delta})$ if $\delta$ is sufficiently small since the minimum is attained at $1$ on the right-hand side above.

\smallskip\noindent
\emph{Conclusion.} We conclude that~\eqref{eq:domination-nn-edge} follows in all three cases, proving the stochastic domination~\eqref{eq:domination-nn} and finishing the proof of \eqref{eq:bootstrap-result-short}.
\end{proof}

\subsection{Verification of the requirement}
We now verify the requirement~\eqref{eq:bootstrap-condition}. To do so, we prove the following lemma that we only need for models with non-trivial vertex marks.
 We define one more exponent for each regime $\mathrm{hl, ll, hh}$:
\begin{equation}\label{eq:eta-def}
    \eta_\mathrm{hl}:=
    1-\gamma_\mathrm{hl}(\tau-1)/\alpha, \qquad 
        \eta_\mathrm{ll}:=1/\alpha, 
        \qquad 
        \eta_\mathrm{hh}:=1-\sigma\gamma_\mathrm{hh}.
\end{equation}
Using the respective values of $\gamma_\mathrm{ll}, \gamma_\mathrm{hl}, \gamma_\mathrm{hh}$  in~\eqref{eq:gamma-ll},~\eqref{eq:gamma-lh}, and~\eqref{eq:gamma-hh}, it is elementary to verify that if $\zeta_{\mathrm{type}}=\zeta_\star$ for some $\mathrm{type}\in\{\mathrm{ll},\mathrm{hl}, \mathrm{hh}\}$, then $\eta_\mathrm{type}>\gamma_\mathrm{type}$ holds (see the proof of~\cref{lemma:prerequisite-verif}). Then there are values of $\alpha$ such that $\gamma_\mathrm{ll}>\eta_\mathrm{ll}$.
 We write $\CC_n(0)[1, w)$ for the component of the origin spanned on vertices in $\CV_n[1,w)$.
\begin{lemma}[Poly-logarithmic mark-thresholds]\label{lemma:prerequisite-verif}
Consider a supercritical KSRG satisfying Assumption \ref{assumption:main} on a PPP. Assume for a  $\mathrm{type}\in\{\mathrm{ll}, \mathrm{hl}\}$ that $\zeta_\mathrm{type}=\max(\zeta_\mathrm{hl}, \zeta_\mathrm{ll})>0$.
    Whenever there exist constants $\delta \in[0, \zeta)$, $A',\rho> 0$, such that for all $n \ge 1$,  
        \begin{equation}\label{eq:hyp-lh-verif}
    \Prob\big(|\CC_n^\sss{(1)}[1, A'n^{\gamma_\mathrm{type}})|< \rho n \big) \le \exp\big(- \tfrac{1}{A'} n^{\zeta_\mathrm{type}-\delta}\big),
    \end{equation}
    then, for all $C>0$, there exists a constant $A>0$ such that for all $n$  sufficiently large,
    \begin{align}
        \Prob\big(\big|\CC_n^\sss{(1)}\big[1, (A\log n)^{\gamma_\mathrm{type}/(\zeta_\mathrm{type}-\delta)}\big)\big|\ge \rho n\big) &\ge 1-n^{-C},\label{eq:lemma-prer-ver-2}
    \end{align}
    and 
    \begin{equation}\label{eq:lemma-prer-ver-1}
        \Prob\big(\exists \mbox{ comp. }\CC\mbox{ in }\CG_n: \CC\supseteq \CV\big[(A\log n)^{\eta_\mathrm{type}/(\zeta_\mathrm{type}-\delta)}, \infty\big), |\CC|\ge \rho n\big) \ge 1-n^{-C}.
    \end{equation}
    The statements remain valid for the Palm version $\Prob^\sss{x}$ of $\Prob$ for any $x\in\R^d$.  
\end{lemma}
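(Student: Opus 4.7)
\emph{Overall strategy.}
Both conclusions stem from a single tessellation of $\Lambda_n$ into $n/m_n$ disjoint sub-boxes $(\CQ_j)_{j \in [n/m_n]}$ of volume $m_n := \lceil (A \log n)^{1/(\zeta_\mathrm{type}-\delta)} \rceil$, for $A$ a large constant to be chosen in terms of $C$, $A'$, $\rho$, $d$, $\tau$, $\alpha$. Inside each $\CQ_j$, the hypothesis~\eqref{eq:hyp-lh-verif} gives a local component $L_j$ of size at least $\rho m_n$ in $\CG_{m_n,j}[1, A' m_n^{\gamma_\mathrm{type}})$ with failure probability at most $\exp(-m_n^{\zeta_\mathrm{type}-\delta}/A') = n^{-A/A'^2}$; a union bound over the $n/m_n \le n$ sub-boxes gives simultaneous existence of all $L_j$ with failure at most $n^{1-A/A'^2} \le n^{-C}/3$ once $A$ is large. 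The mark threshold $A' m_n^{\gamma_\mathrm{type}}$ equals $A'' (\log n)^{\gamma_\mathrm{type}/(\zeta_\mathrm{type}-\delta)}$ for a suitable constant $A''$, matching the form required in~\eqref{eq:lemma-prer-ver-2}.

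\emph{Gluing the local giants for~\eqref{eq:lemma-prer-ver-2}.}
I next show that for each pair of adjacent sub-boxes, the two local giants are joined by at least one edge with very high probability. In the ll regime, bounding all marks from below by $1$ and applying the connection probability~\eqref{eq:connection-prob-gen} at the inter-box distance $\Theta(m_n^{1/d})$ gives an expected edge count between $L_j$ and $L_k$ of $\Omega(m_n^{2-\alpha}) = \Omega(m_n^{\zeta_\mathrm{ll}})$. In the hl regime with $\alpha > 2$ the mark-one lower bound is insufficient, so I first argue that each $L_j$ absorbs in its interior the $\Theta(m_n^{\zeta_\mathrm{hl}})$ expected vertices with marks $\asymp m_n^{\gamma_\mathrm{hl}}$ inside its sub-box (by Poisson sprinkling, following the \emph{(Mark)}-step of Section~\ref{sec:biskup}: such a high-mark vertex connects to $L_j$ with probability tending to $1$). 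Counting the high--low edges between these absorbed high-mark vertices in one box and the low-mark content of the local giant in the adjacent box again yields $\Omega(m_n^{\zeta_\mathrm{hl}})$ expected edges. In either case, the probability of failure to join two adjacent local giants is at most $\exp(-c m_n^{\zeta_\mathrm{type}}) = \exp(-c(A \log n)^{\zeta_\mathrm{type}/(\zeta_\mathrm{type}-\delta)}) \le n^{-C-d-1}$ for $A$ large, since $\zeta_\mathrm{type}/(\zeta_\mathrm{type}-\delta) \ge 1$. A union bound over the $O(n/m_n)$ pairs of adjacent sub-boxes, combined with the existence of all $L_j$, yields a single connected component of size at least $(n/m_n) \cdot \rho m_n = \rho n$ in $\CG_n[1, A''(\log n)^{\gamma_\mathrm{type}/(\zeta_\mathrm{type}-\delta)})$ with total failure at most $n^{-C}$.

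\emph{Absorbing high-mark vertices for~\eqref{eq:lemma-prer-ver-1}.}
Let $\CC_\mathrm{bulk}$ denote the component built above, and set $w_\star := (A \log n)^{\eta_\mathrm{type}/(\zeta_\mathrm{type}-\delta)}$. The Poisson point process $\CV$ splits independently into its restrictions to marks $< w_\star$ and $\ge w_\star$, so I condition on the former producing $\CC_\mathrm{bulk}$ and treat the edges from each $v \in \CV_n[w_\star, \infty)$ to $\CC_\mathrm{bulk}$ independently. For such $v$ with mark $w_v$, the expected number of edges to $\CC_\mathrm{bulk}$ is at least $\Omega(\rho \cdot w_v^{a_\mathrm{type}})$, where $a_\mathrm{type} > 0$ is the exponent from the back-of-the-envelope computation that defines $\zeta_\mathrm{type}$ and $\eta_\mathrm{type}$ in Section~\ref{sec:dominant} and~\eqref{eq:eta-def}; concretely, the choice of $\eta_\mathrm{type}$ is tuned precisely so that $\exp(-c\rho\, w_v^{a_\mathrm{type}}) \le n^{-(C+2)}$ whenever $w_v \ge w_\star$. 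Since the Poisson intensity of vertices with mark $\ge w_\star$ in $\Lambda_n$ is $\Theta(n w_\star^{-(\tau-1)}) \le n$, a union bound over these at most $n$ (whp, by Poisson concentration) many high-mark vertices together with the failure probability of the previous step gives~\eqref{eq:lemma-prer-ver-1} with total failure at most $n^{-C}$.

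\emph{Main obstacle and remarks.}
The hardest part is the gluing step in the hl regime when $\alpha > 2$: there, the naive mark-one lower bound on inter-box edge counts decays and does not beat the union-bound cost over adjacent-box pairs. The fix is to first sprinkle high-mark vertices into each sub-box and show via a small additional argument (mirroring the \emph{(Mark)}-step in~\cref{sec:biskup}) that they are absorbed into the local giants with failure at most $\exp(-\Omega(m_n^{\zeta_\mathrm{hl}}))$, and then to count high--low edges rather than low--low ones. A secondary task is the careful bookkeeping of the exponents $\gamma_\mathrm{type}$ and $\eta_\mathrm{type}$ across both regimes so that the mark thresholds in~\eqref{eq:lemma-prer-ver-2} and~\eqref{eq:lemma-prer-ver-1} come out exactly as stated. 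Finally, the extension to the Palm version $\Prob^\sss{x}$ is standard: conditioning on a vertex at $x$ with unknown mark affects only $O(1)$ sub-boxes near $x$, and by translation-invariance of the PPP the same bounds apply after re-centering.
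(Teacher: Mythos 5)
Your proposal follows essentially the same route as the paper's proof: tessellate $\Lambda_n$ into volume-$(A\log n)^{1/(\zeta-\delta)}$ boxes, apply the hypothesis in each box, glue adjacent local giants (via mark-one edges in the ll regime, via absorbed high-mark vertices of mark $\asymp k^{\gamma_\mathrm{hl}}$ in the hl regime), and then show that every vertex of mark $\ge (A\log n)^{\eta_\mathrm{type}/(\zeta_\mathrm{type}-\delta)}$ attaches to its local giant, with a union bound closing the argument. The minor imprecisions — the exponent $n^{-A/A'^2}$ should be $n^{-A/A'}$, and the high-mark absorption is really an edge count against the local giant of size $\rho k$ in $v$'s own box (giving exponent $\rho k(w_v/k)^\alpha$, which equals $\rho k^{\zeta}$ at the threshold) rather than against $\CC_{\mathrm{bulk}}$ directly with an exponent in $w_v$ alone — do not affect the structure of the argument.
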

The proof is an adaptation of~\cite[Proposition 5.12]{clusterI} and we present it in Appendix~\ref{app:verif}. 
In a nutshell, we apply a boxing scheme ---with box sizes $\Theta((\log n)^{1/(\zeta-\delta)})$--- and apply \eqref{eq:hyp-lh-verif} to each box. To form $\CC$, we connect boxes to each other using high- and/or low-mark vertices depending on the regime. Then, we connect vertices of mark at least $(A\log n)^{\eta/(\zeta-\delta)}$ to the constant-mark vertices in  $\CC$. Computations are similar to those in \cref{sec:second-sub}.

The corollaries of 
\cref{lemma:bootstrap,lemma:prerequisite-verif}  motivate the somewhat counterintuitive formulation of \cref{lemma:prerequisite-verif}, i.e., the requirement~\eqref{eq:hyp-lh-verif} with both $\delta>0$ and $\delta=0$ allowed. 
\begin{corollary}[A giant with sharper probability]\label{cor:initial-upper}
Consider a supercritical KSRG satisfying Assumption \ref{assumption:main} on a PPP or consider supercritical long-range percolation on $\Z^d$. Assume that $\zeta_\mathrm{long}>0$, and $\zeta_\star=\zeta_{\mathrm{type}}$ for some $\mathrm{type}\in \{\mathrm{ll}, \mathrm{hl}, \mathrm{hh}, \mathrm{short}\}$. Set $\gamma_{\mathrm{short}}:=0$. There exist constants $A_1>0$ such that for all $n\ge 1$,
\begin{equation}\label{eq:ltld-weak-lh}
\Prob\big(|\CC_n^\sss{(1)}[1,A_1n^{\gamma_{\mathrm{type}}})|<\tfrac{1}{A_1} n\big)\le \exp\big(-\tfrac{1}{A_1}n^{\max(\zeta_\mathrm{long}, (d-1)/d)}\big).
\end{equation}
\begin{proof}
We first consider KSRGs on a PPP. By \cref{claim:phases}, $\zeta_\mathrm{long}=\max(\zeta_\mathrm{ll}, \zeta_\mathrm{hl}, \zeta_\mathrm{hh})$.
    When $\zeta_\mathrm{long}\in\{\zeta_\mathrm{ll},\zeta_\mathrm{hl}\}$, Proposition~\ref{proposition:ltld-weaker} proves the requirement \eqref{eq:hyp-lh-verif} of \cref{lemma:prerequisite-verif} for some $\delta>0$ and a small value of $\rho>0$. Hence,~\eqref{eq:lemma-prer-ver-2}-\eqref{eq:lemma-prer-ver-1} holds for these values of $\delta$ and $\rho$. This implies the requirement \eqref{eq:bootstrap-condition} of \cref{lemma:bootstrap}. When $\zeta_\mathrm{\star}=\zeta_\mathrm{hh}>0$, \eqref{eq:bootstrap-condition} follows from the combination of the law of large numbers \cite[Corollary 2.3]{clusterI}, and that $|\CC_n^\sss{(1)}|$ contains all vertices of at least some polylogarithmic weight in $n$ by \cite[Proposition 5.12]{clusterI}, which then gives \eqref{eq:hyp-lh-verif} and thus Lemma \ref{lemma:prerequisite-verif}. This proves \eqref{eq:ltld-weak-lh} when $\zeta_{\star}\in\{\zeta_{\mathrm{ll}}, \zeta_{\mathrm{hl}}, \zeta_{\mathrm{hh}}\}$, and gives the requirement \eqref{eq:bootstrap-condition} of \cref{lemma:bootstrap} whenever $\zeta_{\mathrm{long}}>0$. Further, the case for $\zeta_\star=\zeta_{\mathrm{short}}$ follows from \eqref{eq:bootstrap-result-short}. 
    For long-range percolation on $\Z^d$, $\zeta_\mathrm{hl}$ and $\zeta_\mathrm{hh}$ are negative, so $\zeta_\mathrm{long}=\zeta_\mathrm{ll}=2-\alpha$. The requirement \eqref{eq:hyp-lh-verif} of \cref{lemma:prerequisite-verif} is verified by~\cite[Theorem 3.2]{biskup2004scaling}. 
\end{proof}
 \end{corollary}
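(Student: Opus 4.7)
The corollary is a plumbing result: it combines the sharp bounds of Proposition~\ref{lemma:bootstrap} with a case-by-case verification of its central hypothesis \eqref{eq:bootstrap-condition}. The plan is to establish \eqref{eq:bootstrap-condition} in each regime, then invoke either part (long) or part (short) of Proposition~\ref{lemma:bootstrap} to conclude.

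First I would handle the case $\zeta_\star\in\{\zeta_\mathrm{ll},\zeta_\mathrm{hl}\}$ for KSRGs on a PPP. Proposition~\ref{proposition:ltld-weaker} provides a not-quite-sharp bound of the form \eqref{eq:hyp-lh-verif} with a small loss $\delta>0$ in the exponent and a small density $\rho>0$. This is precisely the input required by Lemma~\ref{lemma:prerequisite-verif}, whose conclusion \eqref{eq:lemma-prer-ver-1} says that whp there is a linear-sized component containing all vertices of mark exceeding a polylogarithmic threshold in $n$. By choosing $\delta$ in the exponent $(1-\delta')/(\tau-1)$ of \eqref{eq:bootstrap-condition} to dominate this polylog threshold, we verify \eqref{eq:bootstrap-condition} for a small fixed $\rho$. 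Proposition~\ref{lemma:bootstrap}(long) then yields \eqref{eq:ltld-weak-lh} in these two regimes.

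Next I would dispatch the case $\zeta_\star=\zeta_\mathrm{hh}$. Here Proposition~\ref{proposition:ltld-weaker} does not apply directly, but the hh-regime is already treated in~\cite{clusterI}: \cite[Corollary 2.3]{clusterI} gives a weak LLN for $|\CC_n^\sss{(1)}|$, and \cite[Proposition 5.12]{clusterI} ensures that the giant contains every vertex of at least a polylogarithmic mark. Together these yield \eqref{eq:bootstrap-condition}, and Proposition~\ref{lemma:bootstrap}(long) completes the argument. For $\zeta_\star=\zeta_\mathrm{short}=(d-1)/d$, the assumption $\zeta_\mathrm{long}>0$ guarantees that some long-type exponent is positive; the preceding argument (applied to that positive long-type) verifies \eqref{eq:bootstrap-condition} for a small $\rho$, and Proposition~\ref{lemma:bootstrap}(short) then delivers the stronger exponent $(d-1)/d$ on the right-hand side of \eqref{eq:ltld-weak-lh}, with truncation at a constant mark (consistent with setting $\gamma_\mathrm{short}:=0$).

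For supercritical long-range percolation on $\Z^d$, the mark distribution is trivial, so $\zeta_\mathrm{hh}$ and $\zeta_\mathrm{hl}$ degenerate and only $\zeta_\mathrm{ll}=2-\alpha$ is relevant for $\zeta_\mathrm{long}$. The input~\eqref{eq:hyp-lh-verif} to Lemma~\ref{lemma:prerequisite-verif} is provided by Biskup's bound \cite[Theorem 3.2]{biskup2004scaling}, and the remainder of the argument runs as in the PPP case. There is no real mathematical obstacle; the main subtlety is the bookkeeping: verifying that the loss $\delta$ in Proposition~\ref{proposition:ltld-weaker} is absorbed by Lemma~\ref{lemma:prerequisite-verif} into an acceptable polylog mark-threshold, and then that this threshold is small enough ---namely $o(n^{(1-\delta')/(\tau-1)})$ for some small $\delta'>0$--- to fit the form of \eqref{eq:bootstrap-condition} required by Proposition~\ref{lemma:bootstrap}. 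Once this chain of implications is laid out, the stretched exponential bound~\eqref{eq:ltld-weak-lh} follows immediately from whichever part of Proposition~\ref{lemma:bootstrap} is applicable in the given regime.
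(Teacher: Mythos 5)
Your proposal is correct and follows essentially the same route as the paper's own proof: case-by-case on the type that attains $\zeta_\star$, feeding Proposition~\ref{proposition:ltld-weaker} (or the cited \cite{clusterI} results in the hh-regime, or Biskup's bound for LRP) through Lemma~\ref{lemma:prerequisite-verif} to verify hypothesis \eqref{eq:bootstrap-condition}, then concluding with the appropriate part of Proposition~\ref{lemma:bootstrap}. The one small thing you make explicit that the paper leaves implicit is the plumbing check that the polylogarithmic mark-threshold produced by Lemma~\ref{lemma:prerequisite-verif} is indeed $o(n^{(1-\delta)/(\tau-1)})$ and so fits the form demanded by $\CA_\mathrm{giant}^\mathrm{type}$; that check is correct and harmless to include.
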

The actual mark-threshold so that \emph{all} vertices above that mark are in the giant is as follows.
\begin{corollary}[Poly-logarithmic mark-thresholds]\label{cor:polylog-thresholds}
    Consider a supercritical KSRG satisfying Assumption \ref{assumption:main} on a PPP or consider long-range percolation on $\Z^d$. Assume $\zeta_\mathrm{type}=\max(\zeta_\mathrm{ll}, \zeta_\mathrm{hl})>0$ for some $\mathrm{type}\in\{\mathrm{ll}, \mathrm{hl}\}$. 
    For all $C>0$, there exist constants $A, \varepsilon>0$ such that for all $n$  sufficiently large,
    \begin{align}\label{eq:giant-mark-cor}
        \Prob\big(\big|\CC_n^\sss{(1)}\big[1, (A\log n)^{\gamma_\mathrm{type}/\zeta_\mathrm{type}}\big)\big|\ge \varepsilon n\big) &\ge 1-n^{-C}, 
    \end{align}
    and
    \begin{equation}
        \Prob\big(\exists \mbox{ comp. }\CC\mbox{ in }\CG_n: \CC\supseteq \CV_n\big[(A\log n)^{\eta_\mathrm{type}/\zeta_\mathrm{type}}, \infty\big), |\CC|\ge \varepsilon n\big) \ge 1-n^{-C}.\label{eq:threshold-mark-cor}
    \end{equation}
     \end{corollary}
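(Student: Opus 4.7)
The plan is a short chaining of two results that the paper has already prepared. Fix a regime $\mathrm{type}\in\{\mathrm{ll},\mathrm{hl}\}$ with $\zeta_\mathrm{type}=\max(\zeta_\mathrm{ll},\zeta_\mathrm{hl})>0$. The first step is to invoke Corollary~\ref{cor:initial-upper}: in the present setting $\zeta_\mathrm{long}=\max(\zeta_\mathrm{ll},\zeta_\mathrm{hl},\zeta_\mathrm{hh})\ge\zeta_\mathrm{type}>0$, so the corollary applies (in both the PPP and LRP cases) and produces a constant $A_1>0$ such that
\[
\Prob\big(|\CC_n^\sss{(1)}[1, A_1 n^{\gamma_\mathrm{type}})|<\tfrac{1}{A_1}n\big)\le \exp\big(-\tfrac{1}{A_1} n^{\max(\zeta_\mathrm{long},(d-1)/d)}\big)\le \exp\big(-\tfrac{1}{A_1} n^{\zeta_\mathrm{type}}\big).
\]
The crucial observation is that this bound has \emph{no} $o(1)$ correction in the exponent, i.e.\ it is the ``sharp'' version of the estimate in Proposition~\ref{proposition:ltld-weaker}.

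The second step is to feed this sharp bound into Lemma~\ref{lemma:prerequisite-verif}. The inequality displayed above is precisely the hypothesis~\eqref{eq:hyp-lh-verif} with the parameter choice $\delta=0$, $A'=A_1$, and $\rho=1/A_1$. Applying Lemma~\ref{lemma:prerequisite-verif} with $\delta=0$ therefore immediately gives \eqref{eq:lemma-prer-ver-2} and \eqref{eq:lemma-prer-ver-1} with mark thresholds $(A\log n)^{\gamma_\mathrm{type}/\zeta_\mathrm{type}}$ and $(A\log n)^{\eta_\mathrm{type}/\zeta_\mathrm{type}}$, respectively, which are exactly~\eqref{eq:giant-mark-cor} and~\eqref{eq:threshold-mark-cor}. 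The constant $\varepsilon$ in the corollary can be taken as $\rho=1/A_1$, and for any prescribed polynomial error rate $n^{-C}$ the constant $A$ (entering the mark thresholds) is tuned inside Lemma~\ref{lemma:prerequisite-verif} as in its proof in Appendix~\ref{app:verif}.

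There is essentially no obstacle left at the present stage; the real work has already been done. The weak ``with $\delta$'' version of the existence bound in Proposition~\ref{proposition:ltld-weaker} is upgraded to the sharp ``$\delta=0$'' version in Corollary~\ref{cor:initial-upper} via the single-layer renormalisation of Section~\ref{sec:bootstrap}, which was engineered precisely to enable $\delta=0$ in Lemma~\ref{lemma:prerequisite-verif}. Consequently, what was a polylog exponent $\gamma_\mathrm{type}/(\zeta_\mathrm{type}-\delta)$ for the mark truncation in Lemma~\ref{lemma:prerequisite-verif} becomes the \emph{exact} exponent $\gamma_\mathrm{type}/\zeta_\mathrm{type}$ here, and likewise $\eta_\mathrm{type}/\zeta_\mathrm{type}$ for the high-mark absorption threshold. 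The only case to double-check is long-range percolation on $\Z^d$, where $\zeta_\mathrm{hl},\zeta_\mathrm{hh}<0$ and $\zeta_\mathrm{type}=\zeta_\mathrm{ll}=2-\alpha$, but this is covered identically: Corollary~\ref{cor:initial-upper} still supplies the sharp bound (through Proposition~\ref{lemma:bootstrap} together with the LRP input of Biskup), and Lemma~\ref{lemma:prerequisite-verif} is stated so as to apply to this setting as well.
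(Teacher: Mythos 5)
Your proposal matches the paper's route almost exactly: invoke Corollary~\ref{cor:initial-upper} to supply hypothesis~\eqref{eq:hyp-lh-verif} with $\delta=0$, and then feed it into Lemma~\ref{lemma:prerequisite-verif}. That is precisely the argument the paper uses for the Poisson case.

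However, there is one genuine slip in the last sentence. You claim that for long-range percolation on $\Z^d$ ``Lemma~\ref{lemma:prerequisite-verif} is stated so as to apply to this setting as well.'' It is not: Lemma~\ref{lemma:prerequisite-verif} explicitly considers only \emph{KSRGs on a PPP}, and its proof in Appendix~\ref{app:verif} relies on the PPP structure (independence of $\CV^\sss{\mathrm{base}}$ and $\CV^\sss{\mathrm{spr}}$, Mecke-type arguments, etc.). The correct resolution for LRP on $\Z^d$ is much simpler and bypasses Lemma~\ref{lemma:prerequisite-verif} entirely: all vertex marks are constant equal to one, so $\CV_n\big[(A\log n)^{\eta_\mathrm{type}/\zeta_\mathrm{type}},\infty\big)=\emptyset$ for all large $n$, and the mark truncation in~\eqref{eq:giant-mark-cor} is vacuous. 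Both displays thus collapse to the assertion that $\CG_n$ contains a component of density at least $\varepsilon>0$ with probability $1-n^{-C}$, which follows directly from Corollary~\ref{cor:initial-upper} alone. A second, minor point: Corollary~\ref{cor:initial-upper} produces the mark cutoff $A_1 n^{\gamma_{\mathrm{type}'}}$ for the $\mathrm{type}'$ realising $\zeta_\star$, which need not coincide with your $\mathrm{type}\in\{\mathrm{ll},\mathrm{hl}\}$ when $\zeta_\mathrm{hh}$ or $\zeta_\mathrm{short}$ dominates; the conclusion still transfers because $\zeta\mapsto\gamma=(1-\zeta)/(\tau-1)$ is decreasing (so $\gamma_{\mathrm{type}'}\le\gamma_\mathrm{type}$ and the restricted graph $\CG_n[1,A_1 n^{\gamma_{\mathrm{type}'}})$ is a subgraph of $\CG_n[1,A_1 n^{\gamma_\mathrm{type}})$), but the step should be made explicit rather than silently identifying the two types.
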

    \begin{proof}
        In \cref{cor:initial-upper} we have just shown that the requirement \eqref{eq:hyp-lh-verif} in \cref{lemma:prerequisite-verif} is met with $\delta=0$ for the case $\zeta_{\mathrm{type}}=\max(\zeta_{\mathrm{ll}}, \zeta_{\mathrm{hl}})$. 
        This yields the first two bounds \eqref{eq:giant-mark-cor}--\eqref{eq:threshold-mark-cor} for KSRGs on a PPP. In long-range percolation, all marks are equal to one, so that $\CV[(A\log n)^{\eta_\mathrm{type}/\zeta_\mathrm{type}},\infty)=\emptyset$, and the two bounds are implied by \cref{cor:initial-upper}.\qedhere 
    \end{proof}

In \cref{cor:initial-upper} we obtain the sharp decay on the probability of having a giant, i.e., we obtain the target decay of the lower tail in \cref{thm:large-dev2}. However, the proportion of vertices in the giant is not yet close to $\theta$. 

\section{Lower tail of large deviations}\label{sec:second-upper}
In this section we prove \cref{thm:large-dev2}. We first show uniqueness of the giant via an upper bound on the size of the second-largest component $\CC_n^\sss{(2)}$. In \cref{sec:cluster-size-lln}, we extend this upper bound on $|\CC_n^\sss{(2)}|$ to the upper bound on the cluster-size decay in \cref{thm:subexponential-decay} using our previous work~\cite{clusterI}. A Law of Large Numbers for $|\CC_n^\sss{(1)}|$ will follow. Eventually we prove the lower tail of large deviations.

\subsection{Non-giant components are small}\label{sec:second-sub}
We prove an upper bound on $|\CC_n^\sss{(2)}|$ when $\max(\zeta_\mathrm{hl}, \zeta_\mathrm{ll})>0$. When $\zeta_\mathrm{hh}>0$, we proved a similar upper bound in~\cite{clusterI} using different techniques, see \cref{remark:hh-different}. See also \cref{remark:nn-difficulty} on the difficulties one needs to overcome to extend this result to the case when surface tension dominates connections, i.e., when $\zeta_\star=\zeta_\mathrm{short}$.

\begin{proposition}[Upper bound second-largest component]\label{prop:second-upper}
Consider a supercritical KSRG on a PPP or supercritical long-range percolation on $\Z^d$. Assume $\zeta_\mathrm{type}=\max(\zeta_\mathrm{ll}, \zeta_\mathrm{hl})>0$ for some $\mathrm{type}\in\{\mathrm{ll}, \mathrm{hl}\}$.    There exists a constant $A>0$ such that for all $n>k$,
 \begin{equation}
     \Prob\big(|\CC_n^\sss{(2)}|> k \big) \le (n/k)\exp\big(-\tfrac{1}{A} k^{\zeta_\mathrm{type}}\big) .\label{eq:second-upper}
 \end{equation}
 The statement remains valid for the Palm version $\Prob^\sss{x}$ of $\Prob$ for any $x\in\R^d$.
\end{proposition}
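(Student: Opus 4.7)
The plan is to implement Step 3 of the roadmap in Section 3.1 via a sequential three-stage revealment of the graph, using Corollary~\ref{cor:initial-upper} as the main input. Assume $n/k\in\N$ for notational simplicity, partition $\Lambda_n$ into $m:=n/k$ disjoint cubes $\CQ_1,\ldots,\CQ_m$ of volume $k$, and use the decomposition $\CV=\CV^{\sss{\mathrm{base}}}\cup\CV^{\sss{\mathrm{spr}}}$ of Definition~\ref{def:ksrg-alt} so that fresh portions of the PPP can be sprinkled at each revealment stage, guaranteeing independence across stages.

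\emph{Stage 1 (local giants).} Apply Corollary~\ref{cor:initial-upper} to each box $\CQ_j$: with probability at least $1-\exp(-(1/A)k^{\zeta_{\mathrm{type}}})$, the subgraph $\CG_{k,j}[1,A_1 k^{\gamma_{\mathrm{type}}})$ contains a component $\CC_j$ of size at least $(1/A_1)k$. A union bound over $j\le m$ shows that all $m$ boxes are ``locally giant'' with error probability at most $\tfrac{1}{3}(n/k)\exp(-(1/A)k^{\zeta_{\mathrm{type}}})$, which already matches the target bound.

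\emph{Stage 2 (backbone via box-wedging vertices).} Now sprinkle the independent PPP portion in each box with marks in an appropriate high-mark range. In the hl-regime, I look at $\CV^{\sss{\mathrm{spr}}}_{\CQ_j}[k^{\gamma_{\mathrm{hl}}},2k^{\gamma_{\mathrm{hl}}})$, which has expected size $\Theta(k^{\zeta_{\mathrm{hl}}})$ per box by \eqref{eq:poisson-intensity} and is disjoint from the vertices used in Stage~1. Call $v\in\CV^{\sss{\mathrm{spr}}}_{\CQ_j}$ \emph{box-wedging} if it has an edge to $\CC_j$ \emph{and} to $\CC_{j'}$ for at least one box $\CQ_{j'}$ adjacent to $\CQ_j$; the back-of-the-envelope computation below \eqref{eq:gamma-lh} shows that this happens with constant probability (distances are $\Theta(k^{1/d})$, and $|\CC_j|,|\CC_{j'}|\ge (1/A_1)k$ provide enough connection attempts). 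Poisson concentration (Lemma~\ref{lemma:poisson-1}) yields at least $c k^{\zeta_{\mathrm{hl}}}$ box-wedging vertices in every box with probability at least $1-n\exp(-c' k^{\zeta_{\mathrm{hl}}})$, and all local giants then coalesce into a single connected component $\mathcal{B}$, the \emph{backbone}. In the ll-regime I skip the sprinkling of marks and instead apply the computation \eqref{eq:computation-ll-ER}--\eqref{eq:computation-ll-ER-end} directly to the $\CC_j$'s to merge adjacent local giants; the ``pool'' playing the role of box-wedging vertices in the next stage is then the set of low-mark vertices in $\CC_j$ that have a low-low edge to an adjacent box, of size $\Theta(k^{\zeta_{\mathrm{ll}}})$ per box.

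\emph{Stage 3 (small components merge with $\mathcal{B}$).} Let $\CG^{\sss{3\mathrm{rd}}}$ be the graph on $\CV\setminus\bigcup_j\CC_j$, obtained by revealing all edges within this vertex set (which has not been touched in Stages~1--2). Suppose a component $\CH$ of $\CG^{\sss{3\mathrm{rd}}}$ has $|\CH|\ge k$. Then $\CH$ occupies some collection of boxes whose total volume is at least $k$, so at least one of these boxes contains $\Omega(1)$ vertices of $\CH$, and in fact a pigeonhole argument shows that one box contains $\Omega(|\CH|/\text{(boxes touched)})$ vertices. In the fourth revealment I reveal edges from $\CH$ to the box-wedging vertices of the boxes $\CH$ touches; this is an independent Bernoulli collection, and the expected number of such edges is $\Omega(|\CH|\cdot k^{\zeta_{\mathrm{type}}}\cdot k^{-\alpha+\gamma_{\mathrm{type}}\alpha})\ge \Omega(k^{\zeta_{\mathrm{type}}})$ by the same computation that defines $\gamma_{\mathrm{type}}$ in \eqref{eq:gamma-long}--\eqref{eq:gamma-lh}. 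Hence $\CH$ fails to merge with $\mathcal{B}$ with probability at most $\exp(-\Theta(k^{\zeta_{\mathrm{type}}}))$. A union bound over the at most $n/k$ possible disjoint components of size $\ge k$ gives the claimed bound. The whole argument transfers to LRP on $\Z^d$ by replacing the sprinkled Poisson portion with a Bernoulli thinning of $\Z^d$.

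\emph{Main obstacle.} The delicate point is the dependency structure: conditioning on the \emph{existence} of a large component in $\CG^{\sss{3\mathrm{rd}}}$ could in principle distort the distribution of the box-wedging vertices or the backbone. The fix (already used in Section~\ref{sec:biskup}) is to decompose $\CV$ into independent PPPs up front via Observation~\ref{obs:sprinkle-2}, so that Stages~1, 2 and 3 each use disjoint portions of the point process and the respective edge sets live on disjoint vertex pairs; then Stage~4 reveals a fresh independent Bernoulli field of edges between $\CH$ and the already-built backbone. A secondary technical nuisance is that a component of $\CG^{\sss{3\mathrm{rd}}}$ can straddle many boxes, so one must argue that even in that case it contains $\Omega(k^{\zeta_{\mathrm{type}}})$ ``edge-trial slots'' towards the backbone; this is handled by observing that a single box containing the largest share of $\CH$ already supplies the required expectation.
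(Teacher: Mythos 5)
Your proposal follows the paper's Step-3 roadmap (four-stage revealment, backbone of local giants, merging of leftover components), and the computation $|\CH|\cdot k^{\zeta}\cdot k^{-1}\ge k^\zeta$ in Stage~3 is the right exponent balance. However, there is a genuine gap in the revealment bookkeeping. You define $\CG^{\sss{3\mathrm{rd}}}$ as the graph on $\CV\setminus\bigcup_j\CC_j$ with all edges within this vertex set revealed, while your box-wedging vertices live in $\CV^{\sss{\mathrm{spr}}}_{\CQ_j}\subseteq\CV\setminus\bigcup_j\CC_j$. Consequently every edge between a vertex of $\CH$ and a box-wedging vertex is a pair both of whose endpoints lie in $\CV\setminus\bigcup_j\CC_j$, hence this edge is already revealed (and therefore already absent, on the event that $\CH$ is a separate component containing no box-wedging vertex) by the end of Stage~3. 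There are no fresh Bernoulli trials left to reveal in your ``fourth revealment,'' so the claimed estimate $\exp(-\Theta(k^{\zeta}))$ for the merger cannot be obtained from this setup. The decomposition $\CV=\CV^{\sss{\mathrm{base}}}\cup\CV^{\sss{\mathrm{spr}}}$ does not rescue this: it disentangles the \emph{vertex} point processes, but the edge pair $(v,u)$ with $v\in\CH$, $u$ box-wedging, is still an edge \emph{within} $\CV\setminus\bigcup_j\CC_j$ and is allocated to Stage~3 by your own definition.

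The paper's proof avoids this by not sprinkling at all. It reveals the \emph{edges} (not vertices) in four stages: Stage~1 reveals all within-box edges and thereby determines, for each box, the local low-largest component $\CC_{k,j}^{\sss{(1)}}[1,Ak^\gamma)$ and the set $\CU_j$ of high-mark vertices (mark $\ge Ak^\gamma$) having a Stage-1 edge into it; it then sets $\mathrm{LG}_j:=\CU_j\cup\CC_{k,j}^{\sss{(1)}}[1,Ak^\gamma)$. Stage~3 is then defined as revealing edges with both endpoints in $\CV_n\setminus\bigcup_j\mathrm{LG}_j$, so $\CU_{j'}$ is explicitly excluded from the Stage-3 vertex pool; the cross-box edges from a vertex $v\notin\mathrm{LG}_{j(v)}$ to the $\ge\delta k^\zeta$ vertices of $\CU_{j(v)+1}$ are allocated to Stage~4 and are genuinely fresh, giving $|\CC|\cdot\delta k^\zeta$ independent trials each succeeding with probability $\Theta(k^{-1})$. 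Note also that the paper only requires \emph{one} box-wedging vertex per box for the backbone (an event of probability $1-\exp(-\Theta(k^\zeta))$ per box, by \eqref{eq:box-wedging-two}), and uses the larger set $\CU_{j+1}$ of size $\Theta(k^\zeta)$ as the target pool in Stage~4; your plan of manufacturing $\Theta(k^\zeta)$ doubly-connected sprinkled vertices is overkill, and in any case does not fix the edge-allocation problem. To salvage your approach you would need to redefine the Stage-3 vertex set to exclude the relevant high-mark vertices (as the paper does via $\mathrm{LG}_j$), or equivalently adopt the paper's edge-partition rather than a vertex-sprinkling scheme.
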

The proof of Proposition \ref{prop:second-upper} is inspired by Sly and Crawford~\cite{crawford2012simple} who obtain a polylogarithmic upper bound with unidentified exponent on the second-largest component for long-range percolation. Contrary to long-range percolation, we have to use the vertex-marks as well when $\tau<\infty$.
We present the proof for KSRGs on Poisson point processes satisfying $\zeta_\mathrm{hl}>0$, and explain the needed changes for  $\zeta_\mathrm{ll}>0$ and for KSRGs on $\Z^d$ at the end. 
We abbreviate $\gamma=\gamma_\mathrm{hl}=1-1/\alpha$ and $\zeta=\zeta_\mathrm{hl}=1-\gamma_\mathrm{hl}(\tau-1)=(\tau-1)/\alpha-(\tau-2)$. Clearly $\zeta_\mathrm{hl}>0$ implies $\alpha<\infty$.

We let $A_{\ref{cor:initial-upper}}$ be the constant from \eqref{eq:ltld-weak-lh} in \cref{cor:initial-upper} so that for all $n\ge 1$,
\begin{equation}\label{eq:concentration-right-lh}
\Prob\big(|\CC_n^\sss{(1)}[1,A_{\ref{cor:initial-upper}}n^\gamma)|\le \tfrac{1}{A_{\ref{cor:initial-upper}}} n\big)\le \exp\big(-\tfrac{1}{A_{\ref{cor:initial-upper}}}n^\zeta\big).
\end{equation}
For simplicity we assume that $(n/k)^{1/d}\in\N$, so that $\Lambda_n$ can be partitioned into volume-$2^{-d}k$ boxes.

\begin{definition}[Boxing scheme]\label{def:second-boxing}
Fix $k\ge 1$ sufficiently large. Consider the boxes $\CQ_1, \ldots, \CQ_{2^dn/k}$ that partition $\Lambda_n$, and that are labeled so that $\CQ_j$ is adjacent to $\CQ_{j+1}$ for all $j\in [2^dn/k-1]$. We write $\CV_{k,j}$ for the PPP restricted to the box $\CQ_j$, $\CG_{k,j}$ for the graph $\CG$ restricted to $\CQ_j$.
We call vertices with mark respectively below and at least $A_{\ref{cor:initial-upper}}k^\gamma$ low-mark and high-mark vertices. We call
$\CC_{k,j}^\sss{(1)}[1,A_{\ref{cor:initial-upper}}k^\gamma)$ the local low-largest component of $\CQ_j$. We also define
\begin{equation}\label{eq:v-j-def}          \CU_{j}:=\{v\in\CV_{k,j}[A_{\ref{cor:initial-upper}}k^\gamma,\infty): v  \sim \CC_{k,j}^\sss{(1)}[1,A_{\ref{cor:initial-upper}}k^\gamma)\}.
    \end{equation}
    We then define the local giant component of box $\CQ_j$ as
    \begin{equation}\label{eq:local-giant-def}
\mathrm{LG}_j:=\CU_j\cup\CC_{k,j}^\sss{(1)}[1,A_{\ref{cor:initial-upper}}k^\gamma).
   \end{equation}
    \end{definition}
   In words, $\CU_j$ is the set of high-mark vertices in $\CQ_j$ that connect by at least one edge to the local low-largest component in the same box. So, together they form the local giant component $\mathrm{LG}_j$.
We mention that $\CU_j$ typically does not contain all high-mark vertices. 
    \begin{definition}[Revealment stages]\label{def:revealment-stages} Consider a KSRG under the same setting as Proposition~\ref{prop:second-upper} such that $\zeta_\mathrm{hl}>0$, with the boxing scheme from \cref{def:second-boxing}. Initially, we reveal the realization of vertex set $\CV$, and none of the edges. We define the following four edge-revealment stages:
    \begin{enumerate}[(1)]
        \item\label{item:stage1} Reveal edges between pairs of vertices both in the same box $\CQ_j$.
\item\label{item:stage2}
    Reveal edges between pairs of vertices both in $\cup_{j\le 2^d(n/k)}\mathrm{LG}_j$. 
    \item\label{item:stage3} Reveal remaining edges between pairs of vertices both in $\CV_{n}\setminus \big(\cup_{j\le 2^d(n/k)}\mathrm{LG}_j\big)$
    \item\label{item:stage4} Reveal all remaining edges.
    \end{enumerate}
    We write \emph{stage-$j$-$\CG_n$}    for the graph obtained after the $j$th revealment stage for $j\in\{1,2,3,4\}$.
    \end{definition}
   Thus, in stage \emph{(\ref{item:stage1})} we reveal all `local' edges between vertices inside the same box. In stage \emph{(\ref{item:stage2})}, edges between local giants of different boxes. In stage \emph{(\ref{item:stage3})}, we reveal edges with both endpoints outside local giants in different boxes. Finally, in stage \emph{(\ref{item:stage4})} we reveal all remaining edges: edges in different boxes where one endpoint is in a local giant, and the other endpoint is outside a local giant. The set $\mathrm{LG}_j$, (the local-low largest component and high-mark vertices connecting to it in the same box) is determined during stage \emph{(\ref{item:stage1})}. So stages \emph{(\ref{item:stage2}--\ref{item:stage4})} are well-defined. These latter stages reveal edges between \emph{different} boxes. 
    
    \begin{definition}[Stage\! \emph{(1)} and \emph{(2)} good events]\label{def:second-events}
    Let $\delta\in(0,1/(2^dA_{\ref{cor:initial-upper}}))$ be a small constant. 
    We call a box $\CQ_j$ 1-good if it satisfies the event
    \begin{equation}\label{eq:1-good-def}
    \CA_{\mathrm{1\textnormal{-}good}}(\CQ_j):=\{\CQ_j \ 1\text{-good}\}:=\big\{ |\CC_{k,j}^\sss{(1)}[1,A_{\ref{cor:initial-upper}}k^\gamma)|>\delta k\big\} \cap \big\{|\CU_j|> \delta k^{\zeta}\big\}.
    \end{equation}
For $j<2^d(n/k)$, we call $u\in \CU_j$ box-wedging if $u\sim \mathrm{LG}_{j+1}$.
 We say that stage-2-$\CG_n$ is $2$-good if 
        \begin{equation}\label{eq:2-good-def}
    \{\text{stage-2-$\CG_n$ 2-good}\}:=\bigcap_{j\le 2^d(n/k)}\CA_{\mathrm{1\textnormal{-}good}}(\CQ_j) \cap \bigcap_{j< 2^d(n/k)} \big\{\exists u\in\CU_j: u\sim \mathrm{LG}_{j+1}\big)\big\}.
    \end{equation}
\end{definition}
Whether $\CQ_j$ is $1$-good is entirely determined by the revealed edges in stage \emph{(1)}, and whether stage-$2$-$\CG_n$ is $2$-good, is entirely determined by edges revealed by the end of stage \emph{(2)}. The second intersection in \eqref{eq:2-good-def} 
means that there is at least one box-wedging vertex in each local giant, implying that all local giants are part of the same component in $\CG_n$ that we will call the backbone.\begin{claim}[Stage \emph{(\ref{item:stage1})} error bound]\label{claim:1good}
    Consider a KSRG under the same setting as Proposition~\ref{prop:second-upper} such that $\zeta_\mathrm{hl}>0$. There exists a constant $\varepsilon>0$ such that for all $k$ sufficiently large, for all $j\le 2^d(n/k)$,
    \[
        \Prob\big(\CQ_j\mbox{ is 1-good}\big) \ge 1-\exp(-\varepsilon k^\zeta).
    \]
    \end{claim}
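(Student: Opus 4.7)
The event $\CA_{\mathrm{1\textnormal{-}good}}(\CQ_j)$ in \eqref{eq:1-good-def} is the intersection of two events: (i) the local low-largest component $\CC_j^\star:=\CC_{k,j}^\sss{(1)}[1,A_{\ref{cor:initial-upper}}k^\gamma)$ has size at least $\delta k$; (ii) at least $\delta k^\zeta$ high-mark vertices in $\CQ_j$ connect to $\CC_j^\star$. The plan is to bound the complements of (i) and (ii) separately; each term will be stretched exponentially small in $k^\zeta$, so a union bound finishes the proof. By translation invariance of the PPP, I may assume $j$ is arbitrary and work on a single box of volume $k/2^d$.

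For event (i), I apply \cref{cor:initial-upper} to $\CG_{k/2^d}$ directly: the constant $A_{\ref{cor:initial-upper}}$ and the bound \eqref{eq:concentration-right-lh} yield
\[
\Prob\big(|\CC_{k/2^d}^\sss{(1)}[1,A_{\ref{cor:initial-upper}}(k/2^d)^\gamma)|<\tfrac{1}{A_{\ref{cor:initial-upper}}}(k/2^d)\big)\le \exp\big(-\tfrac{1}{A_{\ref{cor:initial-upper}}}(k/2^d)^\zeta\big).
\]
Since $A_{\ref{cor:initial-upper}}(k/2^d)^\gamma < A_{\ref{cor:initial-upper}}k^\gamma$, this gives a low-largest component of the desired form, and I chose $\delta<1/(2^d A_{\ref{cor:initial-upper}})$ precisely so the density threshold $\delta k$ in \eqref{eq:1-good-def} is met. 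This contributes an error of at most $\exp(-c_1 k^\zeta)$.

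For event (ii), I condition on the stage-\emph{(\ref{item:stage1})}-revealed graph restricted to vertices of mark below $A_{\ref{cor:initial-upper}}k^\gamma$ in $\CQ_j$, and assume (i) holds, so that the realization $\CC_j^\star$ has $|\CC_j^\star|\ge\delta k$ and all its vertices carry mark at least $1$. The high-mark vertex set $\CV_{k,j}[A_{\ref{cor:initial-upper}}k^\gamma,\infty)$ is an independent thinning of the PPP, independent of everything revealed so far. By the Pareto intensity \eqref{eq:poisson-intensity} its cardinality is Poisson with mean
\[
(k/2^d)\cdot (A_{\ref{cor:initial-upper}}k^\gamma)^{-(\tau-1)}=\Theta(k^{1-\gamma(\tau-1)})=\Theta(k^\zeta).
\]
For each such high-mark vertex $v$ with mark $w_v\ge A_{\ref{cor:initial-upper}}k^\gamma$, at distance at most $\sqrt{d}(k/2^d)^{1/d}$ from any vertex in $\CC_j^\star$, the connection probability in \eqref{eq:connection-prob-gen} and the identity $\gamma\alpha=\alpha-1$ (since $\gamma=\gamma_\mathrm{hl}=1-1/\alpha$) give
\[
\Prob\big(v\sim \CC_j^\star\big)\ge 1-\Big(1-p\big(1\wedge \tfrac{\beta A_{\ref{cor:initial-upper}}k^\gamma}{d^{d/2}(k/2^d)}\big)^\alpha\Big)^{\delta k}\ge 1-\exp(-c_2)=: c_3>0,
\]
where for $k$ large the minimum is attained at the second argument and the power of $k$ in the exponent of the exp cancels. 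Since these connection events for distinct high-mark vertices are conditionally independent, $|\CU_j|$ stochastically dominates a $\mathrm{Poi}(c_3\cdot\Theta(k^\zeta))$ random variable. Poisson concentration (\cref{lemma:poisson-1}) then yields $\Prob(|\CU_j|\le \delta k^\zeta \mid (i))\le \exp(-c_4 k^\zeta)$ for $\delta$ sufficiently small.

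Combining the two bounds via a union bound gives $\Prob(\CQ_j\text{ is 1-bad})\le \exp(-\varepsilon k^\zeta)$ for $\varepsilon:=\tfrac12\min(c_1,c_4)$ and $k$ sufficiently large. The adaptation to the $\zeta_\mathrm{ll}>0$ regime is to drop the high-mark condition: then $\CA_\mathrm{1\textnormal{-}good}(\CQ_j)$ reduces to a density lower bound on the local largest component, which follows directly from \cref{cor:initial-upper} with $\gamma_\mathrm{ll}$ in place of $\gamma_\mathrm{hl}$. The main delicate point is tracking which vertices and edges are revealed in stage \emph{(\ref{item:stage1})} to ensure the high-mark PPP and the low-mark subgraph are truly independent; everything else is a direct assembly of existing tools.
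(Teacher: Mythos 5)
Your proof is correct and follows essentially the same route as the paper: event (i) via \cref{cor:initial-upper} applied to the box of volume $k/2^d$, and event (ii) by showing each high-mark vertex connects to $\CC_j^\star$ with uniformly positive probability (using the cancellation $\gamma_\mathrm{hl}\alpha=\alpha-1$), so $|\CU_j|$ dominates a Poisson with mean $\Theta(k^\zeta)$, and Poisson concentration finishes. The only cosmetic difference is that you reveal the high-mark PPP after conditioning on the low-mark subgraph, whereas the paper conditions on the full vertex set $\CV_n$ and only then reveals edges to the high-mark vertices; both give the same stochastic domination.
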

    \begin{proof}  
        We estimate the probabilities of the two events in \eqref{eq:1-good-def}. By \cref{cor:initial-upper},
        \begin{equation}
        \Prob\big(\CC_{k,j}[1,A_{\ref{cor:initial-upper}}k^\gamma) \le \delta k\big)\le \exp\big(-2^{-d\zeta_{\mathrm{hh}}}k^{\zeta_\mathrm{hh}}/A_{\ref{cor:initial-upper}}\big).\label{eq:1good-pr1}
        \end{equation}
        Next, consider $v\in\CV_{n}[A_{\ref{cor:initial-upper}}k^\gamma,\infty)$ in box $\CQ_j$.
        We use  that each vertex in $\CC_{k,j}[1,A_{\ref{cor:initial-upper}}k^\gamma)$ has mark at least $1$, and that the distance between any two vertices in $\CQ_j$ is at most $\sqrt{d}k^{1/d}$. Thus, by the connection probability $\mathrm{p}$ in~\eqref{eq:connection-prob-gen} (with $\alpha<\infty$), conditional on the vertex set $\CV_n$ and the subgraph $\CC_{k,j}[1,A_{\ref{cor:initial-upper}}k^\gamma)$ having size at least $\delta k$,
        $$
        \begin{aligned}
        \Prob\big(v \nsim \CC_{k,j}[1,A_{\ref{cor:initial-upper}}k^\gamma) \, \big| \, | \CC_{k,j}[1,A_{\ref{cor:initial-upper}}k^\gamma)|& \ge \delta k, \CV_{n}\big) \le \Big(1-p\Big(1 \wedge \beta A_{\ref{cor:initial-upper}}\frac{k^\gamma}{d^{d/2}k} \Big)^\alpha\Big)^{\delta k}.
        \end{aligned}
        $$
        If the minimum is at $1$, then this probability is at most $e^{-p\delta k}$. If the minimum is at the second expression, we substitute $\gamma=(\alpha-1)/\alpha$. When we bound $1-x\le e^{-x}$, the powers of $k$ cancel, and  the right-hand side is at most
        \begin{equation}\label{eq:constconnectionproba}
        \Prob\big(v \notin \CU_j\mid | \CC_{k,j}[1,A_{\ref{cor:initial-upper}}k^\gamma)| \ge \delta k, \CV_{n}\big)\le \exp(-p\beta A_{\ref{cor:initial-upper}}^\alpha/d^{d/2}).
        \end{equation}
        In both cases, there is $\delta_1 > 0$ such that any such $v$ connects by an edge to $\CC_{k,1}^\sss{(1)}[1,A_{\ref{cor:initial-upper}}k^\gamma)$ with probability at least $\delta_1$, independently of all other vertices.
       Now, $\CV_{k,j} \cap [A_{\ref{cor:initial-upper}}k^{\gamma},\infty)$ is by~\eqref{eq:poisson-intensity} a PPP with intensity $\mathrm{Leb}\times F_W(\mathrm dw)$. Therefore, 
       the vertices in $\CV_{k,1} \cap [A_{\ref{cor:initial-upper}}k^{\gamma},\infty)$ that connect by an edge to $\CC_{k,1}^\sss{(1)}[1,A_{\ref{cor:initial-upper}}k^\gamma)$ dominate a Poisson random variable with mean 
       \[
      \delta_1\cdot \mathrm{Vol}(\CQ_j)  (A_{\ref{cor:initial-upper}}k^\gamma)^{-(\tau-1)}= \delta_1 2^{-d} A_{\ref{cor:initial-upper}}^{-(\tau-1)}k^{1-\gamma(\tau-1)}=\Theta(k^\zeta).
       \]
       Thus, by \cref{lemma:poisson-1}, there exists $\delta=\delta(A_{\ref{cor:initial-upper}}) > 0$ small enough so that 
       \[
        \begin{aligned}
        \Prob\big(|\CU_j| \le \delta k \, \big| \,  |\CC_{k,j}[1,A_{\ref{cor:initial-upper}}k^\gamma)| \ge \delta k\big) \le \exp(-\varepsilon k^{\zeta}).
        \end{aligned}
       \]
       Combined with~\eqref{eq:1good-pr1} and the definition of $\{\CQ_1\mbox{ is $1$-good}\}$ in \cref{def:second-events} this finishes the proof for $k$ sufficiently large and $\delta, \varepsilon>0$ sufficiently small.        
    \end{proof}

\begin{claim}[Stage \emph{(\ref{item:stage2})} error bound]\label{claim:second-2-good}
    Consider a KSRG under the same setting as Proposition~\ref{prop:second-upper}  such that $\zeta_\mathrm{hl}>0$. There exists a  constant $\varepsilon>0$ such that for all $k$ sufficiently large,
    \[
        \Prob\big(\mbox{stage-2-}\CG_n\mbox{ is 2-good}\big) \ge 1-2^{d+1}(n/k)\exp(-\varepsilon k^\zeta).
    \]
    \end{claim}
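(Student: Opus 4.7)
The event $\{\text{stage-2-}\CG_n\mbox{ 2-good}\}$ in \eqref{eq:2-good-def} is an intersection of two kinds of events: (a) all $2^d(n/k)$ boxes are $1$-good (which is stage-\emph{(\ref{item:stage1})}-measurable), and (b) for each $j<2^d(n/k)$, at least one vertex $u\in\CU_j$ is box-wedging, i.e., $u\sim \mathrm{LG}_{j+1}$. I will bound the failure probabilities of (a) and (b) separately and add.

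For (a), \cref{claim:1good} gives $\Prob(\CQ_j\text{ is }1\text{-bad})\le\exp(-\eps k^\zeta)$ for each $j$, and a union bound over the $2^d(n/k)$ boxes yields error at most $2^d(n/k)\exp(-\eps k^\zeta)$.

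For (b), I condition on stage-\emph{(\ref{item:stage1})}-$\CG_n$ on the event that all boxes are $1$-good. Then the sets $\CU_j$ and the local giants $\mathrm{LG}_j$ are deterministic, with $|\CU_j|\ge \delta k^\zeta$ and $|\mathrm{LG}_{j+1}|\ge\delta k$ by \eqref{eq:1-good-def}. Fix an adjacent pair $j,j{+}1$: since adjacent boxes have side length $(k/2^d)^{1/d}$, any two vertices in $\CQ_j\cup\CQ_{j+1}$ are within distance at most $2\sqrt d\, k^{1/d}$. For $u\in\CU_j$ (mark $\ge A_{\ref{cor:initial-upper}}k^{\gamma_\mathrm{hl}}$) and any $v\in\mathrm{LG}_{j+1}$ (mark $\ge 1$), the connection probability in \eqref{eq:connection-prob-gen} satisfies
\[
\mathrm p(u,v)\;\ge\; p\Big(1\wedge \frac{\beta A_{\ref{cor:initial-upper}} k^{\gamma_\mathrm{hl}}}{(2\sqrt d)^{d} k}\Big)^{\!\alpha}\;\ge\;\frac{c}{k},
\]
where $c:=p\beta^\alpha A_{\ref{cor:initial-upper}}^\alpha(2\sqrt d)^{-\alpha d}$; here I used $\gamma_\mathrm{hl}=1-1/\alpha$, so that $k^{\gamma_\mathrm{hl}\alpha-\alpha}=k^{-1}$, and the minimum is attained by the second term for $k$ large. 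Hence, conditionally on stage \emph{(\ref{item:stage1})},
\[
\Prob\big(u\sim\mathrm{LG}_{j+1}\bigm|\text{stage-}1\text{-}\CG_n\big)\;\ge\;1-(1-c/k)^{\delta k}\;\ge\;1-\mathrm e^{-c\delta}\;=:\;q_\star\;\in\;(0,1).
\]
Crucially, in stage \emph{(\ref{item:stage2})} the edges $\{u,v\}$ for distinct $u\in\CU_j$ form disjoint pairs, so the events $\{u\sim\mathrm{LG}_{j+1}\}$ for $u\in\CU_j$ are conditionally independent. Therefore
\[
\Prob\big(\forall u\in\CU_j:u\nsim\mathrm{LG}_{j+1}\bigm|\text{stage-}1\text{-}\CG_n,\,\text{all }1\text{-good}\big)
\;\le\;(1-q_\star)^{|\CU_j|}\;\le\;\mathrm e^{-c\delta^2 k^\zeta}.
\]
Taking a union bound over the fewer than $2^d(n/k)$ adjacent pairs gives that, conditional on stage-$1$-$\CG_n$ and all boxes being $1$-good, the box-wedging requirement of \eqref{eq:2-good-def} fails with probability at most $2^d(n/k)\exp(-c\delta^2 k^\zeta)$. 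Taking expectations over stage-$1$-$\CG_n$ and summing with the error from (a) yields the claim after possibly relabelling the constant $\eps>0$.

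The only subtlety is the conditional independence step: it relies on the fact that all edges incident to a single $u\in\CU_j$ that go into $\mathrm{LG}_{j+1}$ have not been revealed in stage \emph{(\ref{item:stage1})} (since $u$ and $\mathrm{LG}_{j+1}$ live in different boxes), and that edges from distinct $u\neq u'\in\CU_j$ are disjoint as unordered pairs. This is the main point where the clean separation of the revealment stages in \cref{def:revealment-stages} pays off.
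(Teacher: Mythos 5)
Your proof is correct and follows essentially the same route as the paper: bound $\Prob(\neg\CA_{\mathrm{sub}})$ by a union bound over boxes, then bound the box-wedging failure probability by exploiting conditional independence of cross-box edges given $\CV$ and stage-$1$ information, and take a second union bound. The only cosmetic difference is that you factor the product $\prod_{u\in\CU_j,\,v\in\mathrm{LG}_{j+1}}(1-\mathrm p(u,v))$ into two steps (a per-$u$ connection probability $q_\star$ and then a product over $|\CU_j|$ candidates), whereas the paper computes the full product over the $(\delta k)\cdot(\delta k^\zeta)$ pairs at once — both give $\exp(-\Theta(k^\zeta))$ after the cancellation $(k^\gamma/k)^\alpha \cdot k = 1$ coming from $\gamma_{\mathrm{hl}}=1-1/\alpha$.
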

    \begin{proof}
By \cref{claim:1good}, each box $\CQ_i$ is $1$-good with probability at least $1-\exp(-\varepsilon k^{\zeta})$.  Denote the first event in \eqref{eq:2-good-def} that all boxes $\CQ_j$ are $1$-good by $\CA_\mathrm{sub}$. By a union bound, \begin{equation}\label{firstpart}
\Prob(\CA_{\mathrm{sub}})
\ge 1-2^d(n/k)\exp(-\varepsilon k^{\zeta}).
\end{equation}
We move on to estimating the probability of the second event in \eqref{eq:2-good-def} conditioned on $\CA_\mathrm{sub}$. Fix some $j<2^d(n/k)$. Before Stage~\emph{(\ref{item:stage2})}, none of the edges of $\CU_j$ towards vertices in $\CQ_{j+1}$ have been revealed yet. The maximal Euclidean distance between two neighboring boxes is $2\sqrt{d}k^{1/d}/2$, so we can bound the probability that a single high-mark vertex is box-wedging using \eqref{eq:connection-prob-gen}. When $\CQ_{j+1}$ is $1$-good, by \eqref{eq:1-good-def} there are at least $\delta n$ vertices of mark at least $1$ in $\mathrm{LG}_{j+1}$, so 
\begin{equation}\label{eq:box-wedging-one}
\Prob\big(u \in \CU_j \text{ not box-wedging } \big| \, \CQ_{j+1}\mbox{ $1$-good}, \CV_n \big) \le \Big(1-p\Big(1\wedge \frac{\beta A_{\ref{cor:initial-upper}}k^\gamma}{d^{d/2}k}\Big)^\alpha\Big)^{\delta k}.
\end{equation}
Then, we can use that there are at least $\delta k^\zeta$ candidate vertices to be box-wedging in a $1$-good $\CQ_j$, and that edges are present independently given the vertex set $\CV$. Thus,   
\begin{equation}\label{eq:box-wedging-two}
\begin{aligned}
\Prob\big(\nexists u\in\CU_j: u \text{ box-wedging}\, \big| \,  \CQ_{j},\CQ_{j+1}\mbox{ $1$-good}, \CV_n\big) 
&\le \Big(1-p\Big(1\wedge \frac{\beta A_{\ref{cor:initial-upper}}k^\gamma}{d^{d/2}k}\Big)^\alpha\Big)^{(\delta k)\cdot(\delta k^{\zeta})}\hspace{-6pt}.
\end{aligned}
\end{equation}
  If the minimum on the right-hand side is at $1$, this probability is at most $\exp(-\varepsilon k^{1+\zeta})$. If the minimum is at the second expression, using  that $1-x\le \exp(-x)$ and $\gamma=1-1/\alpha$, the powers of $k$ simplify since $(k^{\gamma}/k)^{\alpha} \cdot k=1$, so this probability is at most
  $
  \exp(-\varepsilon k^{\zeta})
  $
  for $\varepsilon > 0$ small enough. 
  So, integrating over the realizations $\CV_n$ and a union bound over all $2^d(n/k)$ boxes, the probability that there is at least one box $\CQ_j$ with no box-wedging vertices is at most  $2^d(n/k)  \exp(-\varepsilon k^{\zeta})$. Combined with~\eqref{eq:2-good-def} and \eqref{firstpart} the claim follows.
    \end{proof}

\begin{claim}[Stage \emph{(\ref{item:stage3}--\ref{item:stage4})} error bound: second-largest component] \label{claim:second-4-good}
    Consider a KSRG under the same setting as Proposition~\ref{prop:second-upper}  such that $\zeta_\mathrm{hl}>0$. There exists a constant $\varepsilon>0$ such that for all $k$ sufficiently large and $n>k$
    \begin{equation}\label{eq:second-largest-est}
               \Prob\big(\{|\CC_n^\sss{(2)}|> k\}\cap \mbox{stage-2-}\CG_n\mbox{ is 2-good}\big) \le (n/k)\exp(-\varepsilon k^\zeta).
    \end{equation}
    \end{claim}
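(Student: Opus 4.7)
The plan is to work conditionally on $\{\mbox{stage-2-}\CG_n\mbox{ is 2-good}\}$: on this event, the backbone $\bigcup_j\mathrm{LG}_j$ is a single component of stage-2-$\CG_n$ of size at least $\delta\cdot 2^d n$. Any stage-\emph{(\ref{item:stage4})} edge has one endpoint inside $\bigcup_j\mathrm{LG}_j$ and one outside, so stage-\emph{(\ref{item:stage4})} edges cannot merge two non-backbone components of stage-3-$\CG_n$. Therefore on the intersection event $\{|\CC_n^\sss{(2)}|>k\}\cap\{\mbox{2-good}\}$ there must exist a non-backbone component $\CC$ of stage-3-$\CG_n$, entirely contained in $\CV_n\setminus\bigcup_j\mathrm{LG}_j$, with $|\CC|>k$, that acquires no stage-\emph{(\ref{item:stage4})} edge to the backbone. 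This reduces the claim to bounding, for such a $\CC$, its probability of failing to connect to the backbone in stage \emph{(\ref{item:stage4})}, and summing this over all candidate components.

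Conditional on stage-3-$\CG_n$ and on 2-good, I bound the failure probability of a fixed such $\CC$ of size $\ell\ge k$. For each $v\in\CC$ in box $\CQ_j$ I keep only potential stage-\emph{(\ref{item:stage4})} edges from $v$ to high-mark backbone vertices in an adjacent box $\CQ_{j'}$ (boundary effects affect only $O(1)$ vertices of $\CC$ and can be absorbed into the constant). The 2-good event gives $|\CU_{j'}|\ge \delta k^\zeta$, with marks at least $A_{\ref{cor:initial-upper}}k^\gamma$; the maximal vertex-distance is at most $2\sqrt d(k/2^d)^{1/d}$; and exactly as in~\eqref{eq:box-wedging-one} each such potential edge is present with probability at least $p(1\wedge \beta A_{\ref{cor:initial-upper}}k^\gamma/(d^{d/2}k))^\alpha=\Theta(1/k)$, using $\gamma\alpha=\alpha-1$. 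These potential edges are stage-\emph{(\ref{item:stage4})}-edges and hence independent given stage-3-$\CG_n$, so summing over $v\in\CC$ and $u\in\CU_{j'}$,
\[
\Prob\big(\CC\nsim\mbox{backbone in stage (\ref{item:stage4})}\,\big|\,\mbox{stage-3-}\CG_n,\mbox{2-good}\big)\le \exp\big(-c\ell\cdot k^\zeta\cdot k^{-1}\big)\le \exp(-ck^\zeta)
\]
for some $c>0$, where the last inequality uses $\ell\ge k$.

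A union bound finishes the proof. Let $N_{>k}$ denote the number of non-backbone components of stage-3-$\CG_n$ of size larger than $k$; trivially $N_{>k}\le |\CV_n|/k$. Since $\CV_n$ is a PPP in $\Lambda_n$, $\E[|\CV_n|]=n$. By the tower property,
\begin{align*}
\Prob\big(\{|\CC_n^\sss{(2)}|>k\}\cap\mbox{2-good}\big)
&\le \E\Big[\ind{\mbox{2-good}}\sum_{\CC}\Prob\big(\CC\nsim\mbox{backbone}\,\big|\,\mbox{stage-3-}\CG_n,\mbox{2-good}\big)\Big]\\
&\le \exp(-ck^\zeta)\,\E[N_{>k}]\le (n/k)\exp(-ck^\zeta),
\end{align*}
where the inner sum ranges over non-backbone components of stage-3-$\CG_n$ of size larger than $k$; this yields the statement with $\varepsilon=c$. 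The main subtlety is securing enough \emph{independent} stage-\emph{(\ref{item:stage4})} edges with the favourable per-edge probability $\Theta(1/k)$; this is achieved by singling out adjacent-box high-to-low pairs in the box-wedging style of~\eqref{eq:box-wedging-two}, only now summed across all vertices of $\CC$ rather than across one box's $\CU_j$. The same argument applies when $\zeta_\mathrm{ll}>0$ instead of $\zeta_\mathrm{hl}>0$ by replacing the high-low edges with low-low edges and using the edge-expectation bound analogous to~\eqref{eq:computation-ll-ER}; this is also the argument for long-range percolation on $\Z^d$.
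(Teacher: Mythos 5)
Your proof is correct and follows essentially the same route as the paper's: condition on 2-goodness, observe that stage-\emph{(\ref{item:stage4})} edges (having one endpoint in $\bigcup_j\mathrm{LG}_j\subseteq\mathrm{BB}$) can only attach non-backbone stage-3 components to $\mathrm{BB}$ and never to one another, reduce to bounding the failure probability of a single non-backbone component $\CC$ of size $>k$, use the box-wedging estimate (at least $\delta k^\zeta$ candidate high-mark vertices per $v\in\CC$, each candidate edge present with probability $\Theta(1/k)$ via $\gamma\alpha=\alpha-1$, all conditionally independent), and union-bound over the at most $|\CV_n|/k$ candidate components with $\E[|\CV_n|]=n$. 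The only cosmetic difference is that the paper bounds the pair count directly by $k\cdot\delta k^\zeta$ rather than writing $\ell\ge k$, and the paper does not need the "$O(1)$ boundary vertices" aside because it notes every subbox has a neighbouring box.
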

    \begin{proof}
We condition on  the full realization of the stage-$2$-$\CG_n$ that is $2$-good. 
    By \eqref{eq:2-good-def}, any such realization satisfies that  all local giants $(\mathrm{LG}_j)_{j\le 2^d(n/k)}$ have size at least $\delta k$ and they are all connected by box-wedging vertices. This gives a component in $\CG_n$ of size at least $\delta n$ that contains these local giants.  We call this  component the backbone $\mathrm{BB}=\mathrm{BB}_{n,k}$.
      In Stage~\emph{(\ref{item:stage3})} we expose all edges with both endpoints outside the local giants.  If no component of size larger than $k$ except for the backbone remains after Stage~\emph{(\ref{item:stage3})}, then we are done:  In stage~\emph{(\ref{item:stage4})} all remaining edges to be revealed have one endpoint in local giants, hence in $\mathrm{BB}$, and one outside. 

We may interpret now \eqref{eq:second-largest-est} as follows: the second-largest component being larger than $k$ means that there is at least one component above size $k$ that is not $\mathrm{BB}$.
      If after Stage~\emph{(\ref{item:stage3})} there is a component $\CC\neq \mathrm{BB}$ above size $k$, then we aim to show that this component merges with $\mathrm{BB}$ in Stage~\emph{(\ref{item:stage4})} with probability at least $1-\exp( - \eps k^{\zeta})$. Since there are at most $(n/k)$ components above size $k$ after Stage~\emph{(\ref{item:stage3})}, a union bound over all such components will yield \eqref{eq:second-largest-est}. 
      
      So, suppose that there exists a component $\CC\neq \mathrm{BB}$ of size larger than $k$ after Stage~\emph{(\ref{item:stage3})} and fix a realization of stage-$2$-$\CG_n$.  After Stage~\emph{(\ref{item:stage3})} we revealed already all within-box edges, but we did not reveal edges between $\CC$ and local giants in neighboring boxes `around' vertices of $\CC$. Since the number of boxes is at least  $2^d(n/k)$ by \cref{def:second-boxing}, a subbox always has a neighboring box. Let $j(v)$ denote index of the box that vertex $v\in \CC$ belongs to, which is a.s.\ unique.
      Since all boxes are $1$-good, by \eqref{eq:1-good-def}, for each $v\in \CC$, there are at least $\delta k^{\zeta}$ high-mark vertices  at distance at most $\sqrt{d}k^{1/d}$ from $v$ in the vertex set $\CU_{j(v)+1}\subseteq \mathrm{LG}_{j(v)+1}$ in the neighboring boxes $\CQ_{j(v)-1}$ and $\CQ_{j(v)+1}$. Each vertex of $v$ has mark at least $1$. Hence, for a given component $\CC$, by the same calculations as in the proof of \cref{claim:second-2-good} around~\eqref{eq:box-wedging-one}--\eqref{eq:box-wedging-two},
      \begin{align}
      \Prob\big(\CC &\nsim \mathrm{BB} \, \big| \,  \mbox{2-good-}\CG_n, , \CV_n, \mbox{stage-3-}\CG_n\big)\nonumber\le 
       \Big(1-p\Big(1\wedge \beta A_{\ref{cor:initial-upper}}\frac{k^\gamma}{d^{d/2}k}\Big)^\alpha\Big)^{k\cdot (\delta k^{\zeta})}
       \le\exp\big(-\varepsilon k^\zeta\big).\nonumber
       \end{align}
      for $\varepsilon>0$ sufficiently small. Here we used that $\gamma=1-1/\alpha$ and thus some powers of $k$ cancel.
      By a union bound over all at most $|\CV_n|/k$ components of size at least $k$, 
      \begin{align*}
       \Prob\big(\{|\CC_n^\sss{(2)}|> k\}\cap\{\mbox{stage-2-}\CG_n\mbox{ is 2-good}\}\big)&\le        
      \E\big[\Prob\big(|\CC_n^\sss{(2)}|\ge k\, \big| \,  \mbox{2-good-}\CG_n, \CV_n, \mbox{stage-3-}\CG_n\big)\big] \\&\le \E\big[|\CV_n|/k]\exp\big(-\varepsilon k^\zeta\big)\le (n/k)\exp\big(-\varepsilon k^\zeta\big).\end{align*} 
      This finishes the proof of \eqref{eq:second-largest-est}.
    \end{proof}

We are ready to prove Proposition~\ref{prop:second-upper} that bounds $\Prob(|\CC_n^\sss{(2)}|\ge k)$.
\begin{proof}[Proof of Proposition~\ref{prop:second-upper} when $\zeta_{\mathrm{hl}}>0$]
    By the law of total probability,
    \[ 
    \Prob(|\CC_n^\sss{(2)}|\ge k)
    \le  
    \Prob\big(\{|\CC_n^\sss{(2)}|\ge k\} \cap  \mbox{stage-2-}\CG_n\mbox{ is 2-good}\big)+\Prob\big(\mbox{stage-2-}\CG_n\mbox{ is \emph{not} 2-good}\big).   
    \]
    Proposition~\ref{prop:second-upper} follows from Claims~\ref{claim:second-2-good}--\ref{claim:second-4-good}.
\end{proof}
We now give a sketch on how to modify the above proof when $\zeta_{\mathrm{ll}}>0$.
\begin{proof}[Proof of Proposition~\ref{prop:second-upper} when $\zeta_{\mathrm{ll}}>0$]\label{proof:lowlow-second} The proofs are identical for KSRGs on PPPs and on $\Z^d$.
 We follow the $\zeta_\mathrm{hl}$-computations. 
  By \cref{cor:initial-upper},
 \begin{equation}     \Prob\big(|\CC_n^\sss{(1)}\big[1,A_{\ref{cor:initial-upper}})|\le \tfrac{1}{A_{\ref{cor:initial-upper}}} n \big) \le \exp\big(-\tfrac{1}{A_{\ref{cor:initial-upper}}} n^{\zeta_\mathrm{ll}}\big).
 \end{equation}
 We set $\mathrm{LG}_j:=\CC_n^\sss{(1)}\big[1,A_{\ref{cor:initial-upper}})$ to be the local giant of box $\CQ_j$. We ignore high-mark vertices here.
 The revealment scheme in \cref{def:revealment-stages} is then well-defined with this $\mathrm{LG}_j$. We modify the box being $1$-good in \eqref{eq:1-good-def} by only requesting the first event $|\mathrm{LG}_j|=|\CC_n^\sss{(1)}\big[1,A_{\ref{cor:initial-upper}})|\ge \delta k$ there. We define $u\in \mathrm{LG}_j$ to be box-wedging if $u\sim \mathrm{LG}_{j+1}$.  
 In the definition of the stage $2$-$\CG_n$ being $2$-good in \eqref{eq:2-good-def}, we then require at least one box-wedging vertex in each box:
  $$
 \bigcap_{j<2^d(n/k)} \{\exists u\in \mathrm{LG}_j: u\sim \mathrm{LG}_{j+1}\}.$$
  \cref{claim:1good} follows then from \cref{cor:initial-upper} as before. We describe how to prove the equivalent of \cref{claim:second-2-good} for this regime. Estimating that all boxes are $1$-good goes as before in \eqref{firstpart}. For the second part, estimating that each box has a box-wedging vertex, we use only the fact that vertices have weight at least $1$. We get
  \begin{align*}
  \Prob\big(\nexists u\in\mathrm{LG}_j: u\sim \mathrm{LG}_{j+1} \, &\big| \,  \CQ_{j},\CQ_{j+1}\mbox{ are $1$-good}\big)  \le  \Big(1-p \Big(1\wedge \beta \frac{1}{d^{d/2}k} \Big)^\alpha\Big)^{(\delta k)^2}.
  \end{align*}
 If the first term is the minimum, this probability is at most $\exp(-\varepsilon k^2) \le \exp(-\varepsilon k^{2-\alpha})=\exp(-\varepsilon k^{\zeta_\mathrm{ll}})$ for $\varepsilon>0$ sufficiently small. If the second term is the minimum, this probability is also at most $\exp(-\varepsilon k^{2-\alpha})=\exp(-\varepsilon k^{\zeta_\mathrm{ll}})$. The rest of the claim is as for the hl-regime. For \cref{claim:second-4-good}, we use again that each local giant contains at least $\delta k$ vertices of mark at least $1$. Thus,
   $$
      \Prob\big(\CC \nsim \mathrm{BB} \, \big| \, \mbox{2-good-$\CG_n$}, \CV_n\big)\le 
       \Big(1-p\Big(1\wedge \beta \frac{1}{d^{d/2}k}\Big)^\alpha\Big)^{(\delta k)\cdot k}.
      $$     
     As before, the right-hand side is at most $\exp(-\varepsilon k^{2-\alpha})= \exp(-\varepsilon k^{\zeta_\mathrm{ll}})$. The proof of the claim is finished as before.
\end{proof}

We move towards the proofs of \cref{thm:second-largest}. We recall a proposition from our recent paper to prove the lower bounds.
To make notation lighter, we state it only for the case that $\zeta_{\mathrm{long}}>0$. We recall from \eqref{eq:multiplicity} that $\max(\CZ)$ is the maximum and $\mathfrak{m}_{\CZ}$ is the multiplicity of the maximum in the set $\CZ=\{\zeta_{\mathrm{short}}, \zeta_{\mathrm{ll}}, \zeta_{\mathrm{hl}}, \zeta_{\mathrm{hh}}\}$. We write here $\CC_{n}(0)[1,A)$  for the component of a vertex with unknown weight at $0\in \R^d$ inside the vertex set on $\Lambda_n \times [1, A)$, with the idea that if $w_0>A$ then  $\CC_n(0)[1,A):=\emptyset$.   

\begin{proposition}[Lower bound prerequisite {\cite[Prop.~7.1]{clusterI}}]\label{prop:prer-lower}
 Consider a supercritical KSRG on a PPP or on $\Z^d$ satisfying Assumption \ref{assumption:main} with $\zeta_\mathrm{long}>0$. Assume that there exist constants $\eta', \rho>0$ such that for all $n$ sufficiently large,
 \begin{align}
  \Prob^\sss{0}\big(\big|\CC_{n}(0)\big[1,(\log n)^{\eta'}\big)\big|\ge \rho n\big)
  \ge
  \rho.
  \label{eq:lower-sub-cond}
 \end{align}
 Then there exists $A>0$ such that for all $n\in [Ak,\infty]$,
 \begin{equation}
  \Prob^\sss{0}\big(|\CC_n(0)|> k, 0\notin\CC_n^\sss{(1)}\big) \ge
  \exp\big(-Ak^{\max(\CZ)}(\log k)^{\mathfrak{m}_\CZ-1}\big).\label{eq:lower-sub-iso-bound}
 \end{equation}
 Moreover, there exist $ \delta, \varepsilon>0$, such that for all $n$ sufficiently large,
 \begin{equation}
  \Prob\big(|\CC_{n}^\sss{(2)}|> \big((\varepsilon\log n)/(\log\log n)^{\mathfrak{m}_\CZ-1}\big)^{1/\max(\CZ)}\big)\ge 1-n^{-\delta}.\label{eq:lower-second-bound}
 \end{equation}
\end{proposition}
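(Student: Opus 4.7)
My plan is a standard box-isolation argument: combine the hypothesis \eqref{eq:lower-sub-cond} (which produces a large cluster inside a small box around the origin) with an isolation event (no edges from that box to its complement) whose probability is exactly of the target order.

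For \eqref{eq:lower-sub-iso-bound}, I set $m := \lceil k/\rho\rceil$ and define two events. The first is $\CE_\mathrm{clu} := \{|\CC_m(0)[1,(\log m)^{\eta'})| \ge \rho m\}$, which by \eqref{eq:lower-sub-cond} applied at scale $m$ satisfies $\Prob^\sss{0}(\CE_\mathrm{clu}) \ge \rho$ once $m$ is large. The second is the isolation event $\CE_\mathrm{iso}$ that no edge has one endpoint in $\Lambda_m$ and the other in $\Lambda_m^\complement$. Reveal the restricted marked vertex set $\CV \cap(\Lambda_m\times [1,(\log m)^{\eta'}))$ together with $\CV \cap \Lambda_m^\complement$; conditionally on this, $\CE_\mathrm{clu}$ depends only on edges inside $\Lambda_m$ while $\CE_\mathrm{iso}$ depends only on edges crossing $\partial \Lambda_m$, so the two are conditionally independent. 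By \cref{claim:phases} and the back-of-the-envelope decomposition in \cref{sec:dominant}, the expected number of edges (equivalently, of downward-vertex-boundary vertices) between $\Lambda_m$ and $\Lambda_m^\complement$ is $\Theta(m^{\zeta_\star}(\log m)^{\mathfrak{m}_\CZ-1})$, so
\[
\Prob(\CE_\mathrm{iso}) \ge \exp\big(-A\, m^{\zeta_\star}(\log m)^{\mathfrak{m}_\CZ-1}\big).
\]
On $\CE_\mathrm{clu}\cap\CE_\mathrm{iso}$, the cluster $\CC_n(0)$ equals $\CC_m(0)$, so its size lies between $k$ and $m=k/\rho$. Whenever $n\ge Ak$ for $A$ large, \cref{cor:initial-upper} guarantees a component strictly larger than $m$ elsewhere with high probability, so $0 \notin \CC_n^\sss{(1)}$, yielding the stated lower bound after absorbing constants into $A$.

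For \eqref{eq:lower-second-bound}, I calibrate $m=m(n)$ by $A\, m^{\zeta_\star}(\log m)^{\mathfrak{m}_\CZ-1} = \varepsilon\log n$, which gives $m = \Theta(((\varepsilon\log n)/(\log\log n)^{\mathfrak{m}_\CZ-1})^{1/\zeta_\star})$, and partition $\Lambda_n$ into $\lfloor n/m\rfloor$ disjoint translates of $\Lambda_m$. For each such sub-box the local analogue of the previous paragraph gives probability $\ge \rho\exp(-Am^{\zeta_\star}(\log m)^{\mathfrak{m}_\CZ-1}) = \rho n^{-\varepsilon}$ that it contains a cluster of size $\ge \rho m$ which is moreover isolated from the rest of $\Lambda_n$. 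Since the events in distinct sub-boxes are determined by the marked vertex sets inside them and the edges from those sub-boxes, they enjoy enough independence (e.g.\ via a two-step exposure revealing first the sub-box interiors, then the isolation edges) that a standard Chernoff bound gives at least one success with probability $\ge 1-\exp(-\tfrac{\rho}{2}n^{1-\varepsilon}/m)$. A sub-box success yields a component of size $\ge \rho m$ not belonging to the giant (whose existence with size $\gg m$ follows with probability $\ge 1-n^{-\delta'}$ from \cref{cor:initial-upper}), hence $|\CC_n^\sss{(2)}|\ge \rho m$, as required for a suitable $\delta>0$.

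The main obstacle is the logarithmic correction in the regime $\mathfrak{m}_\CZ\ge 2$: one must check that when several connectivity types in $\CZ$ simultaneously attain $\zeta_\star$, each contributes an additive $\Theta(m^{\zeta_\star})$ to the expected boundary, and the superposition of $\mathfrak{m}_\CZ$ dyadic mark-scales produces exactly the factor $(\log m)^{\mathfrak{m}_\CZ-1}$ rather than a larger polylog. A secondary technicality is the treatment of atypically-heavy marks inside $\Lambda_m$: the isolation event must be quantified conditionally on a mild upper truncation of marks in $\Lambda_m$ (absorbing the polynomially small ``there exists a very heavy vertex'' event into the constants), since otherwise a single mark of order $m$ would force too many outgoing edges and break the isolation estimate. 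The rest is bookkeeping.
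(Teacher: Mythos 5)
This proposition is imported from \cite[Prop.\ 7.1]{clusterI} and not re-proved in the present paper, so I evaluate your proposal on its own merits. The template --- local cluster in a box of side $\Theta(k^{1/d})$, isolation of that box, then a tiling of $\Lambda_n$ --- is the correct one, but the step you defer to bookkeeping, estimating $\Prob(\CE_\mathrm{iso})$, is exactly where $\zeta_\star$ is produced, and your treatment of it is wrong in two ways. First, the parenthetical ``equivalently, of downward-vertex-boundary vertices'' conflates two quantities the paper separates explicitly around \eqref{eq:deltaeff}: the edge boundary (exponent $2-\delta_\mathrm{eff}$) and the downward vertex boundary (exponent $\zeta_\mathrm{long}$). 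A vertex of mark $w$ in $\Lambda_m$ has $\Theta(w^\alpha m^{1-\alpha})$ expected edges to $\Lambda_m^\complement$, so without any mark restriction the edge boundary is dominated by the largest mark present and does not concentrate at scale $m^{\zeta_\star}$; the lower bound you assert for $\Prob(\CE_\mathrm{iso})$ is therefore not available. Second, your remedy --- a ``mild upper truncation'' absorbing a polynomially small heavy-vertex event --- is too weak: any vertex of mark $\gg m^{\gamma_\mathrm{long}}$ already has $\gg 1$ expected crossing edges, so the conditional isolation cost only reaches $\exp(-\Theta(m^{\zeta_\star}))$ after forbidding \emph{all} marks above $\Theta(m^{\gamma_\mathrm{long}})$. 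In the $\zeta_\mathrm{hl}$ and $\zeta_\mathrm{hh}$ phases the expected number of such vertices in $\Lambda_m$ is $\Theta(m^{\zeta_\mathrm{long}})\to\infty$, and the probability that none is present is $\exp(-\Theta(m^{\zeta_\mathrm{long}}))$ --- a factor of exactly the target order, not a polynomially small correction. The exponent $\zeta_\star$ arises from optimizing this truncation level against the residual isolation cost; that optimization is the content of \cite[Prop.\ 7.1]{clusterI}, not a side technicality. A related flaw: you reveal only the low-mark part of $\CV\cap\Lambda_m$, yet $\CE_\mathrm{iso}$ depends on edges emanating from the unrevealed high-mark vertices in $\Lambda_m$, so the factorization $\Prob(\CE_\mathrm{clu}\cap\CE_\mathrm{iso})=\Prob(\CE_\mathrm{clu})\Prob(\CE_\mathrm{iso})$ does not follow from your revealment scheme.

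In the second part the per-box success events share the inter-box edges: for $i\neq j$ both events depend on whether the candidate cluster of box $i$ is joined to the candidate cluster of box $j$. They are therefore not independent, and ``a standard Chernoff bound'' does not apply as stated; FKG does not help directly either, since each success event is the intersection of an increasing event (cluster exists) and a decreasing one (isolation). One repair is to isolate each candidate cluster only from non-cluster vertices (these edge-sets are disjoint across boxes, hence conditionally independent given the vertex set) and then union-bound the probability that some pair of candidate clusters is joined; another is a more careful sequential revealment. Either way this needs an argument, and together with the truncation optimization above, this is where the proof of the proposition actually lives rather than in bookkeeping.
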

The proof of this proposition can be found in \cite[Section 7]{clusterI}. The key idea is to find a localized component containing $0$ in the ball of radius $\Theta(k)$ around $0$, simultaneously ensuring that no edges leave this ball. The probabilistic cost of this event gives the lower bound on the cluster size-decay.  The lower bound on the second-largest component can be derived from there, using a tesselation of the original space with boxes of volume $\Theta(n^{\varepsilon})$. Each tile corresponds to a repeated trial of the local event, and we prove that the small balls inside the tiles do not connect to other tiles with sufficiently high probability. See \cite[Section 7.3]{clusterI} for more details.
The next claim verifies the requirement~\eqref{eq:lower-sub-cond} for applying \cref{prop:prer-lower}.
\begin{claim}[The prerequisite holds]\label{claim:lower-bound-prer}
Consider a KSRG under the same setting as \cref{thm:second-largest} and \cref{thm:subexponential-decay}. There exist constants $\eta',\rho>0$ such that 
\begin{equation}\label{eq:origin-giant}
\Prob^\sss{0}\big(|\CC_n(0)[1,(\log n)^{\eta'}|\ge \rho n\big)\ge \rho.
\end{equation}
\end{claim}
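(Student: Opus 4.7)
The plan is to combine \cref{cor:polylog-thresholds}, which provides a linear-sized giant on vertices of polylogarithmic mark, with a Campbell--Mecke mass-transport step that converts a statement about the typical size of the giant into one about the cluster of a Palm point. Set $\mathrm{type}\in\{\mathrm{ll},\mathrm{hl}\}$ so that $\zeta_\mathrm{type}=\max(\zeta_{\mathrm{ll}},\zeta_{\mathrm{hl}})>0$ (this is the hypothesis of the theorems in which \cref{claim:lower-bound-prer} is used). Fix $\eta'>\gamma_\mathrm{type}/\zeta_\mathrm{type}$, let $A_0$ be the constant from \cref{cor:polylog-thresholds} with $C=1$, and abbreviate $W_n:=(A_0\log n)^{\gamma_\mathrm{type}/\zeta_\mathrm{type}}$ (so that $W_n\le(\log n)^{\eta'}$ for $n$ large). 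Apply \cref{cor:polylog-thresholds} inside the sub-box $\Lambda_{m_n}$ with $m_n:=n/2^d$ to obtain $\varepsilon>0$ with
\[
\Prob\big(G_n\big)\ge 1-m_n^{-1},\qquad G_n:=\big\{|\CC^{(1)}_{m_n}[1,W_n)|\ge\varepsilon m_n\big\}.
\]

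Denote $\CC^\star:=\CC^{(1)}_{m_n}[1,W_n)\subseteq\Lambda_{m_n}\times[1,W_n)$. The next step is Slivnyak--Mecke for the unit-intensity marked PPP $\CV$: for any measurable $f$,
\[
\E\Big[\sum_{v\in\CV\cap(\Lambda_{m_n}\times[1,\infty))}f(v,\CV)\Big]=\int_{\Lambda_{m_n}}\E^\sss{x}\big[f(x,\CV)\big]\,\rd x.
\]
Applied to $f(v,\CV)=\ind{v\in\CC^\star}\ind{G_n}$ this gives
\[
\varepsilon m_n/2\le\E\big[|\CC^\star|\ind{G_n}\big]=\int_{\Lambda_{m_n}}\Prob^\sss{x}\big(x\in\CC^\star,\,G_n\big)\,\rd x,
\]
so by averaging there exists $x_0\in\Lambda_{m_n}$ with $\Prob^\sss{x_0}(x_0\in\CC^\star,G_n)\ge\varepsilon/2$. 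The PPP is stationary, so translating by $-x_0$ turns the configuration under $\Prob^\sss{x_0}$ into the configuration under $\Prob^\sss{0}$, and it turns the box $\Lambda_{m_n}$ into $\Lambda_{m_n}-x_0$. Because $m_n^{1/d}=n^{1/d}/2$, for every $x_0\in\Lambda_{m_n}$ we have $\Lambda_{m_n}-x_0\subseteq\Lambda_n$; hence any component of the graph induced on $(\Lambda_{m_n}-x_0)\times[1,W_n)$ is contained in $\CG_n[1,W_n)$. Consequently
\[
\Prob^\sss{0}\big(|\CC_n(0)[1,W_n)|\ge\varepsilon m_n\big)\ge\Prob^\sss{x_0}\big(x_0\in\CC^\star,G_n\big)\ge\varepsilon/2,
\]
and taking $\rho:=\min(\varepsilon/2,\varepsilon/2^d)$ and using $W_n\le(\log n)^{\eta'}$ yields the claim.

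For supercritical LRP on $\Z^d$ all marks equal $1$, so $\CC_n(0)[1,(\log n)^{\eta'})=\CC_n(0)$, and $\Prob^\sss{0}=\Prob$ by translation invariance of $\Z^d$. The same averaging argument goes through with the integral over $\Lambda_{m_n}$ replaced by a sum over lattice points in $\Lambda_{m_n}\cap\Z^d$ and with \cref{cor:initial-upper} replacing \cref{cor:polylog-thresholds}: $\E[|\CC_{m_n}^{(1)}|\ind{G_n}]=\sum_{x\in\Lambda_{m_n}\cap\Z^d}\Prob(x\in\CC_{m_n}^{(1)},G_n)$ forces existence of a lattice point $x_0$ with probability at least $\varepsilon/2$, which can be translated to the origin. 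The only genuine subtlety is the boundary effect: $\Prob^\sss{x}(x\in\CC^\star)$ is \emph{not} translation invariant in $x$ because the box $\Lambda_n$ is not, and a naïve appeal to invariance would fail at $x$ near $\partial\Lambda_n$. Working inside the sub-box $\Lambda_{m_n}$ with $m_n=n/2^d$ is precisely what guarantees $\Lambda_{m_n}-x_0\subseteq\Lambda_n$ for every interior point $x_0$, so the shifted configuration lives inside $\Lambda_n$ and the inclusion of components from $\Lambda_{m_n}-x_0$ into $\CG_n$ makes the translation step rigorous.
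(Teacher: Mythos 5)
Your argument is correct and follows essentially the same route as the paper's: both combine \cref{cor:polylog-thresholds} with a Mecke (mass-transport) step and a $2^d$-volume box-inclusion to handle boundary effects. The only cosmetic difference is that you average over a shrunken sub-box $\Lambda_{n/2^d}$ to extract a single good $x_0$ and then translate it to the origin, whereas the paper enlarges to a box $\Lambda_{2^dn}(u)$ centered at $u$ so the Palm probability becomes translation-invariant and the Mecke integral is constant — two ways of saying the same thing.
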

\begin{proof}
    We prove the bound \eqref{eq:origin-giant} for KSRGs on a Poisson point process and leave it to the reader to adapt it to long-range percolation.
    We rely on Corollary \ref{cor:polylog-thresholds}. Fix $(\zeta,\gamma)\in\{(\zeta_\mathrm{hl},\gamma_\mathrm{hl}), (\zeta_\mathrm{ll},\gamma_\mathrm{ll})\}$ such that $\zeta>0$.
        Let $A,\eps>0$  be so that~\eqref{eq:giant-mark-cor} holds with $C=2$. 
For any $u\in \CV_{n}$, let $\widetilde\CC_{n}(u)$ be the component of $u$ in the induced graph in $\Lambda_n \times[1, (A\log n)^{\gamma/\zeta})$; and let $\widetilde\CC_{2^dn,u}(u)$ be the connected component of $u$ in $\Lambda_{2^dn}(u)\times  [1, (A\log n)^{\gamma/\zeta})$. Since $\Lambda_{n}\subseteq \Lambda_{2^dn}(u)$, clearly  $\widetilde\CC_{n}(u) \subseteq \widetilde\CC_{2^dn,u}(u)$.
 If the giant of $\Lambda_n \times[1, (A\log n)^{\gamma/\zeta})$ has size at least $\eps n$, then there are at least $\eps n$ vertices in a component of size at least $\eps n$ in  $\Lambda_n \times[1, (A\log n)^{\gamma/\zeta})$. This happens with probability at least $1-n^{-2}$ by \eqref{eq:giant-mark-cor}. So,
\begin{equation}\label{eq:before-mecke}
(1-n^{-2})\eps n\le \E\bigg[\sum_{u\in\CV_{n}}\ind{|\widetilde\CC_{n}(u)|\ge \varepsilon n}\bigg] \le \E\bigg[\sum_{u\in\CV_{n}}\ind{|\widetilde\CC_{2^dn,u}(u)|\ge \varepsilon n}\bigg].
\end{equation}
We now show that 
\begin{equation}
\E\bigg[\sum_{u\in\CV_{n}}\ind{\widetilde\CC_{2^dn,u}(u)|\ge \varepsilon n}\bigg] = n\Prob^\sss{0}\big(\CC_{2^dn}(0)[1, (A\log n)^{\gamma/\zeta})|\ge \varepsilon n\big).\label{eq:mecke-to-show}
\end{equation}
For this, we use an alternative KSRG construction. Consider the collection $\bm{U}=(U_{ij})_{j> i\ge 1}$ of independent $\mathrm{Unif}[0,1]$ random variables, independent of the PPP $\CV_n$. We can use $\bm{U}$ to encode the presence of edges: given the PPP $\CV$, order the points with respect to their distance to the origin, and include an edge between the $i$th and $j$th vertex $u_i$ and $u_j$ if $U_{ij}\le \mathrm{p}(u_i,u_j)$. The function $\ind{|\widetilde\CC_{2^dn,u}|\ge \varepsilon n}$ is determined by the realization of the PPP $\CV_n$ and $\bm{U}$ (i.e., it is measurable w.r.t.\ the sigma-algebras generated by these processes/collections). By Mecke's formula and Fubini's theorem, 
\begin{align*} 
 \E\bigg[\sum_{u\in\CV_{n}}\ind{\widetilde\CC_{2^dn, u}(u)|\ge \varepsilon n}\bigg] &= \E\bigg[\E\bigg[\sum_{u\in\CV_{n}}\ind{\widetilde\CC_{2^dn, u}(u)|\ge \varepsilon n}\mid \bm{U}\bigg]\bigg] \\&= \E\bigg[\int_{x\in\Lambda_{n}}\Prob^\sss{x}\big(\widetilde\CC_{2^dn, u}(u)|\ge \varepsilon n\mid \bm{U}, x_u=x\big)\rd x\bigg] \\
 &=\int_{x\in\Lambda_{n}}\Prob^\sss{x}\big(\widetilde\CC_{2^dn, u}(u)|\ge \varepsilon n\mid x_u=x\big)\rd x.
\end{align*}
The box $\Lambda_{2^dn}(u)$ is centered at $u$. So, by translation invariance, the integrand is constant and equal to $\Prob^\sss{0}\big(\widetilde\CC_{2^dn}(0)|\ge \varepsilon n\big)$. Since $\widetilde\CC_{2^dn}(0)=\CC_{2^dn}(0)[1, (A\log n)^{\gamma/\zeta})$ by definition above~\eqref{eq:before-mecke}, this proves~\eqref{eq:mecke-to-show}. Combined with~\eqref{eq:before-mecke}, this proves~\eqref{eq:origin-giant}  for any $\eta'>\gamma/\zeta$,  $\rho=2^{-d}\eps$, and any $n$ sufficiently large. The statement is trivial for smaller $n$ by decreasing $\rho$, finishing the proof.
\end{proof}

We are ready to prove \cref{thm:second-largest} on the second-largest component.
\begin{proof}[Proof of \cref{thm:second-largest}] We give the proof for KSRGs on PPPs only: the proof for long-range percolation $\Z^d$ is identical to the proof of the $\zeta_\mathrm{ll}$-regime for KSRGs.  Assume $\zeta_\mathrm{hl}$ or $\zeta_\mathrm{ll}$ is strictly positive and maximal in $\CZ=\{\zeta_{\mathrm{short}}, \zeta_{\mathrm{ll}}, \zeta_{\mathrm{hl}}, \zeta_{\mathrm{hh}}\}$. The lower bound holds by Proposition \ref{prop:prer-lower}, since its prerequisite~\eqref{eq:lower-sub-cond} is satisfied by \cref{claim:lower-bound-prer}. 
The upper bound follows from Proposition~\ref{prop:second-upper}, substituting $k=k_n=(A\log n)^{1/\max(\zeta_\mathrm{hl}, \zeta_\mathrm{ll})}$ for a  large constant $A=A(\delta)>0$.  
\qedhere
\end{proof}

\subsection{Cluster-size decay and law of large numbers}\label{sec:cluster-size-lln}
We proceed to the proof of \cref{thm:subexponential-decay} on the cluster-size decay. The upper bound~\eqref{eq:second-upper} is the main prerequisite~\eqref{eq:prer-second} of the following general proposition. We proved this proposition for KSRGs in general in \cite{clusterI}. We recall that $\Prob^\sss{x}$ is the Palm measure of the PPP with a vertex at $x$.
\begin{proposition}[Prerequisites for cluster-size decay {\cite[Proposition~6.1]{clusterI}}]\label{prop:prer-upper}
   Consider a supercritical KSRG on a PPP or on $\Z^d$ satisfying Assumption~\ref{assumption:main}. Assume that there exist $\zeta$, $\eta'$, $c$, $c'$, $M_c>0$, and a function $n_0(k)=O(k^{1+c'})$ such that for all $k$ sufficiently large and whenever $n\in[n_0(k),\infty)$, with $\overline{w}(n, c):=M_c(\log n)^{\eta'}$ it holds for all $x\in\Lambda_n$,
    \begin{align}
        \Prob^\sss{x}\big(|\CC_n^\sss{(2)}|> k\big)&\le n^{c'}\exp\big(-ck^\zeta\big), \label{eq:prer-second}\\
        \Prob^\sss{x}\big(|\CC_n^\sss{(1)}|< n^c\big)&\le n^{-1-c},\label{eq:prer-largest}\\
        \Prob^\sss{x}\big(\exists v\in\CV_n[\overline{w}(n, c),\infty): v\notin\CC_n^\sss{(1)}\big)&\le n^{-c}.\label{eq:prer-mark}
    \end{align}
    Then there exists a constant $A>0$ such that for all $k$ sufficiently large and $n\in[n_0(k),\infty]$, 
    \begin{equation}\label{eq:repeat-CLD} 
    \Prob^\sss{0} \big(|\CC_n(0)|> k, 0\notin\CC_n^\sss{(1)}\big)\le \exp\big(-\tfrac{1}{A}k^\zeta\big),
    \end{equation}
    and 
    \begin{equation} \label{eq:repeat-LLN}
    \frac{|\CC_n^\sss{(1)}|}{n} \overset{\Prob}\longrightarrow \theta(\beta, p, \alpha, \tau, \sigma),\qquad \mbox{as }n\to\infty. 
    \end{equation}
\end{proposition}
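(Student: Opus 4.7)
The plan is to prove the cluster-size decay \eqref{eq:repeat-CLD} first, and then derive the weak law \eqref{eq:repeat-LLN} from it by Mecke's formula plus a second-moment argument. The starting crude bound for \eqref{eq:repeat-CLD}, obtained by combining $|\CC_n(0)|\le |\CC_n^\sss{(2)}|$ on the event $\{0\notin\CC_n^\sss{(1)}\}$ with \eqref{eq:prer-second} and standard Palm-to-stationary comparison for a unit-intensity PPP, reads $\Prob^\sss{0}(|\CC_n(0)|>k,\,0\notin\CC_n^\sss{(1)})\le 2n^{c'}\exp(-ck^\zeta)$. All the work lies in removing the factor $n^{c'}$ uniformly for $n\ge n_0(k)$.

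I would handle this in two regimes, fixing a threshold scale $m=m(k)$ super-polynomial in $k$, namely $m:=\exp(ck^\zeta/(2c'))$. When $n\le m$, the crude bound is already at most $\exp(-(c/2)k^\zeta)$, giving \eqref{eq:repeat-CLD} with $A=2/c$ immediately. When $n>m$, I would reduce to the previous regime by spatial localization inside $\Lambda_m$ around the origin. On $\{0\notin\CC_n^\sss{(1)}\}$, prerequisite \eqref{eq:prer-mark} forces $\CC_n(0)$ to contain only vertices of mark below $\overline{w}(n,c)=M_c(\log n)^{\eta'}$, up to failure probability $n^{-c}$. A union bound over ``escape'' edges of length $\ge m^{1/d}$ from such polylog-mark vertices (using the kernel $\kappa$ and profile $\varphi$ of Assumption~\ref{assumption:main}) then shows that $\CC_n(0)\subseteq \Lambda_m$ with failure probability at most $\exp(-\Omega(k^\zeta))$. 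When $\CC_n(0)\subseteq\Lambda_m$ one has $\CC_n(0)=\CC_m(0)$, and the inclusion $\CC_m^\sss{(1)}\subseteq\CC_n^\sss{(1)}$ (established whp by applying \eqref{eq:prer-largest}--\eqref{eq:prer-mark} at both scales and using $\overline{w}(m,c)\le\overline{w}(n,c)$) transfers $\{0\notin\CC_n^\sss{(1)}\}$ to $\{0\notin\CC_m^\sss{(1)}\}$. Thus the event in $\Lambda_n$ reduces to the same event in $\Lambda_m$, covered by the previous regime.

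For \eqref{eq:repeat-LLN}, Mecke's formula yields $\E[|\CC_n^\sss{(1)}|]/n=\Prob^\sss{0}(0\in\CC_n^\sss{(1)})$. The decomposition $\Prob^\sss{0}(0\in\CC_n^\sss{(1)})=\Prob^\sss{0}(|\CC_n(0)|>k)-\Prob^\sss{0}(|\CC_n(0)|>k,\,0\notin\CC_n^\sss{(1)})$, combined with \eqref{eq:repeat-CLD} and the monotone convergence $\Prob^\sss{0}(|\CC_n(0)|>k)\to\Prob^\sss{0}(|\CC(0)|>k)\to\theta$ on letting first $n\to\infty$ and then $k\to\infty$, gives $\E[|\CC_n^\sss{(1)}|]/n\to\theta$. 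Concentration follows by writing $|\CC_n^\sss{(1)}|=\sum_v\ind{|\CC_n(v)|>k}-\sum_v\ind{|\CC_n(v)|>k,\,v\notin\CC_n^\sss{(1)}}$: the first sum is an additive functional of the PPP whose variance is $o(n^2)$ by the Campbell formula and finite-range truncation, and the second has expectation at most $n\exp(-k^\zeta/A)$ by \eqref{eq:repeat-CLD} and Mecke, hence is negligible after dividing by $n$ and letting $k=k(n)\to\infty$ sufficiently slowly.

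The main obstacle is the spatial-localization step. For $n$ double-exponentially large in $k$ the mark threshold $\overline{w}(n,c)$ is itself polynomial in $k$, which inflates the escape-edge union bound; handling this likely requires iterating the localization in stages (to successively smaller boxes) and carefully tracing, for each of the four possible values $\zeta\in\{\zeta_{\mathrm{short}},\zeta_{\mathrm{ll}},\zeta_{\mathrm{hl}},\zeta_{\mathrm{hh}}\}$ of the downward-boundary exponent in \eqref{eq:zeta-star}, how the parameters $(\tau,\alpha,\sigma,d)$ conspire to preserve the $\exp(-\Omega(k^\zeta))$ decay.
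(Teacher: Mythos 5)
The paper does not contain a proof of this proposition; it is cited verbatim from~\cite[Proposition~6.1]{clusterI}, so there is no in-paper argument to compare against. I will therefore assess your proposal on its own merits.

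Your crude starting bound and the dichotomy at $m:=\exp(ck^\zeta/(2c'))$ are fine: for $n\le m$ the hypothesis~\eqref{eq:prer-second} (which is already stated under the Palm measure, so no Palm-to-stationary conversion is needed) gives~\eqref{eq:repeat-CLD} directly with $A=2/c$. The problems are in the regime $n>m$, and you have correctly identified \emph{where} the difficulty lies but not resolved it.

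\textbf{The localization step does not close.} The mark threshold $\overline w(n,c)=M_c(\log n)^{\eta'}$ grows without bound in $n$ for each fixed $k$. As $n$ ranges over $(m,\infty]$, the maximal mark you must allow in $\CC_n(0)$ after invoking~\eqref{eq:prer-mark} is unbounded, so the expected number of ``escape'' edges of length $\Omega(m^{1/d})$ emanating from $\Lambda_m$ with both endpoints of mark below $\overline w(n,c)$ is not controlled uniformly in $n$; for $n$ large enough the union bound gives nothing. Your proposed iterated localization in stages is the right instinct but requires a concrete scheme (e.g.\ localizing to a dyadic sequence of boxes and summing a geometric series of escape probabilities), and the choice $m=\exp(ck^\zeta/(2c'))$ is probably the wrong scale to aim for: it is much larger than $n_0(k)=O(k^{1+c'})$, so the intermediate prefactor $m^{c'}$ is already as large as what you are trying to remove. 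Localizing to a box of the smallest allowed volume $\Theta(n_0(k))$ would make the prefactor $n_0(k)^{c'}$ merely polynomial in $k$ and hence harmless against $\exp(-ck^\zeta)$; that is the natural target scale.

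\textbf{The inclusion $\CC_m^\sss{(1)}\subseteq\CC_n^\sss{(1)}$ is not implied by~\eqref{eq:prer-largest}--\eqref{eq:prer-mark} at the two scales.} Your sketch needs a vertex in $\Lambda_m$ with mark at least $\overline w(n,c)$, so it belongs to both giants; but for $n$ sufficiently large $\overline w(n,c)>m^{1/(\tau-1)}$ and such a vertex typically does not exist in $\Lambda_m$. (A size-comparison argument via~\eqref{eq:prer-second} at threshold $m^c$ would instead require $n^{c'}\exp(-cm^{c\zeta})$ to be small, which again fails for $n$ enormous.) This step needs a genuinely different mechanism.

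\textbf{The LLN concentration argument is unsubstantiated.} You invoke ``Campbell formula and finite-range truncation'' to claim $\mathrm{Var}\big(\sum_v\ind{|\CC_n(v)|>k}\big)=o(n^2)$, but in a KSRG with long-range edges the event $\{|\CC_n(v)|>k\}$ is \emph{not} a finite-range functional of the point process: inserting or deleting one vertex can merge or split macroscopic clusters and shift the count by $\Theta(n)$. Covariance control therefore does not follow from a naive truncation. Separately, $\E[|\CC_n^\sss{(1)}|]=\int_{\Lambda_n}\Prob^\sss{x}(x\in\CC_n^\sss{(1)})\,dx$ by Mecke, not $n\,\Prob^\sss{0}(0\in\CC_n^\sss{(1)})$, since the Palm probabilities are not translation-invariant in a finite box; this is fixable but is another detail your sketch elides. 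The reference proof in~\cite{clusterI} establishes the LLN via the ``giant is almost local'' machinery of~\cite{hofstad2021giantlocal} (local convergence plus the cluster-size decay bound), which sidesteps a second-moment computation entirely.
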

In our current setting, \eqref{eq:second-upper} in Proposition \ref{prop:second-upper} proves the first prerequisite and Corollary \ref{cor:polylog-thresholds} proves the second, and implies the third prerequisite. We spell out the details. This is similar to \cite[Proposition 5.12]{clusterI}.
\begin{claim}\label{claim:prer-mark}
    Consider a supercritical KSRG on a PPP satisfying Assumption \ref{assumption:main} or consider supercritical long-range percolation on $\Z^d$. Assume $\zeta_\mathrm{type}=\max(\zeta_\mathrm{ll}, \zeta_\mathrm{hl})>0$ for some $\mathrm{type}\in\{\mathrm{ll}, \mathrm{hl}\}$.  For each constant $C>0$, there exists a constant $A>0$ such that all $n$ sufficiently large, 
    \[
    \Prob\big(\exists v\in\CV_n[(A \log n)^{\eta_\mathrm{type}/\zeta_\mathrm{type}},\infty): v\notin\CC_n^\sss{(1)}\big)\le n^{-C}.
    \]
    The statement remains true for the Palm-version $\Prob^\sss{x}$ for any $x\in\R^d$.
    \begin{proof}
        We omit the subscript $\mathrm{type}$ in the following proof for $\mathrm{type}\in\{\mathrm{ll}, \mathrm{hl}\}$.
        Let $A=A(C)>0$ be sufficiently large. Using the upper bound on $|\CC_n^\sss{(2)}|$ in  \cref{prop:second-upper},
        and the lower bound on $|\CC_n^\sss{(1)}|$ from \cref{cor:polylog-thresholds}, we obtain 
    \begin{align}
    \Prob^\sss{x}\big(\{|\CC_n^\sss{(2)}|\le(A \log n)^{1/\zeta}\}\cap\{|\CC_n^\sss{(1)}|\ge (1/A) n\}\big)\ge 1-o(n^{-C}).
    \end{align}
    Hence,  
    \begin{align*}
    \Prob^\sss{x}\big(\exists v\in\CV_n[(A\log n)^{\eta/\zeta},\infty): v\notin\CC_n^\sss{(1)}\big)\le 
    \Prob^\sss{x}\left(\begin{aligned}
    &\{\exists v\in\CV_n[(A\log n)^{\eta/\zeta},\infty): v\notin\CC_n^\sss{(1)}\}\\ 
    &\cap \{|\CC_n^\sss{(2)}|\le(A\log n)^{1/\zeta}\}\\ 
    &\cap\{|\CC_n^\sss{(1)}|\ge \tfrac{1}{A} n\}\end{aligned}\right) + o(n^{-C}).
    \end{align*}
    On the intersection of the three events on the right-hand side, there is no linear-sized component in $\CG_n$  that contains all vertices with mark above $(A\log n)^{\eta/\zeta}$, which is exactly the negation of the event in \eqref{eq:threshold-mark-cor}. This has probability at most $o(n^{-C})$ by \eqref{eq:threshold-mark-cor} in \cref{cor:polylog-thresholds}, finishing the proof.
    \end{proof}
\end{claim}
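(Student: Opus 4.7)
My plan is to derive the statement by combining three high-probability inputs that are already in place from earlier sections, rather than building any new machinery. The target is quantitative: for each fixed $C>0$ one must produce a single constant $A$ so that the exceptional event has probability at most $n^{-C}$. Throughout, abbreviate $\zeta=\zeta_\mathrm{type}$, $\gamma=\gamma_\mathrm{type}$ and $\eta=\eta_\mathrm{type}$.

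I would choose $A$ large in terms of $C$ so that three events simultaneously hold with probability at least $1-n^{-C-1}$ each. First, by Proposition~\ref{prop:second-upper} applied with $k:=(A\log n)^{1/\zeta}$, the event $\{|\CC_n^\sss{(2)}|\le (A\log n)^{1/\zeta}\}$ holds, since $(n/k)\exp(-k^\zeta/A')\le n^{-C-1}$ for $A$ large enough. Second, by \eqref{eq:giant-mark-cor} in Corollary~\ref{cor:polylog-thresholds}, the event $\{|\CC_n^\sss{(1)}[1,(A\log n)^{\gamma/\zeta})|\ge \varepsilon n\}$ holds. Third, by \eqref{eq:threshold-mark-cor} of the same corollary, with the same probability there exists \emph{some} connected component of $\CG_n$ of size at least $\varepsilon n$ that contains every vertex of $\CV_n$ with mark at least $(A\log n)^{\eta/\zeta}$.

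On the intersection of these three events, uniqueness of the linear-sized component is automatic: any connected component of size at least $\varepsilon n$ vastly exceeds the polylogarithmic upper bound on $|\CC_n^\sss{(2)}|$, so it must coincide with $\CC_n^\sss{(1)}$. In particular, the linear-sized component absorbing every vertex of mark $\ge (A\log n)^{\eta/\zeta}$ is exactly the giant, and the event in the claim fails. A union bound over the three inputs above, after adjusting $A$ to absorb the factor $3$, delivers the desired $n^{-C}$ bound. The Palm statement follows identically, because Proposition~\ref{prop:second-upper} and Corollary~\ref{cor:polylog-thresholds} are both stated under $\Prob^\sss{x}$.

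I do not anticipate a significant obstacle: the ingredients are essentially packaged for this application, and the only real work is calibrating $A$ so that each of the three estimates contributes polynomial decay with the right exponent $C+1$. One point to double-check is the relative ordering of the mark thresholds $(A\log n)^{\gamma/\zeta}$ appearing in \eqref{eq:giant-mark-cor} and $(A\log n)^{\eta/\zeta}$ in \eqref{eq:threshold-mark-cor}; since $\eta>\gamma$ in the regimes in question (noted right before Lemma~\ref{lemma:prerequisite-verif}), no mismatch arises, as the two linear-sized components are nested and both can be identified with $\CC_n^\sss{(1)}$ on the good event.
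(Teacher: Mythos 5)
Your proposal is correct and follows essentially the same route as the paper: intersect the bad event with the two quantitative good events from Proposition~\ref{prop:second-upper} and Corollary~\ref{cor:polylog-thresholds}, and observe that on the good events any linear-sized component must be $\CC_n^{\sss{(1)}}$, so \eqref{eq:threshold-mark-cor} forces all high-mark vertices into $\CC_n^{\sss{(1)}}$. The only cosmetic difference is that you invoke \eqref{eq:giant-mark-cor} separately where the paper folds the lower bound on $|\CC_n^{\sss{(1)}}|$ into the same corollary citation; this extra event is harmless (indeed slightly redundant, since the linear component in \eqref{eq:threshold-mark-cor} combined with the polylogarithmic bound on $|\CC_n^{\sss{(2)}}|$ already identifies it with $\CC_n^{\sss{(1)}}$).
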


Propositions \ref{prop:prer-lower} and \ref{prop:prer-upper}  give the respective lower and upper bounds for \cref{thm:subexponential-decay}.
\begin{proof}[Proof of \cref{thm:subexponential-decay}] We prove \cref{thm:subexponential-decay} when $\max(\zeta_\mathrm{hl}, \zeta_\mathrm{ll})>\max(\zeta_\mathrm{hh}, 0)$.     The lower bound follows from \eqref{eq:lower-sub-iso-bound} in \cref{prop:prer-lower}, since the prerequisite \eqref{eq:lower-sub-cond} is satisfied by \cref{claim:lower-bound-prer}. For the upper bound we verify the three prerequisites needed in Proposition~\ref{prop:prer-upper}. Prerequisite~\eqref{eq:prer-second} about the second-largest component is proven in Proposition~\ref{prop:second-upper}; prerequisite~\eqref{eq:prer-largest} requiring that the largest component is at least polynomial in size follows from \cref{cor:polylog-thresholds}; prerequisite~\eqref{eq:prer-mark} is proven in \cref{claim:prer-mark}. The upper bound in \cref{thm:subexponential-decay} then follows from \eqref{eq:repeat-CLD}. 
\end{proof}

 \subsection{Lower tail of Large Deviations}\label{sec:lower-tail}
 We are ready to combine everything and give the proof of the lower tail of large deviations for the giant in~\eqref{eq:thm-ltld}.
We cite the lower bound from \cite{clusterI}. We write $\CZ=\{\zeta_\mathrm{hh},\zeta_\mathrm{hl}, \zeta_\mathrm{ll}, \zeta_\mathrm{short}\}$, with maximum value $\zeta_{\star}$, and $\mathfrak{m}_{\CZ}$ the multiplicity of the maximum, see \eqref{eq:multiplicity}.

\begin{theorem}[Lower bound for the lower tail {\cite[Theorem 2.7]{clusterI}}] \label{thm:large-dev-lower}Consider a supercritical KSRG on a PPP satisfying Assumption \ref{assumption:main} or consider supercritical long-range percolation on $\Z^d$. There exists a constant $A>0$  such that for all $\rho>0$, and $n$ sufficiently large, 
    \[ 
    \Prob\big(|\CC_n^\sss{(1)}|<\rho n\big)\ge \exp\big(-\tfrac{1}{A\rho}n^{\zeta_{\star}}(\log n)^{\mathfrak{m}_\CZ-1}\big).
    \]
\end{theorem}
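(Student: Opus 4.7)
The plan is to construct an explicit event on which $|\CC_n^{\sss{(1)}}| < \rho n$ and to bound its probability from below by exhibiting an isolation of a partition of $\Lambda_n$. Fix $K := \lceil 2/\rho \rceil$ and partition $\Lambda_n$ into $K$ disjoint subboxes $Q_1,\ldots,Q_K$, each of volume at most $\rho n/2$. Define the isolation event $\CA_{\mathrm{iso}} := \{\text{no edge of } \CG_n \text{ connects vertices in two distinct } Q_j\}$. On $\CA_{\mathrm{iso}}$, every connected component of $\CG_n$ is contained in a single $Q_j$, so $|\CC_n^{\sss{(1)}}| \le \max_j |\CV_n \cap Q_j|$. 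Since $\E[|\CV_n\cap Q_j|]\le \rho n/2$, standard Poisson-tail bounds (for PPP) or a deterministic estimate (for $\Z^d$) give the event $\CA_{\mathrm{count}} := \{\max_j |\CV_n \cap Q_j| \le \rho n\}$ with probability at least $1/2$ for $n$ large. Thus $\Prob(|\CC_n^{\sss{(1)}}| < \rho n) \ge \Prob(\CA_{\mathrm{iso}} \cap \CA_{\mathrm{count}})$.

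To bound $\Prob(\CA_{\mathrm{iso}} \cap \CA_{\mathrm{count}})$ from below, first restrict to the event $\CA_{\mathrm{trunc}}$ that no vertex of $\CV_n$ has mark exceeding $w_n^{\sss{\max}} := n^{1/(\tau-1)}$ (trivial if $\tau=\infty$); this event has probability bounded away from zero uniformly in $n$ by the Poisson count of high-mark points. Conditionally on $\CV_n$ satisfying $\CA_{\mathrm{trunc}} \cap \CA_{\mathrm{count}}$, edges are conditionally independent, and every cross-box connection probability $\mathrm{p}(u,v)$ is bounded away from $1$ (by inter-box distance lower bounds and mark truncation). Hence
\begin{equation*}
\Prob(\CA_{\mathrm{iso}} \mid \CV_n)\ \ge\ \exp\Big(-C \sum_{j\neq k}\sum_{\substack{u\in Q_j,\, v\in Q_k}} \mathrm{p}(u,v)\Big).
\end{equation*}
Taking expectation over $\CV_n$, the double sum is, up to a constant, the expected number of inter-subbox edges in the truncated model, which by translation invariance equals $K$ times the expected downward-edge boundary of a single subbox of volume $\rho n/2$. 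By the back-of-the-envelope calculations in Section~\ref{sec:dominant} and \cref{claim:phases}, this expected boundary is at most $C'(\rho n)^{\zeta_\star}(\log n)^{\mathfrak{m}_\CZ - 1}$, where the polylog factor appears precisely on phase-transition boundaries where the maximum in $\CZ$ is attained by multiple types.

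Combining, using $K(\rho n)^{\zeta_\star} = 2\rho^{\zeta_\star-1}n^{\zeta_\star}\le 2\rho^{-1}n^{\zeta_\star}$ (since $\zeta_\star\in[0,1)$ and $\rho\le 1$),
\begin{equation*}
\Prob\big(|\CC_n^{\sss{(1)}}|<\rho n\big)\ \ge\ \Prob(\CA_{\mathrm{trunc}}\cap\CA_{\mathrm{count}})\cdot \exp\Big(-\tfrac{C''}{\rho}n^{\zeta_\star}(\log n)^{\mathfrak{m}_\CZ-1}\Big),
\end{equation*}
and the polynomial prefactor is absorbed into the exponent by enlarging $A$ for $n$ sufficiently large, yielding the claim.

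The main obstacle is verifying that the expected inter-subbox edge count after mark truncation is genuinely captured by $\zeta_\star$ with the correct $(\log n)^{\mathfrak{m}_\CZ-1}$ correction. This requires separating the sum $\sum_{u,v}\mathrm{p}(u,v)$ into contributions from short edges, low-low long edges, and long edges incident to (moderate) high-mark vertices, matching each to $\zeta_\mathrm{short},\zeta_\mathrm{ll},\zeta_\mathrm{hl},\zeta_\mathrm{hh}$ respectively, and tracking the logarithmic factors produced on the phase-transition boundaries. A secondary delicate point is choosing the truncation $w_n^{\sss{\max}}$ so that (i) omitting vertices above $w_n^{\sss{\max}}$ costs only a constant probability, and (ii) after truncation no cross-box connection probability approaches $1$, so that the inequality $1-x\ge \re^{-Cx}$ is valid uniformly in the sum; both requirements are satisfied by the threshold $n^{1/(\tau-1)}$ under Assumption~\ref{assumption:main}. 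These are precisely the estimates underlying the isolation-of-a-box analysis in~\cite{clusterI}, which this argument adapts to $K$ parallel subboxes.
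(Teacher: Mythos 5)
Your overall strategy — isolate $\lceil 2/\rho\rceil$ subboxes of volume below $\rho n$ from one another and pay only for the missing cross-box edges — is the right one, and it is essentially the isolation argument behind \cite[Theorem 2.7]{clusterI} that the paper invokes without reproving. However, there are two genuine gaps in the execution.

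The main gap is the choice of truncation level $w_n^{\sss{\max}}=n^{1/(\tau-1)}$. After discarding only the marks above $n^{1/(\tau-1)}$ (which indeed costs just a constant), the quantity you bound, $\E\big[\sum_{u,v}\mathrm{p}(u,v)\big]$, is the expected \emph{edge} boundary of the subboxes, i.e.\ $\Theta(n^{2-\delta_{\mathrm{eff}}})$ in the notation of \eqref{eq:deltaeff} — not the downward \emph{vertex} boundary $\Theta(n^{\zeta_\star})$ of \eqref{eq:zeta-star} and \cref{claim:phases}. The paper explicitly warns that these differ: a $\Theta(1)$ number of vertices with mark near $n^{1/(\tau-1)}$ can each have polynomially many edges leaving the box, so they contribute $O(1)$ to the vertex boundary but dominate the edge boundary. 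Concretely, for $d=1$, $\tau=2.1$, $\alpha=2$, $\sigma=1$ one has $\zeta_\star=\zeta_\mathrm{hl}=0.45$, yet a single vertex of mark $\approx n^{1/1.1}$ (present with probability $\Theta(1)$) has expected cross-degree $n\cdot(n^{1/1.1}/n)^{2}=n^{0.818}$, so your bound yields only $\exp(-\Theta(n^{0.818}))$, which does not prove the theorem. The fix is to truncate at $n^{\gamma_{\mathrm{long}}}$ instead: the event that \emph{no} vertex of mark above $n^{\gamma_{\mathrm{long}}}$ is present in $\Lambda_n$ is itself exponentially costly, $\exp(-\Theta(n^{1-\gamma_{\mathrm{long}}(\tau-1)}))=\exp(-\Theta(n^{\zeta_{\mathrm{long}}}))$, but this cost is of the correct order since $\zeta_{\mathrm{long}}\le\zeta_\star$, and only after this truncation does the remaining expected cross-box edge count match $\Theta(n^{\zeta_\star}(\log n)^{\mathfrak{m}_\CZ-1})$. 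Your argument, which insists the truncation cost be $\Theta(1)$, cannot reach the sharp exponent.

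The second, more technical gap is the claim that every cross-box connection probability is bounded away from $1$ "by inter-box distance lower bounds": adjacent subboxes of a partition share a face, so pairs $u\in Q_j$, $v\in Q_k$ can be at distance $\varepsilon$, and when $p=1$ (allowed by Assumption \ref{assumption:main}, e.g.\ the Poisson--Boolean model) such pairs satisfy $\mathrm{p}(u,v)=1$, so $\prod(1-\mathrm{p}(u,v))\ge\exp(-C\sum\mathrm{p}(u,v))$ fails. This is repaired by additionally requiring thin slabs around the internal box boundaries to be empty of vertices (or by separating the subboxes by corridors of constant width), an event of probability $\exp(-\Theta(n^{(d-1)/d}))$ that is absorbed into the final bound because $\zeta_\star\ge\zeta_{\mathrm{short}}=(d-1)/d$. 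With these two repairs — truncation at $n^{\gamma_{\mathrm{long}}}$ with its exponential cost accounted for, and the boundary-slab emptying — your isolation scheme does yield the stated bound.
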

\begin{proof}[Proof of  \cref{thm:large-dev2}]
The lower bound follows from \cref{thm:large-dev-lower}, since the above theorem is valid for all $\rho>0$ (but is far from being sharp when $\rho\ge\theta$). When $\max(\zeta_\mathrm{hl}, \zeta_\mathrm{ll})>0$ we argue as follows for the upper bound: since we verified the three prerequisites needed in Proposition~\ref{prop:prer-upper} already in the proof of \cref{thm:subexponential-decay} above, the second conclusion of Proposition~\ref{prop:prer-upper} gives that 
\[
|\CC_n^\sss{(1)}| \big/n\overset\Prob\longrightarrow \theta.
\]
Hence, we may choose any $\widetilde\rho\in(\rho, \theta)$, and it follows that $\{|\CC_n^\sss{(1)}|\ge\widetilde \rho n\}$  whp.
 All highest-mark vertices are in the giant whp by \cref{claim:prer-mark}. This proves the requirement~\eqref{eq:bootstrap-condition} in Proposition \ref{lemma:bootstrap}, i.e.,  $\Prob(A_{\mathrm{giant}}^{\mathrm{type}}(\widetilde \rho))\to 1$ holds.
Applying \cref{lemma:bootstrap} for  $\varepsilon>0$ such that $\rho=(1-\varepsilon)\widetilde\rho$ proves the upper bound when $\max(\zeta_\mathrm{hl}, \zeta_\mathrm{ll})>0$.

When $\zeta_\mathrm{hh}>0$, $|\CC_n^\sss{(1)}|$ contains all vertices of mark above some polylog of $n$ by \cite[Proposition 5.12]{clusterI}, and an LLN for $|\CC_n^\sss{(1)}|$ also holds, see \cite[Corollary 2.3]{clusterI}. So, the same reasoning as above can be applied also when $\zeta_\mathrm{hh}>0$. Since $\zeta_\mathrm{long}=\max(\zeta_\mathrm{hl}, \zeta_\mathrm{hh}, \zeta_\mathrm{ll})$ by \cref{claim:phases}, this concludes the proof of \cref{thm:large-dev2}.
\end{proof}

\section{Upper tail of large deviations}\label{sec:upper-tail-ldp}
In this section we deal with the upper tail, that is, with the event of the giant being too large, and prove \cref{upper-lrp-nnp} and \cref{thm:large-dev-upper} following the roadmap outlined in Section~\ref{roadmapuppertail}.
We first show a Law of Large Numbers (LLN) for components of size $\ell$ that applies to both homogeneous models, i.e., $\tau=\infty$, and degree-inhomogeneous models, i.e., $\tau<\infty$. Afterwards, we obtain bounds on the convergence rate of this LLN. We prove that the concentration is exponentially fast when $\tau=\infty$, and show that \cref{upper-lrp-nnp} follows. Then we  analyze the effect of the presence of high-mark vertices (when $\tau<\infty$) to prove \cref{thm:large-dev-upper}.

    \subsection{Concentration of size-$\ell$ components}\label{sec:upper-tail}
        We introduce notation for the probability of the component of the origin being of size $\ell$, and for the total number of size-$\ell$ components in the graph induced by vertices with mark at most $\overline w_n$:
\begin{equation}
    \theta_\ell := \Prob^\sss{0}\big(|\CC(0)|=\ell\big), \quad 
    S_{n,\ell}(\overline{w}_n):=\big|\{\text{comp. }\CC\text{ in }\CG_n[1,\overline w_n): |\CC|=\ell\}\big|, 
    \quad S_{n,\ell}:=S_{n,\ell}(\infty).\label{eq:thetak}
\end{equation}
Clearly $\sum_{1\le \ell<\infty}\theta_\ell=\Prob^\sss{0}(|\CC(0)|<\infty)=1-\theta$. 
In models with all marks equal $1$ (e.g.~long-range percolation) it holds $S_{\ell, n}(\overline w_n)=S_{\ell,n}$ for any $\overline w_n>1$. 
\begin{lemma}[LLN for size-$\ell$ components]\label{lemma:size-k-local}
Consider a KSRG on a PPP or on $\Z^d$ satisfying Assumption \ref{assumption:main}.
    Let $\underline w_n=\omega(n^{1/(\tau-1)})$ be a sequence. Then, for every fixed $\ell\in\N$,
    \begin{equation}
    \label{eq:claim-lwl}
    \frac{\ell S_{n,\ell}(\underline w_n)}{n}\overset{\Prob}\longrightarrow \theta_\ell,\qquad\text{as }n\to\infty.
    \end{equation}
\end{lemma}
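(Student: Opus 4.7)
The plan is to prove \eqref{eq:claim-lwl} via a first- and second-moment method, exploiting the Mecke formula for marked PPPs (as in the derivation leading to \eqref{eq:mecke-to-show}) together with translation invariance and local convergence to the infinite model $\CG$ on $\R^d$. For the $\Z^d$ case, Mecke is replaced by a direct sum over lattice points and the argument is otherwise identical.

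For the first moment, writing $\ell S_{n,\ell}(\underline w_n)=\sum_{v\in\CV_n[1,\underline w_n)}\ind{|\CC_n(v)[1,\underline w_n)|=\ell}$, Mecke's formula gives
\begin{equation*}
  \E\big[\ell S_{n,\ell}(\underline w_n)\big]=\int_{\Lambda_n}\int_1^{\underline w_n}\Prob^{(x,w)}\big(|\CC_n(x)[1,\underline w_n)|=\ell\big)\,F_W(\mathrm d w)\,\mathrm d x,
\end{equation*}
where $\Prob^{(x,w)}$ denotes the Palm law with a vertex at location $x$ and mark $w$. By translation invariance, the integrand equals $\Prob^{(0,w)}(|\CC_{\Lambda_n(x)}(0)[1,\underline w_n)|=\ell)$, with $\Lambda_n(x):=\Lambda_n-x$. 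For any $x$ at distance at least $r_n$ (slowly $\to\infty$, with $r_n=o(n^{1/d})$) from $\partial\Lambda_n$, the box $\Lambda_n(x)$ covers an arbitrarily large spatial ball around $0$, and since $\underline w_n=\omega(n^{1/(\tau-1)})\to\infty$, on the event $\{|\CC(0)|=\ell\}$ the finitely many vertices of $\CC(0)$ are eventually all contained in $\Lambda_n(x)\times[1,\underline w_n)$. Dominated convergence yields the pointwise limit $\Prob^{(0,w)}(|\CC(0)|=\ell)$, and integrating against $F_W$ and $x$ produces $\theta_\ell \cdot \mathrm{Vol}(\Lambda_n)$. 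The $O(n^{(d-1)/d}r_n)=o(n)$ vertices near $\partial\Lambda_n$ contribute negligibly, so $\E[\ell S_{n,\ell}(\underline w_n)/n]\to \theta_\ell$.

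For concentration I bound the variance using the two-point Mecke formula. The second moment of $\ell S_{n,\ell}(\underline w_n)$ decomposes into the diagonal (equal to the first moment, hence $O(n)$) and a double integral over ordered vertex pairs of the probability that each belongs to a size-$\ell$ component of $\CG_n[1,\underline w_n)$. I split this integral according to whether the two components (i) coincide, (ii) are vertex-disjoint and graph-disconnected, or (iii) are vertex-disjoint but joined by at least one edge. Case (i) contributes $O(n)$ by reduction to the first moment; case (ii) factorizes to $(\E[\ell S_{n,\ell}(\underline w_n)])^2$ up to a lower-order correction; case (iii) is absorbed into the expected number of size-$\ell'$ components with $\ell'\le 2\ell$, again $O(n)$ by the first-moment estimate. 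Hence $\mathrm{Var}(\ell S_{n,\ell}(\underline w_n)/n)=O(1/n)$, and Chebyshev's inequality yields convergence in probability.

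The main obstacle will be case (iii): because of long edges, two spatially distant size-$\ell$ clusters can be merged by a single long edge, so spatial separation alone does not decouple events. I plan to handle this by inserting the candidate connecting edge deterministically, reducing case (iii) to the first-moment estimate applied to size-$(\ell+\ell')$ components in the modified graph, then summing over candidate edges using that $\sum_{v}\mathrm{p}(u,v)$ is bounded per vertex (a consequence of local finiteness, i.e., $\alpha>1$ and $\tau>2$). The truncation $\underline w_n=\omega(n^{1/(\tau-1)})$ is precisely what ensures that the mark-restricted model couples to the full infinite model in any fixed bounded neighbourhood with probability tending to one, since the expected number of vertices in $\Lambda_n$ with marks above $\underline w_n$ vanishes.
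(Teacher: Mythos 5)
Your proof takes a genuinely different route from the paper's. The paper does not compute any moments at all: after reducing $S_{n,\ell}(\underline w_n)$ to $S_{n,\ell}(\infty)$ (exactly as you do, by noting that $\underline w_n=\omega(n^{1/(\tau-1)})$ forces $\CV_n[\underline w_n,\infty)=\emptyset$ whp), it observes that $h(G,o):=\ind{|\CC(o)|=\ell}$ is a bounded continuous function on the space of rooted graphs (it is locally determined up to graph distance $\ell+1$), and then directly invokes local convergence \emph{in probability} of $(\CG_n, o_n)$ to the Palm version of $\CG_\infty$, a theorem of van der Hofstad, van der Hoorn, and Maitra. This yields $\tfrac{1}{|\CV_n|}\sum_{v\in\CV_n}h(\CG_n,v)\to\theta_\ell$ in probability, and the result follows since $|\CV_n|/n\to 1$. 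The second-moment work you propose is exactly what is packaged into that citation; your argument, if carried through, essentially reproves local convergence in probability for this particular functional.

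Within your own sketch there is one mislabeled step worth flagging. Your case~(iii) --- ``vertex-disjoint but joined by at least one edge'' --- is vacuous: two distinct connected components of $\CG_n[1,\underline w_n)$ are by definition never joined by an edge. The genuine difficulty you correctly anticipate (that long edges prevent spatial decoupling) does not arise as a separate case; it sits entirely inside your case~(ii). Concretely, when the two clusters are vertex-disjoint, the event $\{|\CC_n(u)|=\ell\}\cap\{|\CC_n(v)|=\ell\}$ forces ``no edge between $\CC_n(u)$ and $\CC_n(v)$'' once, whereas $\Prob(|\CC_n(u)|=\ell)\Prob(|\CC_n(v)|=\ell)$ forces it twice. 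The ratio to be controlled is therefore
\[
\prod_{a\in \CC_n(u),\, b\in \CC_n(v)}\big(1-\mathrm{p}(a,b)\big)^{-1}
= 1 + O\Big(\sum_{a,b}\mathrm{p}(a,b)\Big),
\]
and summing this correction over the Mecke integral is where the boundedness of $\sum_v\mathrm{p}(u,v)$ actually gets used --- not in a phantom case~(iii). Your final claim $\mathrm{Var}(\ell S_{n,\ell}(\underline w_n)/n)=O(1/n)$ is also more than you need and probably more than the sketch delivers; $o(1)$ suffices for Chebyshev. With that relabeling and a careful treatment of the cross-term, your argument closes, but it is substantially more work than the paper's one-line reduction to local convergence.
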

\begin{proof}
We give the proof for KSRGs on a PPP with $\tau<\infty$, i.e., non-constant marks. The proof is analogous when $\tau=\infty$ or when vertex locations are given by $\Z^d$.
By the intensity of $\CV_n$ in~\eqref{eq:poisson-intensity}, $\Lambda_n$ contains no vertices of mark at least $\underline w_n$ whp. If such vertices are absent, then $S_{n,\ell}(\underline w_n)=S_{n,\ell}(\infty)$. Therefore, for any $\psi>0$,
    \begin{align}
    \Prob\big(|\ell S_{n,\ell}(\underline w_n)/n - \theta_\ell|>\psi\big)&\le \Prob\big(\{|\ell S_{n,\ell}(\infty)/n - \theta_\ell|>\psi\}\cap\CV_n[\underline w_n,\infty)=\emptyset\big) \nonumber\\&\hspace{15pt}+ \Prob\big(\CV_n[\underline w_n,\infty)\neq\emptyset\big)\nonumber\\
    &\le \Prob\big(|\ell S_{n,\ell}(\infty)/n - \theta_\ell|>\psi\big) +o(1).\label{eq:lwl-comp-size}
    \end{align}
    We use local convergence to show that the probability on the right-hand side vanishes. We refer to~\cite{Hofbook2} for an elaborate discussion on local convergence and give a short definition. 

    A rooted graph is a couple $(G, o)$ of a graph $G$ and some, possibly random, distinguished vertex $o$ of $G$, which we call the root of $G$. A finite rooted graph $(G, o)$ is said to be uniformly rooted, if $o$ is chosen uniformly at random among the vertices of $G$. Let $\CG_\star$ be the space of all rooted locally finite graphs. Let $B_G(v, r)$ denote the induced subgraph of $G$ on all vertices that are at graph distance at most $r$ from a vertex $v$.  For $(G_1, o_1), (G_2, o_2)\in \CG_\star$, we define $R(G_1, G_2)$ to be the largest value $r\in\N$ such that $B_{G_1}(o_1, r)$ is isomorphic to $B_{G_2}(o_2, r)$: there exists a bijection $\phi: V(B_{G_1}(o_1, r))\mapsto V(B_{G_2}(o_2, r)$ such that $\phi(o_1)=o_2$ and $\{u,v\}$ is an edge in $B_{G_1}(o_1, r)$ if and only if $\{\phi(u),\phi(v)\}$ is an edge in $B_{G_2}(o_1, r)$. We set $d((G_1, o_1), (G_2, o_2)):=1/(1+R(G_1, o_1), (G_2,o_2)))$ for the distance between two rooted graphs.
    A sequence of uniformly rooted graphs $(G_n, o_n)_{n \ge 1}$ converges locally in probability towards $(G_{\infty}, o)$ having law $\mu$, if for every bounded and continuous function $h:\CG_\star\mapsto\R$,
    \begin{equation}\label{eq:local-l1}
    \E\big[h(G_n, o_n) \mid G_n\big]\overset\Prob\longrightarrow\E_\mu[h(G_\infty, o)],\qquad \text{as }n\to\infty,
    \end{equation}   
    where the expectation on the left-hand side is only with respect to the uniform root. (The limit object $(G_\infty, o)$ may be random and infinite, we here omit this discussion).
    
Considering KSRGs in Definition \ref{def:ksrg}, the sequence $(\CG_n)_{n \ge 1}$ of finite KSRGs converges locally in probability to the infinite graph $\CG_\infty$ conditioned to contain a vertex at the origin, the root, see~\cite{maitra2021locallim}. We let $h(G_n,o_n)=\ind{|\CC(o_n)|=\ell}$ be the indicator that the component of $o_n$ has size $\ell$.
Then, $h$ is bounded, and continuous on the Polish space $(\CG_\star, d)$: it only depends on the induced subgraph up to distance $\ell+1$ from the root, so $h(G_1, o_1)=h(G_2, o_2)$ for any two rooted graphs $(G_1, o_1), (G_2,o_2)$ within distance at most $1/(\ell+2)$. So \eqref{eq:local-l1} applies and the right-hand side there equals $\Prob^\sss{0}(|\CC(0)|=\ell)=\theta_\ell$. Instead of explicitly writing out the left-hand side, we first notice that $\sum_{o_n\in\CV_n}h(G_n,o_n)=\ell S_{n,\ell}(\infty)$. 
Therefore, 
\begin{equation}\label{eq:local-l2}
\frac{\ell S_{n,\ell}(\infty)}{n}=\frac{|\CV_n|}{n}\frac{1}{|\CV_n|}\sum_{o_n\in\CV_n}h(G_n, o_n)=\frac{|\CV_n|}{n}\cdot \E[h(G_n, o_n)\mid G_n] \overset\Prob\longrightarrow 1\cdot \theta_\ell,
\end{equation}
where we used that the ratio $|\CV_n|/n$ tends to one in probability. Hence, the probability on the right-hand side in~\eqref{eq:lwl-comp-size} tends to zero as $n$ tends to infinity. This finishes the proof for PPP vertex sets. When the locations are given by $\Z^d$, or $\tau=\infty$, the result follows analogously.
\end{proof}

\subsection{Homogeneous models}
We prove a bound on the convergence rate in \cref{lemma:size-k-local} using a boxing argument. 
        
\begin{lemma}[Concentration for size-$\ell$ components]\label{lemma:size-k-nnp-lrp}
    Consider long-range percolation with $\alpha>1$ or Bernoulli bond percolation on $\Z^d$. For all fixed $\ell\in\N$ and $\psi>0$, there exists a constant $\varepsilon_{\ell,\psi}>0$ such that for all $n\ge 1$,
    \begin{equation}\label{eq:local-l-rate}
    \Prob\Big( \big| S_{n,\ell}(\infty)/(n\theta_\ell/\ell)-1 \big| \le \psi \Big)\ge 1- \exp\big(-\varepsilon_{\ell,\psi} n\big).
    \end{equation}
\end{lemma}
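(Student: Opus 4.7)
The plan is to tessellate $\Lambda_n$ into a large but fixed number of subboxes, apply \cref{lemma:size-k-local} together with Hoeffding's inequality across the (independent) subboxes, and control the error contributed by components that straddle a subbox boundary.

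Fix a large constant $k=k(\ell,\psi)$ to be chosen, and partition $\Lambda_n$ into $m := n/k$ disjoint subboxes $\CQ_1,\dots,\CQ_m$ of volume $k$. For both LRP and NNP the vertex set is $\Z^d$ and edges between distinct vertex pairs are independent Bernoullis, so the variables $S_{k,\ell}^{(j)}$, defined as the number of size-$\ell$ components in the induced subgraph of $\CG_n$ on $\CQ_j$, are iid and bounded above by $k/\ell$. Let $B$ be the number of edges of $\CG_n$ whose endpoints lie in two different subboxes. A size-$\ell$ component of $\CG_n$ contained in some $\CQ_j$ is simultaneously a size-$\ell$ component of the induced subgraph on $\CQ_j$, and conversely; any other size-$\ell$ component of $\CG_n$ uses a boundary-crossing edge, and any size-$\ell$ component of the induced subgraph on $\CQ_j$ that fails to be a component of $\CG_n$ has an incident boundary-crossing edge. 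A direct counting argument therefore yields
\[
\bigl|S_{n,\ell}(\infty) - \textstyle\sum_{j=1}^{m} S_{k,\ell}^{(j)}\bigr| \le 2B.
\]

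The sum $\sum_j S_{k,\ell}^{(j)}$ is a sum of iid $[0,k/\ell]$-valued random variables, so Hoeffding's inequality gives Gaussian tails of order $\exp(-c_1 n/k)$ around $m\,\E[S_{k,\ell}^{(1)}]$. Next, \cref{lemma:size-k-local} applied with $n$ replaced by $k$ (and $\underline w_k=\infty$, since all marks are $1$) yields $\ell S_{k,\ell}(\infty)/k \overset{\Prob}\to \theta_\ell$ as $k\to\infty$, and bounded convergence upgrades this to $\ell\,\E[S_{k,\ell}^{(1)}]/k \to \theta_\ell$; hence $k$ can be taken so large that $|\ell\, m\,\E[S_{k,\ell}^{(1)}]/(n\theta_\ell) - 1| \le \psi/4$. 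For $B$, the key observation is that $B$ is itself a sum of independent Bernoullis (one per vertex pair in distinct subboxes), and a standard boundary-sum computation yields
\[
\E[B] \;=\; O\!\bigl(n\,(k^{-1/d}\vee k^{1-\alpha})\bigr) \;=\; o(n)
\]
as $k\to\infty$. Bernstein's inequality then gives $\Prob(B > \psi\theta_\ell n/(8\ell)) \le \exp(-c_2 n)$ once $k$ is large enough. Combining the three bounds yields~\eqref{eq:local-l-rate}.

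The main obstacle is controlling $B$ in the LRP regime $\alpha\in(1,1+1/d]$, where boundary edges are \emph{not} dominated by nearest-neighbour ones but by ``mesoscopic'' ones. A dyadic decomposition of $\sum_{x\in \CQ,\,y\notin \CQ}\|x-y\|^{-\alpha d}$ according to the distance of $x$ to the boundary of $\CQ$ gives a per-box expectation of order $k^{2-\alpha}$ in that regime, hence $\E[B]=\Theta(nk^{1-\alpha})$, which still tends to $o(n)$ as $k\to\infty$ precisely because $\alpha>1$ (the same assumption that makes the graph locally finite). For $\alpha>1+1/d$ and for NNP only the nearest-neighbour contribution of order $nk^{-1/d}$ appears, and the argument is strictly easier.
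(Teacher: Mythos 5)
Your proof is correct and follows essentially the same strategy as the paper's: tessellate into fixed-volume subboxes, invoke the per-box law of large numbers from \cref{lemma:size-k-local}, get exponential concentration across the $\Theta(n/k)$ independent subboxes, and control the discrepancy coming from boundary-crossing structures. The technical variants you choose — Hoeffding directly on $\sum_j S_{k,\ell}^{(j)}$ rather than a Chernoff bound on the number of ``bad'' boxes, an edge count $B$ in place of the crossing-vertex count $\CV_n^{\mathrm{cross}}(k)$, and Bernstein in place of a moment-generating-function estimate for the boundary term — are interchangeable here and do not change the structure of the argument.
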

\begin{proof}
Let $k$ be a large constant, and w.l.o.g.\ assume 
that $\Lambda_n$ can be partitioned into $(n/k)$ volume-$k$ subboxes. Let $S_{k,\ell}^\sss{(j)}$ denote the number of size-$\ell$ components in the induced subgraph inside the $j$th box. Then, by \cref{lemma:size-k-local}, for a small constant $\psi_1>0$, independently across boxes it holds that
\[
\Prob(\CA_\ell(j, k)):=\Prob\Big(|S_{k,\ell}^\sss{(j)}/ (k\theta_\ell/\ell)-1|\le \psi_1 \Big) \ge 1-\psi_1/2.
\]
The number of boxes for which this event holds dominates a binomial  random variable with parameters $n/k $ and $1-\psi_1/2$. So if we set
\[
\CA_\mathrm{sub}(k):=\bigg\{\Big|\sum_{j\in[n/k]}\ind{\CA_\ell(j,k)} -n/k\Big|\le  \psi_1 n/k\bigg\},
\]
then by a Chernoff bound, there exists a constant $c=c(\psi_1)>0$ such that 
\begin{equation}
\Prob\big(\neg\CA_\mathrm{sub}(k)\big)\le \exp\big(-c (n/k)\big).\label{eq:sub-lrp}
\end{equation}
On $\CA_\mathrm{sub}(k)$, since $(n/k) k \theta_\ell/\ell=n \theta_\ell/\ell$, the number of size-$\ell$ components of in the union of the induced subgraphs satisfies
\begin{equation}\label{eq:snell-bound-1}
(1-\psi_1)^2n\theta_\ell/\ell\le \sum_{j\in[n/k]}S_{k,\ell}^\sss{(j)}   \le (1+\psi_1)^2 n\theta_\ell/\ell \qquad \mbox{on} \  \CA_\mathrm{sub}(k).
\end{equation}
We now look at edges across subboxes. Let $\CV_n^\mathrm{cross}(k)$ denote the set of vertices having an edge to a vertex in a different subbox.  
Each vertex in $\CV_n^\mathrm{cross}(k)$ changes the number of size-$\ell$ components by at most $1$: indeed, this number increases by one when its crossing edge(s) form a new component of size-$\ell$, and it decreases by one if without crossing edges its size was $\ell$.
Therefore, 
\begin{equation}\label{eq:snell-bound-2}
\Big|S_{n,\ell}(\infty)-\sum_{j\in[n/k]}S_{k,\ell}^\sss{(j)}\Big| \le  |\CV_n^\mathrm{cross}(k)|. 
\end{equation}
Hence, for sufficiently small $\psi_1$ depending on $\psi$ and $\ell$, on the event on $ \{|\CV_n^\mathrm{cross}(k)|<\psi_1n\big\}\cap\CA_\mathrm{sub}(k)$, 
\[
(1-\psi)n\theta_\ell/\ell\le (1-\psi_1)^2n\theta_\ell/\ell -\psi_1n\le S_{n,\ell}(\infty) \le (1+\psi_1)^2n\theta_\ell/\ell +\psi_1n\le (1+\psi)n\theta_\ell/\ell.
\]
Combining this with~\eqref{eq:sub-lrp}, the exponential decay in \eqref{eq:local-l-rate}  holds if 
\begin{equation}\label{eq:upper-lrp-pr1}
\forall \psi_1>0: \exists k, n_0\in\N, \eps>0\quad\mbox{s.t.}\quad \forall n\ge n_0: \qquad \Prob\big(|\CV_n^\mathrm{cross}(k)|\ge \psi_1 n\big)\le \exp\big(-\eps n\big).
\end{equation}
For Bernoulli bond percolation on $\Z^d$, the number of vertices with a crossing edge in each subbox is deterministically at most $O(k^{(d-1)/d})$ as $k\to\infty$, so the total number of vertices with a crossing edge across the $(n/k)$ many subboxes is $nO(k^{-1/d})$. So, given $\psi_1$, we choose $k$ large enough so that $O(k^{-1/d})<\psi_1$, and then \eqref{eq:upper-lrp-pr1} holds (with error probability $0$).

Consider long-range percolation on $\Z^d$ with $\alpha>1$. 
 Let $\CV_n^\mathrm{surf}(t)$ be the set of vertices within distance $t$ from the union of the boundaries of all subboxes $j\le n/k$; and let  $\CE_n^\mathrm{long}(t)$ denote the  edges of length at least $t$ inside $\Lambda_n$. Clearly, $|\CV_n^\mathrm{surf}(t)|\le (n/k) 2^d k^{(d-1)/d} (2t+1)$. 
 For any $\psi_1>0$, fix $\psi_2>0$ such that $|\CV_n^\mathrm{surf}(\psi_2k^{1/d})|\le \psi_1 n/2$ holds deterministically for all $k$ sufficiently large (simultaneously). 
 Then, since each edge has two endpoints,
\[
|\CV_n^\mathrm{cross}(k)|\le |\CV_n^\mathrm{surf}(\psi_2k^{1/d})|+2|\CE_n^\mathrm{long}(\psi_2k^{1/d})| \le \psi_1 n/2 + 2|\CE_n^\mathrm{long}(\psi_2k^{1/d})|. 
\]
Therefore, 
\begin{equation}\label{eq:elong}
\Prob\big(|\CV_n^\mathrm{cross}(k)|\ge \psi_1 n\big)\le \Prob\Big(2\big|\CE_n^\mathrm{long}(\psi_2k^{1/d})\big|\ge \psi_1 n/2\Big).
\end{equation}
 By a standard isoperimetric inequality, the total number of possible edges inside $\Lambda_n$ with  length in the interval $[r_{i,k}, r_{i+1,k}):=[2^{i}\psi_2k^{1/d}, 2^{i+1}\psi_2k^{1/d})$  is at most $n\cdot O(r_{i,k}^{d})$ as $i\to\infty$. 
Each such edge is present independently with probability at most $\beta ^\alpha r_{i,k}^{-\alpha d}$ by the connection probability~\eqref{eq:connection-prob-gen}. Hence, there exists a constant $C>0$ such that for $k$ sufficiently large
\[
\begin{aligned}
\big|\CE_n^\mathrm{long}(\psi_2k^{1/d})\big|&\preccurlyeq \sum_{i=0}^{\infty} \mathrm{Bin}\big(C\psi_2^d n2^{id}k, \beta^\alpha \psi_2^{-\alpha d} 2^{-i\alpha d} k^{-\alpha}\big),
\end{aligned}
\]
where all binomial random variables in the sum are independent of each other. 
 Thus, 
 \[
 \Prob\big(|\CV_n^\mathrm{cross}(k)|\ge \psi_1 n\big)\le \Prob\bigg(2\sum_{i=0}^{\infty} \mathrm{Bin}\big(C\psi_2^d n2^{id}k,  \beta^\alpha \psi_2^{-\alpha d}(2^{i d}k)^{-\alpha}\big)\ge \psi_1 n/2\bigg).
 \]
First we apply $x\mapsto \exp(x/2)$ to both sides of the inequality inside the probability on the right-hand side, then we use Markov's inequality and independence of the random variables:
\[
\Prob\big(|\CV_n^\mathrm{cross}(k)|\ge \psi_1 n\big)\le 
\re^{-\psi_1 n /4}\prod_{i=0}^\infty\E\Big[\exp\Big(\mathrm{Bin}\big(C\psi_2^d n2^{id}k, \beta^\alpha \psi_2^{-\alpha d} (2^{i d}k)^{-\alpha}\big)\Big)\Big].
\]
Each factor corresponds to the moment-generating function (MGF) of a binomial random variable evaluated at 1. The MGF of $\mathrm{Bin}(n,p)$ at 1 equals $(1+p(\re-1))^n\le \exp\big((\re-1)np\big)$. Applying this bound to each of the factors yields 
\[
\Prob\big(|\CV_n^\mathrm{cross}(k)|\ge \psi_1 n\big)\le 
\exp\bigg(-\psi_1 n/4 + (\re-1)C\beta^\alpha\psi_2^{d(1-\alpha)}nk^{1-\alpha}\sum_{i=0}^\infty 2^{i d(1-\alpha)}\bigg).
\]
Since $\alpha>1$ by assumption, the sum converges. When $k$ is sufficiently large, the right-hand side is at most $\exp(-\psi_1 n/5)$. This proves \eqref{eq:upper-lrp-pr1} for long-range percolation and hence \eqref{eq:local-l-rate}.
\end{proof}
We are ready to prove  \cref{upper-lrp-nnp}.
\begin{proof}[Proof of \cref{upper-lrp-nnp}]\label{proof:upper-lrp-nnp}
    For the lower bound, we observe that $|\CC_n^\sss{(1)}|=n$ when all nearest neighbor  edges are present. Its probability decays as $\exp(-\Theta(n))$. 
  For the upper bound, we argue as follows. We fix $\rho\in(\theta,1)$, and a small $\psi<1-(1-\rho)/(1-\theta)$. Since $\sum_{\ell=1}^\infty \theta_\ell = 1-\theta$, this gives us a constant $\ell^\star=\ell^\star(\rho,\psi)$ that satisfies  $(1-\psi)\sum_{\ell=1}^{\ell^\ast}\theta_\ell> 1-\rho$. We shall shortly show that 
    \begin{equation}\label{eq:giant-in-acomp}
    \{ \CC_n^\sss{(1)} > \rho n\} \subseteq \big\{\exists \ell \le \ell^\star+1:  S_{n,\ell}(\infty)\le (1-\psi)n\theta_\ell/\ell\big\}=:\CA_{\mathrm{comp}}.
    \end{equation}
    If \eqref{eq:giant-in-acomp} holds, then \cref{lemma:size-k-nnp-lrp} and a union bound over the finite set $\ell\le \ell^\ast+1$ gives that $ \CA_\mathrm{comp}$ occurs with probability exponentially small in $n$, finishing the proof of the upper bound. To prove \eqref{eq:giant-in-acomp}, we show that $\neg \CA_{\mathrm{comp}}\subseteq \{ \CC_n^\sss{(1)} \le \rho n\}$.
    Since we excluded the case in LRP that nearest neighbor  edges are present with probability one, $\theta_\ell=\Prob\big(|\CC(0)|=\ell\big)>0$ for all $\ell\ge 1$. So, for all $n$ sufficiently large, on  $\neg\CA_\mathrm{comp}$, there is at least one component of size $\ell^\star+1$; i.e., $|\CC_n^{\sss{(1)}}|\ge\ell^\ast+1$. Hence, vertices in components of size $\ell\le \ell^\star$ are not in the giant. Thus,
         \[
 n-|\CC_n^\sss{(1)}| =  |\CV_n\setminus\CV(\CC_n^\sss{(1)})| \ge \sum_{\ell=1}^{\ell^\ast}\ell S_{n,\ell}(\infty) \ge \sum_{\ell=1}^{\ell^\ast}(1-\psi)n\theta_\ell>n(1-\rho). 
    \]
    Rearranging yields that $|\CC_{n}^{\sss{(1)}}|< \rho n$ on $\neg\CA_{\mathrm{comp}}$, finishing the proof. 
\end{proof}

\subsection{Inhomogeneous models}
Here we start again with the convergence rate of the number of size-$\ell$ components. Since the degree distribution is heavy-tailed, this is more involved than Lemma \ref{lemma:size-k-nnp-lrp}.
We write $\mathrm{deg}_v$ for the degree of vertex $v$ in $\CG_n$, and $\mathrm{deg}_v[a,b)$ for the number of neighbors of $v$ in $\CG_n$ with mark in $[a,b)$. 
\begin{claim}[Total degree from high-mark vertices]\label{claim:total-deg}
        Consider a KSRG on a PPP satisfying Assumption \ref{assumption:main} with $\tau<\infty$. For any constants $\psi, C>0$, there exists $\phi_0>0$ such that for all $n$ sufficiently large and $\phi\in(0,\phi_0)$, writing $ \underline w_n=n^{(1+\psi)/(\tau-1)}$,
    \begin{equation}\label{eq:total-deg}
    \Prob\Bigg(\sum_{v\in\CV_n[\underline{w}_n, \phi n)}\mathrm{deg}_v[1,\phi n) \ge \psi n\Bigg)\le n^{-C}.
    \end{equation}
\end{claim}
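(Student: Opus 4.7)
The plan is to separate the bad event into two contributions: the rare event that $\CV_n[\underline w_n,\phi n)$ contains many vertices, and the event that, conditional on containing only a bounded number, the total degree-sum is nonetheless large. Both parts will be controlled by Poisson-tail bounds.

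\emph{Step 1 (few high-mark vertices).} By~\eqref{eq:poisson-intensity}, $N:=|\CV_n[\underline w_n,\phi n)|$ is Poisson with mean at most $n\underline w_n^{-(\tau-1)}=n^{-\psi}$. Setting $k_0:=\lceil(C+1)/\psi\rceil$, the standard Poisson tail $\Prob(\mathrm{Poi}(\mu)\ge k)\le(e\mu/k)^k$ then gives $\Prob(N\ge k_0)\le n^{-\psi k_0}\le n^{-(C+1)}$ for $n$ large.

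\emph{Step 2 (bounded degree contribution).} I would then condition on the realization $\CV_n[\underline w_n,\phi n)=\{v_1,\ldots,v_N\}$ (locations and marks) with $N<k_0$. Since disjoint mark-strata of a PPP are independent, $\CV_n[1,\underline w_n)$ remains a PPP with its original intensity on $\Lambda_n\times[1,\underline w_n)$, independent of the conditioning. By the restriction theorem, each $\mathrm{deg}_{v_i}[1,\underline w_n)$ is Poisson with conditional mean
\[
\mu_i=\int_{\Lambda_n}\int_1^{\underline w_n}\mathrm p(v_i,(x,w))(\tau-1)w^{-\tau}\,\rd w\,\rd x\le C_0 w_{v_i}\le C_0\phi n,
\]
where $C_0=C_0(\beta,p,\alpha,\tau,\sigma,d)$ is the model constant from the standard ``expected-degree-is-proportional-to-mark'' estimate for KSRGs under Assumption~\ref{assumption:main} when $\sigma\le\tau-1$ (cf.~\cite{clusterI}). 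Writing $\overline S:=\sum_{i\le N}\mathrm{deg}_{v_i}[1,\underline w_n)$ and absorbing the (at most $2\binom{N}{2}\le k_0^2$) contributions from edges among the $v_j$'s, the sum $S$ in~\eqref{eq:total-deg} satisfies $S\le\overline S+k_0^2$. On $\{N<k_0\}$ with $n$ so large that $k_0^2\le\psi n/2$, the event $\{S\ge\psi n\}$ forces $\{\exists i\le N:\mathrm{deg}_{v_i}[1,\underline w_n)\ge\psi n/(2k_0)\}$. Choosing $\phi_0:=\psi/(2e^2 k_0 C_0)$ ensures $\mu_i\le\psi n/(2e^2k_0)$ for all $\phi<\phi_0$, so the Chernoff tail $\Prob(\mathrm{Poi}(\mu)\ge t)\le e^{-t}$ (valid for $t\ge e^2\mu$) combined with a union bound over $i\le k_0$ yields the conditional bound $k_0\exp(-\psi n/(2k_0))$, which is super-polynomially small in $n$.

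Combining the two steps would give $\Prob(S\ge\psi n)\le n^{-(C+1)}+k_0\exp(-\psi n/(2k_0))\le n^{-C}$ for all $n$ sufficiently large. The main obstacle is the uniform-in-mark bound $\mu_i\le C_0 w_{v_i}$: this is a routine integration against $\kappa_\sigma$ in~\eqref{eq:kernels} and $\varphi_\mathrm{pol}$ in~\eqref{eq:profile} when $\sigma\le\tau-1$, but becomes delicate when $\sigma>\tau-1$, where the expected degree grows super-linearly in the mark; in that regime one would stratify $[\underline w_n,\phi n)$ into dyadic mark-layers and run the two-step argument layer-by-layer, the rarity of high-mark vertices offsetting the larger per-vertex degrees.
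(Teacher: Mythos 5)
Your overall decomposition — a Poisson tail to control the number of high-mark vertices, then the pigeon-hole principle plus a Poisson tail on the degree of a single high-mark vertex to low-mark vertices — is exactly the paper's strategy, and the bookkeeping in Step 1 and the Chernoff bound in Step 2 are fine.

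The genuine gap is the one you flag yourself: the bound $\mu_i\le C_0 w_{v_i}$ fails when $\sigma>\tau-1$. But the resolution you sketch (dyadic stratification of the mark range) is a detour and is not actually needed. The correct and much simpler fix is to record the Poisson domination for the degree \emph{in all three $\sigma$-regimes}: one can show that for $(y,w)\in\Lambda_n\times[\underline w_n,\infty)$, $\mathrm{deg}_{(y,w)}[1,\underline w_n)$ is stochastically dominated by a Poisson with mean of order $w$ when $\sigma<\tau-1$, of order $w\log(n/w)$ when $\sigma=\tau-1$, and of order $w^{(\tau-1)/\sigma}n^{1-(\tau-1)/\sigma}$ when $\sigma>\tau-1$. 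The key observation your proposal is missing is that in the third regime the exponent $(\tau-1)/\sigma$ is strictly less than one, so evaluated at the worst mark $w=\phi n$ the mean is $c\phi^{(\tau-1)/\sigma}n$ — still linear in $n$ with a prefactor that vanishes as $\phi\downarrow 0$. (There is no ``super-linear growth in $w$'' to offset; rather the $n$-dependent factor absorbs the slack.) Once you replace $C_0\phi n$ by $c\max\big(\phi,\phi^{(\tau-1)/\sigma},\phi\log(1/\phi)\big)n$ in your definition of $\phi_0$, your two-step argument goes through verbatim in all regimes, and no stratification is required. Without this repair the proposal only establishes the claim under the additional hypothesis $\sigma\le\tau-1$, which the statement does not assume.
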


\begin{claim}[High-degree vertices]\label{claim:high-deg}Consider a KSRG on a PPP satisfying Assumption \ref{assumption:main} with $\tau<\infty$. For any constants $\ell, C>0$, there exist constants $n_0$, $C_1$ such that for all $n\ge n_0$, and any $\underline w_n> C_1\log n$,  
    \[\Prob\big(\exists v\in \CV_n[\underline w_n, \infty): \mathrm{deg}_v[1,\underline w_n) <\ell\big)\le n^{-C}.
    \]
\end{claim}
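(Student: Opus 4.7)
The plan is to show, for each fixed vertex location $x\in\Lambda_n$ and mark $w\ge \underline w_n\ge C_1\log n$, that the vertex $v=(x,w)$ has at least $\ell$ neighbours of mark in $[1,2)$ with conditional (Palm) probability at least $1-n^{-(C+1)}$, and then take a union bound via Mecke's formula. For both kernels in Assumption~\ref{assumption:main}, $\kappa(w_u,w_v)\ge w_v$ whenever $w_u\ge 1$, so by \eqref{eq:connection-prob-gen} the connection probability of $v$ to any $u=(x_u,w_u)$ with $w_u\ge 1$ is at least $p$ provided $\|x_u-x_v\|^d\le \beta w$. Let $B:=B(x,(\beta w)^{1/d})\cap\Lambda_n$. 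Since $(\beta w)^{1/d}=O((\log n)^{1/d})$ is much smaller than $n^{1/d}$ for $n$ large, we have $|B|\ge V_d\beta w/2^d$ uniformly in $x\in\Lambda_n$ (the worst case is $x$ at a corner of $\Lambda_n$), where $V_d$ denotes the volume of the unit Euclidean ball in $\R^d$.

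By Slivnyak's theorem, under $\Prob^{(x,w)}$ the rest of the point process is still a unit-intensity PPP with intensity \eqref{eq:poisson-intensity}, so the number of points of $\CV\cap (B\times[1,2))$ is Poisson with mean at least $c_1 w$, where $c_1:=V_d\beta(1-2^{-(\tau-1)})/2^d>0$. Thinning independently by the edge-probabilities (which are bounded below by $p$), the number of $v$'s neighbours in $B\times[1,2)$ stochastically dominates a $\mathrm{Poi}(c_1 pw)$ random variable. Since $\underline w_n\ge 2$ for $n$ large, these neighbours lie in $[1,\underline w_n)$. Standard Poisson tail bounds (e.g.\ Lemma~\ref{lemma:poisson-1}) and $w\ge C_1\log n$ give
\[
\Prob^{(x,w)}\big(\mathrm{deg}_v[1,\underline w_n)<\ell\big)\;\le\;\Prob\big(\mathrm{Poi}(c_1 pw)<\ell\big)\;\le\;n^{-c_1 pC_1/2}
\]
for $n$ large enough, uniformly in $x\in\Lambda_n$ and $w\ge\underline w_n$.

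Finally, Mecke's formula bounds the expected number $N$ of bad vertices by
\[
\E[N]=\int_{\Lambda_n}\!\int_{\underline w_n}^{\infty}\!\Prob^{(x,w)}\!\big(\mathrm{deg}_0[1,\underline w_n)<\ell\big)(\tau-1)w^{-\tau}\,dw\,dx\;\le\; n\cdot \underline w_n^{-(\tau-1)}\cdot n^{-c_1 pC_1/2}.
\]
Choosing $C_1$ so that $c_1 pC_1/2\ge C+1$ gives $\E[N]\le n^{-C}$, and Markov's inequality yields the claim. The only technical subtlety is the uniform volume bound on $|B|$ near $\partial\Lambda_n$, which is handled by the observation $(\log n)^{1/d}=o(n^{1/d})$; all other ingredients (Palm/Slivnyak, Poisson concentration, Mecke) are standard.
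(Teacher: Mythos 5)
Your proposal is essentially the paper's proof: condition on the high-mark vertex, observe that its neighbours among low-mark vertices form an inhomogeneous Poisson process whose size stochastically dominates a Poisson variable with mean of order $w$ (worst case at the mark threshold $\underline w_n$ and at a corner of $\Lambda_n$), then use Poisson concentration and a union bound (here via Mecke, in the paper via conditioning on $\CV_n[\underline w,\infty)$ and independence). The restriction to marks in $[1,2)$ is a nice self-contained way to avoid the integral bound the paper cites from the literature.

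One genuine oversight: you write ``$(\beta w)^{1/d}=O((\log n)^{1/d})$ is much smaller than $n^{1/d}$'' and conclude $|B|\ge V_d\beta w/2^d$ uniformly. But $w$ ranges over $[\underline w_n,\infty)$ unbounded, and for $w$ much larger than $n$ the ball $B(x,(\beta w)^{1/d})$ does not fit inside $\Lambda_n$, so the volume bound fails (indeed $|B|\le n$). The correct lower bound is $|B|\ge c_d\min(\beta w,\,n)$, yielding a dominating $\mathrm{Poi}(c_1p\min(w,n))$ rather than $\mathrm{Poi}(c_1pw)$ -- exactly the $\mathrm{Poi}(c(w\wedge n))$ appearing in the paper's proof. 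Since $\min(w,n)\ge\min(\underline w_n,n)\ge C_1\log n$ for large $n$, the Poisson tail bound and the rest of the argument go through unchanged, so this is an easy fix rather than a structural flaw.
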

\begin{claim}[Crossing edges]\label{claim:crossing-edges}
Consider a  KSRG on a PPP satisfying Assumption \ref{assumption:main} with   $\tau<\infty$. Let $\psi>0$ be a constant such that $(1+\psi)/(\tau-1) < 1$. Then, for any constant $C>0$, there exists $n_0$ such that for all $n\ge n_0$ and any $\underline w_n\le n^{(1+\psi)/(\tau-1)}$,  
    \begin{equation}\label{eq:crossing-edges}
    \Prob\big(|\{v\in \CV_{n}[1,\underline w_n): v\sim \Lambda_n^{\complement}\times[1,\underline w_n)\}|\ge \psi n\big)\le n^{-C}.
    \end{equation}
\end{claim}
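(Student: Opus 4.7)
The plan is to bound $Y_n := |\{v\in\CV_n[1,\underline w_n): v \sim \Lambda_n^\complement\times[1,\underline w_n)\}|$ by the number $E_n$ of edges between $\Lambda_n\times[1,\underline w_n)$ and $\Lambda_n^\complement\times[1,\underline w_n)$, since each contributing vertex is incident to at least one such edge, and then to control all moments of $E_n$ before concluding via Markov.

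First I would compute $\E[E_n]$ by Mecke's formula. Splitting $\Lambda_n^\complement$ into dyadic annuli around $\partial\Lambda_n$, and the mark interval $[1,\underline w_n)$ into dyadic pieces, the resulting integrals decompose exactly into the four regimes enumerated in Section~\ref{sec:dominant}: short low-low edges, long low-low edges, long high-low edges, and long high-high edges. Each piece is bounded by the corresponding back-of-the-envelope estimate in~(\ref{eq:zeta-short-ll})--(\ref{eq:zeta-hh}), so each contributes $O(n^{\zeta_\#})$ (up to a poly-log factor on phase boundaries) for $\zeta_\#\in\{\zeta_{\mathrm{short}},\zeta_{\mathrm{ll}},\zeta_{\mathrm{hl}},\zeta_{\mathrm{hh}}\}$. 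Under Assumption~\ref{assumption:main} ($\alpha>1$, $\tau>2$), each of these is strictly less than $1$, and the truncation $\underline w_n\le n^{(1+\psi)/(\tau-1)}$ only restricts the high-mark contributions. Thus there exist $\delta,A_1>0$ with $\E[E_n]\le A_1 n^{1-\delta}$.

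To upgrade this to an $n^{-C}$ tail, I would compute $\E[E_n^k]$ for an arbitrary constant $k\in\N$ via the multivariate Mecke formula. Writing $E_n=\sum_{\{i,j\}}Z_{ij}$ with $Z_{ij}$ the indicator that the pair $\{i,j\}$ forms a crossing edge and expanding the $k$-th power, the expectation factors over partitions of the $2k$ endpoints recording which indices coincide. The partition into $k$ disjoint edges yields $(\E[E_n])^k=O(n^{k(1-\delta)})$. Each remaining partition corresponds to a connected hypergraph on fewer than $2k$ vertices; re-running the same dyadic regime-decomposition as in Step~1 for the corresponding conditional integrals gives a contribution of order $n^{k(1-\delta)}$ times a negative power of $n$ reflecting the saved spatial/mark integrations. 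Summing over the finite number of partitions, one obtains $\E[E_n^k]\le C^kk!\,n^{k(1-\delta)}$. Markov's inequality applied to $E_n^k$ then yields $\Prob(Y_n\ge \psi n)\le \Prob(E_n\ge\psi n)\le (A_1/\psi)^kk!\,n^{-k\delta}$, so choosing $k\ge C/\delta$ gives the desired bound.

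The main obstacle is the partition bookkeeping: one must verify that every "clumped" partition yields a power of $n$ strictly smaller than the dominant $(\E[E_n])^k$ term, which amounts to redoing the $\zeta_\star$-calculation for each sub-configuration. A cleaner alternative I would try first is a boxing argument: tessellate $\Lambda_n$ into $\Theta(n/M)$ constant-volume boxes, write $E_n$ as a sum of nearly independent box-contributions (each a bounded Poisson functional with sub-exponential tail by a direct comparison with a Poisson random variable of mean $O(1)$), and apply a Bernstein-type concentration inequality. This would yield a stretched-exponential bound $\exp(-c n^\delta)$, comfortably implying the polynomial bound required here.
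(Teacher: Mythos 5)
Your proposal takes a genuinely different route from the paper's, and while the main idea is plausible in outline, both the primary and the alternative arguments contain gaps or errors that need addressing before this could be accepted.

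The paper works directly with the vertex count $Y_n$, not with the edge count $E_n$. The proof conditions on a ``good'' event $\CA$ for the \emph{exterior} configuration (the number of vertices in dyadic space-mark annuli $A_{i,j}\subseteq\Lambda_n^\complement\times[1,\underline w_n)$ stays within a $(\log n)^2$-factor of its mean), shows $\Prob(\neg\CA)=o(n^{-C})$ by Poisson concentration, and then uses that conditionally on $\CV$ satisfying $\CA$, each interior low-mark vertex independently connects to $\Lambda_n^\complement\times[1,\underline w_n)$ with probability at most $\psi/6$. This yields a binomial comparison and an exponentially decaying tail for the type-3 contribution. The remaining contributions (vertices near $\partial\Lambda_n$, and high-constant-mark vertices) are trivially handled by Poisson concentration. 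This choice is not incidental: the paper emphasizes elsewhere that the \emph{vertex} boundary concentrates while the \emph{edge} boundary is sensitive to the extremal marks. Your first move, $Y_n\le E_n$, gives up exactly this robustness.

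Your high-moment argument for $E_n$ could in principle recover the polynomial bound, because the mark cap $\underline w_n\le n^{(1+\psi)/(\tau-1)}$ with $(1+\psi)/(\tau-1)<1$ keeps the out-degree of a single vertex polynomially below $n$, so the star configurations (all $k$ edges incident to one high-mark vertex near $\partial\Lambda_n$) contribute $n^{k(1-\varepsilon')}$ times a polynomial, which is beaten by $(\E[E_n])^k$ once $k$ is a large enough constant. But you acknowledge that the partition bookkeeping is ``the main obstacle'' and leave it undone; that is precisely the content of the claim, and as written there is no proof. Also note the prefactor in $\E[E_n^k]\le C^kk!\,n^{k(1-\delta)}$ should rather be a Bell number of $2k$, though for a fixed $k=\lceil C/\delta\rceil$ this is immaterial.

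Your preferred alternative (constant-volume boxing plus Bernstein) contains a concrete error. The crossing-edge count from a constant-volume box is \emph{not} a bounded Poisson functional and does \emph{not} have a sub-exponential tail: a single vertex of mark $w$ in the box has out-degree comparable to $w$ (or $w^{(\tau-1)/\sigma}n^{1-(\tau-1)/\sigma}$ when $\sigma>\tau-1$), and $w$ is Pareto-distributed up to $\underline w_n$. The per-box contribution therefore inherits a polynomial tail from the marks, which is exactly the heavy-tail phenomenon that makes the edge boundary misbehave. In addition, the box contributions are not independent: they share the exterior vertex set $\CV\cap\Lambda_n^\complement$ and in particular a single high-mark exterior vertex near $\partial\Lambda_n$ inflates many boxes simultaneously. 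One can fix this by conditioning on the exterior configuration first, but then one is effectively forced into the paper's space-mark-annulus conditioning and back to counting \emph{vertices} rather than \emph{edges}, at which point the boxing is redundant. As stated, the claimed comparison with $\mathrm{Poi}(O(1))$ and the claimed stretched-exponential rate are not justified.
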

The proofs of the three claims are based on concentration inequalities for Poisson/binomial random variables and postponed to Appendix~\ref{sec:upper-deg-cross}. Their combination gives a \emph{polynomial} convergence rate for the size-$\ell$ components in \cref{lemma:size-k-local}, which is in stark contrast to the exponential convergence rate for degree-homogeneous models in \cref{lemma:size-k-nnp-lrp}.
\begin{lemma}[Concentration for size-$\ell$ components]\label{lemma:size-k-comp}
    Consider a KSRG on a PPP satisfying Assumption \ref{assumption:main} with $\tau<\infty$. For all constants $\ell, \psi, C>0$, there exist constants $\phi_0>0$ and $n_0$ such that for all $n\ge n_0$ and all $\phi\in(0, \phi_0)$,
    \begin{equation}
        \Prob\big(\big|S_{n,\ell}(\phi n)/n - \theta_\ell/\ell\big| >\psi \big)\le n^{-C}.\label{eq:size-k}
    \end{equation}
\end{lemma}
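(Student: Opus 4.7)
The plan is to follow the template of Lemma \ref{lemma:size-k-nnp-lrp}: tessellate $\Lambda_n$ into many iid subboxes, invoke the qualitative local LLN (Lemma \ref{lemma:size-k-local}) inside each subbox, and boost to a polynomial convergence rate via a Chernoff bound across independent subboxes. Compared to the degree-homogeneous setting, the new difficulty is that heavy-tailed marks create hubs and long edges, which I would control using the three preceding claims.

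First, fix small constants $\psi_1,\psi_2,\psi_3>0$ (depending on $\psi,\ell,C$) that together sum to less than $\psi$, and set $\underline w_n:=n^{(1+\psi_1)/(\tau-1)}$. Comparing $S_{n,\ell}(\phi n)$ with $S_{n,\ell}(\underline w_n)$, the absolute difference is at most $\sum_{v\in \CV_n[\underline w_n,\phi n)}(\mathrm{deg}_v[1,\phi n)+1)$, since each vertex $v$ with mark in $[\underline w_n,\phi n)$, once included in the induced subgraph, can merge at most $\mathrm{deg}_v[1,\phi n)$ existing components and create at most one new size-$\ell$ component. Claim \ref{claim:total-deg} bounds the degree sum by $\psi_1 n$ with probability $1-n^{-C-1}$, and a Poisson-concentration estimate handles the (in expectation $o(1)$-many) vertices in $\CV_n[\underline w_n,\phi n)$, so that $|S_{n,\ell}(\phi n)-S_{n,\ell}(\underline w_n)|\le 2\psi_1 n$ with probability $1-n^{-C-1}$.

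Next, tessellate $\Lambda_n$ into $m_n=n/k$ disjoint subboxes $Q_j$ of a constant volume $k=k(\psi,\ell,C)$, and write $S^{(j)}_{k,\ell}(\underline w_n)$ for the number of size-$\ell$ components in $\CG_n[1,\underline w_n)\cap Q_j$. Because the PPP restricted to disjoint subboxes is independent and within-subbox edges depend only on within-subbox vertices, the collection $(S^{(j)}_{k,\ell}(\underline w_n))_j$ is iid. For $k$ sufficiently large, Lemma \ref{lemma:size-k-local} applied to a subbox of volume $k$ (noting that $\underline w_n\gg k^{1/(\tau-1)}$) gives $\Prob(|\ell S^{(1)}_{k,\ell}(\underline w_n)/k-\theta_\ell|>\psi_2)\le\psi_2$, and a Chernoff bound on the iid sum of indicators yields $\big|\sum_j S^{(j)}_{k,\ell}(\underline w_n)/n-\theta_\ell/\ell\big|\le \psi_2+\psi_3$ with probability $1-\exp(-cn/k)\ge 1-n^{-C-1}$. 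The difference between $S_{n,\ell}(\underline w_n)$ and $\sum_j S^{(j)}_{k,\ell}(\underline w_n)$ is at most the number of vertices in $\CG_n[1,\underline w_n)$ incident to edges crossing a subbox boundary; splitting by endpoint marks, I would apply Claim \ref{claim:crossing-edges} at subbox scale (with threshold $k^{(1+\psi_1)/(\tau-1)}$) via a union bound over $m_n$ subboxes for the low-mark contribution, and use Claim \ref{claim:high-deg} together with Poisson concentration on the number of medium-mark vertices to control the remaining crossing vertices by $\psi_1 n$ with probability $1-n^{-C-1}$.

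The hardest step will be bridging the intermediate ``medium-mark'' range $[k^{(1+\psi_1)/(\tau-1)},\underline w_n)$, which is covered neither by Claim \ref{claim:crossing-edges} (whose mark threshold scales with the box volume and must therefore be taken at the subbox scale $k$) nor by Claim \ref{claim:total-deg} (whose threshold is tied to $n$). The key observation I would use is that Claim \ref{claim:high-deg} forces every vertex of mark $\ge C_1\log n$ to lie in a component of size strictly greater than $\ell$, so medium-mark vertices affect $S_{n,\ell}(\underline w_n)$ only by merging neighbouring low-mark components, an effect again bounded by the total degree of medium-mark vertices. A union bound over the four error events finally delivers the claimed $\psi$-deviation with probability $1-n^{-C}$.
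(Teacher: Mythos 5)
Your reduction from $\overline{w}_n=\phi n$ to $\underline{w}_n=n^{(1+\psi)/(\tau-1)}$ via Claims \ref{claim:total-deg} and \ref{claim:high-deg} is essentially the paper's first step. The gap is in your tessellation: you take subboxes of \emph{constant} volume $k$, mimicking Lemma \ref{lemma:size-k-nnp-lrp}, and this fails in the inhomogeneous setting in three compounding ways. First, Claim \ref{claim:crossing-edges} applied at scale $k$ gives error probability $\le k^{-C'}$, a constant; a union bound over the $n/k$ subboxes then gives $\Theta(n)$ and is vacuous. In the degree-homogeneous Lemma \ref{lemma:size-k-nnp-lrp} this is circumvented by a \emph{global} moment-generating-function estimate on the total number of long edges in $\Lambda_n$, not by a per-subbox union bound; the presence of heavy-tailed marks precisely rules out that argument here, which is why Claim \ref{claim:crossing-edges} exists and why the paper cannot reuse the homogeneous proof. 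Second, even if the low-mark crossing count were controlled, your medium-mark range $[k^{(1+\psi_1)/(\tau-1)},\underline{w}_n)$ contains $\Theta(n\,k^{-(1+\psi_1)})$ vertices, and the quantity you need to bound is their \emph{total degree}, not their number; Claim \ref{claim:high-deg} tells you nothing about total degree, Claim \ref{claim:total-deg} is pinned to the threshold $n^{(1+\psi)/(\tau-1)}$ and does not descend to a constant threshold, and Poisson concentration on the count alone does not cap the degree sum. Third, to convert ``most boxes are good'' into a bound on $\sum_j S_{k,\ell}^{\sss{(j)}}$ you must control the contribution of bad boxes by the number of vertices they contain; with $n/k$ boxes of constant volume and Poisson$(k)$ occupancy, no fixed multiple of $k$ bounds all occupancies simultaneously with probability $1-n^{-C}$.

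The paper's fix is a single architectural change that resolves all three problems at once: take only $A\log n$ subboxes, each of volume $k_n=n/(A\log n)$. Then Lemma \ref{lemma:size-k-local} still applies inside each subbox (volume $\to\infty$); Claim \ref{claim:crossing-edges} at scale $k_n$ gives error $k_n^{-C'}=n^{-(1-o(1))C'}$, and a union bound over $A\log n$ boxes is still $o(n^{-C})$; the mark threshold satisfies $\underline{w}_n\le k_n^{(1+\psi')/(\tau-1)}$ for $\psi'$ slightly larger than $\psi$, so there is \emph{no} medium-mark range at all; and each box has Poisson$(k_n)$ occupancy with $k_n\gg\log n$, so $\max_j|\CV_{k_n}^{\sss{(j)}}|\le \tfrac43 k_n$ holds with probability $1-n^{-cA}$. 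The only remaining price is that the Chernoff bound on the number of good boxes decays as $n^{-cA}$ rather than exponentially in $n$; this is absorbed by choosing $A\ge C/c$. Without switching from constant-volume subboxes to $\Theta(n/\log n)$-volume subboxes, I do not see how to close your argument.
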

\begin{proof}
    We first reduce the upper mark threshold $\overline w_n:=\phi n$ in the event in~\eqref{eq:size-k} to a smaller value $\underline w_n=n^{(1+\psi)/(\tau-1)}$ (we assume w.l.o.g.\ that $(1+\psi)/(\tau-1)<1$). We use that each vertex with degree at least $\ell$ is in a component of size at least $\ell+1$, and distinguish the case that each vertex of mark $\underline w_n$ is in a component of size larger than $\ell$, i.e.,

    \begin{align}
    \Prob\big(\big|S_{n,\ell}(\overline w_n )/n - \theta_\ell/\ell\big| >\psi \big)&\le 
    \Prob\bigg(
    \begin{aligned}
    &\{|S_{n,\ell}(\overline w_n)/n - \theta_\ell/\ell| >\psi\} \\&\cap \{\forall v\in\CV_n[\underline w_n, \overline w_n): |\CC_n(v)[1,\overline w_n)|>\ell\} \end{aligned}\bigg) \label{eq:intersect} \\&\hspace{15pt}+ 
    \Prob\big(\exists v\in\CV_n[\underline w_n, \overline w_n): \mathrm{deg}_v[1, \overline w_n) \le \ell\big).\label{eq:term-degree}
    \end{align}
   Now we `switch' $\overline w_n$ to $\underline w_n$ in $S_{n,\ell}(\cdot)$ on the right-hand side. Let $T_{n,\ell}$ denote the number of size-$\ell$ components in $\CG_n[1,\underline w_n)$ that have an edge towards a vertex with mark in $[\underline w_n, \overline w_n)$. 
    The events on the right-hand side of \eqref{eq:intersect} imply that the size-$\ell$ components of $\CG_n[1,\overline w_n)$ do not contain any vertices with mark at least $\overline{w}_n$.
   So, on those events, $S_{n,\ell}(\overline w_n)=S_{n,\ell}(\underline w_n)- T_{n,\ell}$. Distinguishing also whether $T_{n,\ell}/n\le \psi/2$ or not yields
  \begin{align}
    \Prob\big(\big|S_{n,\ell}(\overline w_n)/n - \theta_\ell/\ell\big| >\psi \big)&\le 
    \Prob\bigg(
    \begin{aligned}
    &\{|S_{n,\ell}(\underline w_n)/n - T_{n,\ell}/n - \theta_\ell/\ell| >\psi\} \cap\{T_{n,\ell}/n \le \psi/2\}\\&\cap \{\forall v\in\CV_n[\underline w_n, \overline w_n): |\CC_n(v)[1,\overline w_n)|>\ell\} \end{aligned}\bigg) \label{eq:intersect-2}
    \\&\hspace{15pt}+ 
    \Prob\big(T_{n,\ell}/n\ge \psi /2\big)+
    \Prob\big(\exists v\in\CV_n[\underline w_n, \overline w_n): \mathrm{deg}_v[1, \overline w_n) \le \ell\big).
    \end{align}
    The first two events in \eqref{eq:intersect-2} imply that  $\{|S_{n,\ell}(\underline w_n)/n -\theta_\ell/\ell|>\psi/2\}$. Moreover, $T_{n,\ell}$ is at most the total number of edges between $\CV[\underline{w}_n, \overline w_n)$ and $\CV_n[1,\underline{w}_n)$, which is at most the total degree of vertices in  $\CV[\underline{w}_n, \overline w_n)$. Hence, 
    \begin{equation}
    \begin{aligned}
    \Prob\big(\big|S_{n,\ell}(\overline w_n)/n - \theta_\ell/\ell\big| >\psi \big)&\le 
    \Prob\big(|S_{n,\ell}(\underline w_n)/n -  \theta_\ell/\ell| >\psi/2 \big)\\&\hspace{15pt}+ 
    \Prob\bigg(\sum_{v\in\CV_n[\underline w_n, \overline w_n)}\mathrm{deg}_v[1,\overline w_n)\ge n \psi /2\bigg)\\&\hspace{15pt}+
    \Prob\big(\exists v\in\CV_n[\underline w_n, \overline w_n): \mathrm{deg}_v[1, \overline w_n) \le \ell\big).
    \end{aligned}\nonumber
    \end{equation}
     By Claim~\ref{claim:total-deg}, the second term can be made at most $n^{-2C}$ if $\phi$ in the definition of $\overline w_n=\phi n$ is sufficiently small. Since $\underline w_n=n^{(1+\psi)/(\tau-1)}$, the third term is also at most $n^{-2C}$ by Claim~\ref{claim:high-deg}. So,
     \begin{equation}\Prob\big(\big|S_{n,\ell}(\overline w_n)/n - \theta_\ell/\ell\big| >\psi \big)\le 
    \Prob\big(|S_{n,\ell}(\underline w_n)/n -  \theta_\ell/\ell| >\psi/2 \big) + o\big(n^{-C}\big).
    \label{eq:size-k-three-events}
    \end{equation} 
    It remains to bound the first term on the right-hand side with  $\underline w_n=n^{(1+\psi)/(\tau-1)}$. 
    Let $A$ be a sufficiently large constant with $(A\log n)^{1/d}\in\N$ and partition $\Lambda_n$ into boxes $\CQ_1,\ldots,\CQ_{A\log n}$ of volume $k_n:=n/(A\log n)$. Let $S_{k_n,\ell}^\sss{(j)}(\underline w_n)$ denote the number of components of size exactly $\ell$ in the $j$th box, and $\CV^\sss{(j)}_{k_n}$ the set of vertices in the $j$th box. 
    \begin{equation}\label{eq:size-ell-lln-pr}
    \CA^\sss{(j)}:=\{|S_{k_n,\ell}^\sss{(j)}(\underline w_n)/k_n-\theta_\ell/\ell|\le\psi/12\}, 
    \end{equation}
 and otherwise we call it bad. The probability of being good tends to $1$ with $k_n$ by \cref{lemma:size-k-local}, and so it is at least $1-\psi/32$ for all $n$ sufficiently large.
 Let $
X:=\sum_{j\in[k_n]}\ind{\CA^\sss{(j)}}
    $
    be the number of good boxes. Independence across boxes gives that $X$ dominates a binomial random variable with parameters $A\log n$ and $1-\psi/32$. So, by a Chernoff bound there exists a constant $c=c(\psi)>0$ such that 
    \begin{equation}
    \Prob\big(X\ge (1-\psi/16)A\log n\big)\ge 1-\exp\big(-cA\log n\big)=1-n^{-cA}.\label{eq:size-k-bad}
    \end{equation}
    Since $k_n=n/A\log n$, using concentration for Poisson random variables, for all $n$ sufficiently large,
    \begin{equation}
    \Prob\big(\forall j: |\CV_{k_n}^\sss{(j)}[1,\underline w_n)|/k_n<4/3 \big)
    \ge 1-(A\log n)\exp\big(-\Omega(n/\log n)\big)\ge 1-n^{-cA}.\label{eq:size-k-poi}\end{equation}
    Thus, in the following, we will find bounds on $S_{n,\ell}(\underline w_n)$ conditional on the event 
    \begin{equation}
  \CA_{\mathrm{good}}:=  \{\forall i: |\CV_{k_n}^\sss{(j)}[1,\underline w_n)|/k_n<4/3\}\cap \{X\ge (1-\psi/16)A \log n\}.\label{eq:size-k-intersect}
    \end{equation}
    We write $S^\sss{\mathrm{good}}_{n,\ell}$ for the sum of $S_{k_n,\ell}^{\sss{(j)}}$ over $j$ for which the $j$th box is good, and similarly $S^\sss{\mathrm{bad}}_{k_n,\ell}$ for the same sum over bad boxes. 
    Finally, let $\CV^\sss{\mathrm{cross}}_n[1,\underline w_n)$ denote the set of vertices incident to an edge that crosses a box-boundary. Since each edge changes the number of size-$\ell$ components by at most $2$, similarly to \eqref{eq:snell-bound-2}, it holds that 
    \begin{equation}
    |S_{n,\ell}(\underline w_n)-(S^\sss{\mathrm{good}}_{n,\ell}+S^\sss{\mathrm{bad}}_{n,\ell})|\le  |\CV^\sss{\mathrm{cross}}_n[1,\underline w_n)|.
     \end{equation} 
On the event $\CA_{\mathrm{good}}$ in~\eqref{eq:size-k-intersect}, the at most $A\log n$ good boxes contain together at most $(\theta_\ell/\ell+\psi/12)n$ components of size $\ell$. There are at most  $n(\psi/16)4/3=n\psi/12$ vertices in bad boxes. Assuming the worst case that all of them are in size-$\ell$ components inside their boxes, we obtain the upper bound $S^\sss{\mathrm{bad}}_{n,\ell}\le n\psi/(12\ell)$. So, by a triangle inequality on the left-hand side above,
\[ S_{n,\ell}(\underline w_n)-n \theta_\ell/\ell \le n\psi/12+n\psi/(12\ell)+|\CV^\sss{\mathrm{cross}}_n[1,\underline w_n)|
    \le  n\psi/6  +|\CV^\sss{\mathrm{cross}}_n[1,\underline w_n)|.\]
 Similarly, for a lower bound we may assume that there are no size-$\ell$ components in bad boxes. On the event~\eqref{eq:size-k-intersect}, this still gives
 \[ S_{n,\ell}(\underline w_n)-n \theta_\ell/\ell \ge n (1-\psi/16)\big( \theta_\ell/\ell -\psi/12\big) - |\CV_n^\sss{\mathrm{cross}}[1,\underline w_n)| \ge -n\psi7/48- |\CV_n^\sss{\mathrm{cross}}[1,\underline w_n)| \]   
    Recalling the bounds~\eqref{eq:size-k-three-events},~\eqref{eq:size-k-bad}, and~\eqref{eq:size-k-poi}, we obtain 
    \[
    \begin{aligned}
    \Prob\big(\big|S_{n,\ell}(\overline w_n)/n - \theta_\ell/\ell\big| >\psi \big)&\le 
    2n^{-cA}+o(n^{-C})+\Prob\big(|\CV^\sss{\mathrm{cross}}_{n,\ell}|/n \ge\psi/3 \big).
    \end{aligned}    
    \]
    We bound the probability concerning the crossing edges. By the pigeon-hole principle,  $|\CV^\sss{\mathrm{cross}}_{n,\ell}| \ge n  \psi/3 $ implies that there exists a box that contains at least $(\psi/3)k_n$ vertices with an outgoing edge towards the complement of the box. Hence, by translation invariance of the model on $\R^d$,
    \[
    \Prob\big(|\CV_{n,\ell}^\sss{\mathrm{cross}}|/n\ge \psi/3\big)\le (A\log n)\Prob\big(|\{v\in\CV_{k_n}[1, \underline w_n): v\sim \Lambda_{k_n}^\complement\times[1,\underline w_n)\}|\ge (\psi/3)k_n\big).
    \]
    \cref{claim:crossing-edges} is applicable here for $n_{\ref{claim:crossing-edges}}=k_n=n/(A\log n)$, since $\underline w_n=n^{(1+\psi)/(\tau-1)}\le k_n^{(1+\psi')/(\tau-1)}$ for any $\psi'>\psi$. So the right-hand side is smaller than $n^{-cA}$ for $n$ sufficiently large. We conclude that~\eqref{eq:size-k} follows for $\phi$ small, setting $A\ge C/c$ such that $(A\log n)^{1/d}\in\N$.
\end{proof}

We need a last preliminary claim. 
Recall $\mathrm{hubs}(\rho)$ from \eqref{eq:hubs-gen}. We introduce notation for the functions appearing in the upper and lower bound of \cref{thm:large-dev-upper}: 
\begin{equation}\label{eq:underover-h}
    h_\mathrm{up}:=\lceil\mathrm{hubs}(\rho)\rceil, 
    \qquad 
    h_\mathrm{lo}:=\lim_{r\downarrow \rho}\lceil\mathrm{hubs}(r)\rceil=
    \begin{dcases}
        \lceil\mathrm{hubs}(\rho)\rceil,&\text{if }\mathrm{hubs}(\rho)\notin\N\text{ or }p=1,\\
        \mathrm{hubs}(\rho) + 1,&\text{if }\mathrm{hubs}(\rho)\in\N\text{ and }p<1.
    \end{dcases}
    \end{equation}
    Clearly $h_\mathrm{lo} \ge h_\mathrm{up} \ge \mathrm{hubs}(\rho)$, and for all $\rho\in(\theta, 1)$ and  $p<1$ also $h_\mathrm{lo} > \mathrm{hubs}(\rho)$ strictly.
\begin{claim}[Truncation of the generating function]\label{claim:hubs-lower}
     Consider a KSRG on a PPP satisfying Assumption \ref{assumption:main} with $\tau<\infty$. Let $\rho\in(\theta,1)$. There exist constants $\psi>0$ and $\ell_0\in\N$ such that for all $\ell^\ast>\ell_0$,
    \begin{equation}
    (1-\psi)^2 \bigg(\theta + \sum_{\ell=1}^{\ell^\ast} \big(1-(1-p)^{h_\mathrm{lo} \ell}\big) \theta_\ell\bigg) > \rho,\label{eq:hubs-lower-trunc}
    \end{equation}
    and
    \begin{equation}
    (1-\psi)^2\sum_{\ell=1}^{\ell^\ast}\theta_\ell(1-p)^{(h_\mathrm{up}-1)\ell}>
    1-\rho+\psi.
\label{eq:hubs-upper-trunc}
    \end{equation}
   \end{claim}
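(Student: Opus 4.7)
\smallskip\noindent
\textbf{Proof plan for Claim \ref{claim:hubs-lower}.} The strategy is to first identify the two relevant \emph{infinite} sums with strictly sharper analogues of the target inequalities, then obtain the truncated version by choosing $\ell^\ast$ large and $\psi$ small. The starting point is the identity
\[
H_\CC\big((1-p)^{\mathrm{hubs}(\rho)}\big) \;=\; \sum_{\ell\ge 1} \theta_\ell\, (1-p)^{\mathrm{hubs}(\rho)\,\ell} \;=\; 1-\rho\qquad (p<1),
\]
which follows by directly evaluating the definition of $\mathrm{hubs}(\rho)$ in \eqref{eq:hubs-gen}: plugging $z=(1-p)^{\mathrm{hubs}(\rho)}$ into the probability generating function $H_\CC$ and using $\mathrm{hubs}(\rho)=\log H_\CC^{(-1)}(1-\rho)/\log(1-p)$ gives $H_\CC(z)=1-\rho$. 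The case $p=1$ has $\mathrm{hubs}(\rho)=h_{\mathrm{up}}=h_{\mathrm{lo}}=1$ and the inequalities reduce, under the convention $0^0:=1$, to $\theta+(1-\theta)>\rho$ and $(1-\psi)^2(1-\theta)>1-\rho+\psi$, both of which hold for $\psi$ small since $\rho<1$; we focus on $p<1$ from here on.

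\smallskip
For \eqref{eq:hubs-lower-trunc}, I would use that $h_{\mathrm{lo}}>\mathrm{hubs}(\rho)$ \emph{strictly}: if $\mathrm{hubs}(\rho)\notin\N$ then $h_{\mathrm{lo}}=\lceil \mathrm{hubs}(\rho)\rceil>\mathrm{hubs}(\rho)$, and if $\mathrm{hubs}(\rho)\in\N$ then $h_{\mathrm{lo}}=\mathrm{hubs}(\rho)+1$ by \eqref{eq:underover-h}. Since $0<1-p<1$, the map $h\mapsto (1-p)^h$ is strictly decreasing, so
\[
\sum_{\ell\ge 1} \theta_\ell\,(1-p)^{h_{\mathrm{lo}}\,\ell} \;<\; \sum_{\ell\ge 1}\theta_\ell\,(1-p)^{\mathrm{hubs}(\rho)\,\ell} \;=\; 1-\rho.
\]
Using $\sum_{\ell\ge 1}\theta_\ell=1-\theta$, this yields
$\theta+\sum_{\ell\ge 1}\theta_\ell\bigl(1-(1-p)^{h_{\mathrm{lo}}\ell}\bigr)=\rho+\delta_1$ for some constant $\delta_1=\delta_1(\rho,p)>0$. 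Analogously for \eqref{eq:hubs-upper-trunc}: $h_{\mathrm{up}}-1<\mathrm{hubs}(\rho)$ strictly (since $h_{\mathrm{up}}=\lceil\mathrm{hubs}(\rho)\rceil$ and $\mathrm{hubs}(\rho)>0$), whence
$\sum_{\ell\ge 1}\theta_\ell(1-p)^{(h_{\mathrm{up}}-1)\ell} = 1-\rho+\delta_2$ for some constant $\delta_2>0$.

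\smallskip
The final step is a standard truncation/continuity argument. Because both infinite sums are absolutely convergent with non-negative summands bounded by $\theta_\ell$, and $\sum_\ell \theta_\ell=1-\theta<\infty$, choose $\ell_0\in\N$ so large that for every $\ell^\ast\ge \ell_0$ the tails $\sum_{\ell>\ell^\ast}\theta_\ell\bigl(1-(1-p)^{h_{\mathrm{lo}}\ell}\bigr)<\delta_1/2$ and $\sum_{\ell>\ell^\ast}\theta_\ell(1-p)^{(h_{\mathrm{up}}-1)\ell}<\delta_2/2$. The truncated analogues of the two displays above then exceed $\rho+\delta_1/2$ and $1-\rho+\delta_2/2$, respectively. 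Finally fix $\psi>0$ so small that $(1-\psi)^2(\rho+\delta_1/2)>\rho$ and $(1-\psi)^2(1-\rho+\delta_2/2)>1-\rho+\psi$; the second inequality reduces to $\psi\bigl(1+2(1-\rho+\delta_2/2)\bigr)+O(\psi^2)<\delta_2/2$, which has a positive solution. This gives both \eqref{eq:hubs-lower-trunc} and \eqref{eq:hubs-upper-trunc}. No step is a genuine obstacle; the only care required is to verify the strictness of $h_{\mathrm{lo}}>\mathrm{hubs}(\rho)$ and $\mathrm{hubs}(\rho)>h_{\mathrm{up}}-1$ in all cases, and to treat the degenerate case $p=1$ via the convention $0^0=1$.
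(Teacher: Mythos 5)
Your proof is correct and follows essentially the same route as the paper: both hinge on the identity $\sum_{\ell\ge 1}\theta_\ell(1-p)^{\mathrm{hubs}(\rho)\ell}=1-\rho$ for $p<1$, the strict inequalities $h_{\mathrm{lo}}>\mathrm{hubs}(\rho)$ and $h_{\mathrm{up}}-1<\mathrm{hubs}(\rho)$ (with a separate treatment of $p=1$), and then a truncation-plus-small-$\psi$ argument. The only difference is cosmetic (you package the slack as explicit constants $\delta_1,\delta_2$ and truncate the tail, while the paper rearranges the target inequalities directly and chooses $\psi'$), so no further comparison is needed.
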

   We postpone the proof to Appendix~\ref{sec:app-trunc-gen} and proceed to the lower bound of \cref{thm:large-dev-upper}.
\begin{proposition}[Lower bound of the upper tail]\label{prop:upper-lower}
  Consider a KSRG on a PPP satisfying Assumption \ref{assumption:main} with $\tau<\infty$.
Fix $\rho\in(\theta, 1)$. 
Assume either supercriticality and $\zeta_\mathrm{long}>0$, or assume $\theta=0$. Then there exists $A=A(\rho)>0$ such that for all $n\ge 1$, 
 \begin{equation}\nonumber
 \Prob\big(|\CC_n^\sss{(1)}|> \rho n\big) \ge (1/A)n^{-(\tau-2)h_\mathrm{lo}}.
 \end{equation} 
\end{proposition}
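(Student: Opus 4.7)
\smallskip
\noindent
\emph{Proof plan.} The strategy is to exhibit a favourable event $E$ with $\Prob(E)\ge (1/A)n^{-(\tau-2)h_\mathrm{lo}}$ on which $|\CC_n^\sss{(1)}|>\rho n$.  We plant exactly $h_\mathrm{lo}$ \emph{hubs}---vertices of mark in $[Rn,2Rn)$ inside $\Lambda_n$ for a large constant $R$---and force them to agglomerate the existing components of the hub-free graph $\widetilde \CG_n:=\CG_n[1,Rn)$ into a single cluster of size exceeding $\rho n$.  By the intensity \eqref{eq:poisson-intensity}, the number of hubs is Poisson with mean $\Theta(n\cdot (Rn)^{-(\tau-1)})=\Theta(n^{-(\tau-2)})$, independent of $|\CV_n[2Rn,\infty)|$.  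Hence the event
\[
E_\mathrm{hub}:=\{|\CV_n[Rn,2Rn)|=h_\mathrm{lo}\}\cap\{\CV_n[2Rn,\infty)=\emptyset\}
\]
has probability $\Theta(n^{-(\tau-2)h_\mathrm{lo}})$, and conditionally on $E_\mathrm{hub}$, the subgraph $\widetilde \CG_n$ is distributed as a KSRG on a PPP with marks truncated above by $Rn$.

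\smallskip
\noindent
Choose $\ell^\star=\ell^\star(\rho)$ as in Claim \ref{claim:hubs-lower} and fix a small $\psi>0$ with $\psi<\psi_{\ref{claim:hubs-lower}}$.  Define the following good events on $\widetilde\CG_n$: for each $\ell\le\ell^\star$,
\(\CA_\ell:=\{|S_{n,\ell}(Rn)/n-\theta_\ell/\ell|\le \psi/(2\ell^\star)\}\),
and in Case A (supercritical with $\zeta_\mathrm{long}>0$) also
\(\CA_\mathrm{giant}:=\{|\widetilde\CC_n^\sss{(1)}|\ge (1-\psi)\theta n\}\).
Lemma \ref{lemma:size-k-comp} (with $\phi=R$ and any $C>(\tau-2)h_\mathrm{lo}$) gives $\Prob(\neg\CA_\ell)\le n^{-C}$, while Theorem \ref{thm:large-dev2} (Case A) or the vacuous choice $\widetilde\CC_n^\sss{(1)}:=\emptyset$ (Case B with $\theta=0$) gives $\Prob(\neg\CA_\mathrm{giant})\le \exp(-\Omega(n^{\zeta_\star}))$.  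Using independence of the hub PPP and the hub-free PPP,
\[
\Prob\big(E_\mathrm{hub}\cap\bigcap_{\ell\le\ell^\star}\CA_\ell\cap\CA_\mathrm{giant}\big)\ge (1-o(1))\Prob(E_\mathrm{hub})\ge \tfrac{1}{2A}n^{-(\tau-2)h_\mathrm{lo}}.
\]

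\smallskip
\noindent
Next, reveal the edges from the $h_\mathrm{lo}$ hubs to the vertices of $\widetilde\CV_n$.  Any two points of $\Lambda_n$ lie within distance $d^{1/2}n^{1/d}$, so for any non-hub $u$ with mark $\ge 1$ and any hub $v$, the connection probability satisfies $\mathrm{p}(u,v)\ge \varphi(\beta R/d^{d/2})\ge p(1-\psi')$ for $\psi'>0$ arbitrarily small once $R$ is chosen large enough.  For a fixed component $\CC$ of $\widetilde\CG_n$ of size $\ell$, the probability that \emph{no} hub connects to $\CC$ is at most $(1-p(1-\psi'))^{h_\mathrm{lo}\ell}$, and these merging indicators are conditionally independent across distinct components given $\widetilde\CG_n$ and the hub positions.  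Letting $M_\ell$ be the number of merged size-$\ell$ components, a Chernoff bound combined with $\CA_\ell$ yields, with conditional probability $1-\exp(-\Omega(n))$,
\[
\ell M_\ell\ge (1-\psi)^2\, n\,\theta_\ell\,\big(1-(1-p(1-\psi'))^{h_\mathrm{lo}\ell}\big),\qquad 1\le\ell\le\ell^\star.
\]
In Case A, $\widetilde\CC_n^\sss{(1)}$ also merges with at least one hub with conditional probability $1-\exp(-\Omega(n))$, since $(1-p(1-\psi'))^{h_\mathrm{lo}(1-\psi)\theta n}$ is super-exponentially small.  Summing contributions and applying Claim \ref{claim:hubs-lower} (after taking $\psi'$ so small that $(1-p(1-\psi'))^{h_\mathrm{lo}\ell}$ is within $o(1)$ of $(1-p)^{h_\mathrm{lo}\ell}$ uniformly for $\ell\le\ell^\star$) gives the hub cluster in $\CG_n$ size
\[
\theta n\cdot(1-\psi)+\sum_{\ell=1}^{\ell^\star}\ell M_\ell > \rho n \qquad\text{in Case A,}
\]
and the analogous estimate without the first summand in Case B.  Hence $|\CC_n^\sss{(1)}|>\rho n$ on the specified event, concluding the proof.

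\smallskip
\noindent
\emph{Main obstacles.}  The delicate point is the polynomial rate $n^{-C}$ for the concentration of $S_{n,\ell}(Rn)$, which must beat the target decay $n^{-(\tau-2)h_\mathrm{lo}}$; this is precisely what Lemma \ref{lemma:size-k-comp} provides, and is the reason why the proof of that lemma has to carefully bypass local convergence (which has no explicit rate).  A second subtlety is the choice $h_\mathrm{lo}$ versus $\lceil \mathrm{hubs}(\rho)\rceil$: when $\mathrm{hubs}(\rho)\in\N$ and $p<1$, having exactly $\mathrm{hubs}(\rho)$ hubs saturates the threshold $H_\CC((1-p)^{h})=1-\rho$ with equality, and the $(1-\psi)^2$ slack would destroy the desired strict inequality; overshooting by one hub restores the strict inequality in Claim \ref{claim:hubs-lower} at the cost of a single extra factor $n^{-(\tau-2)}$, which is reflected in the exponent $h_\mathrm{lo}$ instead of $h_\mathrm{up}$ in the lower bound.
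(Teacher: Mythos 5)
The proposal follows essentially the same blueprint as the paper: plant $h_\mathrm{lo}$ hubs, verify that size-$\ell$ component counts concentrate via Lemma~\ref{lemma:size-k-comp}, and show that enough small clusters attach to the hubs to push the giant above~$\rho n$. The independence argument, the Chernoff step for merged clusters, and the appeal to Claim~\ref{claim:hubs-lower} all match the paper. But there is one concrete gap.

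You invoke Lemma~\ref{lemma:size-k-comp} with $\phi=R$, where $R$ is a \emph{large} constant chosen so that every hub connects to every non-hub with probability~$p$ (this forces $R\ge d^{d/2}/\beta$ for the profile in~\eqref{eq:connection-prob-gen}). However Lemma~\ref{lemma:size-k-comp} only gives concentration of $S_{n,\ell}(\phi n)$ for $\phi\in(0,\phi_0)$, and $\phi_0$ is a possibly \emph{small} constant: in Claim~\ref{claim:total-deg} (which underpins the lemma), $\phi_0$ is forced small so that the total degree emanating from vertices with mark in $[\underline w_n,\phi n)$ stays below $\psi n$, and the prefactor $c_\phi$ there only tends to zero as $\phi\downarrow 0$. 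These two requirements on the single constant $\phi=R$ pull in opposite directions, and in general $d^{d/2}/\beta>\phi_0$, so the lemma does not apply as you have set it up. Conditioning away the intermediate marks $[\phi n, Rn)$ cannot rescue this, because $\Prob\big(\CV_n[\phi n, Rn)\neq\emptyset\big)=\Theta(n^{-(\tau-2)})$, which is far larger than the needed $n^{-C}$ with $C>(\tau-2)h_\mathrm{lo}$. The paper avoids this by \emph{decoupling} the two constants: it counts size-$\ell$ components and the giant in $\CG_n[1,\phi n)$ for a small $\phi$, uses a separate large $R$ as the hub mark threshold, and simply ignores vertices with mark in $[\phi n, Rn)$ (they only help a lower bound on $|\CC_n^{\sss{(1)}}|$). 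Replacing every occurrence of $S_{n,\ell}(Rn)$ and $\widetilde\CC_n^{\sss{(1)}}$ by the $\phi n$-truncated versions in your argument closes the gap.

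Two small remarks. First, for $R\ge d^{d/2}/\beta$ and vertex marks $\ge 1$, the connection probability to a hub is \emph{exactly} $p$, so the slack $p(1-\psi')$ and the limiting procedure $\psi'\downarrow 0$ are unnecessary. Second, $(1-p)^{h_\mathrm{lo}(1-\psi)\theta n}$ is exponentially small, not ``super-exponentially'' small; what matters (and holds) is that it is $o(n^{-C})$ for any fixed $C$.
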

\begin{proof} The proof is similar in spirit to the proof of \cref{upper-lrp-nnp} on page \pageref{proof:upper-lrp-nnp}, but we also need to deal with high-mark vertices. See Section \ref{sec:outline-upper} for an outline that we make formal now. 
    Let $R,\ell^\star$ be two sufficiently large constants, and let $\phi,\psi>0$ be two small constants so that  given $\rho\in (\theta,1)$, \eqref{eq:hubs-lower-trunc} holds.
    Write $T_{n,\ell}$ for the number of components of size $\ell$ in $\CG_n[1,\phi n)$ that connect by an edge to the vertices with mark in $\CV_n[Rn,\infty)$. Define the events 
   
    \begin{align}
    \CA_\mathrm{comp}&:=
    \big\{|\CC_n^\sss{(1)}[1,\phi n)|/n\ge \theta(1-\psi)\big\}\cap \big\{\forall \ell\le \ell^\ast: S_{n,\ell}(\phi n)\ge (1-\psi)n\theta_\ell/\ell\big\}, \label{eq:a-comp-7}\\
    \CA_\mathrm{hub}&:=\big\{|\CV_{n}[Rn, \infty)|= h_\mathrm{lo}\big\}\cap\big\{\forall v\in \CV_{n}[Rn, \infty): v\sim \CC_n^\sss{(1)}[1, \phi n)\big\},\label{eq:a-hub-7}\\
    \CA_\mathrm{conn}&:=\big\{\forall \ell\le \ell^\ast: T_{n,\ell}\ge (1-\psi) \big(1-(1-p)^{h_\mathrm{lo} \ell}\big) \big((1-\psi)n \theta_\ell/\ell\big) \big\}.\label{eq:a-conn-7}
    \end{align} 
    We shall show that 
    \begin{equation}\label{eq:comp-implication}
     \CA_\mathrm{comp} \cap \CA_\mathrm{hub} \cap \CA_\mathrm{conn} \subseteq \{ |\CC_n^\sss{(1)}|>\rho n\}.
    \end{equation}
   By its definition, $T_{n,\ell}$ many clusters of size $\ell$ connect to the hubs listed in $\CA_\mathrm{hub}$, which then all are part of the giant component in the whole graph. Hence, on $\CA_{\mathrm{hub}}$, the giant $|\CC_n^\sss{(1)}[1,\phi n)|$ increases by at least $\ell T_{n,\ell}$ when fully revealing the graph.  
   By the bounds on $T_{n,\ell}$  and the initial size $(1-\psi) \theta n $ of the giant inside the events, and \eqref{eq:hubs-lower-trunc} in \cref{claim:hubs-lower}, we obtain that 
    \[
    \begin{aligned}
    |\CC_n^\sss{(1)}|/n &\ge \theta(1-\psi) + \frac{1}{n}\sum_{\ell=1}^{\ell^\ast}\ell T_{n,\ell} \ge \theta(1-\psi) + (1-\psi)^2\sum_{\ell=1}^{\ell^\ast} \big(1-(1-p)^{h_\mathrm{lo}\ell}\big) \theta_\ell\\
    &\ge (1-\psi)^2\bigg(\theta + \sum_{\ell=1}^{\ell^\ast} \big(1-(1-p)^{h_\mathrm{lo}\ell}\big) \theta_\ell\bigg) > \rho,
    \end{aligned}
    \]
   so 
  \eqref{eq:comp-implication} holds. We bound from below the probability of the  intersection  using the law of total probability:
      \begin{equation} \label{eq:law-total-prob}
        \Prob\big(\CA_\mathrm{comp}\cap\CA_\mathrm{hubs}\cap\CA_\mathrm{conn}\big) = \Prob\big(\CA_\mathrm{comp}\cap\CA_\mathrm{hubs}\big)\Prob\big(\CA_\mathrm{conn}\mid\CA_\mathrm{comp}\cap\CA_\mathrm{hubs}\big).
    \end{equation}
   To study the first factor, we use the law of total probability again and then also $\Prob(A\cap B)\ge \Prob(A)-\Prob(\neg B)$ (see \eqref{eq:a-comp-7}-\eqref{eq:a-hub-7}):
\begin{align}
    \Prob\big(\CA_\mathrm{comp}\cap \CA_\mathrm{hubs}\big) &\ge
    \Prob\Big(\forall v\in \CV_{n}[Rn, \infty): v\sim \CC_n^\sss{(1)}[1, \phi n) \mid \CA_\mathrm{comp}\cap
    \big\{|\CV_{n}[Rn, \infty)|=h_\mathrm{lo}\big\}\Big) \nonumber \\
  &\qquad \cdot  \Big(\Prob\big(
    |\CV_{n}[Rn, \infty)|= h_\mathrm{lo}
    \big)-\Prob\big(\neg\CA_\mathrm{comp}\big)\Big).    \label{eq:upper-tail-lower-pr1}
       \end{align}
    For the first term in the row \eqref{eq:upper-tail-lower-pr1} we use that $|\CV_n[Rn,\infty)|\sim\mathrm{Poi}(n(Rn)^{-(\tau-1)})$ by the intensity in \eqref{eq:poisson-intensity}. Note here that the parameter tends to $0$ since $\tau>2$. Therefore, since $h_{\mathrm{lo}}$ is a $\rho$-dependent constant that does not depend on $n$,
    \begin{equation}\label{eq:lb-1st-term-7}
    \begin{aligned}
    \Prob\big(|\CV_{n}[Rn, \infty)|= h_\mathrm{lo}\big) &= \exp\big(\!-\!R^{-(\tau-1)}n^{-(\tau-2)}\big)\frac{\big(R^{-(\tau-1    )}n^{-(\tau-2)}\big)^{h_\mathrm{lo}}}{h_\mathrm{lo}!}\ge c_{R,\rho}n^{-(\tau-2)h_\mathrm{lo}}
    \end{aligned}
    \end{equation}
    for some constant $c_{R,\rho}>0$. We now bound $\Prob(\neg\CA_\mathrm{comp})$ in \eqref{eq:upper-tail-lower-pr1}. 
    \begin{equation}
    \Prob(\neg\CA_\mathrm{comp}) \le \Prob\Big( \exists \ell\le \ell^\ast: \ell S_{n,\ell}(\phi n)/n\le (1-\psi)\theta_\ell\Big) + \Prob(|\CC_n^\sss{(1)}[1,\phi n)|/n< \theta(1-\psi) ).
    \end{equation}
    Lemma~\ref{lemma:size-k-comp} and a union bound over the $\ell^\star$ many $\ell$ shows that the first term is at most $n^{-C}$ for arbitrary large $C$ when $\phi<\phi_0(\psi)$. For the second term, there are two cases. Either $\theta=0$ and then this probability is $0$; or, $\theta$ is non-zero. In the latter case, when additionally $\zeta_{\mathrm{long}}>0$, our result for the \emph{lower} tail of large deviations in \cref{thm:large-dev2} guarantees that this event has stretched exponentially decaying probability in $n$.  This is where we use that the model must be supercritical with $\zeta_{\mathrm{long}}>0$ when\footnote{Inspection of the proof of \cref{thm:large-dev2} shows that the sprinkling technique used in Section \ref{sec:biskup} needs supercriticality. The assumption $\zeta_{\mathrm{long}}>0$ is the technical condition of \cref{thm:large-dev2} that guarantees enough long edges.} $\theta>0$.
    So, under these conditions, we know that  $\Prob\big(\neg\CA_\mathrm{comp}\big)$ is of smaller order than the lower bound in \eqref{eq:lb-1st-term-7} on the first term in \eqref{eq:upper-tail-lower-pr1}. 

    We now analyze the conditional probability on the line above \eqref{eq:upper-tail-lower-pr1}. Take $R>\sqrt{d}/\beta$. Then a vertex with mark at least $Rn$ connects by an edge to any other vertex in $\Lambda_n$ with probability $p$ by \eqref{eq:connection-prob-gen}. The giant $\CC_n^{\sss{(1)}}[1,\phi n)$ has size at least $(1-\psi)\theta n$ on the event $\CA_{\mathrm{comp}}$ in \eqref{eq:a-comp-7} in the conditioning. So, by a union bound, the (conditional) probability that all the $h_\mathrm{lo}$ many vertices of mark at least $Rn$ connect by an edge to $\CC_n^{\sss{(1)}}[1,\phi n)$ is at least $1-h_\mathrm{lo}(1-p)^{\Theta(n)}$, which tends to $1$ as $n\to\infty$. Combining these with \eqref{eq:upper-tail-lower-pr1}--\eqref{eq:lb-1st-term-7} gives in \eqref{eq:law-total-prob}
    \begin{equation}\label{eq:ltp-t1}
\Prob\big(\CA_\mathrm{comp}\cap\CA_\mathrm{hubs}\big)\ge c_{R,\rho}n^{-(\tau-2)h_\mathrm{lo}}.    
    \end{equation}
    We show that the other term in  \eqref{eq:law-total-prob}, $
    \Prob\big(\CA_\mathrm{conn}\mid \CA_\mathrm{comp}\cap\CA_\mathrm{hubs}\big)$ tends to $1
    $
    as $n\to\infty$. Recall these events from \eqref{eq:a-comp-7}--\eqref{eq:a-conn-7} and that under the conditioning, $T_{n,\ell}$ denotes the number of size-$\ell$ components of $\CG_n[1,\phi n)$ that connect to one of the $h_{\mathrm{lo}}$ many vertices of mark at least $Rn$ (the hubs).
    As before, each vertex connects independently to these hubs with probability $p$. Further, the vertex set counted in $T_{n,\ell}$ is disjoint of the vertices in $\CC_{n}^{\sss{(1)}}[1,\phi n)$ so their connection to the hubs is independent of whether the vertices in $\CC_{n}^{\sss{(1)}}[1,\phi n)$ connect to the hubs, which is an event inside the conditioning $\CA_{\mathrm{comp}}\cap \CA_\mathrm{hubs}$. So, under the conditioning, 
 each component of size $\ell$ in $\CG_n[1,\phi n)$ connects to one of the $h_{\mathrm{lo}}$ hubs with probability $1-(1-p)^{h_\mathrm{lo} \ell}$ independently across size-$\ell$ components and across $\ell<\ell^\star$. Hence, given the number of size-$\ell$ components in $\CG_n[1,\psi n)$,  $(S_{n,\ell}(\phi n))_{\ell\le \ell^\star}$, the variables $T_{n,\ell}$ are binomially distributed with parameters  $S_{n,\ell}(\phi n)$ and $(1-(1-p)^{h_\mathrm{lo}\ell})$. Further, under the conditioning in $\CA_{\mathrm{comp}}$, each $S_{n,\ell}(\phi n)$ is at least $(1-\psi) n \theta_\ell/\ell$, so $T_{n,\ell}$ dominates a binomial random variable with parameters $(1-\psi) n \theta_\ell/\ell$ and $(1-(1-p)^{h_\mathrm{lo}\ell})$ for each $\ell\le \ell^\star$.
 A union bound over $\ell\le \ell^\star$ and then a Chernoff bound for each $\ell$  give that 
$\Prob\big(\CA_\mathrm{conn}\mid \CA_\mathrm{comp}\cap\CA_\mathrm{hubs}\big) \to 1
    $ as $n\to\infty$. Using this in \eqref{eq:law-total-prob} and \eqref{eq:ltp-t1} gives in \eqref{eq:comp-implication} that for some constant $c_{R,\rho}>0$, 
\begin{align*}\Prob(\CC_{n}^{\sss{(1)}}>\rho n) \ge  \Prob\big(\CA_\mathrm{comp}\cap\CA_\mathrm{hubs}\cap\CA_\mathrm{conn}\big)&\ge c_{R,\rho} n^{-(\tau-2)h_\mathrm{lo}}.\qedhere
    \end{align*}    
\end{proof}
We proceed to the upper bound of \cref{thm:large-dev-upper}. 
\begin{proposition}[Upper bound of the upper tail]\label{prop:upper-upper}
 Consider a KSRG satisfying Assumption \ref{assumption:main} on a PPP with $\tau<\infty$.
Fix $\rho\in(\theta, 1)$. There exists $A=A(\rho)>0$ such that for all $n\ge 1$,
 \begin{equation}\nonumber
\Prob\big(|\CC_n^\sss{(1)}|> \rho n\big)\le An^{-(\tau-2)h_\mathrm{up}}.
 \end{equation} 
 \end{proposition}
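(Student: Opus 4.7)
The plan is to decompose by the number of hubs and show that with fewer than $h_{\mathrm{up}}$ hubs the giant cannot be too large, while the event of having at least $h_{\mathrm{up}}$ hubs is itself already of the target polynomial order. Let $\phi>0$ be small, to be chosen, and set $H:=|\CV_n[\phi n,\infty)|$. By the PPP intensity \eqref{eq:poisson-intensity}, $H\sim\mathrm{Poi}(\phi^{-(\tau-1)}n^{-(\tau-2)})$, so $\Prob(H\ge h_{\mathrm{up}})\le \phi^{-(\tau-1)h_{\mathrm{up}}}n^{-(\tau-2)h_{\mathrm{up}}}/h_{\mathrm{up}}!$, matching the desired upper bound. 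It therefore suffices to show that for each fixed $h\in\{0,\ldots,h_{\mathrm{up}}-1\}$, the conditional event $\{|\CC_n^\sss{(1)}|>\rho n\}\cap\{H=h\}$ has probability $O(n^{-C})$ for an arbitrarily large $C$.

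For that, fix $\psi>0$ and $\ell^\ast$ as supplied by \cref{claim:hubs-lower} so that \eqref{eq:hubs-upper-trunc} holds, and fix $\phi\in(0,\phi_0)$ with $\phi_0$ from \cref{lemma:size-k-comp} so that, simultaneously for all $\ell\le\ell^\ast$,
\[
\CA_1:=\big\{\forall \ell\le\ell^\ast: S_{n,\ell}(\phi n)\ge(1-\psi)n\theta_\ell/\ell\big\}
\]
holds with probability $1-O(n^{-C})$. Reveal $\CG_n[1,\phi n)$ first (so $\CA_1$ is measurable), and then reveal the hubs; since hubs form an independent PPP obtained by Poisson thinning, conditioning on $\{H=h\}$ does not affect the law of $\CG_n[1,\phi n)$. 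For each size-$\ell$ component of $\CG_n[1,\phi n)$, the connection probability to any fixed hub is bounded above by $p$, so independently across hubs and components the probability that a given component connects to \emph{none} of the $h$ hubs is at least $(1-p)^{h\ell}$. Hence the number $D_\ell$ of size-$\ell$ components disconnected from every hub dominates, conditionally on $\CA_1\cap\{H=h\}$, a $\mathrm{Bin}((1-\psi)n\theta_\ell/\ell,(1-p)^{h\ell})$ variable, and by Chernoff plus a union bound
\[
\CA_2:=\big\{\forall \ell\le\ell^\ast: D_\ell\ge(1-\psi)^2 (n\theta_\ell/\ell)(1-p)^{h\ell}\big\}
\]
holds with conditional probability $1-O(\exp(-cn))$.

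On $\CA_1\cap\CA_2\cap\{H=h\}$, every size-$\ell$ component counted in $D_\ell$ remains a connected component of $\CG_n$ of size $\ell\le\ell^\ast$, so (for $n$ large enough that $|\CC_n^\sss{(1)}|>\rho n>\ell^\ast$) these vertices all lie in $\CV_n\setminus\CC_n^\sss{(1)}$. Therefore
\[
n-|\CC_n^\sss{(1)}|\ge\sum_{\ell=1}^{\ell^\ast}\ell\, D_\ell\ge(1-\psi)^2 n\sum_{\ell=1}^{\ell^\ast}\theta_\ell(1-p)^{h\ell}\ge(1-\psi)^2 n\sum_{\ell=1}^{\ell^\ast}\theta_\ell(1-p)^{(h_{\mathrm{up}}-1)\ell}>n(1-\rho+\psi),
\]
where the penultimate inequality uses monotonicity in $h$ and the last is \eqref{eq:hubs-upper-trunc}. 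This forces $|\CC_n^\sss{(1)}|<\rho n$, contradicting $\{|\CC_n^\sss{(1)}|>\rho n\}$. Thus $\{|\CC_n^\sss{(1)}|>\rho n,H=h\}\subseteq (\neg\CA_1\cup\neg\CA_2)\cap\{H=h\}$, which has probability $O(n^{-C})$. Summing the $h_{\mathrm{up}}$ conditional bounds with the Poisson tail bound for $\{H\ge h_{\mathrm{up}}\}$ yields the statement.

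The only delicate point is the coupling that guarantees \emph{independent} Bernoulli trials for ``component $\CC$ disconnected from hub $v$'': this requires that we reveal $\CG_n[1,\phi n)$ first so that the size-$\ell$ component vertex sets are frozen, and then use Poisson thinning to treat the hubs as an independent process. All remaining steps are routine Chernoff bounds and the already-proven concentration \cref{lemma:size-k-comp}; no further appeal to the lower tail \cref{thm:large-dev2} is needed for the upper bound (unlike in \cref{prop:upper-lower}).
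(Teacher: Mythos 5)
Your route is essentially the paper's: concentrate the number of size-$\ell$ components using \cref{lemma:size-k-comp}, bound the hub count by a Poisson tail, show that components disconnected from all hubs remain disconnected in $\CG_n$, and invoke \cref{claim:hubs-lower} to conclude. The paper packages the hub count into the single event $\CA_{\mathrm{hubs}}:=\{|\CV_n[\phi n,\infty)|\le h_{\mathrm{up}}-1\}$ rather than summing over $h=0,\dots,h_{\mathrm{up}}-1$ as you do, but that is a cosmetic difference.

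There is, however, one genuine slip in your displayed chain. You write
\[
n-|\CC_n^\sss{(1)}|\ge\sum_{\ell=1}^{\ell^\ast}\ell\,D_\ell,
\]
but the quantity on the right lower-bounds the number of \emph{vertices} outside the giant, i.e.\ $|\CV_n|-|\CC_n^\sss{(1)}|=|\CV_n\setminus\CC_n^\sss{(1)}|$, and $|\CV_n|\sim\mathrm{Poi}(n)$ can exceed $n$. Without a bound on $|\CV_n|$ the chain does not force $|\CC_n^\sss{(1)}|<\rho n$. The paper addresses this by including $\{|\CV_n|/n\le 1+\psi\}$ in $\CA_{\mathrm{comp}}$ and then writing $1-|\CC_n^\sss{(1)}|/n\ge |\CV_n\setminus\CC_n^\sss{(1)}|/n-\psi$; the extra $+\psi$ slack built into \eqref{eq:hubs-upper-trunc} is there precisely to absorb this correction, so your use of \cref{claim:hubs-lower} is already prepared for it. The fix is to add $\{|\CV_n|\le(1+\psi)n\}$ to your $\CA_1$ (a Poisson Chernoff bound, error $\exp(-\Omega(n))$) and replace $n-|\CC_n^\sss{(1)}|$ by $|\CV_n|-|\CC_n^\sss{(1)}|$ in the chain, then subtract $\psi n$ at the end. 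As written the step is incorrect; with that addition the argument goes through.
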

\begin{proof}
Given $\rho$, fix $\phi,\psi>0$ sufficiently small and $\ell^\star$ sufficiently large so that \eqref{eq:hubs-upper-trunc} holds for $\psi$ and $\ell^\star$.
    We write $\overline T_{n,\ell}$ for the number of components of size $\ell$ in $\CG_n[1,\phi n)$ that do \emph{not} connect by an edge to the vertices with mark at least $\phi n$, and define
    \begin{equation}\label{eq:a-events-up-7}
    \begin{aligned}
    \CA_\mathrm{comp}&:=\big\{|\CV_n|/n\le 1+\psi\}\cap \big\{\forall \ell\le \ell^\ast+1:  S_{n,\ell}(\phi n)\ge (1-\psi)n\theta_\ell/\ell\big\},\\
    \CA_\mathrm{hubs}&:=\big\{|\CV_{n}[\phi n, \infty)|\le h_\mathrm{up}-1\big\},\\
    \CA_\mathrm{conn}&:=\big\{\forall \ell\in[\ell^\ast]: \overline T_{n, \ell}> (1-\psi) (1-p)^{(h_\mathrm{up}-1) \ell}\big((1-\psi) n\theta_\ell/\ell\big) \big\}.
    \end{aligned}
    \end{equation}
    We show in the next paragraph that 
    \begin{equation}\label{eq:upper-containment}
        \CA_\mathrm{comp}\cap \CA_\mathrm{hubs}\cap \CA_\mathrm{conn} \subseteq \{ |\CC_{n}^{\sss{(1)}}| \le \rho n  \}. 
    \end{equation}   
    To show this, we bound the size of the complement of the giant on the intersection of the three events. Since $S_{n,\ell^\ast+1}(\phi n)\!\ge\!1$ for all sufficiently large $n$, the largest component has size at least $\ell^\ast$. Hence, vertices in components of size $\ell\le \ell^\star$ are not in the giant of $\CG_n[1,\psi n)$.   Combining the bounds in the events with  \cref{claim:hubs-lower}, we obtain for sufficiently small $\psi$ that
    \[
    1-|\CC_n^\sss{(1)}|/n \ge  |\CV_n\setminus \CC_n^\sss{(1)}|/n -\psi \ge \frac{1}{n}\sum_{\ell=1}^{\ell^\ast}\ell\overline T_{n,\ell} - \psi 
    \ge 
    (1-\psi)^2\sum_{\ell=1}^{\ell^\ast}\theta_\ell(1-p)^{(h_\mathrm{up}-1)\ell} - \psi > 1-\rho.
    \]
    Rearranging shows \eqref{eq:upper-containment}, which implies that $\{|\CC_n^\sss{(1)}|>\rho n\}\subseteq (\neg\CA_\mathrm{comp}) \cup (\neg \CA_\mathrm{hubs})\cup (\neg\CA_\mathrm{conn})$.
    Thus, it follows that    \begin{equation}\label{eq:upper-upper-pr1} 
    \begin{aligned}
\Prob\big(|\CC_n^\sss{(1)}|>\rho n\big) &\le \Prob\big(\neg \CA_\mathrm{comp}\big) + \Prob\big(\neg \CA_\mathrm{hubs}\big)+\Prob\big(\neg \CA_\mathrm{conn}\cap \CA_\mathrm{comp}\cap\CA_\mathrm{hubs}\big)\\
    &\le \Prob\big(\neg \CA_\mathrm{comp}\big) + \Prob\big(\neg \CA_\mathrm{hubs}\big)+\Prob\big(\neg \CA_\mathrm{conn}\mid \CA_\mathrm{comp}\cap\CA_\mathrm{hubs}\big).
    \end{aligned}
    \end{equation}
    By \cref{lemma:size-k-comp} and the concentration of Poisson random variables in \cref{lemma:poisson-1}, $\Prob\big(\neg\CA_\mathrm{comp}\big)=o(n^{-C})$ for any $C>0$ if $\phi=\phi(\psi)$ is sufficiently small. For $\Prob\big(\neg \CA_\mathrm{hubs}\big)$, we use similarly to \eqref{eq:lb-1st-term-7} that the number of hubs is Poisson. So, there exists a constant $C_{\phi,\rho}>0$ such that for all $n\ge 1$,
    \[
    \begin{aligned}
    \Prob\big(\neg \CA_\mathrm{hubs}\big) = \Prob\big(\mathrm{Poi}\big(n(\phi n)^{-(\tau-1)}\big)\ge h_\mathrm{up}\big) &= \exp\big(-\phi^{-(\tau-1)}n^{-(\tau-2)}\big)\sum_{j=h_\mathrm{up}}^\infty\frac{\phi^{-j(\tau-1)}n^{-j(\tau-2)}}{j!} \\
    &\le C_{\phi, \rho} n^{-(\tau-2)h_\mathrm{up}}.
    \end{aligned}
    \]
    For the conditional probability in~\eqref{eq:upper-upper-pr1}, with the corresponding events defined in \eqref{eq:a-events-up-7}, we argue similarly as in the proof of the lower bound. The probability that a single vertex in a size-$\ell$ component of $\CG_{n}[1, \phi n)$ connects by an edge to each of the at most $h_\mathrm{up}\!-\!1$ hubs is at most $p$, independently of each other under the conditioning. Hence, the probability that a component of size $\ell$ does not connect by an edge to one of the hubs is at least $(1-p)^{\ell(h_\mathrm{up}-1)}$, and connected components connect independently by an edge to the hubs. Thus, conditionally on $S_{n,\ell}(\phi n)$ and $\CA_\mathrm{comp}\cap\CA_\mathrm{hubs}$, $\overline T_{n,\ell}$ dominates a binomial random variable with parameters $S_{n,\ell}(\phi n)$ and $(1-p)^{\ell(h_\mathrm{up}-1)}$. On the event $\CA_{\mathrm{comp}}$, 
   $S_{n,\ell}(\phi n)\ge (1-\psi) n \theta_\ell /\ell$, so $\overline T_{n,\ell}$ dominates a binomial random variable with parameters  $(1-\psi) n \theta_\ell /\ell$ and $(1-p)^{\ell(h_\mathrm{up}-1)}$. A union bound over $\ell\le \ell^\star+1$ and then a Chernoff bound shows that    $\Prob\big(\neg \CA_\mathrm{conn}\mid \CA_\mathrm{comp}\cap\CA_\mathrm{hubs}\big)=\exp\big(-\Theta(n)\big)$. Substituting these three bounds into~\eqref{eq:upper-upper-pr1} finishes the proof.
\end{proof}

\begin{proof}[Proof of \cref{thm:large-dev-upper}]
    Recall $h_\mathrm{lo}$ and $h_\mathrm{up}$ from~\eqref{eq:underover-h}. The lower bound is given by Proposition~\ref{prop:upper-lower}, the upper bound by Proposition~\ref{prop:upper-upper}.
\end{proof}

\begin{appendices}
 \section{Preliminary lemmas}
 \begin{lemma}[Concentration of Poisson random variables {\cite{mitzenmacher2017probability}}]\label{lemma:poisson-1}
 For $x>1$, 
 \begin{equation}
  \Prob\big(\Poi(\lambda) \ge x\lambda\big)\le\exp(-\lambda(1+x\log(x)-x)),
 \end{equation}
 and for $x<1$,
  \begin{equation}
  \Prob\big(\Poi(\lambda) \le x\lambda\big)\le 
  \exp(-\lambda(1-x-x\log(1/x)).
 \end{equation}
\end{lemma}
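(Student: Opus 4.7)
The plan is to apply the standard Chernoff--Cram\'er (moment generating function) method separately to each tail, exploiting the explicit MGF of a Poisson variable, namely $\E[e^{t\,\Poi(\lambda)}] = \exp(\lambda(e^t-1))$ for every $t\in\R$.

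For the upper tail with $x>1$, I would first apply Markov's inequality to $e^{t\,\Poi(\lambda)}$ for an arbitrary $t>0$ to obtain
\[
\Prob\big(\Poi(\lambda) \ge x\lambda\big) \le e^{-tx\lambda}\,\E[e^{t\,\Poi(\lambda)}] = \exp\bigl(\lambda(e^t - 1 - tx)\bigr),
\]
and then optimize over $t>0$. Differentiating $e^t - 1 - tx$ and setting the derivative to zero yields $t^\star = \log x$, which is positive precisely because $x>1$. Plugging $t^\star$ back in produces exponent $\lambda(x - 1 - x\log x) = -\lambda(1 + x\log x - x)$, matching the first stated bound.

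The lower tail with $x<1$ I would handle symmetrically, applying Markov to $e^{-t\,\Poi(\lambda)}$ for $t>0$:
\[
\Prob\big(\Poi(\lambda) \le x\lambda\big) \le e^{tx\lambda}\,\E[e^{-t\,\Poi(\lambda)}] = \exp\bigl(\lambda(e^{-t} - 1 + tx)\bigr).
\]
Minimizing over $t>0$ gives the optimizer $t^\star = \log(1/x)$, which is positive since $x<1$, and substituting yields $\exp(-\lambda(1 - x - x\log(1/x)))$, as claimed.

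I do not expect any real obstacle: the Poisson MGF is elementary, the optimizer lies in the admissible range $t>0$ in each regime, and nothing beyond differentiation of a one-variable convex function and Markov's inequality is needed. The only bookkeeping is verifying the positivity of $t^\star$ in each case, which follows immediately from the sign conventions on $x$, and keeping track of signs when rewriting $x - 1 - x\log x$ as $-(1 + x\log x - x)$ (for $x>1$) and as $-(1 - x - x\log(1/x))$ (for $x<1$) to match the stated form.
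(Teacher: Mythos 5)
Your proof is correct and is the standard Chernoff--Cram\'er argument for Poisson tail bounds; the paper itself gives no proof but merely cites Mitzenmacher--Upfal, whose derivation of exactly these inequalities is the same MGF-plus-Markov optimization you carried out.
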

 \section{Existence of a giant: postponed proofs}\label{app:linear-sized}
\subsection{Polynomially-sized components}
 The proof of \cref{lemma:ltld-weakest} uses a similar strategy as the proof of \cref{proposition:ltld-weaker}: we first prove the statement along a subsequence $(n_i)_{i\ge 1}$ using a multi-scale renormalization method, and then extend the result to all $n$. For the multi-scale renormalization, we follow the steps \emph{(Sub),(Con),(Mark)} outlined in \cref{sec:outline-biskup-renorm}. The initialization of the renormalization is different. Here, we shall use the uniqueness of the infinite cluster to construct large connected components in level-$0$ boxes. We use slightly overlapping boxes, so the events that two overlapping boxes contain a large connected component in their induced subgraph are no longer independent. Therefore, we cannot use Chernoff bounds as we did in the proof of \cref{proposition:ltld-weaker}. Markov's inequality can still be applied, but it leads to weaker bounds. 
We start by formally setting up the renormalization. 

We recall the parameter $\beta$ from the connectivity function in \eqref{eq:connection-prob-gen}, and the critical value $\beta_c$ from \eqref{eq:betac}. 
From \ref{def:ksrg-alt}, we recall that we can construct the vertex set $\CV$ as the union of two independent PPPs $\CV^\sss{\mathrm{base}}$ and $\CV^\sss{\mathrm{spr}}$  with respective spatial intensities $q$ and $1-q$, with $\beta q^d>\beta_c$ gives two independent PPPs. The choice of $q$ ensures that the graph $\CG^\sss{\mathrm{base}}=(\CV^\sss{\mathrm{base}}, \CE^\sss{\mathrm{base}})$ induced by vertices in $\CV^\sss{\mathrm{base}}$ is supercritical itself. 
    Let $\theta^\mathrm{\sss{base}}$ be the percolation probability of $\CG^\sss{\mathrm{base}}$. Lastly, w.l.o.g.\ we assume that $\alpha< \infty$, as otherwise $\zeta_\mathrm{hl}=(\tau-1)/\alpha-(\tau-2)<0$.     
\begin{definition}[renormalization scheme  with overlapping boxes]\label{def:renorm-overlap}
    Let $\zeta=\zeta_\mathrm{hl}>0$, and let $\varepsilon'>0$ be a sufficiently small constant. Let \begin{equation}\label{eq:berger-eta}
    \eta:=\lceil2(1/\varepsilon' -1)\rceil.
    \end{equation} 
    Given $\alpha\in(1,\infty)$, let $s=s(\alpha, \varepsilon')$ be the smallest integer satisfying $s^{\varepsilon'+\alpha }(s+1)^{-\alpha}\ge 1$ (so that $ \tilde s^{(\varepsilon'+\alpha)}(\tilde  s+1)^{-\alpha}\ge 1$ for all $\tilde s \ge s$).
Let $n_0\ge \underline{n}_0\ge 1$ be two large constants. Then, we iteratively define for $i\ge 1$,
    \begin{equation}\label{eq:berger-mi-ni}
    m_i:= (i+s)^{(2+\eta)d}, \qquad \underline n_i:=\underline n_{i-1}m_i=\underline n_0\prod_{\ell=1}^i (\ell+s)^{(2+\eta)d}, 
    \qquad 
    n_i:=\big(\underline n_i^{1/d}+ n_0^{1/d}-\underline n_0^{1/d}\big)^d.
    \end{equation}       
    Let $\CQ^\sss{\mathrm{sub}}_i(x)$ and $\CQ_i(x)$ denote boxes of volume $\underline n_i$ and $n_i$ centered at $x$.
    We define a subgraph $\widehat \CG_{i, x}:=(\widehat \CV_{i, x}, \widehat \CE_{i, x})$ of the graph $\CG$ on $\CQ^\sss{\mathrm{sub}}_i(x)$. We call $\widehat \CV_{i,x}$ the level-$i$ vertex set inside $\CQ_i(x)$ and define it as 
    \begin{equation}
        \widehat\CV_{i,x}:= \CV^\sss{\mathrm{base}}_{\CQ_i(x)}\cup\CV^\sss{\mathrm{spr}}_{\CQ^\sss{\mathrm{sub}}_i(x)}[1, 2\underline n_i^\gamma).\label{eq:def-xi-berger}
    \end{equation}
   Let $\CE^\sss{\mathrm{base}}_{i,x}$ be the edges of $\CG^{\sss{\mathrm{base}}}$ inside $\CQ_i(x)$. Denote then by $\CE^{\sss{\mathrm{spr\textnormal{-}base}}}_{i,x}$ the set of edges of $\CG$ inside $Q_i^{\sss{\mathrm{sub}}}(x)$ with one endpoint in  $\CV^\sss{\mathrm{spr}}[1, 2\underline n_i^\gamma)$ and one in $\CV^\sss{\mathrm{base}}$, formally $\{\{u, v\}\in\CE_{\CQ^\sss{\mathrm{sub}}_i(x)}: u\in \CV^\sss{\mathrm{spr}}_{\CQ^\sss{\mathrm{sub}}_i(x)}[1, 2\underline n_i^\gamma), v\in \CV^\sss{\mathrm{base}}_{\CQ^\sss{\mathrm{sub}}_i(x)}\}$.
   Then we say that the level-$i$ edge set inside $\CQ_i(x)$ is 
    \begin{equation}\label{eq:def-vertex-berger}
        \widehat\CE_{i,x}:= \CE^\sss{\mathrm{base}}_{i,x}\cup\CE^{\sss{\mathrm{spr\textnormal{-}base}}}_{i,x}.
    \end{equation}    
    We define the graphs $\widehat \CG_{i, x}:=(\widehat \CV_{i, x}, \widehat \CE_{i, x})$.
    Let $\rho_0:=\theta^\sss{\mathrm{base}}/2$, and define for $i\ge 1$ \begin{equation}\label{eq:berger-rho-i}
    \rho_i:=1/(i+s)^{2d}.
    \end{equation}
    We call a box $\CQ_i(x)$ $(i, \varepsilon')$-good  if the graph $\widehat \CG_{i, x}$ satisfies the event
    \begin{equation}\label{eq:i-good-def-berger}
    \begin{aligned}
       \CA_{\mathrm{i\textnormal{-}good}, \varepsilon'}(x):= \Big\{ \exists\text{comp.\ $\CC_{i, x}$ in }\widehat\CG_{i,x}&: |\CC_{i, x}\cap\CQ^\sss{\mathrm{sub}}_i(x)| \ge  \underline n_i\prod_{\ell=0}^i\rho_\ell, \\&\hspace{10pt}|\CV^\sss{\mathrm{spr}}_{\CQ^\sss{\mathrm{sub}}_i(x)}[\underline n_i^\gamma, 2\underline n_i^\gamma)\cap\CC_{i, x}| \ge \underline{n}_i^{\zeta-2\varepsilon'}\Big\}.
    \end{aligned}
    \end{equation}   
\end{definition}

The picture about the boxes $\CQ^\sss{\mathrm{sub}}_i(x), \CQ_i(x)$ one should have in mind is as follows: $\CQ_i(x)$, the box of volume $n_i$, is the ``fattening'' of $\CQ^\sss{\mathrm{sub}}_i(x)$ by at most an $n_0$-dependent constant. By construction, $\CQ^\sss{\mathrm{sub}}_i(x)$ can be partitioned into exactly  $\underline n_i/\underline n_0$ many non-overlapping small boxes $(\CQ^\sss{\mathrm{sub}}_0(x_j))_{j\ge 1}$  of volume $\underline n_0$, centered at some $(x_j)_{j\ge 1}$. If we take the union of the overlapping fatter boxes $(\CQ_0(x_j))_{j\ge 1}$ of volume $n_0>\underline n_0$, we obtain $\CQ_i(x)$. 
Since $\underline n_0, n_0$ are both constants, we obtain that 
    \begin{equation}\label{eq:ni-bigger-box}
        n_i=(1+o(1))\underline n_i,\qquad\text{as }i\to\infty.
    \end{equation}
The edge set at level $i$ consists of all edges in $\CE^\sss{\mathrm{base}}$ whose vertices are in the larger box $\CQ_i(x)$, and additionally, the edges from certain ``high-mark vertices'' in $\CV^\sss{\mathrm{spr}}$ to $\CV^\sss{\mathrm{base}}$ with both endpoints inside the smaller box $\CQ^\sss{\mathrm{sub}}_i(x)$. 

For the connected component that realizes the event $\CA_{\mathrm{i\textnormal{-}good}, \varepsilon'}(x)$, we have a lower bound on the proportion of vertices inside the smaller box $\CQ^\sss{\mathrm{sub}}_i(x)$, and also on the amount of vertices in the range $[\underline n_i^\gamma, 2\underline n_i^\gamma)$ in the sprinkled vertex set in $\CQ^\sss{\mathrm{sub}}_i(x)$.  These criteria in \eqref{eq:i-good-def-berger} do not require any information about vertices in the `box-annulus' $\CQ_i(x)\setminus \CQ^\sss{\mathrm{sub}}_i(x)$.

We can compute the density of the component $\CC_{i,x}$ inside the smaller box $\CQ^\sss{\mathrm{sub}}_i(x)$. By definition of $\underline n_i$ in \eqref{eq:berger-mi-ni} and $\rho_i$ in~\eqref{eq:berger-rho-i}, for any $\underline n_0$ sufficiently large, using that $\eta/(\eta+2)\le \eps'$ due to~\eqref{eq:berger-eta}, this density is
    \begin{equation}
    \prod_{\ell=0}^i\rho_\ell = \rho_0\prod_{\ell=1}^i (\ell+ s)^{-2d}= \rho_0(\underline n_i/\underline n_0)^{-2/(2+\eta)}\ge (\theta^{\sss{\mathrm{base}}}/2)\cdot  n_i^{-\varepsilon'}.\label{eq:berger-rho-bound}
    \end{equation}
    Hence, combined with~\eqref{eq:ni-bigger-box}, for all $i$ sufficiently large,
    \begin{equation}\label{eq:berger-sub-pr1}
    \Prob\big(|\CC_{n_i}^\sss{(1)}| \ge n_i^{1-\varepsilon'}\theta^\sss{\mathrm{base}}/4\big) \ge \Prob\big( \CA_{\mathrm{i\textnormal{-}good}, \varepsilon'}(0)\big).
    \end{equation}
  This is closely related to what we aim for in \cref{lemma:ltld-weakest}. In the next three claims, we will show that the right-hand side can be made arbitrarily close to $1$ for all $i$ when $\underline n_0$ is sufficiently large. Afterwards we prove \cref{lemma:ltld-weakest} to bound the left-hand side for arbitrary $n$. 
      \begin{claim}[Ergodicity and uniqueness]\label{claim:ergodic-unique}
        Consider a supercritical KSRG as in \cref{lemma:ltld-weakest} and Definition \ref{def:renorm-overlap}. Then the measure $\Prob$ is ergodic, and there is a unique infinite component.
        \end{claim}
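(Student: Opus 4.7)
The plan is to treat the two assertions --- ergodicity and uniqueness --- separately, both by standard spatial-percolation machinery.

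For ergodicity, I would show the stronger property that the translation action $(T_x)_{x\in\R^d}$ is mixing with respect to $\Prob$. Translation invariance is immediate from the construction: the unit-intensity PPP $\CV=\CV^{\sss{\mathrm{base}}}\cup\CV^{\sss{\mathrm{spr}}}$ is translation invariant, the marks are iid, and the connection probability $\mathrm{p}(u,v)$ in~\eqref{eq:connection-prob-gen} depends only on marks and $\|x_u-x_v\|$. For mixing, take two cylinder events $A, B$ supported in bounded boxes $K_A, K_B$. Once $\|x\|$ exceeds $\mathrm{diam}(K_A)+\mathrm{diam}(K_B)$, the marked PPPs inside $K_A$ and $x+K_B$ are independent, so the only source of dependence between $A$ and $T_x^{-1}B$ comes from edges linking the two boxes. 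A union bound that combines the polynomial tail of $\varphi$ (or its compact support when $\alpha=\infty$) with the Pareto mark tail shows that the expected number of such edges vanishes as $\|x\|\to\infty$, so $\Prob(A\cap T_x^{-1}B)\to\Prob(A)\Prob(B)$. A monotone class argument then extends the mixing to all pairs of measurable events, and mixing implies ergodicity.

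For uniqueness, the plan is to invoke the Burton--Keane trifurcation argument in its extension by Gandolfi--Keane--Newman to long-range Bernoulli-type percolation. The essential ingredient is insertion/deletion tolerance: for any bounded box $B$, the conditional law of the configuration inside $B$ given its complement remains absolutely continuous with respect to a product reference measure, so one can locally modify the configuration with positive probability. In our setting this follows from the absolute continuity of the Lebesgue intensity driving $\CV$ and from the Pareto mark distribution having a positive density on $[1,\infty)$. The trifurcation counting argument then rules out three or more infinite clusters, while the standard FKG/symmetry argument rules out the coexistence of exactly two. For each of the specific submodels captured by Assumption~\ref{assumption:main} --- long-range percolation, scale-free percolation, geometric inhomogeneous random graphs, the age-dependent random connection model, and their close relatives --- uniqueness has already been proved in the cited literature (for instance \cite{DeiHofHoo13, gracar2019age, GraHeyMonMor19}), and those arguments transfer verbatim to the unified class considered here.

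The main obstacle is the threshold-profile case $\alpha=\infty$ combined with $p=1$, in which edges are deterministic given the marked vertex positions and edge-level finite energy fails. In that case I would run Burton--Keane at the vertex level instead, using the absolute continuity of the PPP intensity and of the mark distribution to produce the local modifications that the trifurcation argument requires; this is precisely the adaptation that makes the uniqueness argument robust across the entire KSRG class.
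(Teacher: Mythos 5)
Your plan works but diverges from the paper on both halves, and it glosses over the step the paper treats as the real content. On ergodicity, you prove the stronger mixing property via cylinder events; the paper instead attaches an iid sequence of $\mathrm{Unif}[0,1]$ marks to each Poisson point, uses these to decide edge presence, and observes that the whole graph is then a translation-equivariant factor of a single marked PPP, whose ergodicity it cites directly (\cite[Prop.~8.13]{last2017lectures}). Your version needs care that the paper's does not: because the graph has edges of unbounded length, a graph-level event such as ``the cluster of the vertex nearest $K_A$ has size $5$'' is not supported on a bounded box, so the class of events you can treat directly by your crossing-edge estimate does not generate the right $\sigma$-algebra without an approximation step. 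Encoding everything at the marked-PPP level (which is really the idea underlying your proposal anyway) makes the crossing-edge union bound unnecessary.

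On uniqueness, you correctly identify Burton--Keane plus finite energy, and your worry about $\alpha=\infty$, $p=1$ — where the edge set is a deterministic function of the marked point configuration, so edge-level insertion tolerance fails — is legitimate, and your fix (run the local modifications at the vertex level, using the absolutely continuous spatial and mark intensities) is the correct one. What you skip is the verification that occupies most of the paper's proof: the trifurcation-counting step of Burton--Keane requires that the expected number of vertices in $\Lambda_n$ with an edge leaving $\Lambda_n$ be $o(n)$. For nearest-neighbor models this is automatic, but for KSRGs with $\alpha$ near $1$ and/or heavy-tailed marks the abundance of long edges makes it a genuine condition, and the paper checks it explicitly by combining local finiteness ($\alpha>1$, $\tau>2$), translation invariance, and Mecke's formula to bound $\E[|\{v\in\Lambda_n : v\sim \Lambda_n^\complement\}|]=o(n)$. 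Asserting that the submodel proofs in \cite{DeiHofHoo13, gracar2019age, GraHeyMonMor19} ``transfer verbatim'' elides precisely this check, which is the point of writing out a unified argument for the whole KSRG class rather than citing each submodel separately.
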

        \begin{proof}
             We start with ergodicity. We construct the graph as a marked Poisson point process $\widetilde\CV$ such that the graph remains invariant under translation. These marks are `additional' to the vertex marks $w_v$. Namely, we equip each vertex $v=(x_v, w_v)\in\CV$ with a sequence of iid $\mathrm{Unif}[0,1]$ random variables $(U_{vi})_{i\ge 1}$. Given this marked PPP, we write $\CU(v)=(u_i)_{i\ge 1}$ for all vertices whose first spatial coordinate is strictly larger than the first spatial coordinate of $v$, increasingly ordered with respect to their Euclidean distance to $v$. We include an edge between $v$ and $u_i$ iff   $U_{vi}\le \mathrm{p}(v, u_i)$.
                This yields a graph that has the same distribution as in \cref{def:ksrg}. By the adaptation of~\cite[Proposition 8.13]{last2017lectures} to marked Poisson point processes (see also~\cite[Exercise 10.1]{last2017lectures}), such a process is ergodic: any event in the invariant $\sigma$-algebra has either probability $0$ or $1$.

    The uniqueness of the infinite cluster follows by the classical Burton-Keane argument~\cite{BurtonKeane}, which makes implicit use of ergodicity. We do \emph{not} sketch the Burton-Keane argument here, just highlight one point:  the argument requires that the expected size of the vertex boundary of $\Lambda_n$ (i.e., the number of vertices in $\Lambda_n$ with an edge leaving $\Lambda_n$) is of smaller order than the total number of vertices inside $\Lambda_n$. Formally, we have to show that 
    \begin{equation}\label{eq:vertex-boundary-app}
   \mathrm{VB}_n:= \E\big[|\{v\in\Lambda_n: v\text{ connected by an edge to }\Lambda_n^\complement\}|\big] 
    \end{equation}
 is $o(n)$.   Fix $\eps>0$ arbitrarily small, and let $\delta=\delta(\eps)$ be a small constant depending on $\eps$, such that the expected number of vertices in $\Lambda_n$ within distance $\delta n^{1/d}$ from the boundary $\partial\Lambda_n$ is at most $\eps n$. Since $\eps>0$ is arbitrary, we may bound from above the vertex boundary in \eqref{eq:vertex-boundary-app} by counting all these vertices in $\mathrm{VB}_n$. The remaining vertices are in $\Lambda_{n(1-\delta)^d}$, and they only contribute to the vertex boundary in \eqref{eq:vertex-boundary-app} if they have an edge of length $\delta n^{1/d}$. Thus, 
    \begin{equation}\label{eq:vertex-boundary-longer-app}
    \mathrm{VB}_n\le \eps n + \E\big[|\{v\in\Lambda_n: v\text{ incident to edge of length at least $\delta n^{1/d}$}\}|\big].
    \end{equation}
    We use translation invariance of the graph and Mecke's formula, and arrive at
      \[
  \mathrm{VB}_n\le \eps n +n (1-\delta)^d \Prob^\sss{0}\big(0\text{ incident to edge of length at least $\delta n^{1/d}$}\big).
    \]
    When $\alpha>1$ and $\tau>2$ in \cref{assumption:main}, the graph is locally finite, see also \cite{DeiHofHoo13, hirsch2017gilbertgraph, luchtrathThesis2022}, which implies that the probability on the left-hand side tends to $0$ as $n\to\infty$. Therefore, the expected size of the vertex boundary is $o(n)$. By the Burton-Keane argument, the uniqueness of the infinite cluster follows.
        \end{proof}
    \begin{claim}[Induction base for overlapping boxes]\label{claim:induction-base-ii}
        Consider a KSRG in the setting of \cref{lemma:ltld-weakest} and Definition \ref{def:renorm-overlap}. Let $\varepsilon'>0$ is a sufficiently small constant. For each $\delta>0$, there exists a constant $\underline n_0^\star>0$ such that for each $\underline n_0\ge \underline n_0^\star$ there exists $n_0>0$ such that for all $x\in\R^d$, 
        \[
        \Prob\big(\CQ_0(x) \text{ is $(0, \varepsilon')$-good}\big)\ge 1-\delta.
        \]
        \end{claim}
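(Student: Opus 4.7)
The plan is to derive the induction base from ergodicity and uniqueness of the infinite cluster of $\CG^\sss{\mathrm{base}}$, which is a supercritical KSRG by the choice $\beta q^d>\beta_c$. By translation invariance, I may assume $x=0$ throughout and abbreviate $\CQ_0=\CQ_0(0)$, $\CQ_0^\sss{\mathrm{sub}}=\CQ^\sss{\mathrm{sub}}_0(0)$.

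First, by Claim~\ref{claim:ergodic-unique}, the graph $\CG^\sss{\mathrm{base}}$ is ergodic and contains $\Prob$-a.s.\ a unique infinite cluster $\CC_\infty^\sss{\mathrm{base}}$ with density $\theta^\sss{\mathrm{base}}>0$. By the multidimensional Birkhoff ergodic theorem applied to the indicator random field $(\mathds{1}\{v\in \CC_\infty^\sss{\mathrm{base}}\})_{v\in \CV^\sss{\mathrm{base}}}$,
\[
\frac{\big|\CV^\sss{\mathrm{base}}\cap \CQ_0^\sss{\mathrm{sub}} \cap \CC_\infty^\sss{\mathrm{base}}\big|}{\underline n_0}\overset{\text{a.s.}}\longrightarrow \theta^\sss{\mathrm{base}},\qquad \text{as }\underline n_0\to\infty.
\]
Fix $\delta>0$. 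Hence for $\underline n_0$ sufficiently large (only depending on $\delta$), with probability at least $1-\delta/3$ the box $\CQ_0^\sss{\mathrm{sub}}$ contains at least $(3/4)\theta^\sss{\mathrm{base}}\underline n_0$ many vertices of the infinite cluster.

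Second, the main step is to show that most of these infinite-cluster vertices are connected to each other \emph{inside} the fattened box $\CQ_0$, i.e., they are contained in a single connected component of the subgraph $\CG^\sss{\mathrm{base}}$ restricted to $\CQ_0$. Let $\CK(n_0)$ denote the largest connected component of this restricted graph. The strategy is to show that for $n_0/\underline n_0$ sufficiently large,
\[
\Prob\Big(\big|\CV^\sss{\mathrm{base}}\cap\CQ_0^\sss{\mathrm{sub}}\cap \CC_\infty^\sss{\mathrm{base}}\setminus \CK(n_0)\big|\ge (\theta^\sss{\mathrm{base}}/4)\underline n_0\Big)\le \delta/3.
\]
A vertex $v\in \CV^\sss{\mathrm{base}}\cap \CQ_0^\sss{\mathrm{sub}}\cap \CC_\infty^\sss{\mathrm{base}}\setminus \CK(n_0)$ must be connected inside $\CC_\infty^\sss{\mathrm{base}}$ to the ``bulk'' only via a path using at least one edge leaving $\CQ_0$. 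Applying Markov's inequality to the expected size of this ``bad'' set combined with the vertex-boundary estimate used in the proof of Claim~\ref{claim:ergodic-unique} (more precisely, \eqref{eq:vertex-boundary-longer-app}: only a vanishing fraction of vertices in $\CQ_0^\sss{\mathrm{sub}}$ are incident to edges of length comparable to the width of the annulus $\CQ_0\setminus \CQ_0^\sss{\mathrm{sub}}$), together with the uniqueness of $\CC_\infty^\sss{\mathrm{base}}$ which forces all large clusters in $\CQ_0$ to merge once the box is enlarged sufficiently, the bad-set probability can be made at most $\delta/3$ by choosing $n_0$ large compared to $\underline n_0$. Thus with probability at least $1-2\delta/3$ we obtain a component $\CC_{0,0}$ in $\widehat\CG_{0,0}$ with $|\CC_{0,0}\cap \CQ_0^\sss{\mathrm{sub}}|\ge (\theta^\sss{\mathrm{base}}/2)\underline n_0=\underline n_0\rho_0$, verifying the first condition of $(0,\varepsilon')$-goodness in~\eqref{eq:i-good-def-berger}.

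Third, it remains to verify the second condition in~\eqref{eq:i-good-def-berger}, i.e., that sufficiently many vertices of $\CV^\sss{\mathrm{spr}}_{\CQ_0^\sss{\mathrm{sub}}}[\underline n_0^\gamma,2\underline n_0^\gamma)$ attach to $\CC_{0,0}$. Here I use that $\CV^\sss{\mathrm{spr}}$ is independent of $\CV^\sss{\mathrm{base}}$ and hence of the component $\CC_{0,0}$ constructed above. The expected number of vertices in $\CV^\sss{\mathrm{spr}}_{\CQ_0^\sss{\mathrm{sub}}}[\underline n_0^\gamma,2\underline n_0^\gamma)$ is of order $\underline n_0^{\zeta}$ by~\eqref{eq:poisson-intensity} and the choice $\zeta=\zeta_\mathrm{hl}=1-\gamma(\tau-1)$. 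Each such vertex, conditionally on $\CC_{0,0}$ containing at least $(\theta^\sss{\mathrm{base}}/2)\underline n_0$ vertices of mark at least $1$ at distance at most $\sqrt{d}\underline n_0^{1/d}$, connects by an edge to $\CC_{0,0}$ with probability bounded away from $0$ by the exact computation already done in the induction base of Claim~\ref{claim:induction-base} (using $\gamma=(\alpha-1)/\alpha$ so the relevant powers of $\underline n_0$ cancel). Poisson concentration (Lemma~\ref{lemma:poisson-1}) then yields that with probability at least $1-\delta/3$ the number of attaching sprinkled vertices is at least $\underline n_0^{\zeta-2\varepsilon'}$, provided $\underline n_0$ is large enough. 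Combining the three error bounds by a union bound gives the claim.

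The main technical obstacle is the second step: in models with long edges one cannot naively restrict paths in the infinite cluster to lie in a slightly larger box, as is standard for nearest-neighbour  percolation. The fattening ratio $n_0/\underline n_0$ must be chosen so that, on the one hand, the vertex-boundary estimate makes ``path loss'' negligible and, on the other hand, uniqueness of the infinite cluster forces all large components in the finite box $\CQ_0$ to agree. This is precisely why the renormalisation in \cref{def:renorm-overlap} is set up with overlapping boxes and a constant fattening parameter $n_0-\underline n_0$.
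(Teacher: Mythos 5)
Your overall three-step structure matches the paper's: (i) ergodicity gives a positive density of infinite-cluster vertices in $\CQ_0^\sss{\mathrm{sub}}$; (ii) these merge into one component inside the fattened box $\CQ_0$; (iii) Poisson concentration on the sprinkled high-mark vertices. Steps (i) and (iii) are essentially identical to the paper and are correct. The gap is in step (ii), which is also the crux of the argument.

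You propose bounding the expected size of the bad set $B=\CV^\sss{\mathrm{base}}\cap\CQ_0^\sss{\mathrm{sub}}\cap\CC_\infty^\sss{\mathrm{base}}\setminus\CK(n_0)$ by Markov together with the vertex-boundary estimate \eqref{eq:vertex-boundary-longer-app}. This does not work as stated, for two reasons. First, the estimate \eqref{eq:vertex-boundary-longer-app} bounds the expected number of vertices in $\Lambda_n$ with a long incident edge by $o(n)$; applied to $\CQ_0=\Lambda_{n_0}$ this gives $o(n_0)$, which is useless here because you need $o(\underline n_0)$ and $n_0\gg\underline n_0$. Second, a vertex $v\in B$ need not itself be incident to a long or boundary-crossing edge: $v$'s component in $\CG_{n_0}^\sss{\mathrm{base}}$ could reach the boundary of $\CQ_0$ entirely through short edges and cross there. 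So counting vertices of $\CQ_0^\sss{\mathrm{sub}}$ incident to edges of annulus-width length does not dominate $|B|$.

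The paper avoids any quantitative estimate here. The argument is soft: $\Lambda_{\underline n_0}\cap\CC_\infty^{\sss{\mathrm{base}}}$ is a.s.\ a finite set of vertices, all connected by finite paths inside $\CC_\infty^{\sss{\mathrm{base}}}$ because the infinite cluster is unique and the graph is locally finite (Claim~\ref{claim:ergodic-unique}); hence as $n_0\to\infty$ with $\underline n_0$ fixed, a.s.\ eventually all these paths lie in $\Lambda_{n_0}$, and continuity of measure (from below) yields an $n_0=n_0(\underline n_0,\delta)$ so that they merge into one component of $\CG_{n_0}^\sss{\mathrm{base}}$ with probability at least $1-\delta/2$. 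If you want to keep your Markov step, you can: by a Mecke/Palm calculation $\E[|B|]=\int_{\Lambda_{\underline n_0}}\Prob^x(x\in\CC_\infty^\sss{\mathrm{base}},\,x\notin\CK(n_0))\,\rd x$, and the integrand tends to $0$ as $n_0\to\infty$ for each fixed $x$ by the same continuity-of-measure argument, so bounded convergence gives $\E[|B|]=o(\underline n_0)$. But the route through the vertex-boundary estimate should be dropped.
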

        \begin{proof}
    We estimate the probability of $\CA_{\mathrm{0\textnormal{-}good}, \varepsilon'}(x)$ defined in~\eqref{eq:i-good-def-berger}. We set $x=0$, so that $\CQ_{0}^{\sss{\mathrm{sub}}}(0)=\Lambda_{\underline n_0}$ while $\CQ_{0}(0)=\Lambda_{n_0}$. We write $\CC_\infty^{\sss{\mathrm{base}}}$ for the unique infinite component of $\CG^{\sss{\mathrm{base}}}$. By definition, $\theta^{\sss{\mathrm{base}}}=\Prob(0 \in \CC_\infty^{\sss{\mathrm{base}}})$, so that $\CC_\infty^\sss{\mathrm{base}}$ typically contains  a $\theta^{\sss{\mathrm{base}}}$-proportion of the vertices in $\Lambda_{\underline n_0}$ by translation invariance. By ergodicity (\cref{claim:ergodic-unique}), the proportion of vertices in $\Lambda_{\underline n_0}$ in $\CC_\infty^{\sss{\mathrm{base}}}$ converges in probability to $\theta^{\sss{\mathrm{base}}}$ as $\underline{n}_0\to\infty$. In \cref{def:renorm-overlap} we defined $\rho_0=\theta^{\sss{\mathrm{base}}}/2$.
    Hence, for  $\underline n_0$ sufficiently large, 
    \[
    \Prob\big(|\Lambda_{\underline n_0}\cap\CC_\infty^{\sss{\mathrm{base}}}|\ge \rho_0 \underline n_0 \big)\ge 1-\delta/4.
    \]
    Since the infinite component is unique (\cref{claim:ergodic-unique}) and the graph is locally finite, given $\delta$, there exists $n_0=n_0(\underline n_0)$ such that with probability at least $1-\delta/2$, the separate components that $\CC_\infty^{\sss{\mathrm{base}}}$ induces inside $\Lambda_{\underline n_0}$ are all in a single component inside the larger box $\Lambda_{n_0}$. Formally, let $n_0$ be so large that 
    \begin{equation}\label{eq:berger-init}
\Prob\big(|\Lambda_{\underline n_0}\cap\CC_\infty^{\sss{\mathrm{base}}}|\ge \rho_0 \underline n_0, \Lambda_{\underline n_0}\cap\CC_\infty^{\sss{\mathrm{base}}}\mbox{ in same component in }\CG_{n_0}^{\sss{\mathrm{base}}} \big)\ge 1-\delta/2
    \end{equation}
   holds. This also implies that the event $\CA_\mathrm{base}:=\{\exists\text{ comp. }\CC\text{ in }\CG_{n_0}: |\CC\cap \Lambda_{\underline n_0}|\ge \rho_0 \underline n_0\}$ occurs with probability at least $1-\delta/2$.
   We now condition on the realization of the graph $\CG^{\sss{\mathrm{base}}}$ satisfying $\CA_\mathrm{base}$. 
   Let $\CC$ be an arbitrary component in $\CG_{n_0}^{\sss{\mathrm{base}}}$  that the event $\CA_\mathrm{base}$ describes. This component only uses vertices and edges of $\CG_{n_0}^{\sss{\mathrm{base}}}$, which is also a subgraph of the graph $\widehat \CG_{0,x}$. The component $\widehat \CC\supseteq \CC$ in  $\widehat \CG_{i,0}$ thus satisfies the first condition of $\CA_{\mathrm{0\textnormal{-}good}, \eps'}$ in~\eqref{eq:i-good-def-berger}. For the second criterion of $\CA_{\mathrm{0\textnormal{-}good}, \varepsilon'}$ we will compute how many vertices of $\CV_{\underline n_0}^\sss{\mathrm{spr}}[\underline n_0^\gamma,2 \underline n_0^\gamma)$ connect to $\CC$ (and thus form $\widehat \CC$). Since we revealed a realization of $\CG^{\sss{\mathrm{base}}}$ satisfying $\CA_\mathrm{base}$, $\CC$ is known, and we obtain 
    \[
    \ind{\CA_\mathrm{base}}\Prob\big(\neg \CA_{\mathrm{0\textnormal{-}good}, \varepsilon'}(x)\mid \CG^\sss{\mathrm{base}}\big)\le \ind{\CA_\mathrm{base}}\Prob\big(|\{u\in\CV^\sss{\mathrm{spr}}[\underline n_0^\gamma,2 \underline n_0^\gamma): u\sim \CC\}|< \underline{n}_0^{\zeta-2\varepsilon'} \mid \CG^\mathrm{base}\big).
    \]
      The vertices in $\CV_{\underline n_0}^\sss{\mathrm{spr}}[\underline n_0^\gamma,2 \underline n_0^\gamma)$  form  a PPP with intensity $(1-q)\mathrm{Leb}(\cdot) \times (\tau-1)w^{-\tau} \mathrm dw$. We compute the mean and estimate it  from below using that $1-\gamma (\tau-1)=\zeta$:  \[
      \E\big[|\CV^\sss{\mathrm{spr}}_{\underline n_0}[\underline n_0^\gamma,2 \underline n_0^\gamma)| \big]\ge 
      (1-q)\underline n_0^{1-\gamma(\tau-1)}(1-2^{-(\tau-1)})\ge \underline n_0^{\zeta-\varepsilon'}.\]
    We estimate the mark of each vertex in $\CC\cap \Lambda_{\underline n_0}$ from below by one, the spatial distance by at most the diameter of $\Lambda_{\underline n_0}$, and the size of $\CC\cap \Lambda_{\underline n_0}$ by $\rho_0\underline n_0$. So, a single vertex in $\CV_{\underline n_0}^\sss{\mathrm{spr}}[\underline n_0^\gamma,2 \underline n_0^\gamma)$ connects to $\CC$ with probability at least 
       \[
       \begin{aligned}
    1-\Big(1-p\Big(1\wedge\beta\frac{\underline n_0^\gamma}{d^{d/2}\underline n_0}\Big)^\alpha\Big)^{\rho_0\underline n_0}&\ge 1-\exp\big(-p\beta^\alpha \rho_0d^{-\alpha d/2}\underline n_0^{\gamma\alpha-\alpha+1}\big)\\
    &=1-\exp\big(-p\beta^\alpha \rho_0d^{-\alpha d/2}\big)  \ge 2\underline n_0^{-\varepsilon'}.
    \end{aligned}\]
    Here we used that $\gamma=1-1/\alpha$ to obtain the second row so that powers of $\underline n_0$ vanish, and that the last inequality holds for all $\underline n_0$ sufficiently large. Given the vertex set, vertices in $\CV^\sss{\mathrm{spr}}$ connect independently by an edge to $\CC$. So, the total number of vertices in $\CV_{\underline n_0}^\sss{\mathrm{spr}}[\underline n_i^\gamma, 2\underline n_i^\gamma)$ dominates a Poisson random variable with mean $2\underline n_0^{\zeta-2\varepsilon'}$. By concentration for Poisson random variables (\cref{lemma:poisson-1}), there exists $c>0$ such that
    \[
    \ind{\CA_\mathrm{base}}\Prob\big( \CQ_0(x) \text{ is $(0, \varepsilon')$-bad}\big)\le \exp\big(-c\underline n_0^{\zeta-2\varepsilon'}\big).
    \]
    The right-hand side is at most $\delta/2$ if $\underline n_0$ is large. Together with~\eqref{eq:berger-init}, this finishes the proof.
        \end{proof}
  The key idea of \cref{claim:induction-base-ii} above is that the separate components of the infinite component in $\CG^\sss{\mathrm{base}}$ induced in a box $\Lambda_{\underline n_0}$ connect to each other in a larger box $\Lambda_{n_0}$. However, we had no control over the relation between $n_0$ and $\underline n_0$, and thus lose the density estimate of the connected component in the larger box. The multi-scale renormalization in the following claim makes this control possible, using that the added annulus in the `fattened' box  has constant width across levels, see \eqref{eq:ni-bigger-box}. 
    \begin{claim}[Iterative renormalization with overlapping boxes]\label{claim:induction-advance-ii}
         Consider a KSRG in the setting of \cref{lemma:ltld-weakest}, with the boxing scheme from \cref{def:renorm-overlap}. Assume $\varepsilon'>0$ is a sufficiently small constant. For each $\delta>0$, there exists a constant $\underline n_0^\star=\underline n_0^\star(\varepsilon', \delta)>0$ such that for each $\underline n_0\ge \underline n_0^\star$ there exists $n_0>0$ so that for all $x\in\R^d$ and all $i\ge 0$,
        \begin{equation}\label{eq:berger-box-bad-i}
        \Prob\big(\CQ_i(x) \text{ is $(i, \varepsilon')$-good}\big)\ge 1-\delta.
        \end{equation}
        \begin{proof}
            We follow the same steps \emph{(Sub), (Con)}, and \emph{(Mark)}, similar to the proof of \cref{claim:induction-advance}. By \cref{claim:induction-base-ii} the statement holds for $i=0$, so we may assume inductively that it holds until $i-1$ and that $i\ge 1$.

            \smallskip\noindent 
    \emph{Good subboxes.}
Consider $\CQ_i(x)$, and take a partition of the smaller box $\CQ^\sss{\mathrm{sub}}_i(x)$ into $m_{i}$ many subboxes of volume $\underline n_{i-1}$ with centers $(x_j)_{j\le m_i}$. We then obtain $\big((\CQ^\sss{\mathrm{sub}}_{i-1}(x_j), \CQ_{i-1}(x_j))\big)_{j\le m_i}$, where $(\CQ^\sss{\mathrm{sub}}_{i-1}(x_j))_{j\le m_i}$ are the subboxes forming the partitioning, and $(\CQ_{i-1}(x_j))_{j\le m_i}$ their fattened versions. Since two neighboring fattened subboxes overlap, the events that two fattened boxes are $(i-1, \varepsilon')$-good are not independent. Let $\CA_\mathrm{sub}$ be the event that at least $\rho_i m_i$ fattened boxes are $(i-1, \varepsilon')$-good.    
    By Markov's inequality and translation invariance,
    \begin{align}
    \Prob\big(\neg\CA_\mathrm{sub}\big) &= \Prob\big(|\{j: \CQ_{i-1}(x_j) \text{ is $(i, \varepsilon')$-bad} \}| \ge (1-\rho_i) m_i \big)\nonumber \\
    &\le \frac{1}{1-\rho_i}\Prob\big(\CQ_{i-1}(0) \text{ is $(i, \varepsilon')$-bad}\big)\nonumber\\&\le \Big(1+\frac{2}{(i+s)^{2d}}\Big)\Prob\big( \CQ_{i-1}(0) \text{ is $(i, \varepsilon')$-bad}\big),\label{eq:con-berger}
    \end{align}
    where for the second bound we used $\rho_i=(i+s)^{-2d}$, so that $1/(1-\rho_i)\le1+2(i+s)^{-2d}$ for all $i\ge 1$ and $d\ge 1$.
    \smallskip

    \noindent    
   \emph{Connection between subboxes.}
    We write $\mathrm{pre}$-$\widehat\CG_{i, x}$ for the union of the graphs $\cup_{j \le m_i} \widehat \CG_{i-1, x_j}$ inside the subboxes at level $i-1$, see~\eqref{eq:def-xi-berger} and~\eqref{eq:def-vertex-berger}, and assume that $\CA_\mathrm{sub}$ holds. Let $\mathrm{inter}$-$\widehat\CG_{i,x}$ denote the graph on the same vertex set as $\mathrm{pre}$-$\widehat\CG_{i, x}$, but including also all edges between the vertices in $\cup_{j \le m_i} \widehat \CV_{i-1, j}$. 
   \smallskip 
   
     \noindent\emph{The auxiliary graph.}
    Similar to below~\eqref{eq:pre-graph}, we re-label the subboxes so that the subboxes with indices $j\le  \rho_i m_i$ are all $(i-1, \varepsilon')$-good.  We consider the auxiliary graph $\CH_i=(\CV_{\CH_i}, \CE_{\CH_i})$: each vertex $j\in[ \rho_i m_i]=:\CV_{\CH_i}$ corresponds to a component $\CC_{i-1, x_j}$ that realizes the event $\CA_{i\textnormal{-}\mathrm{good}, \varepsilon'}(x_j)$ in the $j$-th subbox, see~\eqref{eq:i-good-def-berger}. To determine the edges of the auxiliary graph, we use the  vertices at level $i-1$ that belong to inter-$\CG_{i,x}$ as follows: we connect vertex $j$ and $k$ in $\CH_i$ if there is an edge between a sprinkled vertex in $\CC_{i-1,j}\cap \CQ_{i-1}^{\sss{\mathrm{sub}}}(x_j)$ to some base-vertex in $\CC_{i-1,k}\cap \CQ_{i-1}^{\sss{\mathrm{sub}}}(x_k)$ or vice versa: 
    \[ 
    \begin{aligned}
    \{j \sim_{\CH_i} k\} \ \Longleftrightarrow\ &\Big\{\big(\CC_{i-1,j}\cap\CV^\sss{\mathrm{spr}}_{\CQ^\sss{\mathrm{sub}}_{i-1}(x_{j})}[\underline n_{i-1}^\gamma, 2 \underline n_{i-1}^\gamma)\big)\sim_{\widehat\CG_{i,x}} \big(\CC_{i-1,k}\cap\CV^\sss{\mathrm{base}}_{\CQ^\sss{\mathrm{sub}}_{i-1}(x_{k})}\big)\Big\}\\
    &\, \cup \,
    \Big\{\big( \CC_{i-1,k}\cap\CV^\sss{\mathrm{spr}}_{\CQ^\sss{\mathrm{sub}}_{i-1}(x_{k})}[\underline n_{i-1}^\gamma, 2 \underline n_{i-1}^\gamma)\big)\sim_{\widehat\CG_{i,x}} \big(\CC_{i-1,j}\cap\CV^\sss{\mathrm{base}}_{\CQ^\sss{\mathrm{sub}}_{i-1}(x_{j})}\big)\Big\}.
    \end{aligned}
    \]
 Each good subbox $\CQ_{i-1}(x_j)$ is contained in $\CQ^\sss{\mathrm{sub}}_i(x)$,  and the connected component $\CC_{i-1,j}$ satisfies $|\CC_{i-1,j}\cap \CQ^\sss{\mathrm{sub}}_{i-1}(x_j)|\ge \underline n_{i-1}\prod_{\ell=0}^{i-1}\rho_\ell$. On $\CA_\mathrm{sub}$ there are at least $\rho_i m_i$ good subboxes. Thus, if $\CH_i$ is connected, then there is a component $\CC_{i,x}$ satisfying:
    \[
  |\CC_{i,x} \cap \CQ^\sss{\mathrm{sub}}_i(x)|  \ge \rho_i m_i\cdot \underline n_{i-1}\prod_{\ell=0}^{i-1}\rho_\ell = \underline n_i\prod_{\ell=0}^{i}\rho_\ell.
    \]           
    We bound from above the probability that $\CH_i$ is not connected by the probability that $\CH_i$ is not the complete graph.  We already lost sharp bounds in \eqref{eq:con-berger} as opposed to  \eqref{eq:first-good-crit} and \eqref{eq:ind-pr-con} where we used the connectivity of the Erd{\H o}s-R\'enyi random graph. So even though the edges in $\CH_i$ are conditionally independent given pre-$\widehat \CG_{i,x}$, we will use Markov's inequality to bound edge-presence events.
    We compute conditionally on the realization of $\mathrm{pre}\textnormal{-}\widehat\CG_{i, x}$, the union of subgraphs inside level-($i- 1$)-subboxes that
    \[
    \begin{aligned}
    \ind{\CA_\mathrm{sub}}\Prob\bigg(\nexists \text{component }\CC_{i,x} \text{ in }\mathrm{inter}\textnormal{-}\widehat\CG_{i,x}&: |\CC_{i,x}\cap \CQ^\sss{\mathrm{sub}}_i(x)|\ge\underline n_i\prod_{\ell=0}^i\rho_\ell \, \Big|\,  \mathrm{pre}\textnormal{-}\widehat\CG_{i, x}\bigg)\\ 
    &\le \ind{\CA_\mathrm{sub}}\Prob\big(\CH_i\mbox{ is not connected}\mid\mathrm{pre}\textnormal{-}\widehat\CG_{i, x}\big)\\
    &\le m_i^2 \max_{j, k}\ind{\CA_\mathrm{sub}}\Prob\big(j\nsim_{\CH_i}k\mid \mathrm{pre}\textnormal{-}\widehat\CG_{i, x}\big).
    \end{aligned}
    \]
    To bound the probability on the right-hand side, we use the two properties of $ \CC_{i-1, j}$ of a good $(i-1, \varepsilon')$ subbox in~\eqref{eq:i-good-def-berger} about its density in  $ \CQ^\sss{\mathrm{sub}}_{i-1}(x_j)$ and its level-$(i-1)$  high-mark vertices.
         Since the union of the smaller level-$(i-1)$-boxes forms $\CQ_i^\sss{\mathrm{sub}}(x)$ (see reasoning below \cref{def:renorm-overlap}), the maximal distance between two vertices in $\CQ_{i-1}(x_{j})$ and $\CQ_{i-1}(x_{k})$ is $\sqrt{d}\underline n_{i}^{1/d}$.
    Moreover, by the definition of the level-$(i-1)$ edge set in~\eqref{eq:def-vertex-berger}, none of the edges between $\CV^\sss{\mathrm{spr}}_{\CQ_{i-1}^\sss{\mathrm{sub}}(x_{j})}$ and $\CV^\sss{\mathrm{base}}_{\CQ_{i-1}^\sss{\mathrm{sub}}(x_{k})}$ have been revealed in pre-$\widehat\CG_{i,x}$ and they are thus conditionally independently present. We bound this probability from above by only requiring that there is no edge between $\CC_{i-1,j}\cap \CV^\sss{\mathrm{spr}}_{\CQ_{i-1}^\sss{\mathrm{sub}}(x_{j})}[\underline n_{i-1}^\gamma, 2\underline n_{i-1}^\gamma)$ and $\CC_{i-1,k}\cap \CV^\sss{\mathrm{base}}_{\CQ_{i-1}^\sss{\mathrm{sub}}(x_{k})}$, and we use that we have lower bounds on the sizes of these sets in \eqref{eq:i-good-def-berger}:  
     \[
    \begin{aligned}
\ind{\CA_\mathrm{sub}}\Prob\big(\CH_i\mbox{ is not connected}&\mid\mathrm{pre\textnormal{-}}\widehat\CG_{i, x}\big) \\
&\le m_i^2\prod_{u\in \CC_{i-1,j}\cap \CV^\sss{\mathrm{spr}}_{\CQ^\sss{\mathrm{sub}}_{i-1}(x_{j_1})}[\underline n_{i-1}^\gamma, 2 \underline n_{i-1}^\gamma)}\prod_{v \in \CC_{i-1,k}\cap\CV^\sss{\mathrm{base}}_{\CQ^\sss{\mathrm{sub}}_{i-1}(x_{j_2})}}(1-\mathrm{p}(u,v)) \\
    & \le m_i^2\Big(1-p\Big(1\wedge \beta\frac{\underline n_{i-1}^\gamma}{d^{d/2}\underline n_i}\Big)^\alpha\Big)^{\underline n_{i-1}^{1+\zeta-2\varepsilon'}\prod_{\ell=0}^{i-1}\rho_\ell}.
    \end{aligned}
    \]
    We use that $\underline n_i=m_i\underline n_{i-1}$ and $\gamma=1-1/\alpha$, so that the minimum is attained at the second term for any $n_0$ sufficiently large. Hence, 
    \begin{align}
    \ind{\CA_\mathrm{sub}}\Prob\big(\CH_i\mbox{ is not connected}\mid\mathrm{pre\textnormal{-}}\widehat\CG_{i, x}\big)
    &\le 
    m_i^2\exp\Big(-p\beta^\alpha m_i^{-\alpha}\underline n_{i-1}^{(\gamma-1)\alpha}\underline n_{i-1}^{\zeta+1-2\varepsilon'}\prod_{\ell=0}^{i-1}\rho_\ell\Big) \nonumber\\
    &=m_i^2\exp\Big(- m_i^{-\alpha}\underline n_{i-1}^{\zeta-2\varepsilon'}p\beta^\alpha\prod_{\ell=0}^{i-1}\rho_\ell\Big).\label{eq:berger-not-conn}
    \end{align}
    The expression $\prod_{\ell=0}^{i-1}\rho_\ell$ inside the exponent is at least $\underline n_i^{-\varepsilon'}$ by~\eqref{eq:berger-rho-bound}.
We would also like to replace the factor $m_i^{-\alpha}$ by $\underline n_{i-1}^{-\eps'}$. For this it is enough to show that $\underline n_{i-1}^{\varepsilon'}\ge m_i^\alpha$ with $m_i$ and $\underline n_i$ from~\eqref{eq:berger-mi-ni}. We show this by induction. For $i=1$, $m_1^\alpha=(1+s)^{(2+\eta)d\alpha}\le \underline n_0^{\eps'}$ holds for any $\underline n_0$ sufficiently large. We advance the induction, assuming $ \underline n_{i-2}^{\varepsilon'}\ge m_{i-1}^\alpha$. Using the recursion defining $\underline n_i$ and the induction hypothesis, 
 \[
    \underline n_{i-1}^{\varepsilon'} =  
    m_{i-1}^{\varepsilon'} \underline n_{i-2}^{\varepsilon'} \ge m_{i-1}^{\varepsilon'+\alpha}=m_{i}^\alpha \Big(\frac{m_{i-1}}{m_{i}}\Big)^\alpha m_{i-1}^{\varepsilon'}=m_{i}^\alpha \Big(\frac{i-1+s}{i+s}\Big)^{(2+\eta)d\alpha} (i-1+s)^{(2+\eta)d\varepsilon'}\ge m_{i}^\alpha,
    \]
    by the choice of $s$ just above~\eqref{eq:berger-mi-ni}.
     Substituting this bound into~\eqref{eq:berger-not-conn}, and discounting the prefactor $m_{i-1}^2$ and constant factors inside the exponential by another factor $\underline n_{i-1}^{\varepsilon'}$, we obtain

    \begin{equation}\label{eq:berger-not-conn-2}
     \ind{\CA_\mathrm{sub}}\Prob\bigg(\nexists \text{comp. }\CC_i \text{ in }\mathrm{inter\textnormal{-}}\widehat\CG_{i,x}: |\CC_i\cap \CQ^\sss{\mathrm{sub}}_i(x)|\ge\underline n_i\prod_{\ell=0}^i\rho_\ell \, \Big|\,  \mathrm{pre\textnormal{-}}\widehat\CG_{i, x}\bigg)
     \le 
     \exp\big(-\underline n_{i-1}^{\zeta-5\varepsilon'}\big).
    \end{equation}
    
    \smallskip\noindent 
    \emph{Sprinkling high-mark vertices.} Let us denote the opposite event of \eqref{eq:berger-not-conn-2} by
    \[ \CA_\mathrm{conn}:= \Big\{\exists \text{comp. }\CC_i \text{ in }\mathrm{inter\textnormal{-}}\widehat\CG_{i,x}: |\CC_i\cap \CQ^\sss{\mathrm{sub}}_i(x)|\ge\underline n_i\prod_{\ell=0}^i\rho_\ell \Big\}\].
 On $\CA_\mathrm{conn}$, $\widehat\CG_{i,x}$ satisfies the first criterion of being $(i, \varepsilon')$-good in~\eqref{eq:i-good-def-berger}. We now look at the second criterion of $\CA_{\mathrm{i\textnormal{-}good}, \varepsilon'}(0)$ in~\eqref{eq:i-good-def-berger},  conditioned on the graph $\mathrm{inter}\textnormal{-}\widehat\CG_{i, x}$ satisfying the event $\CA_\mathrm{conn}\cap\CA_\mathrm{sub}$. We reveal the sprinkled vertices  $\CV^{\sss{\mathrm{spr}}}[\underline{n}_i^\gamma, 2\underline{n}_i^\gamma)$ with location in $\CQ_i^\sss{\mathrm{sub}}(x)$. These vertices are independent of $\mathrm{inter\textnormal{-}}\widehat\CG_{i, x}$, which only contains vertices with mark below $2\underline n_{i-1}^\gamma$. This mark threshold is below $\underline n_i^\gamma$ for sufficiently large $\underline n_0$ and small $\varepsilon'$ see~(\ref{eq:berger-eta}--\ref{eq:berger-mi-ni}). The expected  number of revealed vertices is then $(1-q)\underline{n}_i^{1-\gamma(\tau-1)}(1-2^{-(\tau-1)})$ using~\cref{def:ksrg-alt}.

  We bound from below the probability that such a vertex connects by an edge to one of the at least $\underline{n}_i\prod_{\ell=0}^i\rho_\ell$ many vertices in $\CC_i\cap \CQ^\sss{\mathrm{sub}}_i(x)$ of $\mathrm{inter}\textnormal{-}\widehat\CG_{i, x}$. The probability that a single vertex of mark at least $\underline{n}_i^\gamma$ does not connect by an edge to $\CC_i$ is at most 
    \[
    \Big(1-p\Big(1\wedge \beta\frac{\underline{n}_i^\gamma}{d^{d/2}\underline{n}_i}\Big)^\alpha\Big)^{\underline{n}_i\prod_{\ell=0}^i\rho_\ell}
    \le 
    \exp\Big(-p\beta^\alpha d^{-\alpha d/2}\underline n_i^{(\gamma-1)\alpha+1}\prod_{\ell=0}^i\rho_\ell \Big)=
    \exp\Big(-p\beta^\alpha d^{-\alpha d/2}\prod_{\ell=0}^i\rho_\ell \Big),
    \]
    using again $\gamma=1-1/\alpha$ and cancellation of the powers of $\underline n_i$. Each vertex in $\CV^\sss{\mathrm{spr}}$ connects by an edge independently to the vertices in $\CC_i$. Thus, the total number of vertices from $\CV^\sss{\mathrm{spr}}[\underline n_i^\gamma, 2\underline n_i^\gamma)\cap\CQ^\sss{\mathrm{sub}}_i(x)$ connecting to $\CC_i$ dominates a Poisson random variable with mean 
    \[
   (1- q)(1-2^{-(\tau-1)})\underline{n}_i^{\zeta}\cdot \Big(1-\exp\Big(-p\beta^\alpha d^{-\alpha d/2}\prod_{\ell=0}^i\rho_\ell \Big)\Big)
    \ge 
    c\underline{n}_i^{\zeta}\prod_{\ell=0}^i\rho_\ell\ge c\rho_0\underline{n}_i^{\zeta-\varepsilon'}\ge 2\underline{n}_i^{\zeta-2\varepsilon'},
    \]
    where the second bound follows for some constant $c>0$, the third bound by~\eqref{eq:berger-rho-bound}, and the last bound if $\underline{n}_0$ is sufficiently large. We apply concentration for Poisson random variables (\cref{lemma:poisson-1}). We recall the definition of $\CA_{\mathrm{i\textnormal{-}good}, \varepsilon'}$ from~\eqref{eq:i-good-def-berger}, and combine it with~\eqref{eq:berger-not-conn-2} to obtain for $\underline n_0$ sufficiently large that
    \[
    \ind{\CA_\mathrm{sub}\cap\CA_\mathrm{conn}}\Prob\big(\neg \CA_{\mathrm{i\textnormal{-}good}, \varepsilon'}(0)\big)\le 2\exp\big(-\underline{n}_i^{\zeta-4\varepsilon'}\mid \mathrm{inter}\textnormal{-}\widehat\CG_{i, x}\big).
    \]
    We combine this bound with $\Prob\big(\neg\CA_\mathrm{sub}\big)\le \big(1+2(i+s)^{-2d}\big)\Prob\big(\neg\CA_{\mathrm{(i-1)\textnormal{-}good}, \varepsilon'}(0)\big)$ from~\eqref{eq:con-berger}. Then
    \begin{equation}\label{eq:recursive-i}
    \Prob\big(\neg \CA_{\mathrm{i\textnormal{-}good}, \varepsilon'}(0)\big) \le \big(1+2(i+s)^{-2d}\big)\Prob\big(\neg\CA_{\mathrm{(i-1)\textnormal{-}good}, \varepsilon'}(0)\big) + 2\exp\big(-\underline{n}_i^{\zeta-5\varepsilon'}\big).
    \end{equation}
    \smallskip
    
    \noindent 
    \emph{Combining everything: a recursive bound.}
     We finish the proof of  \eqref{eq:berger-box-bad-i} by some analysis.
   We show that the right-hand side of  \eqref{eq:recursive-i} is at most $\delta$ by working out the recursion in $i$.
       Fix $\delta'>0$ sufficiently small.    
    By \cref{claim:induction-base-ii}, $\Prob\big(\neg\CA_{\mathrm{0\textnormal{-}good}, \varepsilon'}(0)\big)\le \delta'$ for all $\underline n_0$ sufficiently large. We may assume inductively that for all $1 \le j\le i-1$, 
    \begin{equation*}\Prob(\neg\CA_{\mathrm{j\textnormal{-}good}, \varepsilon'}(0)) \le \delta' \prod_{\ell=1}^j \Big(1 + \frac{3}{(\ell+s)^{2d}}\Big).\end{equation*}
    Then, using this bound in \eqref{eq:recursive-i} we obtain
    \[
    \begin{aligned}
    \Prob(\neg\CA_{\mathrm{i\textnormal{-}good}, \varepsilon'}(0))    &\le 
    \delta' \Big(1+\frac{2}{(i+s)^{2d}}\Big)\prod_{\ell=1}^{i-1} \Big(1 + \frac{3}{(\ell+s)^{2d}}\Big) + 2\exp\big(-\underline n_i^{\zeta-5\varepsilon'}\big).
    \end{aligned}
    \]
    Showing that the right-hand side is at most $\delta'\prod_{\ell=1}^i \big(1 + \frac{3}{(\ell+s)^{2d}}\big)$ is equivalent to showing that 
    \begin{equation}\label{eq:beger-recursion-to-show}
    1+\frac{2}{(i+s)^{2d}}+\frac{2\exp\big(-\underline n_i^{\zeta-5\varepsilon'}\big)}{\delta'\prod_{\ell=1}^{i-1}(1+3/(\ell+s)^{2d})}\le 1 + \frac{3}{(i+s)^{2d}}.
    \end{equation}
    The product in the denominator on the left-hand side is bounded away from 0. Since $\underline{n}_i$ grows superpolynomially by its definition in~\eqref{eq:berger-mi-ni}, the bound~\eqref{eq:beger-recursion-to-show} is satisfied for all $i\ge 1$ if $\underline{n}_0$ is sufficiently large. This advances the induction, and we conclude that for all $i\ge 0$ and a proper choice of the constant $\delta'>0$,
    \[
    \Prob\big(\neg\CA_{\mathrm{i\textnormal{-}good}, \varepsilon'}(0)\big)\le \delta'\prod_{\ell=1}^\infty \Big(1 + \frac{3}{(\ell+s)^{2d}}\Big)=\delta.\qedhere
    \]
        \end{proof}
    \end{claim}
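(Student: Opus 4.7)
The plan is to prove \eqref{eq:berger-box-bad-i} by induction on $i\ge 0$, with the base case $i=0$ supplied by Claim \ref{claim:induction-base-ii} (which also fixes $\underline n_0$ and $n_0$). To advance from level $i-1$ to level $i$, I would follow the three-step scheme (\emph{Sub}), (\emph{Con}), (\emph{Mark}) used in Claim \ref{claim:induction-advance}. The essential new difficulty is that the fattened boxes $\CQ_{i-1}(x_j)$ tiling $\CQ_i^\sss{\mathrm{sub}}(x)$ now \emph{overlap} (each fattened box is a constant-width annulus larger than its smaller core), so the events ``$\CQ_{i-1}(x_j)$ is $(i-1,\varepsilon')$-good'' are correlated across neighbours, and the Chernoff bound for the number of good subboxes used in Section \ref{sec:biskup} is unavailable. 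I would replace it with Markov's inequality plus translation invariance, which only gives a \emph{multiplicative} rather than exponential improvement per level.

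For the (\emph{Sub}) step, I would partition $\CQ_i^\sss{\mathrm{sub}}(x)$ into $m_i$ disjoint subboxes of volume $\underline n_{i-1}$ centered at $(x_j)_{j\le m_i}$, and define $\CA_\mathrm{sub}$ as the event that at least $\rho_i m_i$ of the fattened boxes $\CQ_{i-1}(x_j)$ are $(i-1,\varepsilon')$-good. Markov's inequality applied to the number of bad fattened subboxes, translation invariance, and the elementary bound $1/(1-\rho_i)\le 1+2\rho_i$ with $\rho_i=(i+s)^{-2d}$ should yield
\[\Prob(\neg\CA_\mathrm{sub})\le \bigl(1+2(i+s)^{-2d}\bigr)\Prob(\neg\CA_{(i-1)\text{-}\mathrm{good},\varepsilon'}(0)).\]
This multiplicative bound is the source of the recursion; because $2d\ge 2>1$, the sequence $((i+s)^{-2d})_{i\ge 1}$ is summable, so the infinite product $\prod_{\ell\ge 1}(1+3(\ell+s)^{-2d})$ converges to a finite constant, which is what will make the iteration tractable.

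For the (\emph{Con}) step, I would condition on a realization of $\mathrm{pre}\text{-}\widehat\CG_{i,x}:=\bigcup_{j\le m_i}\widehat\CG_{i-1,x_j}$ satisfying $\CA_\mathrm{sub}$, relabel so that the boxes $j\le\lceil\rho_i m_i\rceil$ are the $(i-1,\varepsilon')$-good ones, and build an auxiliary graph $\CH_i$ on these labels by declaring $j\sim_{\CH_i} k$ whenever a level-$(i-1)$ sprinkled high-mark vertex of $\CC_{i-1,x_j}$ is joined by an edge in $\widehat\CG_{i,x}$ to a base vertex of $\CC_{i-1,x_k}$, or vice versa. Connectivity of $\CH_i$ then forces the existence of a component of size at least $\underline n_i\prod_{\ell\le i}\rho_\ell$ inside $\CQ_i^\sss{\mathrm{sub}}(x)$. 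The Erd\H{o}s--R\'enyi connectivity estimate used in Claim~\ref{claim:induction-advance} is too sharp to reuse here, so I would settle for the cruder union bound $\Prob(\CH_i\text{ not connected}\mid\mathrm{pre}\text{-}\widehat\CG_{i,x})\le m_i^2\max_{j,k}\Prob(j\nsim_{\CH_i}k)$. Plugging in the quantitative sizes from \eqref{eq:i-good-def-berger}, the identity $\gamma=1-1/\alpha$, and the inductively-maintained inequality $m_i^\alpha\le \underline n_{i-1}^{\varepsilon'}$ (which I would verify by a short induction using the definition of $s$ above \eqref{eq:berger-mi-ni}), the bound simplifies to $\exp(-\underline n_{i-1}^{\zeta-5\varepsilon'})$.

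The (\emph{Mark}) step proceeds essentially as in Claim \ref{claim:induction-advance}: the level-$i$ mark threshold $\underline n_i^\gamma$ strictly exceeds the threshold $2\underline n_{i-1}^\gamma$ used on previous levels, so $\CV^\sss{\mathrm{spr}}_{\CQ_i^\sss{\mathrm{sub}}(x)}[\underline n_i^\gamma,2\underline n_i^\gamma)$ is a fresh PPP of expected size $\Theta(\underline n_i^\zeta)$ that is independent of the conditioning, and Lemma \ref{lemma:poisson-1} then controls the failure of the second criterion in \eqref{eq:i-good-def-berger} by $\exp(-\underline n_i^{\zeta-4\varepsilon'})$. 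Assembling the three steps produces the recursion
\[\Prob(\neg\CA_{i\text{-}\mathrm{good},\varepsilon'}(0))\le \bigl(1+2(i+s)^{-2d}\bigr)\Prob(\neg\CA_{(i-1)\text{-}\mathrm{good},\varepsilon'}(0))+2\exp(-\underline n_i^{\zeta-5\varepsilon'}).\]
The main obstacle is to iterate this cleanly without letting the product of multipliers spoil the bound. I would do so via the inductive ansatz $\Prob(\neg\CA_{i\text{-}\mathrm{good},\varepsilon'}(0))\le \delta'\prod_{\ell\le i}(1+3(\ell+s)^{-2d})$ with the slightly larger constant $3$ instead of $2$, so that the extra multiplicative slack $\delta'(i+s)^{-2d}$ at level $i$ comfortably absorbs the additive term $2\exp(-\underline n_i^{\zeta-5\varepsilon'})$ (which holds for $\underline n_0$ large because $\underline n_i$ grows superpolynomially in $i$). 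Choosing $\delta'$ so that $\delta'\prod_{\ell\ge 1}(1+3(\ell+s)^{-2d})\le \delta$ then delivers the target uniform bound.
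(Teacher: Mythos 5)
Your proposal is correct and follows essentially the same route as the paper's proof: Markov's inequality plus translation invariance in the (\emph{Sub}) step to cope with the overlapping fattened boxes, a crude $m_i^2$ union bound over pairs in the auxiliary graph $\CH_i$ for (\emph{Con}), Poisson concentration for the sprinkled high-mark vertices in (\emph{Mark}), and the recursive ansatz $\Prob(\neg\CA_{i\text{-}\mathrm{good},\varepsilon'}(0))\le \delta'\prod_{\ell\le i}(1+3(\ell+s)^{-2d})$ with the superpolynomial growth of $\underline n_i$ absorbing the additive error term. The auxiliary inductive inequality $m_i^\alpha\le\underline n_{i-1}^{\varepsilon'}$ and the final choice of $\delta'$ via the convergent infinite product also match the paper.
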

    
    We now prove \cref{lemma:ltld-weakest}. Similarly to the proof of Proposition \ref{proposition:ltld-weaker} on page \pageref{proof:prop-ltld-weaker}, this step generalizes the previous claim to arbitrary box size $n$ (instead of $n$ restricted along the subsequence $(n_i)_{i\ge 1}$).

    \begin{proof}[Proof of \cref{lemma:ltld-weakest}]
        Fix $\varepsilon'\in(0,\varepsilon/2)$. Let $c>0$ be a small constant. It suffices to show that for all $\delta$, there exists $n_\ast$ such that for all $n\ge n_\ast$,
        \begin{equation}\label{eq:berger-pr-1}
            \Prob\big(|\CC_n^\sss{(1)}|<n^{1-2\varepsilon'}\big)\le \delta.
        \end{equation}
        Consider the sequence $(n_i)_{i\ge 0}$ from \cref{def:renorm-overlap}, with $n_0$ given by \cref{claim:induction-advance-ii} such that 
        \begin{equation}\label{eq:berger-pr-2}
        \Prob\big(\neg\CA_{\mathrm{i\textnormal{-}good},\varepsilon'}(x)\big)\le \delta/4.
        \end{equation}
        for all $i\ge 0$. Assume $n\gg n_0$, and define  $m$, $i_\ast$, and $\tilde n$ as in~\eqref{eq:boxing-tilde-n}, i.e., 
        \begin{equation}\label{eq:berger-i-star}
            \begin{aligned}
                i_\ast=\max\{i: n_i<n\}, \qquad m:=\lfloor(n/n_{i_\ast})^{1/d}\rfloor^d, \qquad \tilde n:=n_{i_\ast}m.
            \end{aligned}
        \end{equation} 
        We will assume that $i_\ast$ is a sufficiently large constant.
        We consider a box $\Lambda_{\tilde n}$ of volume $\tilde n$ inside $\Lambda_n$ and partition this smaller box into \emph{disjoint} subboxes of volume $n_{i_\ast}$. By construction, $\Lambda_{\tilde n}$ covers a uniformly positive proportion of the larger box $\Lambda_n$, i.e., exactly as in \eqref{def:ratio-nu}, there exists an absolute constant $\nu>0$ such that 
        \begin{equation}\label{eq:berger-pr-3}
        m\ge \nu (n/n_{i_\ast}).
        \end{equation}
        Moreover, we may assume without loss of generality that $m$ is at least a large constant $m_0$, as otherwise the statement is immediately implied by~\eqref{eq:berger-sub-pr1} and \cref{claim:induction-advance-ii}.
        We denote the boxes of the partition of $\Lambda_{\tilde n}$ by $\CQ_{i_\ast}(x_1),\ldots, \CQ_{i_\ast}(x_m)$. By~\eqref{eq:berger-pr-2} and Markov's inequality,
        \begin{equation}\label{eq:berger-markov-last}
       \Prob(\CA_{\mathrm{sub}}):= \Prob\bigg(\sum_{j\in[m]}\ind{\CQ_{i_\ast}(x_j)\text{ is $(i_\ast,\varepsilon')$-good}}\ge  m/2\bigg)\ge 1-\delta/2.
        \end{equation}
        We reveal the subgraphs $\widehat\CG_{i_\ast, x_j}$ from \cref{def:renorm-overlap} in the subboxes, and denote their union by pre-$\widehat\CG_{\tilde n}$. We condition the graph on satisfying $\CA_\mathrm{sub}$: each good subbox contains a large component $\CC_j$ of size at least $n_{i_\ast}^{1-\varepsilon'}$. If the components $\CC_j$ are all connected by an edge to each other, then $\CG_{\tilde n}$ contains a connected component of size at least 
        \[
         (1/2) m \underline n_{i_\ast}\prod_{\ell=1}^{i_\ast}\rho_\ell \ge 
         (1/2) m \underline n_{i_\ast}^{1-\varepsilon'} \ge   (1/4) m  n_{i_\ast}^{1-\varepsilon'}  \ge (\nu/4)n n_{i_\ast}^{-\varepsilon'}\ge n^{1-2\varepsilon'}
             \]
        using~\eqref{eq:berger-rho-bound}, \eqref{eq:berger-pr-3}, and the fact that  $n_i=\underline n_i(1+o(1))$ as $i\to\infty$ for all $n$ is sufficiently large. 
        
        We thus need to study the connectivity between the large connected components $\CC_j$ in the good subboxes. By the definition of $(i, \eps')$ goodness in \eqref{eq:i-good-def-berger}, $\CC_{j}$ has size at least $n_{i_\ast}^{1-\varepsilon'}$, and contains at least $\underline{n}_{i_\ast}^{\zeta-2\varepsilon'}$ vertices of mark at least $\underline{n}_{i_\ast}^{\gamma}$. The edges between subboxes have not yet been revealed. The largest possible distance between two vertices in $\Lambda_{\tilde n}$ is $\sqrt{d}\tilde{n}^{1/d}=\sqrt{d}\tilde{m n_{i_\ast}}^{1/d}$.  Thus,       
        the probability that the large components $\CC_{j}$ and $\CC_{k}$ in two good subboxes $j$, $k$ are not connected by an edge in $\CG_n$ is at most 
        \[
        \begin{aligned}
        \prod_{u\in\CC_{j}, v\in\CC_{k}}\hspace{-10pt}\Big(1-\mathrm{p}(u,v)\Big) &\le \Big(1-p\Big(\beta\frac{\underline n_{i_\ast}^\gamma}{d^{d/2}n_{i_\ast}m}\Big)^\alpha\Big)^{2\underline n_{i_\ast}^{1+\zeta-3\varepsilon'}}\\
        &\le 
        \exp\Big(-p\beta^\alpha d^{-\alpha d/2}\underline n_{i_\ast}^{\gamma\alpha + 1-\alpha+\zeta-3\varepsilon'}m^{-\alpha}\Big)
        \le \exp\Big(-\underline n_{i_\ast}^{\zeta-4\varepsilon'}m^{-\alpha}\Big),
        \end{aligned}
        \]
        having used $\gamma=1-1/\alpha$ in the last step, combined with a bound of $\underline n_i^{-\varepsilon'}$ to account for the constant prefactors.
        Considering the at most $m^2$ potential edges, and combining it with~\eqref{eq:berger-markov-last}, we obtain
        \[
        \Prob\big(|\CC_n^\sss{(1)}|<n^{1-2\varepsilon'}\big)\le \delta/2 + m^2\exp\Big(-\underline n_{i_\ast}^{\zeta-4\varepsilon'}m^{-\alpha}\Big).
        \]
        By~\eqref{eq:berger-i-star}, $m$ is at most $m_{i_\ast}$ defined in~\eqref{eq:berger-mi-ni}. By the same reasoning as below~\eqref{eq:berger-not-conn}, it follows that $m^{-\alpha}\ge \underline n_{i_\ast}^{-\eps}$. Thus, if $\underline n_0$ is a sufficiently large constant,  the second term on the right-hand side is smaller than $\delta/2$. This finishes the proof.
    \end{proof}

\subsection{Useful bounds for the iterative renormalization}\label{app:useful}
\begin{proof}[Proof of \cref{claim:useful}]
  We first discuss \eqref{eq:mi-ni-doubly-exp}. Its proof is standard and based on  simple estimates relating to integer parts and induction, so we leave it to the reader.

    For the first inequality (Sub) in~\eqref{eq:mi-ni-sub} we substitute $n_{i}=m_{i}n_{i-1}$ on its right-hand side. Simplification gives that (Sub)-inequality is equivalent to  
    $
    \varepsilon_{i}m_{i}^{1-\zeta+\delta}\ge C
    $
    for all $i\ge 1$.
    This follows for any sufficiently large $n_0$ since $m_{i}$ is doubly exponentially increasing, $\varepsilon_{i}=(i+1)^{-2}$, and $1-\zeta=\gamma(\tau-1)>0$. For the second inequality (Con) in~\eqref{eq:mi-ni-con} we define the constant
    \[ 
    \widetilde C:=C/\prod_{j=1}^\infty (1-\varepsilon_j)= C/\prod_{j=2}^\infty (1-1/j^{2}),
    \]
    which does not depend on $n_0$. It is elementary to check that $C/\rho_{i-1}^2\le \widetilde C n_0^{\delta/2}$ by definition of $\rho_i$ in~\eqref{eq:rho-i}.
     Hence, dividing both sides with $\rho_{i-1}^2$ in \eqref{eq:mi-ni-con}, and using this inequality, it is  sufficient to show that for all $n_0$ sufficiently large
    \[
    m_{i}^{1-\alpha}n_{i-1}^\zeta\ge \widetilde C n_0^{\delta/2}n_{i}^{\zeta-\delta}.
    \]
    Using that $n_{i}=m_{i}n_{i-1}$, after rearranging this is equivalent to $n_{i-1}^\delta \ge \widetilde C n_0^{\delta/2} m_i^{\zeta-\delta+ \alpha-1}$.
   By \eqref{eq:def-mi-ni}, $m_i\le n_{i-1}^{\xi_\delta}$ with $\xi_\delta=(\delta/2)/(\zeta+\alpha-1-\delta/2)$, and raising then both sides to the power $2/\delta$ gives that \eqref{eq:mi-ni-con} holds if 
    $n_{i-1}^2\ge 
    {\widetilde C}^{2/\delta} n_0 n_{i-1}^{1-\xi_\delta}$. This inequality holds for all $i\ge 1$ when $n_0$ is sufficiently large (the choice of $n_0$ depends on $\widetilde C$). 

    For the bound (Mark) in~\eqref{eq:mi-ni-mark} we have to show that $
    \rho_in_i^\delta\ge C$.
    The infinite product of $(1-1/j^2)$ is strictly positive in~\eqref{eq:rho-i}, so $\rho_i=\Theta(\rho_0)=\Theta\big(n_0^{-\delta/4}\big)$ as $n_0\to\infty$. Since $(n_i)_{i\ge 0}$ is increasing, (Mark) follows again for sufficiently large $n_0$.

    The bounds in~\eqref{eq:mi-ni-1} follow since $\varepsilon_i=(i+1)^{-2}$ for $i\ge 1$ and $m_i$ and $n_i$ increase doubly exponentially in $i$, while the powers appearing in \eqref{eq:mi-ni-1} are all positive (e.g. $1-\zeta-\delta>0$). The same is true for \eqref{eq:mi-ni-1-gamma}.
We turn to the last bound~\eqref{eq:useful-boxing}. We substitute $n=n_im$ and rearrange to obtain that \eqref{eq:useful-boxing} is equivalent to $n_i^\delta\ge m^{\zeta-\delta+\alpha-1}$. Since $m\le m_{i+1}$ by assumption, this inequality holds for all such $m$ if $n_i^\delta\ge m_{i+1}^{\zeta-\delta+\alpha-1}$. Since $m_{i+1}\le n_i^{\xi_\delta}$ by \eqref{eq:def-mi-ni}, it follows by definition of $\xi_\delta$ in~\eqref{eq:xi-delta} that this last inequality holds if $n_i^\delta\ge n_i^{\delta/2-\xi_\delta\delta/2}$, which clearly holds since $\xi_\delta > 0$.
    \end{proof}
\subsection{Poly-logarithmic mark-thresholds to belong to the giant}\label{app:verif}
  \begin{proof}[Proof of \cref{lemma:prerequisite-verif}]
  \emph{High-low regime}.  Assuming \eqref{eq:hyp-lh-verif}, we will adapt the proof of the upper bound on $|\CC_n^\sss{(2)}|$ from Section~\ref{sec:second-upper} to show \eqref{eq:lemma-prer-ver-2} and \eqref{eq:lemma-prer-ver-1}.  Let $\zeta=\zeta_\mathrm{hl}$, $\gamma=\gamma_\mathrm{hl}$, $\eta=\eta_\mathrm{hl}$. Since we assume $\zeta_\mathrm{hl}>0$, we may assume that $\alpha<\infty$. 
  
  Take  $k=k_n=(A_1\log n)^{1/(\zeta-\delta)}$ for a sufficiently large $A_1 > 0$ and partition $\Lambda_n$ into boxes of volume $k$, denoted by $\CQ_1,\ldots, \CQ_{n/k}$ similarly to Section~\ref{sec:second-upper}, and labeled so that boxes with consecutive labels share a $(d-1)$-dimensional side.   
  We write $\CC_{k,j}^\sss{(1)}[1, A'k^{\gamma})$ for the largest connected component induced by the vertices in $\CQ_j\times[1, A'k^{\gamma})$, where $A'$ is the constant from the hypothesis~\eqref{eq:hyp-lh-verif}.
Now, we construct a backbone with the following revealment scheme (again similar to Section~\ref{sec:second-upper}):
      \begin{enumerate}
          \item\label{item:app-1} For each $j=1,\ldots, n/k$, reveal all edges inside box $j$ belonging to $\CV_{k,j}[1, A'k^{\gamma})$.
          \item\label{item:app-2} Let $\CU_{j}=\{v \in \CV_{k,j}[A'k^\gamma, 2A'k^{\gamma}) |\, v \sim \CC_{k,j}^\sss{(1)}[1,A'k^{\gamma})\}$, call the local giant $\mathrm{LG}_j:= \CU_j\cup \CC_{k,j}^\sss{(1)}[1,A'k^{\gamma})$ and reveal all edges between any pair of vertices in $\cup_{j \le n/k} \mathrm{LG}_j$.
          \item\label{item:app-3} Reveal all vertices of mark at least $2A'k^\eta=2A'(A_1\log n)^{\eta/(\zeta-\delta)}$ and their edges towards vertices in  $\cup_{j \le n/k}\mathrm{LG}_j$.
      \end{enumerate}
      Define a box $\CQ_j$ then to be $1$-good if, for some $\delta_1 > 0$, it satisfies the event
      \begin{equation}\label{eq:app-good}
      \big\{|\CC_{k,j}^\sss{(1)}[1, A'k^{\gamma})|> \rho k\big\}\cap\big\{
          |\CU_j| \ge \delta_1k^{\gamma}\big\}.
          \end{equation}
      We call (stage-2-)$\CG_n$ $2$-good if
      \begin{enumerate}
          \item[(2a)] all boxes $\CQ_1,\ldots, \CQ_{n/k}$ are 1-good,
          \item[(2b)] $|\{ u \in \CU_j : u \sim \mathrm{LG}_{j+1}\}| \ge 1$ for all $j < n/k$, i.e., each box contains a box-wedging vertex.
      \end{enumerate}
      We call (stage-3-)$\CG_n$ $3$-good if
      \begin{enumerate}
          \item[(3a)] the stage-$2$-$\CG_n$ is $2$-good,
          \item[(3b)] all vertices of mark at least $2A'k^\eta$ are connected by an edge to a vertex in $\cup_{j \le n/k} \mathrm{LG}_j$.
      \end{enumerate}
        By the hypothesis~\eqref{eq:hyp-lh-verif}, for any fixed $1 \le i \le n/k$,
  $$
\Prob\big(|\CC_{k,j}^\sss{(1)}[1, A'k^{\gamma})|\le \rho k \big) \le A_2\exp\big(- \tfrac{1}{A'} k^{\zeta-\delta}\big), 
$$
and by the same calculations as in \cref{claim:1good}, there exists $\varepsilon>0$ such that
$$
\Prob\big(|\CU_j|\le \delta_1 k^{\zeta} \mid |\CC_{k,j}^\sss{(1)}[1, A'k^{\gamma})|> \rho k \big) \le \exp(-\varepsilon k^{\zeta}).
$$
Let us denote by 2a-good the event that condition (2a) is satisfied. Then,  by a union bound over all $n/k$ boxes, for some $\varepsilon>0$,
$$
\Prob\big(\text{$\CG_n$ is 2a-good}\big) \ge 1-2(n/k)\exp\big(-\varepsilon k^{\zeta-\delta}\big).
$$
We estimate the probability of having a box-wedging vertex using the same calculations as in \cref{claim:second-2-good}. Again for some $\varepsilon > 0$,
$$
\Prob\big(|\{ u \in \CU_j : u \sim \mathrm{LG}_{j+1}\}|=0 \mid \CQ_j, \CQ_{j+1} \mbox{ are 1-good}\big) \le \exp(-\varepsilon k^{\zeta}).
$$ Hence, by a union bound over all $n/k$ boxes, there exists $\varepsilon>0$ such that
\[\Prob\big(\text{$\CG_n$ is 2-good}\mid\text{$\CG_n$ is 2a-good}\big)\ge 1-2(n/k)\exp(-\varepsilon k^{\zeta}).\] 
Thus, there exists a constant $\varepsilon'>0$ such that for $n$ sufficiently large, using $k=(A_1\log n)^{1/(\zeta-\delta)}$,
\[
\Prob\big(\CG_n\text{ is 2-good}\big)\ge 1-(n/\varepsilon' k)\exp\big(-\varepsilon' k^{\zeta-\delta}\big)\ge 1-\exp\big(-\varepsilon'A_1\log n+\log(n))\big).
\]
Recall now $C>0$ from the statement of \cref{lemma:prerequisite-verif}. Then, for all $A_1=A_1(\varepsilon)$ sufficiently large, 
\begin{equation}
\Prob\big(\CG_n\text{ is 2-good}\big)\ge1-o(n^{-C}).\label{eq:verif-2-good}
\end{equation}
In the first two stages \eqref{item:app-1}--\eqref{item:app-2} of the revealment scheme, we only revealed vertices with mark below $2A'k^{\gamma}=2A'(A_1\log n)^{\gamma/(\zeta-\delta)}$ to obtain stage-2-$\CG_n$. Moreover, conditional on $\CG_n$ being 2-good, the local giants in all the subboxes are connected to each other by box-wedging vertices, and their size is at least $\rho k$ by \eqref{eq:app-good}. So,
$$|\CC_{n}^\sss{(1)}[1,2A'k^{\gamma})| =|\CC_{n}^\sss{(1)}[1,2A'(A_1\log n)^{\gamma/(\zeta-\delta)})|\ge \sum_{j\le n/k}|\mathrm{LG}_j|\ge \rho n.
$$
Combining this with \eqref{eq:verif-2-good} gives that
\[
\Prob\big(|\CC_{n}^\sss{(1)}[1,2A'(A_1\log n)^{\gamma/(\zeta-\delta)})|\ge \rho n\big) \ge 1-o(n^{-C}).
\]
This proves~\eqref{eq:lemma-prer-ver-2} for $A:=2A'A_1^{\gamma/(\zeta-\delta)}$.

We proceed to the proof of~\eqref{eq:lemma-prer-ver-1}. We use the same revealment scheme as above in \eqref{item:app-1}--\eqref{item:app-3}. Choosing $k:=(A_1\log n)^{1/(\zeta-\delta)})$, for \eqref{eq:lemma-prer-ver-1} it suffices if all vertices of mark at least $2A'k^{\eta}$ connect by an edge to $\CC_{n}^\sss{(1)}[1,2A'k^\gamma)$ with probability at least $1-o(n^{-C})$.

 We first show that vertices with marks at least $2A'k^\eta$ have not been revealed in the stage-2-$\CG_n$ in \eqref{item:app-1}--\eqref{item:app-2} where marks up to $2A'k^\gamma$ are revealed. To this end, we recall that $\eta=1-\gamma(\tau-1)/\alpha$ from~\eqref{eq:eta-def}. We then need to check that $\eta\ge\gamma$, equivalently, $1-\gamma(\tau-1)/\alpha \ge\gamma$. Elementary rearrangements yield that this inequality holds whenever $1-\gamma(\tau-1)=\zeta_\mathrm{hl}\ge0$. This holds by our assumption in the statement of Lemma \ref{lemma:prerequisite-verif}. In conclusion, any vertex of mark $\ge 2A'k^\eta$ is not part of stage-$2$-$\CG_n$.
 
Excluding first the event that stage-$2$-$\CG_n$ is not $2$-good, we can then assume that stage-$2$-$\CG_n$ is $2$-good and condition on its realization, and also on the realization of vertices with mark at least $2A'k^\eta$. We obtain for the complement of the event in \eqref{eq:lemma-prer-ver-1} that
\begin{align}
\Prob&\big(\nexists\CC: \CC \not\nsupseteq \CV_n[2A'k^{\eta}, \infty), |\CC|\ge \rho n\big) \label{eq:prer-verif-3} \\&\le 
\E\Big[\Prob\big(\exists v\in \CV_n[2A'k^{\eta}, \infty): v\nsim  \CC_{n}^\sss{(1)}[1,2A'k^\gamma)\mid  \CG_n \mbox{ 2-good}, \CV_n[2A'k^\eta,\infty)\big)\Big] 
+\Prob(\CG_n \mbox{ not 2-good}).\nonumber
\end{align}
The second term on the right-hand side is at most $o(n^{-C})$ by the previous calculations leading to~\eqref{eq:verif-2-good} for any $A_1$ sufficiently large.
For the first term, we consider all possible connections of a high-mark vertex $u$ to the lower-mark vertices in the `local giant' in the subbox containing $u$. By a union bound over all vertices of mark at least $2A'k^{\eta}$, and recalling that the maximum distance between two vertices in the same box is at most $\sqrt{d}k^{1/d}$, we have
\begin{align}
\Prob\big(\nexists\CC: \CC \not\nsupseteq \CV_n[2A'k^{\eta}, \infty), |\CC|\ge \rho n\big) 
&\le  \E\bigg[|\CV_n[2A'k^{\eta}, \infty)|\Big(1 - p \Big(1\wedge \beta \frac{2A'k^\eta}{d^{d/2}k}\Big)^\alpha\Big)^{\rho  k}\bigg] + o(n^{-C}), \nonumber\\
&\le n\exp\big(-p\beta^\alpha d^{-\alpha d/2}\rho (2A')^\alpha k^{1-\alpha(1-\eta)}\big)+o(n^{-C}),\label{eq:eta-zeta}
\end{align}
where we used that the minimum is attained at the second term for $k$ sufficiently large (this follows because $\eta=1-\gamma(\tau-1)/\alpha<1$). We compute that $1-\alpha(1-\eta)=1-\gamma(\tau-1)=\zeta$ and substituting $k=(A_1\log n)^{1/(\zeta-\delta)}$, we see that the right-hand side is at most $o(n^{-C})$ if $A_1$ is sufficiently large.
Thus,~\eqref{eq:lemma-prer-ver-1} follows for the hl-regime for some constant $A>0$.

\emph{Low-low regime.} We only sketch the adaptation of the method to the low-low regime, where the calculations are less involved. Set now $\zeta=\zeta_\mathrm{ll}=2-\alpha$, $\gamma=\gamma_\mathrm{ll}=(\alpha-1)/(\tau-1)$, set $k=k_n=(A_1 \log n)^{1/(\zeta-\delta)}$ and partition $\Lambda_n$ into boxes $\CQ_1, \ldots, \CQ_{n/k}$ of volume $k$. Set also $\eta=\eta_\mathrm{ll}=1/\alpha$. 
We modify slightly the revealment scheme in \eqref{item:app-1}--\eqref{item:app-3} as follows: there are no sets $\CV_j$; a box $\CQ_j$ is called $1$-good if  $|\CC_{k,j}^\sss{(1)}[1,A'k^\gamma)| \ge \rho k$; the stage-2-$\CG_n$ is called $2$-good if all subboxes $\CQ_1, \ldots, \CQ_{n/k}$ are $1$-good and additionally there are box wedging vertices between each consecutive box:
$$
  |\{u\in\CC_{k,j}^\sss{(1)}[1,A'k^\gamma): u\sim \CC_{k,j+1}^\sss{(1)}[1,A'k^\gamma)\}|\ge 1 \mbox{ for all } i<n/k.$$
  By calculations similar to those in the proof of \cref{prop:second-upper} when $\zeta_\mathrm{ll}>0$ on page~\pageref{proof:lowlow-second}, one can show that there exists $\varepsilon_2>0$ such that for all $n$ sufficiently large
  \[
  \Prob\big(\CG_n\text{ is $2$-good}\big)\ge 1-n\exp\big(-\varepsilon_2k^{\zeta-\delta}\big)\ge 1-o(n^{-C}),
  \]
  substituting $k=(A_1\log n)^{1/(\zeta-\delta)}$ for any sufficiently large constant $A_1$. Now~\eqref{eq:lemma-prer-ver-2} follows as in~\eqref{eq:verif-2-good}.

We turn to~\eqref{eq:lemma-prer-ver-1}, for which we first prove that vertices of mark above $2A' k^{\eta_{\mathrm{ll}}}$ have not been revealed yet in stage-$2$-$\CG_n$. For this we need that $\eta_\mathrm{ll}\ge \gamma_\mathrm{ll}$. When $\zeta_\mathrm{ll}> \max(\zeta_\mathrm{hl}, 0)$, it follows that $\alpha<\tau-1$ and $\alpha\in(1,2)$ by definition of $\zeta_\mathrm{ll}$ and $\zeta_\mathrm{hl}$ in~\eqref{eq:zeta-ll} and~\eqref{eq:zeta-lh}, respectively. For such values of $\alpha$, $\alpha(\alpha-1)<\tau-1$ holds, which implies that $\eta_\mathrm{ll}=1/\alpha>(\alpha-1)/(\tau-1)=\gamma_\mathrm{ll}$. Thus, no vertices of mark at least $2A'k^{\eta_\mathrm{ll}}$ have been exposed in the stage-2-$\CG_n$. Similar to~\eqref{eq:prer-verif-3} and~\eqref{eq:eta-zeta}, we obtain by a union bound over all vertices of mark at least $2A'k^{\eta_\mathrm{ll}}$, 
\begin{align*}
\Prob\big(\nexists\CC: \CC \not\nsupseteq \CV_n[2A'k^{\eta_\mathrm{ll}}, \infty), |\CC|\ge \rho n\big) 
&\le  \E\bigg[|\CV_n[2A'k^{\eta_\mathrm{ll}}, \infty)|\Big(1 - p \Big(1\wedge \beta \frac{2A'k^\eta}{d^{d/2}k}\Big)^\alpha\Big)^{\rho k}\bigg] +o(n^{-C})\\
&\le n\exp\big(-p\beta^\alpha d^{-\alpha d/2}\rho (2A')^\alpha k^\zeta\big)+o(n^{-C}).
\end{align*}
Here we used that  $\eta=1/\alpha$ implies that the minimum is again attained at the second term, and we then used that $\zeta=2-\alpha$. Substituting $k=(A_1\log n)^{1/(\zeta-\delta)}$ yields that when $A_1$ is large
\[
\Prob\big(\nexists\CC: \CC \not\nsupseteq \CV_n[2A'k^{\eta_\mathrm{ll}}, \infty), |\CC|> \rho n\big)= o(n^{-C}),
\]
showing~\eqref{eq:lemma-prer-ver-1} for some $A>0$.
The corresponding statements for the Palm version $\Prob^\sss{x}$ of $\Prob$ for any $x\in\R^d$, are proven in exactly the same way.
\end{proof}

 \section{Postponed proofs: upper tail}
 \subsection{Concentration of degrees and crossing edges}\label{sec:upper-deg-cross}
 \begin{proof}[Proof of \cref{claim:total-deg}]
 We write $\underline w=\underline w_n$. To bound \eqref{eq:total-deg}, we first distinguish whether there are more or fewer than $\lceil C/\psi\rceil$ such vertices, and in the latter case we condition on their number: 
\[ 
\begin{aligned}\Prob\Bigg(\sum_{v\in\CV_n[\underline{w}, \phi n)}\hspace{-10pt}\mathrm{deg}_v[1,\phi n) &\ge \psi n\Bigg)\le \Prob\big(|\CV_n[\underline{w}, \phi n)|> \lceil C/\psi\rceil\big)\\&+ \E\bigg[\ind{|\CV_n[\underline{w}, \phi n)|\le
\lceil C/\psi\rceil}\Prob\Bigg(\sum_{v\in\CV_n[\underline{w}, \phi n)}\hspace{-10pt}\mathrm{deg}_v[1,\phi n)\ge \psi n\, \Big|\, |\CV_n[\underline{w}, \phi n)|\Bigg)\bigg]  .
\end{aligned}
\]
For the first probability, the intensity of $\CV$ in~\eqref{eq:poisson-intensity} yields that  $|\CV_n[\underline w, \phi n)| \preccurlyeq \mathrm{Poi}(n \underline{w}^{-(\tau-1)}) = \mathrm{Poi}(n^{-\psi})$. Hence, $\Prob(|\CV_n[\underline{w},\phi n)|> \lceil C/\psi\rceil)  =\Theta(n^{-\psi(\lceil C/\psi\rceil +1)})=o(n^{-C})$.
We turn to the term inside the expectation. For the sum $\sum_{v\in\CV_n[\underline{w}, \phi n)}\mathrm{deg}_v[1,\phi n)$ to be at least $\psi n$, by the pigeon-hole principle, there must be a vertex among the at most $\lceil C/\psi \rceil$  vertices in $\CV_n[\underline{w}, \phi n)$ that has total degree at least $\psi n/\lceil(C/\psi)\rceil\ge 2n\psi^2/(3C)$ (we may assume $C$ large so that this inequality holds). Since at most  $\lceil C/\psi \rceil$ edges go to other vertices in $\CV_n[\underline{w}, \phi n)$,  this vertex must have at least $2n\psi^2/(3C) - \lceil C/\psi \rceil \ge n\psi^2/(2C)$  edges towards vertices with mark below $\underline w$ (the bound holds for $n$ large).  We take a union bound over these high-mark vertices, and consider the worst-case location $y$ of such a vertex to estimate the second term above:
 \begin{equation}\label{eq:total-deg-pr}
 \begin{aligned}
\Prob\Bigg(\sum_{v\in\CV_n[\underline{w}, \phi n)}&\hspace{-12pt}\mathrm{deg}_v[1,\phi n) \ge \psi n\Bigg)\le o(n^{-C})\\
&+ \lceil \tfrac{C}{\psi}\rceil \sup_{(y, w)\in\Lambda_n\times[\underline w, \phi n)}\Prob^y\Big(\mathrm{deg}_{(y, w)}[1,\underline w) \ge \tfrac{\psi^2}{2C}n\mid (y,w)\in\CV_n[\underline w, \phi n)\Big).
\end{aligned}
\end{equation}
Since $\CV_n[1,\underline w)$ is independent of $\CV_n[\underline w, \infty)$, the PPP restricted to vertices of mark at most $\underline w$ is independent of the location $y$ of a high-mark vertex $v=(y,w)$. Since edges are present conditionally independently, conditionally on $v=(y,w)$, the neighbors of $v$ in the graph form an inhomogeneous PPP on $\Lambda_n\times[1,\underline w)\subseteq \R^{d+1}$  with intensity $p\big((y,w), (x,z)\big)\big(\rd x\times F_W(\rd z)\big)$.
By integrating this intensity over $\Lambda_n\times[1,\underline w)$ (see~\cite[Proposition 2.1]{luchtrathThesis2022} for an example) the reader may verify that there exists a constant $c=c(d,\alpha, \tau, \sigma, \beta, p)>0$ such that  for \emph{any} vertex $v=(y,w)$ with location $y\in\Lambda_n$ and $w\ge \underline w$
\[
\mathrm{deg}_{(y, w)}[1,\underline w)\preccurlyeq
\begin{dcases}
    \mathrm{Poi}(cw),&\text{if }\sigma< \tau-1, \\
    \mathrm{Poi}\big((cw\log(n/w)\big),&\text{if }\sigma=\tau-1,\\
    \mathrm{Poi}\big(cw^{(\tau-1)/\sigma}n^{1-(\tau-1)/\sigma}\big),&\text{if }\sigma>\tau-1.
\end{dcases}
\]
 The intensity is increasing in $w$ in all three cases, so the supremum in~\eqref{eq:total-deg-pr} is at $w=\phi n$. Substituting this value, the parameter of the Poisson distribution is order $O(n)$ in all three cases with constant prefactor at most $c_\phi:=c\max(\phi, \phi^{(\tau-1)/\sigma},\phi\log(1/\phi))$. Clearly $c_\phi$ tends to zero as $\phi\!\downarrow\! 0$. 
 We 
  conclude that 
\[
\Prob\Bigg(\sum_{v\in\CV_n[\underline{w}, \phi n)}\hspace{-10pt}\mathrm{deg}_v[1,\phi n) \ge \psi n\Bigg)\le o(n^{-C})+ \lceil \tfrac{C}{\psi}\rceil \Prob\Big(\Poi\big(nc_\phi \big)\ge n\frac{\psi^2}{2C}\Big).
\]
 By concentration for Poisson random variables (\cref{lemma:poisson-1}), the second term decays exponentially in $n$ for any $\phi$ sufficiently small, finishing the proof. 
\end{proof}

 \begin{proof}[Proof of \cref{claim:high-deg}]
We first condition on the set of vertices with mark at least $\underline w$ and apply a union bound over these vertices: 
\[
\Prob\big(\exists v\in \CV_n[\underline w, \infty): \mathrm{deg}_v[1,\underline w) <\ell\big) \le\E\bigg[\sum_{ v\in \CV_n[\underline w, \infty)}\Prob\Big( \mathrm{deg}_v[1,\underline w) <\ell\mid \CV_n[\underline w,\infty)\Big)\bigg].
\]
We analyze a single summand: since the PPP $\CV_n[1,\underline w)$ is independent of $\CV_n[\underline w, \infty)$, and edges are present conditionally independently, it follows that 
\begin{equation}\label{app:low-deg}
\Prob\big( \mathrm{deg}_v[1,\underline w) <\ell\mid \CV_n[\underline w,\infty)\big) \le \sup_{(y,w)\in \Lambda_n\times [\underline w, \infty)}
\Prob\big( \mathrm{deg}_v[1,\underline w) <\ell\mid v=(y,w)\in \CV\big).
\end{equation}
The neighbors of the vertex $v$ form an independent inhomogeneous PPP on $\Lambda_n\times[1,\underline w)$  with intensity $p\big((y,w), (x,z)\big)\big(\rd x\times F_W(\rd z)\big)$.
By integrating the intensity over $\Lambda_n\times[1,\underline w)$ (see~\cite[Proposition 2.1]{luchtrathThesis2022} for an example) there exists a constant $c=c(d,\alpha, \tau, \sigma, \beta, p)>0$ such that $\mathrm{deg}_{v}[1,\underline w)\succcurlyeq \mathrm{Poi}(c (w\wedge n))$ for \emph{any} $y\in\Lambda_n$. This is also true when $y$ is close to the boundary of $\Lambda_n$, since at least a $d$-dependent constant fraction of the ball of volume $w\wedge n$ centered at any $y$ falls inside $\Lambda_n$. Since $w\ge \underline w$, the supremum in~\eqref{app:low-deg} is attained at $w=\underline w$, so we obtain  that 
\[
\Prob\big(\exists v\in \CV_n[\underline w, \infty): \mathrm{deg}_v[1,\underline w) <\ell\big) \le \E[|\CV_n[\underline w,\infty)|]\Prob\big(\mathrm{Poi}(c\underline w)<\ell\big)\le n\Prob\big(\mathrm{Poi}(c\underline w)<\ell\big).
\]
By concentration inequalities for Poisson random variables (see \cref{lemma:poisson-1}), the probability on the right-hand side is of order $o(n^{-C-1})$  when $\underline w\ge C_1\log n$, where $C_1=C_1(C,c)$ is a sufficiently large constant.
\end{proof}

\begin{proof}[Proof of \cref{claim:crossing-edges}]
We distinguish three types of vertices inside $\Lambda_n$ that may be connected by an edge to a vertex outside $\Lambda_n$: (1) those close to the boundary of $\Lambda_n$, (2) those with mark at least a very large constant not close to the boundary, and (3) the rest. By the pigeon-hole principle, in order for the event inside of the probability of the statement~\eqref{eq:crossing-edges} to hold, at least one of these three disjoint sets must have size at least $(\psi/3)n$. We write  $\tilde n=(1-\psi/4)n$ and $A_{0}:=\Lambda_n\setminus\Lambda_{\tilde n}$ for an annulus inside $\Lambda_n$. Formally, the three cases become
\begin{equation}\label{eq:app-3-term-cut}
\begin{aligned}
\Prob\big(|\{v\in \CV_{n}[1,\underline w): v&\sim \Lambda_n^\complement\times[1,\underline w)\}|\ge \psi n\big)\\&\le
\Prob\big(|\CV_{A_0}|\ge (\psi/3) n\big)
+
\Prob\big(|\CV_{\tilde n}[(4/\psi)^{1/(\tau-1)},\infty)|\ge (\psi/3) n\big)\\
&\hspace{15pt}+
\Prob\big(|\{v\in \CV_{\tilde n}[1,(4/\psi)^{1/(\tau-1)}): v\sim \Lambda_n^\complement\times[1,\underline w)\}|\ge (\psi/3) n\big).
\end{aligned}
\end{equation}
The size of the annulus in the first term and the mark-truncation value $(4/\psi)^{1/(\tau-1)}$ in the second term are both chosen  so that $\E[|\CV_{A_0}|]=\E[|\CV_{\tilde n}[(4/\psi)^{1/(\tau-1)},\infty)|]= (\psi/4)n$.
By concentration inequalities for Poisson variables (\cref{lemma:poisson-1}), the first two terms on the right-hand side decay exponentially in $n$ and are thus $o(n^{-C})$. In the remainder, we focus on the third term. 

When $\alpha=\infty$, the third term equals zero for $n$ sufficiently large: by the connection probability $\mathrm{p}$ in~\eqref{eq:connection-prob-gen}, a vertex with mark at most $(4/\psi)^{1/(\tau-1)}$ cannot be connected by an edge to a vertex of mark at most $\underline w=o(n)$ at distance $\Omega(n^{1/d})$. 

We now study the third term in \eqref{eq:app-3-term-cut} when $\alpha<\infty$. We use a discretization argument: we decompose $\Lambda_n^\complement\times [1, \underline w)$ into space-mark annuli and count edges according to where their endpoint falls. Set for $i\ge 1$ and $j\le \lceil\log_2(\underline w)\rceil$,
\begin{equation}\label{eq:app-aij-def}
\begin{aligned}
A_{i,j}&:=(\Lambda_{2^in}\setminus\Lambda_{2^{i-1}n})\times[2^j,2^{j+1}),\\
\E[\CV\cap A_{i,j} ] &= 2^{i-1}n\cdot (2^{-j(\tau-1)}-2^{-(j+1)(\tau-1)})\le 2^{i-j(\tau-1)-1}n. 
\end{aligned}
\end{equation}
To avoid too many crossing edges, a good event is when none of the sets $A_{i,j}$ contains more than $2(\log n)^2$ times the expectation many vertices. Truncating also at $1$, we define this event as
\begin{equation}
\CA_\mathrm{out}:=\bigcap_{i\ge 1}\bigcap_{j\le \lceil\log_2(\underline w)\rceil}\CA_{i,j},\quad \mbox{with }\quad 
\CA_{i,j}:=\Big\{|\CV\cap A_{i,j}|\le (\log n)^2\max(2^{i-j(\tau-1)}n, 1\big)\Big\}.\label{eq:event-cross}
\end{equation}
Let us introduce the event that the number of vertices inside $\Lambda_{\tilde n}=\Lambda_{(1-\psi/4)n}$ does not exceed $n$:
\begin{equation}
\CA_\mathrm{in}:=\{|\CV_{\tilde n}[1,(4/\psi)^{1/(\tau-1)})|\le n\}\qquad\mbox{and}\qquad \CA=\CA_\mathrm{in}\cap\CA_\mathrm{out}.\label{eq:event-cross-in}
\end{equation}
We first use concentration inequalities for Poisson random variables to show that $\Prob\big(\neg\CA)$ decays superpolynomially. 
To make another case distinction below, we let 
\begin{equation}\label{eq:j-star}
\begin{aligned}
    j_\ast(i)&:=\max\big\{j\in[\lceil\log_2\underline w\rceil]: 2^{i-j(\tau-1)-1}n\ge (\log n)^2\big\},\\
    C_1&:= \inf_{C'\ge 2\psi}\big\{\forall i\ge C_1\log_2 n: j_\ast(i)= \lceil \log_2 \underline w\rceil\big\}.
    \end{aligned}
\end{equation}
 The number of vertices in $A_{i,j}$ is Poisson distributed with expectation at most $2^{i-j(\tau-1)-1}n$, see~\eqref{eq:app-aij-def}. 
  Therefore, by a union bound over all space-mark annuli,
\[
\begin{aligned}
\Prob\big(\neg\CA_\mathrm{out}\big)&\le \sum_{i\ge 1}\sum_{j=1}^{j_\ast(i)}\Prob\big(\mathrm{Poi}(2^{i-j(\tau-1)-1}n)\ge (\log n)^22^{i-j(\tau-1)}n\big) \\
&\hspace{15pt}+ \sum_{i=1}^{C_1\log_2 n}\sum_{j= j_\ast(i)+1}^{\lceil\log_2 \underline w\rceil }\Prob\Big(\mathrm{Poi}\big(2^{i-j(\tau-1)-1}n\big)\ge (\log n)^2\max\big(2^{i-j(\tau-1)}n, 1\big)\Big).
\end{aligned}
\]
We use that the function  $f(\lambda):=\Prob(\Poi(\lambda)>2(\log n)^2 (\lambda\vee 1))$ is increasing on the interval $[0,1]$, maximal at $\lambda=1$, and then decreasing on $[1, \infty)$. So, the summands on the second line can be all bounded from above by $\Prob\big(\mathrm{Poi}(1)\ge (\log n)^2\big)=O\big(1/(\log n)^2!\big)$. Since $\underline w=n^{(1+\psi)/(\tau-1)}$, there are $O((\log n)^2)$ summands, so the second row  decays faster than any polynomial.
Turning to the first row, since the means here are all above $1$, for each fixed $i$, the largest summand is $j=j_\ast(i)\le \lceil\log_2 \underline w\rceil\le\lceil\log_2 n\rceil$. We then employ concentration for Poisson random variables (\cref{lemma:poisson-1}) and obtain that  there exists $c>0$ such that 
\[
\Prob\big(\neg\CA_\mathrm{out}\big)\le  o(n^{-C})+ \sum_{i\ge 1}\Big(\lceil\log_2 n\rceil \exp\big(-c2^{i-j_\ast(i)(\tau-1)}n\big)\Big).
\]
If $i\ge\lceil C_1\log_2 n\rceil$, then $j_\ast(i)=\lceil\log_2 w\rceil$ by definition in~\eqref{eq:j-star}. For smaller $i$, again by definition of $j_\ast(i)$, the argument of the exponential is always at least $c(\log n)^2$. Therefore, 
\[
\Prob\big(\neg\CA_\mathrm{out}\big)\le o(n^{-C}) +  C_1\lceil\log_2 n\rceil^2\exp\big(-c(\log n)^2\big) + \lceil\log_2 n\rceil\sum_{i\ge \lceil C_1\log_2 n\rceil}\exp\big(-c2^{i-\lceil\log_2 \underline w\rceil(\tau-1)}n\big).
\]
The second term decays superpolynomially and is absorbed by the $o(n^{-C})$ term. Since $\underline w\le n^{(1+\psi)/(\tau-1)}$ with $(1+\psi)/(\tau-1)<1$, we obtain for a constant $c'>0$ that 
\[
\Prob\big(\neg\CA_\mathrm{out}\big)\le o(n^{-C}) +\lceil\log_2 n\rceil \sum_{i\ge \lceil C_1\log_2 n\rceil}\exp\big(-c'2^{i}n^{-\psi}\big).
\]
Since $C_1>2\psi$ by definition in~\eqref{eq:j-star}, it follows that $2^i\ge n^{2\psi}$ for all $i$. Therefore, the sum decays faster than any polynomial. We recall that $\CA=\CA_\mathrm{in}\cap\CA_\mathrm{out}$ by~\eqref{eq:event-cross-in}. By \cref{lemma:poisson-1}, also $\Prob\big(\neg \CA_\mathrm{in}\big)=\exp(-\Omega(n))$. 
This proves that 
\begin{equation}\label{eq:app-bad-a}
\Prob\big(\neg \CA\big)=o(n^{-C}).
\end{equation}
 We return to the third term in~\eqref{eq:app-3-term-cut} and show that it decays  exponentially in $n$ when we condition on any  realization of the vertex set $\CV$ that satisfies $\CA$. Our first goal is to show that the probability that any fixed vertex $v\in\CV_{\tilde n}[1,(4/\psi)^{1/(\tau-1)})$ is connected by an edge to a vertex outside $\Lambda_n$ tends to zero, conditionally on $\CV$ that satisfies $\CA$.
Using the upper bounds on $|\CV\cap A_{i,j}|$ in~\eqref{eq:event-cross} and the connection probability $\mathrm{p}$ from~\eqref{eq:connection-prob-gen}, Markov's inequality yields for any $v\in \Lambda_{\tilde n}\times[1,(4/\psi)^{1/(\tau-1)})$,
\[
\begin{aligned}
\Prob\big(\exists u&\in \Lambda_n^\complement\times[1,\underline w): v\sim u \mid \CV, \CA\big)\\&\le
p\beta^\alpha(\log n)^2\sum_{i\ge 1}\sum_{j\in[\lceil\log_2\underline w\rceil]}\big(2^{i-j(\tau-1)}n\vee 1\big)\Big(\frac{\kappa_\sigma((C/\psi)^{1/(\tau-1)}, 2^{j+1})}{\|\Lambda_{\tilde n}-A_{i,j}\|^d}\Big)^\alpha.
\end{aligned}
\]
By the definition of $\kappa_\sigma$ in~\eqref{eq:kernels} the numerator is at most $c_12^{j\alpha}$ for some constant $c_1=c_1(C,\psi, \sigma, \alpha)$.
There exists a constant $c_2>0$ such that $\|\Lambda_{\tilde n}-A_{i,j}\|\ge (c_22^in)^{1/d}$ for all $i\ge 1$, and hence there exists another constant $c>0$ such that 
\[
\begin{aligned}
\Prob\big(\exists u\in \Lambda_n^\complement\times [1,\underline w): v\sim u \mid \CV, \CA\big)\le
c(\log n)^2n^{-\alpha}\sum_{i\ge 1}\sum_{j\in[\lceil\log_2\underline w\rceil]}\big(2^{i-j(\tau-1)}n\vee 1\big)2^{(j-i)\alpha} .
\end{aligned}
\]
Studying the maximum inside the summation, $2^{i-j(\tau-1)}n<1$ if $j\in[(i+\log_2 n)/(\tau-1), \lceil\log_2\underline w\rceil]$. Since $ \log_2 \underline w \le \log_2 n$, this interval is non-empty only when $i<C_2\log_2 n$ for some constant $C_2>0$.  
Using now that $x\vee 1\le x + \ind{x\le 1}$ with these criteria, we obtain the upper bound
\begin{equation}\label{eq:app-crossing-444}
\begin{aligned}
\Prob\big(\exists u\in \Lambda_n^\complement\times[1,\underline w): v\sim u \mid \CV, \CA\big)&\le
c(\log n)^2n^{1-\alpha}\sum_{i\ge 1}\sum_{j\in[\lceil\log_2\underline w\rceil]}2^{-i(\alpha-1)+j(\alpha-(\tau-1))} \\
&\hspace{15pt}
+c(\log n)^2n^{-\alpha}\sum_{i=1}^{\lceil C_2\log_2 n\rceil}\sum_{j=\lceil (i+\log_2 n)/(\tau-1)\rceil}^{\lceil \log_2 \underline w\rceil}2^{(j-i)\alpha}.
\end{aligned}
\end{equation}
We continue to bound the second line. Recall that $\underline w\le n^{(1+\psi)/(\tau-1)}$ where $(1+\psi)/(\tau-1)<1$. Therefore, there exists $\varepsilon>0$ such that $2^j<n^{1-\varepsilon}$ for all $j\le \lceil\log_2 \underline w\rceil$ when $n$ is sufficiently large. Furthermore, we bound $2^{-i\alpha}\le 1$. Consequently, there exists $C_3>0$ such that 
\[
\begin{aligned}
\Prob\big(\exists u\in \Lambda_n^\complement\times[1,\underline w): v\sim u \mid \CV, \CA\big)&\le
c(\log n)^2n^{1-\alpha}\sum_{i\ge 1}2^{-i(\alpha-1)}\sum_{j\in[\lceil\log_2\underline w\rceil]}2^{j(\alpha-(\tau-1))} \\
&\hspace{15pt}
+C_3(\log n)^4n^{-\varepsilon\alpha}.
\end{aligned}
\]
We now bound the first line in \eqref{eq:app-crossing-444} on the right-hand side. Since $\alpha>1$, the sum over $i$ is finite. The sum over $j$ is finite when $\alpha<\tau-1$, and at most logarithmic when $\alpha=\tau-1$. Since $\alpha > 1$, in both of these cases the right-hand side on the first line is at most $O(n^{-\tilde\varepsilon})$ for some $\tilde\varepsilon > 0$. When $\alpha>\tau-1$, we use again that $2^j<n^{1-\varepsilon}$, so the maximal term is at most $n^{(1-\varepsilon)(\alpha-(\tau-1))}$. When multiplied with $n^{1-\alpha}$, this is at most $n^{-(\tau-2)-\varepsilon(\alpha-(\tau-1))}$. We conclude that in all cases there exist $\tilde\varepsilon, C_4>0$ such that 
\[
\Prob\big(\exists u\in \Lambda_n^\complement[1,\underline w): v\sim u \mid \CV, \CA\big)\le
C_4(\log n)^3n^{-\tilde\varepsilon} + C_2(\log n)^4n^{-\varepsilon \alpha}.
\]
Assuming $n$ is sufficiently large, the right-hand side is at most $\psi/6$.
By definition of $\CA_\mathrm{in}$ in~\eqref{eq:event-cross-in}  there are at most $n$ vertices inside $\CV_{\tilde n}[1,(4/\psi)^{1/(\tau-1)})$. Recall that edges are present conditionally independently on $\CV$. Therefore, conditionally on any realization of $\CV$ that satisfies $\CA$,
\[
|\{v\in \CV_{\tilde n}[1,(4/\psi)^{1/(\tau-1)}): v\sim \Lambda_n^\complement\times[1,\underline w)\}|\succcurlyeq \mathrm{Bin}(n,\psi/6).
\]
Returning to the third term in~\eqref{eq:app-3-term-cut}, we obtain that 
\[
\begin{aligned}
\Prob\big(|\{v\in \CV_{\tilde n}[1,(4/\psi)^{1/(\tau-1)})&: v\sim \Lambda_n^\complement\times[1,\underline w)\}|\ge (\psi/3) n\big) \\
&\le 
\Prob\big(\neg \CA) + \E\big[\ind{\CA}\Prob\big(\mathrm{Bin}(n,\psi/6)\ge (\psi/3)n\mid \CV,\CA\big)\big].
\end{aligned}
\]
The first term is of order $o(n^{-C})$ by~\eqref{eq:app-bad-a}. The second term decays exponentially in $n$ by a Chernoff bound. Thus all terms in~\eqref{eq:app-3-term-cut} are of order $o(n^{-C})$, finishing the proof. 
\end{proof}

 \subsection{Truncation of the generating function}\label{sec:app-trunc-gen}
\begin{proof}[Proof of \cref{claim:hubs-lower}]
        We start with the first bound. Using elementary rearrangements of~\eqref{eq:hubs-lower-trunc}, we leave it to the reader to verify that it is sufficient that there exists $\psi'>0$ and $\ell_0$ such that 
        \[
        \rho-\theta + \psi'<\sum_{\ell=1}^{\ell_0}\big(1-(1-p)^{h_\mathrm{lo} \ell}\big)\theta_\ell = \sum_{\ell=1}^{\ell_0}\theta_\ell - \sum_{\ell=1}^{\ell_0}(1-p)^{h_\mathrm{lo} \ell}\theta_\ell.
        \]
            Since $\sum_{\ell=1}^{\infty}\theta_\ell = 1-\theta$,  we may choose $\ell_0$ is sufficiently large (depending on $\psi'$) so that $\sum_{\ell=1}^{\ell_0}\theta_\ell=\Prob^\sss{0}\big(|\CC(0)|\le \ell_0\big)> 1-\theta-\psi'$. Rearranging both sides, it is sufficient to show that we can choose $\psi'$ satisfying 
        \begin{equation}
        2\psi' <1-\rho-  \sum_{\ell=1}^{\ell_0}(1-p)^{h_\mathrm{lo} \ell}\theta_\ell.
        \label{eq:lower-trunc-pr}
        \end{equation}
        If $p=1$, then the sum is $0$ and the bound is satisfied for any $\psi'\in(0,(1-\rho)/2)$.
        Now we assume that $p<1$, so that $\mathrm{hubs}(\rho)$, defined in~\eqref{eq:hubs-gen}, satisfies the equation 
        \begin{equation}
        \E^\sss{0}\big[(1-p)^{\mathrm{hubs}(\rho)|\CC(0)|}\big]=\sum_{\ell=1}^{\infty} (1-p)^{\mathrm{hubs}(\rho) \ell } \theta_\ell = 1-\rho.\label{eq:hubs-equal}
        \end{equation} 
        Now the definition of $h_{\mathrm{lo}}(\rho)$ matters: 
   If $\mathrm{hubs}(\rho)$ is an integer, then $h_{\mathrm{lo}}(\rho)=\lceil \mathrm{hubs}(\rho)\rceil +1 = \mathrm{hubs}(\rho)+1$  by definition of $h_\mathrm{lo}$ in~\eqref{eq:underover-h}. If $\mathrm{hubs}(\rho)$ is not an integer, then $h_{\mathrm{lo}}(\rho)=\lceil \mathrm{hubs}(\rho)\rceil$, and in both cases we have $h_{\mathrm{lo}}(\rho)>\mathrm{hubs}(\rho)$. So, using the infinite sum on the right-hand side of \eqref{eq:lower-trunc-pr} in place of $1-\rho$ gives that~\eqref{eq:lower-trunc-pr} holds if
        \[
        2\psi' < \sum_{\ell=1}^{\ell_0}\big((1-p)^{\mathrm{hubs}(\rho)\ell}-(1-p)^{h_\mathrm{lo} \ell}\big).
        \]
        Since $\mathrm{hubs}(\rho)<h_\mathrm{lo}$, each summand is nonnegative, so the above bound is satisfied if $\psi'$ is chosen as, for instance, $\psi':= \big((1-p)^{\mathrm{hubs}(\rho)}- (1-p)^{h_\mathrm{lo}}\big)/2>0$.

        We turn to the proof of~\eqref{eq:hubs-upper-trunc} for which we first assume $p<1$.
        Using elementary rearrangements,  it suffices to find for an $\psi'>0$ sufficiently small, and an $\ell_0$ large, so that for all $\ell^\ast\ge \ell_0$,
        \begin{equation}\label{eq:trunc-upper-pr}\sum_{\ell=1}^{\ell^\ast}\theta_\ell(1-p)^{(h_\mathrm{up}-1)\ell}  > 1-\rho + \psi'.
        \end{equation}
        Since we assumed $p<1$, the equality~\eqref{eq:hubs-equal} is satisfied. Because $h_\mathrm{up}=\lceil\mathrm{hubs}(\rho)\rceil$, it holds that 
        $h_{\mathrm{up}}-1< \mathrm{hubs}(\rho)$. Since  the left-hand side in~\eqref{eq:hubs-equal} is decreasing in $\mathrm{hubs}(\rho)$, it follows that 
        \begin{equation}\nonumber
        \sum_{\ell=1}^{\infty}(1-p)^{(h_\mathrm{up} - 1)\ell}\theta_\ell>1-\rho.
        \end{equation}
        Thus, there exists $\delta>0$, and $\ell_0>0$ such that for all $\ell^\ast\ge \ell_0$
        \[
        \sum_{\ell=1}^{\ell^\ast}(1-p)^{(h_\mathrm{up} - 1)\ell}\theta_\ell>1-\rho+\delta.
        \]
        We combine this bound with~\eqref{eq:trunc-upper-pr}, and choose $\psi'=\delta$. This finishes the proof of~\eqref{eq:hubs-upper-trunc} when $p<1$. 
        When $p=1$, $h_\mathrm{up}=1$ by its own definition in~\eqref{eq:underover-h} and the definition of $\mathrm{hubs}(\rho)=1$ in~\eqref{eq:hubs-gen}. Hence, $(1-p)^{(h_\mathrm{up}-1)\ell}=1$ for all $\ell\ge 1$. The proof follows as before since $\sum_{\ell=1}^{\infty}\theta_\ell = 1-\theta$,  choose $\ell_0$ is sufficiently large (depending on $\psi'$) so that $\sum_{\ell=1}^{\ell_0}\theta_\ell=\Prob^\sss{0}\big(|\CC(0)|\le \ell_0\big)> 1-\theta-\psi'>1-\rho$.
       \end{proof}

\subsection*{Acknowledgements}
The work of JJ and JK has been partly supported through grant NWO 613.009.122.  The work of DM has been partially supported by grant Fondecyt grant 1220174 and by grant GrHyDy ANR-20-CE40-0002.
\end{appendices}


\begin{thebibliography}{10}

\bibitem{abbey1952examination}
H.~Abbey.
\newblock An examination of the Reed-Frost theory of epidemics.
\newblock {\em Human biology}, 24(3):201, 1952.

\bibitem{aizenman1980lower}
M.~Aizenman, F.~Delyon, and B.~Souillard.
\newblock Lower bounds on the cluster size distribution.
\newblock {\em Journal of Statistical Physics}, 23(3):267--280, 1980.

\bibitem{longrangeUnique1987}
M.~Aizenman, H.~Kesten, and C.~M. Newman.
\newblock {Uniqueness of the infinite cluster and continuity of connectivity
  functions for short and long range percolation}.
\newblock {\em Communications in Mathematical Physics}, 111(4):505 -- 531,
  1987.

\bibitem{aizenman1986discontinuity}
M.~Aizenman and C.~M. Newman.
\newblock Discontinuity of the percolation density in one dimensional $1/| x-
  y|^2$ percolation models.
\newblock {\em Communications in Mathematical Physics}, 107(4):611--647, 1986.

\bibitem{albert2002statistical}
R.~Albert and A.-L. Barab{\'a}si.
\newblock Statistical mechanics of complex networks.
\newblock {\em Reviews of Modern Physics}, 74(1):47, 2002.

\bibitem{alexander1990wulff}
K.~Alexander, J.~Chayes, and L.~Chayes.
\newblock The {W}ulff construction and asymptotics of the finite cluster
  distribution for two-dimensional {B}ernoulli percolation.
\newblock {\em Communications in Mathematical Physics}, 131(1):1--50, 1990.

\bibitem{andjel1993characteristic}
E.~D. Andjel.
\newblock Characteristic exponents for two-dimensional bootstrap percolation.
\newblock {\em Annals of Probability}, 21(2):926--935, 1993.

\bibitem{andreis2023large}
L.~Andreis, W.~K{\"o}nig, H.~Langhammer, and R.~I. Patterson.
\newblock A large-deviations principle for all the components in a sparse
  inhomogeneous random graph.
\newblock {\em Probability Theory and Related Fields}, pages 1--100, 2023.

\bibitem{andreis2021er}
L.~Andreis, W.~K{\"o}nig, and R.~I. Patterson.
\newblock A large-deviations principle for all the cluster sizes of a sparse
  {E}rd{\H{o}}s--{R}{\'e}nyi graph.
\newblock {\em Random Structures \& Algorithms}, 59(4):522--553, 2021.

\bibitem{barbour2004approximating}
A.~D. Barbour and S.~Utev.
\newblock Approximating the Reed--Frost epidemic process.
\newblock {\em Stochastic processes and their applications}, 113(2):173--197,
  2004.

\bibitem{Baumler}
J.~B\"{a}umler.
\newblock Continuity of the critical value and a shape theorem for long-range percolation.
\newblock {\em Preprint arXiv:2312.04099}, 2023.

\bibitem{berger2002transience}
N.~Berger.
\newblock Transience, recurrence and critical behavior for long-range
  percolation.
\newblock {\em Communications in Mathematical Physics}, 226(3):531--558, 2002.

\bibitem{bhamidiConfLargeDev}
S.~Bhamidi, A.~Budhiraja, P.~Dupuis, and R.~Wu.
\newblock Rare event asymptotics for exploration processes for random graphs.
\newblock {\em Annals of Applied Probability}, 32(2):1112--1178, 2022.

\bibitem{biskup2004scaling}
M.~Biskup.
\newblock On the scaling of the chemical distance in long-range percolation
  models.
\newblock {\em Annals of Probability}, 32(4):2938--2977, 2004.

\bibitem{girgGiantBlasius}
T.~Bläsius, T.~Friedrich, M.~Katzmann, J.~Ruff, and Z.~Zeif.
\newblock On the giant component of geometric inhomogeneous random graphs.
\newblock {\em European Symposium on Algorithms (ESA)}, 20:1--20:13, 2023.

\bibitem{BriKeuLen19}
K.~Bringmann, R.~Keusch, and J.~Lengler.
\newblock Geometric inhomogeneous random graphs.
\newblock {\em Theoretical Computer Science}, 760:35--54, 2019.

\bibitem{broadbent1957percolation}
S.~R. Broadbent and J.~M. Hammersley.
\newblock Percolation processes: I. crystals and mazes.
\newblock {\em Mathematical {P}roceedings of the {C}ambridge {P}hilosophical
  {S}ociety}, 53(3):629--641, 1957.

\bibitem{BurtonKeane}
R.~M. Burton and M.~Keane.
\newblock Density and uniqueness in percolation.
\newblock {\em Communications in Mathematical Physics}, 121:501--505, 1989.

\bibitem{campa2009statistical}
A.~Campa, T.~Dauxois, and S.~Ruffo.
\newblock Statistical mechanics and dynamics of solvable models with long-range
  interactions.
\newblock {\em Physics Reports}, 480(3-6):57--159, 2009.

\bibitem{carvalho2021subsampled}
T.~T. Carvalho, A.~J. Fontenele, M.~Girardi-Schappo, T.~Feliciano, L.~A.
  Aguiar, T.~P. Silva, N.~A. De~Vasconcelos, P.~V. Carelli, and M.~Copelli.
\newblock Subsampled directed-percolation models explain scaling relations
  experimentally observed in the brain.
\newblock {\em Frontiers in Neural Circuits}, 14:576727, 2021.

\bibitem{cerf2000large}
R.~Cerf.
\newblock {\em Large deviations for three dimensional supercritical
  percolation}.
\newblock Soci{\'e}t{\'e} math{\'e}matique de France, 2000.

\bibitem{ChungLu02.1}
F.~Chung and L.~Lu.
\newblock Connected components in random graphs with given expected degree
  sequences.
\newblock {\em Annals of Combinatorics}, 6(2):125--145, 2002.

\bibitem{contreras2021supercritical}
D.~Contreras, S.~Martineau, and V.~Tassion.
\newblock Supercritical percolation on graphs of polynomial growth.
\newblock {\em Duke Mathematical Journal}, 173(4): 745--806, 2024.

\bibitem{crawford2012simple}
N.~Crawford and A.~Sly.
\newblock Simple random walk on long range percolation clusters {I}: heat
  kernel bounds.
\newblock {\em Probability Theory and Related Fields}, 154(3-4):753--786, 2012.

\bibitem{Dauxois}
T.~Dauxois, S.~Ruffo, E.~Arimondo, and M.~Wilkens.
\newblock Dynamics and thermodynamics of systems with long range interactions.
\newblock {\em Lecture Notes in Physics Vol. 602, Springer}, 2002.

\bibitem{Defenu}
N.~Defenu, T.~Donner, T.~Macrì, G.~Pagano, S.~Ruffo, and A.~Trombettoni.
\newblock Long-range interacting quantum systems.
\newblock {\em Rev. Mod. Phys.}, 95, 035002, 2023.

\bibitem{DeiHofHoo13}
M.~Deijfen, R.~van~der Hofstad, and G.~Hooghiemstra.
\newblock Scale-free percolation.
\newblock {\em Annales de l'Institut Henri Poincar{\'e}, Probabilit{\'e}s et
  Statistiques}, 49(3):817--838, 2013.

\bibitem{del2018finding}
G.~Del~Ferraro, A.~Moreno, B.~Min, F.~Morone, {\'U}.~P{\'e}rez-Ram{\'\i}rez,
  L.~P{\'e}rez-Cervera, L.~C. Parra, A.~Holodny, S.~Canals, and H.~A. Makse.
\newblock Finding influential nodes for integration in brain networks using
  optimal percolation theory.
\newblock {\em Nature communications}, 9(1):2274, 2018.

\bibitem{deuschel1996surface}
J.-D. Deuschel and A.~Pisztora.
\newblock Surface order large deviations for high-density percolation.
\newblock {\em Probability Theory and Related Fields}, 104(4):467--482, 1996.

\bibitem{lrpRevisited20}
H.~Duminil-Copin, C.~Garban, and V.~Tassion.
\newblock Long-range models in 1D revisited.
\newblock {\em Annales de l'Institut Henri Poincar\'e, Probabilit\'es et Statistiques}, 60(1):232--241, 2024.


\bibitem{durrett1988large}
R.~Durrett and R.~H. Schonmann.
\newblock Large deviations for the contact process and two dimensional
  percolation.
\newblock {\em Probability Theory and Related Fields}, 77(4):583--603, 1988.

\bibitem{FouMul18}
N.~Fountoulakis and T.~M{\"u}ller.
\newblock Law of large numbers for the largest component in a hyperbolic model
  of complex networks.
\newblock {\em Annals of Applied Probability}, 28(1):607--650, 2018.

\bibitem{gandolfi1989thesis}
A.~Gandolfi.
\newblock {\em Clustering and uniqueness in mathematical models of percolation
  phenomena}.
\newblock PhD thesis, Technische Universiteit Delft, 1989.

\bibitem{Gilb61}
E.~N. Gilbert.
\newblock Random plane networks.
\newblock {\em Journal of the Society for Industrial Applied Mathematics},
  9:533--553, 1961.

\bibitem{gouere2008subcritical}
J.-B. Gou{\'e}r{\'e}.
\newblock Subcritical regimes in the {P}oisson {B}oolean model of continuum
  percolation.
\newblock {\em Annals of Probability}, 36(4):1209--1220, 2008.

\bibitem{gracar2019age}
P.~Gracar, A.~Grauer, L.~L{\"u}chtrath, and P.~M{\"o}rters.
\newblock The age-dependent random connection model.
\newblock {\em Queueing Systems}, 93(3):309--331, 2019.

\bibitem{GraHeyMonMor19}
P.~Gracar, M.~Heydenreich, C.~M{\"o}nch, and P.~M{\"o}rters.
\newblock Recurrence versus transience for weight-dependent random connection
  models.
\newblock {\em Electronic Journal of Probability}, 27:1--31, 2022.

\bibitem{gracar2022finiteness}
P.~Gracar, L.~L{\"u}chtrath, and C.~M{\"o}nch.
\newblock Finiteness of the percolation threshold for inhomogeneous long-range
  models in one dimension.
\newblock {\em Preprint arXiv:2203.11966}, 2022.

\bibitem{gracar2021percolation}
P.~Gracar, L.~L{\"u}chtrath, and P.~M{\"o}rters.
\newblock Percolation phase transition in weight-dependent random connection
  models.
\newblock {\em Advances in Applied Probability}, 53(4):1090--1114, 2021.

\bibitem{Grim99}
G.~Grimmett.
\newblock {\em Percolation}.
\newblock Grundlehren der Mathematischen Wissenschaften, Springer, 1999.

\bibitem{grimmett1990supercritical}
G.~R. Grimmett and J.~M. Marstrand.
\newblock The supercritical phase of percolation is well behaved.
\newblock {\em Proceedings of the Royal Society of London. Series A:
  Mathematical and Physical Sciences}, 430(1879):439--457, 1990.

\bibitem{Hafner72}
R.~Hafner.
\newblock The asymptotic distribution of random clumps.
\newblock {\em Computing}, 10:335--–351, 1972.

\bibitem{hall1985continuum}
P.~Hall.
\newblock On continuum percolation.
\newblock {\em Annals of Probability}, pages 1250--1266, 1985.

\bibitem{hirsch2017gilbertgraph}
C.~Hirsch.
\newblock {From heavy-tailed Boolean models to scale-free Gilbert graphs}.
\newblock {\em Brazilian Journal of Probability and Statistics}, 31(1):111 --
  143, 2017.

\bibitem{hofstad2021giantlocal}
R.~v.~d. Hofstad.
\newblock The giant in random graphs is almost local.
\newblock {\em arXiv:2103.11733}, 2021.

\bibitem{hutchcroft2022transience}
T.~Hutchcroft.
\newblock Transience and anchored isoperimetric dimension of supercritical
  percolation clusters.
\newblock {\em Electronic Journal of Probability}, 28:1--15, 2023.

\bibitem{jacob2023only}
E.~Jacob.
\newblock Subcritical annulus crossing in spatial random graphs
\newblock {\em arXiv preprint arXiv:2411.10333}, 2025.

\bibitem{clusterI}
J.~Jorritsma, J.~Komj{\'athy}, and D.~Mitsche.
\newblock Cluster-size decay in supercritical kernel-based spatial random
  graphs.
\newblock {\em Annals of Probability,} 53(4), 1537-1597, 2025.

\bibitem{clusterIII}
J.~Jorritsma, J.~Komj{\'athy}, and D.~Mitsche.
\newblock Cluster-size decay in supercritical long-range percolation.
\newblock {\em Electronic Journal of Probability}, 29: 1--36, 2024.

\bibitem{jorritsmaZwart2023}
J.~Jorritsma and B.~Zwart.
\newblock Large deviations of the giant component in scale-free inhomogeneous
  random graphs.
\newblock {\em Preprint arXiv:2407.01224}, 2024.

\bibitem{kesten1990bondpercolation}
H.~Kesten and Y.~Zhang.
\newblock The probability of a large finite cluster in supercritical
  {B}ernoulli percolation.
\newblock {\em Annals of Probability}, pages 537--555, 1990.

\bibitem{KomLod20}
J.~Komj{\'a}thy and B.~Lodewijks.
\newblock Explosion in weighted hyperbolic random graphs and geometric
  inhomogeneous random graphs.
\newblock {\em Stochastic Processes and their Applications}, 130(3):1309--1367,
  2020.

\bibitem{kozma2015random}
R.~Kozma and M.~Puljic.
\newblock Random graph theory and neuropercolation for modeling brain
  oscillations at criticality.
\newblock {\em Current opinion in neurobiology}, 31:181--188, 2015.

\bibitem{krioukov2010hyperbolic}
D.~Krioukov, F.~Papadopoulos, M.~Kitsak, A.~Vahdat, and M.~Bogu\~{n}{\'a}.
\newblock Hyperbolic geometry of complex networks.
\newblock {\em Physical Review E}, 82(3):036106, 2010.

\bibitem{kunz1978essential}
H.~Kunz and B.~Souillard.
\newblock Essential singularity in percolation problems and asymptotic behavior
  of cluster size distribution.
\newblock {\em Journal of Statistical Physics}, 19(1):77--106, 1978.

\bibitem{last2017lectures}
G.~Last and M.~Penrose.
\newblock {\em Lectures on the Poisson process}, volume~7.
\newblock Cambridge University Press, 2017.

\bibitem{lebowitz1988pseudo}
J.~Lebowitz and R.~H. Schonmann.
\newblock Pseudo-free energies and large deviations for non-{G}ibbsian {FKG}
  measures.
\newblock {\em Probability Theory and Related Fields}, 77(1):49--64, 1988.

\bibitem{lefevre1990stochastic}
C.~Lef{\`e}vre.
\newblock Stochastic epidemic models for SIR infectious diseases: a brief
  survey of the recent general theory.
\newblock In {\em Stochastic Processes in Epidemic Theory: Proceedings of a
  Conference held in Luminy, France, October 23--29, 1988}, pages 1--12.
  Springer, 1990.

\bibitem{liggett1997domination}
T.~M. Liggett, R.~H. Schonmann, and A.~M. Stacey.
\newblock Domination by product measures.
\newblock {\em Annals of Probability}, 25(1):71--95, 1997.

\bibitem{lindvall1999strassen}
T.~Lindvall.
\newblock On {S}trassen's theorem on stochastic domination.
\newblock {\em Electronic Communications in Probability}, 4:51--59, 1999.

\bibitem{luchtrathThesis2022}
L.~L{\"u}chtrath.
\newblock {\em Percolation in weight-dependent random connection models}.
\newblock PhD thesis, Universit\"at zu K\"oln, 2022.

\bibitem{mitzenmacher2017probability}
M.~Mitzenmacher and E.~Upfal.
\newblock {\em Probability and computing: Randomization and probabilistic
  techniques in algorithms and data analysis}.
\newblock Cambridge University Press, 2017.

\bibitem{MollReed98}
M.~Molloy and B.~Reed.
\newblock The size of the giant component of a random graph with a given degree
  sequence.
\newblock {\em Combinatorics, Probability and Computing}, 7(3):295--305, 1998.

\bibitem{monchContinuityPercolation}
C.~Mönch.
\newblock Inhomogeneous long-range percolation in the weak decay regime.
\newblock {\em Probability Theory and Related Fields }189:1129–-1160, 2024.

\bibitem{nagaev1979large}
S.~V. Nagaev.
\newblock Large deviations of sums of independent random variables.
\newblock {\em Annals of Probability}, pages 745--789, 1979.

\bibitem{newman2012communities}
M.~E. Newman.
\newblock Communities, modules and large-scale structure in networks.
\newblock {\em Nature physics}, 8(1):25--31, 2012.

\bibitem{NorRei06}
I.~Norros and H.~Reittu.
\newblock On a conditionally {P}oissonian graph process.
\newblock {\em Advances in Applied Probability}, 38(1):59--75, 2006.

\bibitem{oconnel1998}
N.~O'Connell.
\newblock Some large deviation results for sparse random graphs.
\newblock {\em Probability Theory and Related Fields}, 110:277--285, 1998.

\bibitem{pisztora1996surface}
A.~Pisztora.
\newblock Surface order large deviations for {I}sing, {P}otts and percolation
  models.
\newblock {\em Probability Theory and Related Fields}, 104:427--466, 1996.

\bibitem{mdp}
S.~Roch.
\newblock {\em Modern Discrete Probability: An Essential Toolkit}.
\newblock Cambridge Series in Statistical and Probabilistic Mathematics, 2024.

\bibitem{schulman_1983}
L.~S. Schulman.
\newblock Long range percolation in one dimension.
\newblock {\em Journal of Physics A: Mathematical and General},
  16(17):L639--L641, 1983.

\bibitem{Hofbook2}
R.~van~der Hofstad.
\newblock {\em Random Graphs and Complex Networks: Volume 2}.
\newblock Cambridge Series in Statistical and Probabilistic Mathematics.
  Cambridge University Press, 2024.

\bibitem{maitra2021locallim}
R.~van~der Hofstad, P.~van~der Hoorn, and N.~Maitra.
\newblock Local limits of spatial inhomogeneous random graphs.
\newblock {\em Advances in Applied Probability}, pages 1--48, 2023.


\end{thebibliography}
\end{document}